\documentclass[11pt]{article}
\usepackage{mymacros}


\usepackage{tikz} 
\usepackage{tikz-cd}
\usepackage{lipsum}

\usepackage{dutchcal}
\usepackage{standalone}
\usepackage[normalem]{ulem}

\titleformat{\subsection}
{\normalfont\normalsize\bfseries}{\thesubsection}{1em}{}
\titleformat{\subsubsection}[runin]
{\normalfont\small\bfseries}{\thesubsubsection}{0.8em}{}

\title{Central Limit Theorem for Multi-Point Functions of the 2D Discrete Gaussian Model at high temperature}

\author{
  Jiwoon Park\footnote{Department of Pure Mathematics and
   Mathematical Statistics, University of Cambridge, Centre for
   Mathematical Sciences, Wilberforce Road, Cambridge, CB3 0WB, UK.
  \url{https://orcid.org/0000-0002-1159-2676}.   {\tt jp711@cantab.ac.uk}.}
}

\date{\vspace*{-2em}}

\definecolor{magenta}{rgb}{0.55, 0.0, 0.55}
\definecolor{darkmagenta}{rgb}{0.85, 0.0, 0.55}

\makeatletter
\newcommand{\customlabel}[2]{%
   \protected@write \@auxout {}{\string \newlabel {#1}{{#2}{\thepage}{#2}{#1}{}} }%
   \hypertarget{#1}{#2}
}
\makeatother

  {\list{}{\leftmargin=0.15in\rightmargin=0.15in}\item[]}%
  {\endlist}

\newcommand{\Tay}{\operatorname{Tay}}
\newcommand{\Rem}{\operatorname{Rem}}
\newcommand{\Loc}{\operatorname{Loc}^{(2)}}
\newcommand{\Loco}{\operatorname{Loc}^{(0)}}

\newcommand{\Eplus}{\mathbb{E}}
\newcommand{\Comp}{\operatorname{Comp}}

\newcommand{\one}{{\bf 1}}

\newcommand{\Et}{\mathbb{E}_{(\tau)}}

\newcommand{\kt}{\mathfrak{t}}
\newcommand{\htau}{h_{\kt}}
\newcommand{\ctau}{c_{\kt}}

\newcommand{\kM}{\mathfrak{M}}
\newcommand{\kK}{\mathfrak{K}}
\newcommand{\kU}{\mathfrak{U}}

\newcommand{\kC}{\mathfrak{C}}
\newcommand{\kg}{\mathfrak{g}}

\newcommand{\sg}{\operatorname{SG}}
\newcommand{\gsg}{\operatorname{G-SG}}
\newcommand{\dg}{\operatorname{DG}}
\newcommand{\gff}{\operatorname{GFF}}

\newcommand{\Her}{\operatorname{He}}

\newcommand{\regularityone}{\,\textnormal{R}1}
\newcommand{\regularitytwo}{\,\textnormal{R}2}
\newcommand{\Gammaone}{\,\Gamma1}
\newcommand{\Gammatwo}{\,\Gamma2}
\newcommand{\Gammathree}{\,\Gamma3}
\newcommand{\Gammafour}{\,\Gamma4}
\newcommand{\assumpf}{\,\textnormal{A}'_{\f}}
\newcommand{\assumpfa}{\,\textnormal{A}_{\f}}
\newcommand{\assumpu}{\,\textnormal{A}'_{u}}
\newcommand{\rginitial}{\,\Phi_{\operatorname{IC}}}

\newcommand{\varphip}{\varphi_{+}}

\newcommand{\Conn}{\operatorname{Conn}}

\newcommand{\f}{\mathcal{f}}
\newcommand{\cz}{\mathcal{z}}
\newcommand{\rr}{\mathcal{r}}

\renewcommand{\bar}[1]{\overline{#1}}
\newcommand{\alphaLoc}{\alpha^{(2)}_{\operatorname{Loc}}}
\newcommand{\alphaLoco}{\alpha^{(0)}_{\operatorname{Loc}}}

\usepackage[titles]{tocloft}
\setlength{\cftbeforesecskip}{0.5ex}



\def\newpageonoff{0} 
\def\tableofcontentsonoff{0} 

\begin{document}
\iftrue

\maketitle

\ifx\newpageonoff\undefined
{\red command undefined!!}
\else
  \if\tableofcontentsonoff1
  \tableofcontents
  \fi
\fi

\abstract{
We study microscopic observables of the Discrete Gaussian model (i.e., the Gaussian free field restricted to take integer values) at high temperature using the renormalisation group method. 
In particular, we show the central limit theorem for the two-point function of the Discrete Gaussian model 
by computing the asymptotic of the moment generating function
$\big\langle e^{\cz  (\sigma (0) - \sigma (y))} \big\rangle_{\beta, \Z^2}^{\dg}$ for $\cz \in \C$ sufficiently small.
The method we use has direct connection with the multi-scale polymer expansion used in \cite{dgauss1, dgauss2}, where it was used to study the scaling limit of the Discrete Gaussian model.
The method also applies to multi-point functions and lattice models of sine-Gordon type studied in \cite{MR634447}. 
}

\section{Introduction}

The Discrete Gaussian (DG) model is an effective interface model whose height variables are restrained to take integer values and the thermal fluctuation is controlled by the Dirichlet energy.
Specifically, let $\Lambda \subset \Z^{d}$ be a square domain such that $0\in \Lambda$. 
The DG measure $\P^{\dg}_{\beta, \Lambda}$ on $\Omega^{\dg}_{\Lambda} = \{  \sigma \in (2\pi \Z)^{\Lambda} : \sigma (0) = 0 \}$ with periodic boundary condition is defined by
\begin{align}
\P^{\dg}_{\beta, \Lambda} (\sigma) = \frac{1}{Z_{\beta, \Lambda}^{\dg}} \exp \Big( -\frac{1}{4 \beta} \sum_{x\sim y \in \Lambda} (\sigma_x - \sigma_y)^2  \Big)
\end{align}
where $\beta >0$ is the temperature parameter and $Z_{\beta, \Lambda}^{\dg}$ is a normalisation constant.
Also, $x\sim y$ indicates they are adjacent to each other when the domain $\Lambda$ is equipped with periodic structure and $\sum_{x\sim y}$ counts every edge of $\Lambda$ twice.
The expectation in the measure is denoted either $\E^{\dg}_{\beta, \Lambda} (\cdot)$ or $\langle \cdot \rangle^{\dg}_{\beta, \Lambda}$.
The exponent can be equivalently written as $-\frac{1}{2\beta} (\sigma, -\Delta \sigma)$ where $\Delta f(x) = \sum_{y: y\sim x} f(y) - f(x)$.

The case $d=2$ has been drawing attention from the probability and mathematical physics communities due to the presence of the localisation-delocalisation phase transition and its relation to other models such as the lattice Coulomb gas model and the Villain XY model.
For some introductory aspects of this phase transition and duality, one may consult \cite{1711.04720,RB16-MATH253x}.
When $\beta \ll 1$ (low temperature),  Peierls' argument shows that $\sup_{x\in \Lambda} \operatorname{Var} (\sigma_x)$ is bounded uniformly as $|\Lambda| \rightarrow \infty$, see  \cite{Brandenberger:1982aa}. 
Whereas for the case $\beta \gg 1$ (high temperature), 
Fr\"{o}hlich and Spencer were the first to use the spin-wave method in \cite{MR634447} to prove delocalisation, namely $\operatorname{Var} (\sigma_x - \sigma_y) \geq c(\beta) \log \norm{x-y}_2$ for some $c(\beta) > 0$ and uniformly over $|\Lambda|$. 
Since then,  the delocalisation was proved on more general graphs
\cite{2110.09498, 2012.09687} using new strategies, and a number of works \cite{2002.12284, 2101.05139, 2012.01400, MR2251117, 1907.08868,arxiv.2211.14365} developed different views on how to understand the delocalised phase.  
Also, it was proved recently in \cite{dgauss1, dgauss2} that the scaling limit of the DG model with $\beta \gg 1$ is in fact a usual continuum Gaussian Free Field with renormalised temperature using the renormalisation group argument.

Delocalisations are widely observed phenomenon in 2D random surface models, as seen in (the non-exhaustive lists) \cite{Brascamp2004, MR3395146, Ioffe2002,MR3182484, arxiv.2202.13578} for continuous-valued models and \cite{MR3606736,MR4121614,2012.13750,MR4315657,arxiv.2206.12058,  MR3369909} for discrete-valued models. 
Delocalisation in 2D is closely related to the absence of symmetry breaking in 2D systems with continuous symmetry,  the Mermin-Wagner theorem \cite{PhysRevLett.17.1133, Mermin1967}.  Such type of results can also be found in \cite{MR632763,Pfister1981,MR0424106}.
One may also consult \cite{Velenik-LectureNotes, MR2228384} for different probabilistic models,  questions, approaches and diverse lists of references on this phenomenon. 

In this paper, we aim to further develop the method of \cite{dgauss1, dgauss2} to understand the multi-point functions of the DG model in the delocalised phase, and thereby establish the central limit theorem for microscopic observables.

\subsection{Results}

\subsubsection{Infinite volume function.}

Our main theorem about the two-point function of the DG model is stated in terms of the infinite volume DG measure. 
However, in the high-temperature phase, a translation invariant infinite volume Gibbs measure with variance delocalisation does not exist,  cf. \cite{2101.05139}.
To resolve this, one usually considers the gradient Gibbs measure that only takes the gradient of the field $\sigma$ into account. 
Formally,  let us define $\P^{\dg}_{\beta, \Z^2}$ to be the probability measure on the quotient space $(2\pi\Z)^{\Z^2} / \sim$ (where $\sigma \sim \sigma'$ whenever $\sigma \equiv \sigma' + 2\pi c$ for some $c \in \Z$ independent of the lattice site)
such that $\langle F (\sigma) \rangle^{\dg}_{\beta,\Z^2} = \lim_{n\rightarrow \infty} \langle F(\sigma) \rangle^{\dg}_{\beta, \Lambda (n)}$ for some $(\Lambda(n))_{n\in \N}$, 
a sequence of tori increasing to $\Z^2$ as subsets of $\Z^2$, 
and $F$ is a bounded measurable function that only depends on finite number of points in $\Z^2$.
It is shown in \cite{MR2251117} that the measure obtained in this way is the unique (translation-)ergodic gradient Gibbs measure with tilt 0 in the high-temperature phase.  

In the high-temperature phase, the results of Fr\"ohlich-Spencer-Park (\!\! \cite{MR634447, MR0456220}, see also \cite{1711.04720}) show that this measure is approximately Gaussian in the sense that
\begin{align}
	\langle e^{ r(\beta) (f, \phi) } \rangle^{\gff}
	\leq 
 	\langle e^{\beta^{-1/2} (f, \sigma)} \rangle^{\dg}_{\beta} 
	\leq 
 	\langle e^{(f, \phi)} \rangle^{\gff} = e^{\frac{1}{2} (f, (-\Delta)^{-1} f)}
 	\label{eq:Frohlich-Spencer-Park}
\end{align}
for sufficiently large $\beta$, 
some $r (\beta) \in (0,1)$ such that $r(\beta) \rightarrow 1$ as $\beta \rightarrow \infty$
and $f \in \R^{\Z^2}$ of compact support such that $\sum_{x} f(x) =0$,
where $\phi \sim \P^{\gff}_{\Lambda}$, 
meaning that $\phi$ has the distribution of the Gaussian free field. 
When $f = \delta_0 - \delta_y$ for $y\in \Z^2$, 
then 
\begin{align}
	 \frac{1}{2} (\delta_0 - \delta_y, (-\Delta)^{-1} (\delta_0 - \delta_y))
		&= \frac{ \log \norm{y}_2 }{2 \pi} \cz^2  + C + O(\norm{y}^{-\alpha'})	,
		\label{eq:mgf_gff}
\end{align}
for some $\alpha' > 0$ (see \cite{MR4043225}, for example).
This shows the field is delocalised, but does not provide
sufficient amount of control when it comes to proving the scaling limit (proved in \cite{dgauss1, dgauss2}),
distributional convergence of the rescaled field at a point (Corollary~\ref{cor:CLT}) or determining the scaling dimension of  $e^{i\eta \sigma_y}$ (Corollary~\ref{cor:cos_correlation}).
In this paper, we aim to extend the method of \cite{dgauss1, dgauss2} to prove both of these results as corollaries of the following theorem.

\begin{theorem}
\label{thm:main_theorem}

There exist $\beta_0$ and $\htau >0$ such that, 
whenever $\beta \geq \beta_0$ and $\cz \in \D_{\htau} = \{ z \in \C : |z| < \htau  \}$,
\begin{align}
\begin{split}
	\log \langle e^{ \beta^{-1/2} \cz (\sigma_0- \sigma_y)} \rangle^{\dg}_{\beta, \Z^2} = \frac{ \log \norm{y}_2 }{2 \pi (1 + s_0^c (\beta))} \cz^2  + f_{\beta} (\cz, y)
	\label{eq:main_theorem}
\end{split}
\end{align}
for some $s_0^c (\beta) = O(e^{-c_f \beta})$ with $c_f >0$
and for some complex analytic function $f_{\beta} (\cdot ,  y) : \D_{\htau} \rightarrow \C$ that is bounded uniformly in $y$.
In fact, there is $f_{\beta} (\cdot, \infty)$ such that $\norm{f_{\beta} (\cdot, y) - f_{\beta} (\cdot, \infty) }_{L^{\infty} (\D_{\htau})} = O(\norm{y}_2^{-\alpha})$ for some $\alpha > 0$.
\end{theorem}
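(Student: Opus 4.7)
The plan is to express the moment generating function as a ratio of sine-Gordon partition functions in the presence of a linear source, and then run the renormalisation group (RG) of \cite{dgauss1, dgauss2} while carrying along an \emph{observable} sector attached to the insertion points $0$ and $y$. Via the Poisson summation identity underlying the sine-Gordon representation, I can write, schematically,
\[
\langle e^{\beta^{-1/2}\cz(\sigma_0 - \sigma_y)} \rangle^{\dg}_{\beta, \Z^2}
= \langle e^{\cz (\delta_0 - \delta_y, \phi)} \rangle^{\gff} \cdot \frac{Z^{\sg}_{\beta}[\cz; y]}{Z^{\sg}_{\beta}[0]},
\]
where $\phi$ is the GFF on $\Z^2$ and $Z^{\sg}_{\beta}[\cz; y]$ denotes the sine-Gordon partition function with cosine activity tilted by the linear source $\cz(\delta_0 - \delta_y)$. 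By \eqref{eq:mgf_gff}, the first factor contributes $\frac{\log\norm{y}_2}{2\pi}\cz^2$ plus a bounded analytic remainder, so the remaining task is to extract a correction of the form $-\frac{s_0^c(\beta)}{2\pi(1 + s_0^c(\beta))}\cz^2 \log \norm{y}_2$ together with the full bounded analytic piece $f_\beta(\cz, y)$ from the sine-Gordon ratio.

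Second, I apply the multi-scale polymer expansion with a finite-range decomposition $(-\Delta)^{-1} = \sum_{j=0}^{\infty} C_j$, integrating out fluctuation fields one scale at a time. The bulk RG is exactly the one analysed in \cite{dgauss1, dgauss2} and converges to a fixed point characterised by a renormalised stiffness $1 + s_0^c(\beta) = 1 + O(e^{-c_f \beta})$. In parallel, the source $\cz(\delta_0 - \delta_y)$ generates a sequence of observable coupling constants localised in blocks near $0$ and near $y$, which I track alongside the bulk coordinates. The essential effect of the bulk flow on the observable sector is to replace the bare Gaussian covariance seen by the source with the renormalised one $(1 + s_0^c(\beta))^{-1}(-\Delta)^{-1}$; combined with the logarithmic behaviour of $(-\Delta)^{-1}(0,y)$, this produces precisely the coefficient $\frac{1}{2\pi(1 + s_0^c(\beta))}$ in front of $\cz^2 \log \norm{y}_2$. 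All higher-order scale contributions are absorbed into $f_\beta(\cz, y)$.

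Third, the analytic and asymptotic properties of $f_\beta$ follow from the quantitative norms used in \cite{dgauss1, dgauss2}. Complex analyticity on $\D_{\htau}$ comes from the analytic dependence of each polymer activity on $\cz$: choosing $\htau$ small enough (jointly with $\beta_0$), the norm controls all $\cz$-derivatives and the telescoping sum over scales converges. Uniform boundedness in $y$ follows because only scales $j \lesssim \log_2 \norm{y}_2$ can produce polymer activities whose support reaches both $0$ and $y$; beyond that, the two clusters evolve independently up to small cross-terms. The power-law rate $\norm{f_\beta(\cdot, y) - f_\beta(\cdot, \infty)}_{L^\infty(\D_{\htau})} = O(\norm{y}_2^{-\alpha})$ is obtained by comparing the joint flow with two decoupled single-insertion flows; the difference is governed by cross-activities whose support must bridge distance $\norm{y}_2$, and the finite-range property of $C_j$ combined with the exponentially contracting error terms of the RG map yields polynomial decay.

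The main obstacle will be lifting the framework of \cite{dgauss1, dgauss2} to the observable sector with tight enough control in $y$. One must design a norm that simultaneously bounds bulk activities, observable couplings at both insertion points, and joint $\cz$-derivatives, and is stable under the nonlinear RG map at every scale. Identifying the precise renormalisation coefficient $1/(1 + s_0^c(\beta))$ requires separating genuine wave-function renormalisation from the many subleading polymer contributions that merely enlarge $f_\beta$, while obtaining the power-law comparison with the $y = \infty$ flow requires propagating cross-decoupling estimates through all scales above $\log_2 \norm{y}_2$. Handling these two issues simultaneously is where the heart of the technical work lies.
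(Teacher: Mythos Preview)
Your overall strategy matches the paper's: sine-Gordon reformulation, the bulk RG of \cite{dgauss1,dgauss2} perturbed by the source, stiffness renormalisation $s_0^c(\beta)$ producing the coefficient $1/(2\pi(1+s_0^c))$, and a coalescence-scale argument for the decay of cross-terms. One slip: your coalescence picture is inverted. Polymer activities whose support reaches both $0$ and $y$ can arise only at scales $j\gtrsim\log_L\|y\|_2$, not $\lesssim$; below the coalescence scale $j_{0y}$ the two clusters evolve independently (Lemma~\ref{lemma:before_coalsc_scale}, Corollary~\ref{cor:before_coalsc_scale}), and the genuinely two-point free energy is the sum over $j\geq j_{0y}$ of terms decaying like $L^{-\alpha j}$, which gives the $O(\|y\|_2^{-\alpha})$ rate (Lemma~\ref{lemma:two_point_energy}). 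On implementation, the paper does not introduce observable coupling constants in the style of Falco or \cite{MR3345374,MR3459163}; instead it carries the source as an exponential tilt $\E_{(\tau)}$ of the fluctuation integrals (Proposition~\ref{prop:final_alternative_form_of_mgf}), equivalently a complex shift $\varphi\mapsto\varphi+\tau u_{j+1}$ at each scale (Lemma~\ref{lemma:Et_by_complex_shift}), and tracks analyticity in $\tau$ via the augmented seminorm $\|\cdot\|_{\vec h,T_j}$. This is the technical novelty over \cite{dgauss2} and sidesteps the difficulty with observable fields at large $\beta$ that the paper flags in Section~\ref{sec:relation_to_sine_gordon}; your sketch would need to confront that issue if you implemented it literally as an observable-field flow.
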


Note that $\htau$, the radius of validity of \eqref{eq:main_theorem}, 
must have a finite upper bound because the function $\cz \mapsto \langle e^{\beta^{-1/2} \cz (\sigma_0 - \sigma_y)} \rangle_{\beta, \Z^2}^{\dg}$ is periodic under translations by $2 \pi \beta^{1/2} i$.

The main message is twofold.
First,  the dominating term of the moment generating function in the limit $\norm{y}_2 \rightarrow \infty$ is just a constant multiple of that of the Gaussian free field given in \eqref{eq:mgf_gff}.
In other words, 
\begin{align}
	\log \langle e^{ \beta^{-1/2} \cz (\sigma_0- \sigma_y)} \rangle^{\dg}_{\beta, \Z^2} 
		= \frac{1}{1+ s_0^c (\beta)}	\log \langle e^{\cz (\phi_0 - \phi_y)} \rangle_{\Lambda}^{\gff} + 	C' + O(\norm{y}^{-\alpha})
\end{align}
so the divergence of log-moment generating function on a neighbourhood of 0 is entirely due to the second moment.
This can also be compared to \cite{MR3182484}, where the strong central limit theorem for continuous valued models is proved in this sense (but on the whole real line, not for complex $\cz$ as above).
Also, the theorem shows a polynomial decay of the remainder $f_{\beta} (\cdot, y) - f_{\beta} (\cdot, \infty)$.  
There are two different contributions on the remainder term, one decaying part of the lattice Laplacian $(-\Delta)^{-1} (0,y)$ and one specific to the model. 

Second, there is a deviation of the DG model from the Gaussian free field by a factor $s_0^c (\beta)$.  
This correction is identical to the one observed in the scaling limit, \cite{dgauss1}.
It is often explained to arise from the competition between the spin-wave (upper bound) and spin-vortex fluctuations (lower bound).
Our proof does not provide a lower bound,  but comparing \eqref{eq:main_theorem} with \cite[Corollary~6.2, Corollary~2.2]{2012.01400},  we see that $s_0^c (\beta)$ also satisfies $s_0^c (\beta) \geq \frac{\beta}{2\pi^2} e^{-\beta / 2}$ for $\beta \geq (\beta_0 \vee 4\pi^2/3)$.

Theorem~\ref{thm:main_theorem} can be generalised to the sine-Gordon type models of \cite{MR634447} (see Section~\ref{sec:relation_to_sine_gordon} and Remark~\ref{remark:sg_also_works}),  multi-point functions (see Section~\ref{sec:multi_point_functions}) and a larger domain of $\cz$ (the domain $S_{R, \htau}$ in Theorem~\ref{thm:main_theorem_generalised}).
But we confine most of our discussion to the case of the Discrete Gaussian model and the two-point function for $\cz \in \D_{\htau}$ in the interest of clarity.

Our main theorem complements an earlier result about the scaling limits in \cite{dgauss2} where the DG model is proved to converge to the full plane Gaussian free field.
To be precise,  \cite{dgauss2} considers observables that are `mesoscopic' in the sense that it is given as a sequence of functions $(f_n)_n \subset \R^{\Z^2}$ where $f_n(x)$ approximate $n^{-2} f(n^{-1} x)$ where $f \in C_c^{\infty} (\R)$ with $\int_{\R^2} f = 0$. 
Then $\langle e^{(f_n, \sigma)} \rangle_{\beta, \Z^2}^{\dg}$ is shown to converge to the moment generating function of the Gaussian free field.  To obtain Theorem~\ref{thm:main_theorem}, 
the method of \cite{dgauss2} requires two major extensions. 
The first is to refine the analysis to `microscopic' observables and the second is to extend the domain of the observables in the complex direction. 
The first amounts to controlling extra free energy coming from the point observables, which are not observed in the scaling limits. 
These points will be explained again in Section~\ref{sec:the_rg_method_explained}.

\subsubsection{Implications.}

An immediate consequence of Theorem~\ref{thm:main_theorem} is obtained by expanding the moment generating function as a Taylor series in $\cz$, so
\begin{align}
	\var_{\beta, \Z^2}^{\dg} \big[  \sigma_0 - \sigma_y  \big] = \frac{\beta}{2\pi (1+ s_0^c (\beta))} \log \norm{y}_2 + c_{\beta} + O_{\beta} \big( \norm{y}_2^{-\alpha'} \big)	\label{eq:variance}
\end{align}
for some $c_{\beta} \in \R$.
This shows the unboundedness of the variance directly. 
Also, we can find the natural scale associated to the size of the spin field by observing \eqref{eq:variance}, so let $t_{\beta} (y) = (\beta \log \norm{y}_2 )^{-1/2}$. 
After rescaling the spin field in Theorem~\ref{thm:main_theorem} by $t_{\beta} (y)$,  we obtain the precise distribution of the rescaled spins.

\begin{corollary}[Central limit theorem] \label{cor:CLT}

Under the assumptions of Theorem~\ref{thm:main_theorem}, for $\cz \in \C$,  
\begin{align}
\begin{split}
	\log \langle e^{ \cz t_{\beta} (y)  (\sigma_0 - \sigma_y)} \rangle^{\dg}_{\beta, \Z^2} = \frac{1}{2 \pi (1 + s_0^c (\beta))} \cz^2 + O_{\beta} \Big( \frac{\cz^2}{\log \norm{y}_2}  \Big)	.
\end{split}
\end{align}
\end{corollary}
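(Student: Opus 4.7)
The plan is to derive the corollary directly from Theorem~\ref{thm:main_theorem} by a change of variables $\cz \mapsto \beta^{1/2} t_{\beta}(y)\,\cz = \cz/\sqrt{\log \norm{y}_2}$. For any fixed $\cz \in \C$, the rescaled argument lies inside $\D_{\htau}$ as soon as $\norm{y}_2$ is large enough that $|\cz|/\sqrt{\log\norm{y}_2} < \htau$, so Theorem~\ref{thm:main_theorem} applies. The main term of \eqref{eq:main_theorem} then collapses as
\begin{align*}
\frac{\log\norm{y}_2}{2\pi(1+s_0^c(\beta))}\Big(\frac{\cz}{\sqrt{\log\norm{y}_2}}\Big)^2 = \frac{\cz^2}{2\pi(1+s_0^c(\beta))},
\end{align*}
exactly the leading term in the corollary. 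It remains to control $f_{\beta}(\cz/\sqrt{\log\norm{y}_2},y)$.

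The key observation is that $f_{\beta}(\cdot,y)$ has a zero of order at least two at the origin. Indeed, at $\cz=0$ the left-hand side of \eqref{eq:main_theorem} is $\log 1 = 0$ and the quadratic term vanishes, forcing $f_{\beta}(0,y)=0$. For the linear coefficient, note that the $\dg$ measure is invariant under $\sigma\mapsto -\sigma$, so $\langle \sigma_0-\sigma_y\rangle_{\beta,\Z^2}^{\dg} = 0$, which (after differentiating \eqref{eq:main_theorem} in $\cz$ at $0$) gives $\partial_{\cz} f_{\beta}(0,y)=0$.

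With these two vanishings in hand and the uniform-in-$y$ bound $\|f_{\beta}(\cdot,y)\|_{L^{\infty}(\D_{\htau})}\le M$ furnished by the theorem, Cauchy's integral formula on the circle $|\cz'|=\htau/2$ yields $|f_{\beta}(\cz',y)| \le C_{\beta}|\cz'|^2$ uniformly in $y$, for all $|\cz'|\le \htau/4$ (with $C_{\beta}$ depending only on $M$ and $\htau$). Plugging in $\cz'=\cz/\sqrt{\log\norm{y}_2}$ produces an error of order $C_{\beta}\cz^2/\log\norm{y}_2$, which is exactly the $O_{\beta}(\cz^2/\log\norm{y}_2)$ term claimed.

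There is no substantive obstacle: every ingredient (analyticity, uniform boundedness, the symmetry of the measure) is already in place. The only minor point to watch is that the estimate is not uniform in $\cz$ over all of $\C$; rather, for each fixed $\cz$, one needs $\norm{y}_2$ large enough to ensure $\cz/\sqrt{\log\norm{y}_2}\in \D_{\htau/4}$, which is the natural implicit condition absorbed into the $O_{\beta}$ notation.
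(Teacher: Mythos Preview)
Your proof is correct and matches the paper's intended (though not explicitly written) argument: the paper presents Corollary~\ref{cor:CLT} as an immediate consequence of rescaling in Theorem~\ref{thm:main_theorem}, and in the multi-point analogue (Section~\ref{sec:multi_point_functions}) explicitly records the vanishing $f_{\beta}(0,y)=\partial_{\cz}f_{\beta}(0,y)=0$ that you derive from the spin-flip symmetry. Your use of the Schwarz-type bound (via Cauchy's formula/maximum principle) to turn the uniform $L^\infty$ control of $f_\beta$ into the quadratic estimate $|f_\beta(\cz',y)|\le C_\beta|\cz'|^2$ is exactly the missing step the paper leaves to the reader.
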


When the field is allowed to take continuous values and the interaction potential satisfies uniform convexity condition, the same observable as in Corollary~\ref{cor:CLT} was shown to satisfy the central limit theorem in \cite{MR3182484}, 
and recently,  the local central limit theorem was also proved in \cite{arxiv.2202.13578}.
These results are obtained by applying stochastic homogenisation on the Helffer-Sj\"ostrand representation of the field. 
This technique has wide-ranging applications in this context,  but it is not directly applicable to integer valued systems.  
There are also results on some specific integer valued models. 
The height function for the dimer model was studied extensively in \cite{KenyonSheffield2003} and the central limit theorem was proved in \cite{Laslier2015}.
Also, a central limit theorem (with unknown scaling factors) was obtained for the square ice model in \cite{arxiv.2206.12058} using the Russo-Seymour-Welsh estimate on the level lines.

Another interesting related observable is the cosine correlation. 

\begin{corollary}[Cosine correlation] \label{cor:cos_correlation}
Under the assumptions of Theorem~\ref{thm:main_theorem} and if $\eta \in (-\htau, \htau)$, then
\begin{align}
\begin{split}
	\big\langle \cos\big( \beta^{-1/2} \eta  (\sigma_0 - \sigma_y) \big) \big\rangle^{\dg}_{\beta, \Z^2} = C (\beta, \eta) \norm{y}_{2}^{-\frac{\eta^2}{2\pi (1+s_0^c (\beta))}} \Big( 1 + O_{\beta} \big( \norm{y}_2^{-\alpha} \big) \Big)
\end{split}
\end{align}
for some $C(\eta,\beta)$ analytic in $\eta$. 
\end{corollary}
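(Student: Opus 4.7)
The plan is to deduce this directly from Theorem~\ref{thm:main_theorem} by evaluating the moment generating function at the purely imaginary point $\cz = i\eta$. The first observation is a reflection symmetry: the DG energy depends only on squared gradients and the pinning $\sigma(0)=0$ is preserved under $\sigma \mapsto -\sigma$, so $\P^{\dg}_{\beta,\Z^2}$ is invariant under this reflection and hence $\sigma_0 - \sigma_y$ has symmetric law. Consequently
\begin{equation*}
\big\langle \cos(\beta^{-1/2}\eta(\sigma_0-\sigma_y)) \big\rangle^{\dg}_{\beta,\Z^2} = \big\langle e^{i\beta^{-1/2}\eta(\sigma_0-\sigma_y)} \big\rangle^{\dg}_{\beta,\Z^2}.
\end{equation*}
Since $|\eta|<\htau$ we have $i\eta \in \D_{\htau}$, so Theorem~\ref{thm:main_theorem} applies with $\cz = i\eta$, and using $(i\eta)^2 = -\eta^2$ yields
\begin{equation*}
\big\langle \cos(\beta^{-1/2}\eta(\sigma_0-\sigma_y)) \big\rangle^{\dg}_{\beta,\Z^2} = \norm{y}_2^{-\eta^2/(2\pi(1+s_0^c(\beta)))} \, e^{f_\beta(i\eta, y)}.
\end{equation*}

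It remains to isolate the prefactor $C(\beta,\eta)$ and the polynomial remainder. I would set $C(\beta,\eta) := e^{f_\beta(i\eta,\infty)}$. Analyticity of $C$ in $\eta$ follows from Weierstrass's theorem: each $f_\beta(\cdot,y)$ is complex analytic on $\D_{\htau}$ and $f_\beta(\cdot,y) \to f_\beta(\cdot,\infty)$ uniformly on $\D_{\htau}$ by Theorem~\ref{thm:main_theorem}, so $f_\beta(\cdot,\infty)$ is analytic on $\D_{\htau}$, whence $\eta \mapsto e^{f_\beta(i\eta,\infty)}$ is analytic on the strip $|\operatorname{Im}\eta|<\htau$ and in particular on $(-\htau,\htau)$. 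The same uniform bound yields $|f_\beta(i\eta,y) - f_\beta(i\eta,\infty)| \leq \|f_\beta(\cdot,y) - f_\beta(\cdot,\infty)\|_{L^\infty(\D_{\htau})} = O(\norm{y}_2^{-\alpha})$, and exponentiating (using $e^z = 1 + O(z)$ for $z$ in a bounded set, which applies since $f_\beta(\cdot,\infty)$ is bounded uniformly on $\D_{\htau}$) converts this into $e^{f_\beta(i\eta,y)} = C(\beta,\eta)\bigl(1 + O_\beta(\norm{y}_2^{-\alpha})\bigr)$, which is the stated formula. Since every step is a direct consequence of Theorem~\ref{thm:main_theorem}, the only points requiring attention are verifying the $\sigma \mapsto -\sigma$ symmetry of the gradient Gibbs measure and cleanly tracking the substitution $\cz = i\eta$; there is no substantive analytic obstacle.
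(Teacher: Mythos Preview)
Your proof is correct and follows exactly the approach the paper intends: the corollary is a direct consequence of Theorem~\ref{thm:main_theorem} obtained by substituting $\cz = i\eta$, using spin-flip symmetry to identify the cosine with the exponential, and reading off the prefactor $C(\beta,\eta)=e^{f_\beta(i\eta,\infty)}$ together with the $O(\norm{y}_2^{-\alpha})$ remainder from the uniform convergence $f_\beta(\cdot,y)\to f_\beta(\cdot,\infty)$. The paper does not spell out a separate proof of this corollary, so your argument is precisely what is implicitly meant.
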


The interpretation is not so clear for the DG model.  But using the analogue for the generalised sine-Gordon models, the cosine correlation corresponds to the correlation function of two test charges $\pm \eta$ inserted into the 2D lattice multi-component Coulomb gas system dual to the generalised sine-Gordon model (also see \cite{1311.2237}).  For this reason, this observable is also called the fractional charge correlation or the electric correlator. 

The polynomial decay of the fractional charge correlation also characterises the delocalisation, and the polynomial lower and upper bounds were established in \cite{MR634447,MR610687}.
In the localised phase, the Debye screening (cf. \cite{MR574172, MR923850} for the lattice sine-Gordon model and \cite{Brandenberger:1982aa} for the DG model) induces exponential decay of the truncated charge correlation.  The same type of result was studied for the dimer model in \cite{Pinson2004, MR3369909}.
A similar result was also obtained for the interacting dimer model in \cite{MR3606736} but only after smoothing the point observables in an appropriate way. 
For the lattice sine-Gordon model at the critical point,  the fractional charge correlation was computed in \cite{1311.2237}.

\subsubsection{Finite volume functions.}

Theorem~\ref{thm:main_theorem} is proved by first observing what happens in finite volumes.
In what follows,
$\Lambda_N$ is $L^N \times L^N$ two-dimensional discrete torus with distinguished point 0, where $N \in \Z_{>0}$ and $L \in \Z_{\geq 2}$.
We consider $(\Lambda_N)_{N > 0}$ as nested subsets of $\Z^2$ with $0\in \Lambda_N$ and $\Lambda_N$ is represented by a corresponding subset of $\Z^2$. However, the metric $\operatorname{dist}_p (x,y)$ $(p\in [1,\infty])$ is that of the discrete torus, 
and also often is denoted $\norm{x-y}_{p}$ (even though it is not a norm). 

The correlation function in the infinite volume can be reduced to a result on the finite discrete torus, $\Lambda_N$.
For the finite volume result, we try to make the statement as general as possible. 
An interesting generalisation can be obtained by considering a large class of two-cluster observables of form $\f$ as in the following. 
\begin{equation} \stepcounter{equation}
	\tag{\theequation $\assumpfa$} \label{quote:assumpfa}
\begin{split}
	\parbox{\dimexpr\linewidth-4em}{%
		$y\in \Z^2$ and $\f_1,  \f_2 : \Z^2 \rightarrow \R$ are two functions with compact supports such that
$\f_1 (0), \f_2 (0) \neq 0$ and $\sum_{x\in \Z^2} ( \f_1 (x) +  \f_2 (x) ) = 0$.
Set $\f = \f_1 + T_y \f_2$ where $T_y \f_2 (x) = \f_2 (x-y)$.  Then set
	}
	\\
	M = \max\{ \norm{\f_1}_{L^{\infty}}, \norm{\f_2}_{L^{\infty}} \}, 
	\qquad 
	\rho = \max\{ \operatorname{diam}(\operatorname{supp} (\f_1) ) , \operatorname{diam}(\operatorname{supp} (\f_2) ) \} \qquad\;
\end{split}
\end{equation}

For Theorem~\ref{thm:main_theorem}, we will only need the case $\f_1 = \delta_0 = - \f_2$ so $\f = \delta_0 - \delta_y$,
but for later use, we record an alternative requirement on $\f$.
\begin{equation} \stepcounter{equation}
	\parbox{\dimexpr\linewidth-4em}{%
		$\f$ satisfies \eqref{quote:assumpfa},  but is allowed to have $\sum_x \f (x) \neq 0$.
	}	\tag{\theequation $\assumpf$} \label{quote:assumpf}
\end{equation}
\vspace{0pt}

Although $\f$ is a function defined on $\Z^2$, for $L^N$ sufficiently large compared to $\norm{y}_{\infty} + \rho$, 
$\f$ can be always be considered as a function on $\Lambda_N$. 
To be more formal, we will let $\Lambda'_N = [-\frac{L^N -1 }{2} , \frac{L^N -1}{2} ]^2 \cap \Z^2 \subset \Z^2$ if $L$ is odd and $\Lambda'_N  = [-\frac{L (L^N - 1)}{2(L-1)}, \frac{(L-2) (L^N - 1)}{2(L-1)} ]^2 \cap \Z^2$ if $L$ is even.
Also, let $\iota_N : \Lambda_N \rightarrow \Z_2$ be an isometric embedding such that $\iota_N |_{\Lambda'_N} : \Lambda_N \rightarrow \Lambda'_N$ is a bijection and $\iota_N (0) =0$.
Whenever we refer to $\f$ on $\Lambda_N$ below,  it will always mean $\f \circ \iota_N$ in real. 

Then Theorem~\ref{thm:main_theorem} can be reduced to the following theorem. 
In the statement, translation invariant covariance matrix means $\kC : \Lambda \times \Lambda \rightarrow \R$ such that $\kC (x_1,x_2) = \kC (x_1+ z, x_2 +z)$ for any $z\in \Lambda$ and $(f, \kC f) \geq 0$ for any $f \in \R^{\Lambda}$ (when $\Lambda$ is either $\Lambda_N$, a two-dimensional torus, or $\Z^2$).
We also use $S_{a,b} = \{ x + y : x \in [-a,a],  y \in \D_{b}  \} \subset \C$,  an elongation of the domain $\D_{b}$ in the real direction. 
The theorem should be interpreted in the line of Theorem~\ref{thm:main_theorem}, that the moment generating function of the two-cluster function behaves like that of the Gaussian free field with lower order corrections, at high temperature.

\begin{theorem} \label{thm:main_theorem_generalised}
There exists a translation invariant covariance matrix $\mathfrak{C}_{\beta,\Lambda_N}$  and $\beta_0 (R) , \, L_0 (R) > 0$ for each $R\geq 0$
such that the following holds.
Let $\cz \in S_{R,\htau}$ with $\htau \equiv \htau (M, \rho, L) >0$,  $L \geq L_0$
$\beta \geq \beta_0$ and $\sigma \sim \P_{\beta, \Lambda_N}^{\dg}$.
Then for any $y$ and $\f = \f_1 + T_y \f_2$ satisfying \eqref{quote:assumpfa},
\begin{align}
\begin{split}
	\log \langle e^{\beta^{-1/2} \cz (\f, \sigma)  } \rangle^{\dg}_{\beta, \Lambda_N} &= \frac{1}{2} \cz^2 (\f,  \mathfrak{C}_{\beta, \Lambda_N}  \f ) \\
	& \qquad +  h^{(1)}_{\beta} [\f_1] (\cz) + h^{(1)}_{\beta} [\f_2] (\cz) + h^{(2)}_{\beta} [\f_1, \f_2] (\cz, y) + \psi_{\beta,  \Lambda_N} (\cz, y) 
\end{split}
\label{eq:thm_main_theorem_generalised}
\end{align}
where $h^{(a)}_{\beta}$ ($a \in \{1,2\}$) and $\psi_{\beta, \Lambda_N}$ are  analytic functions in $S_{R,\htau} \ni \cz$
satisfying the following:
\begin{itemize}
\item for $\f_2 \equiv 0$ and for any $\f_1$, we have $h^{(1)}_{\beta} [0] = h^{(2)}_{\beta} [\f_1 ,  0] = 0$;
\item $| h^{(2)}_{\beta} [\f_1, \f_2] (\cz, y)  | = O_{\beta, R} ( \norm{y}_2^{-\alpha} )$ as $\norm{y}_2 \rightarrow \infty$
uniformly in $\cz \in S_{R, \htau}$ for some $\alpha  >0$; 
\item $\psi_{\beta, \Lambda_N} (\cz , y) = O_{\beta, R} \big( |\Lambda_N|^{-\alpha}  \big)$ as $N\rightarrow \infty$ uniformly in $\cz \in S_{R, \htau}$ and $y$; and
\item there exists a translation invariant covariance matrix $\mathfrak{C}_{\beta, \Z^2} : \Z^2 \times \Z^2 \rightarrow \R$ such that $(\f,  \mathfrak{C}_{\beta, \Z^2} \f) = \lim_{N\rightarrow \infty} (\f, \mathfrak{C}_{\beta, \Lambda_N}\f)$ (independent of choice of $L$).
\end{itemize}
\end{theorem}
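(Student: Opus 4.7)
The plan is to follow the multi-scale polymer renormalisation group (RG) of \cite{dgauss1, dgauss2}, extended to microscopic point observables and a complex source parameter $\cz$. First, I would rewrite the left-hand side of \eqref{eq:thm_main_theorem_generalised} as a ratio of DG partition functions $Z^{+}/Z$ where $Z^{+}$ carries the extra source weight $e^{\beta^{-1/2}\cz(\f,\sigma)}$. Applying Poisson summation in the integer variable $\sigma$ turns the DG measure into a Gaussian free field integral with a sum over integer ``charges'', i.e.\ a Sine-Gordon-type representation whose cosine potential is small for $\beta \gg 1$. The $\cz$-insertion is a linear source in the field; performing the analytic shift of the Gaussian field that absorbs it produces the bare Gaussian prefactor $\exp\!\bigl(\tfrac{1}{2}\cz^{2}(\f,\beta^{-1}(-\Delta)^{-1}\f)\bigr)$ and leaves a Sine-Gordon expectation whose cosines are multiplied by $\cz$-dependent phases localised near $\operatorname{supp}(\f_{1})$ and $y+\operatorname{supp}(\f_{2})$.

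Next, I would run the finite-range RG of \cite{dgauss1, dgauss2}, based on a decomposition $(-\Delta)^{-1} = \sum_{j=0}^{N}\Gamma_{j}$ with $\Gamma_{j}$ of range $L^{j}$, integrating out the field scale by scale and tracking both the bulk polymer activities $K_{j}$ and separate ``observable'' polymer activities carrying the $\cz$-dependent phases of the two clusters. The bulk flow is exactly that of \cite{dgauss1} and produces the Kosterlitz--Thouless-type renormalisation of the inverse temperature encoded by $1+s_{0}^{c}(\beta)$; resumming the scale-$j$ propagators at the renormalised coupling defines the translation-invariant $\mathfrak{C}_{\beta,\Lambda_{N}}$, and the usual decay estimates on the effective kernel provide the infinite-volume limit $\mathfrak{C}_{\beta,\Z^{2}}$. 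Analyticity in $\cz \in S_{R,\htau}$ is maintained by upgrading the polymer norms to analytic norms on the strip; the radius $\htau(M,\rho,L)$ is then fixed by the requirement that the initial observable perturbation be small enough to enter the contractive regime.

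The decomposition in \eqref{eq:thm_main_theorem_generalised} is read off from the observable flow. Set $j^{*} \approx \log_{L}\!\|y\|_{2}$. At scales $j < j^{*}$ the two observable supports lie in disjoint scale-$j$ polymers by the \emph{strict} finite range of $\Gamma_{j}$, so the observable flow factorises into two independent one-cluster flows whose limit free energies are $h^{(1)}_{\beta}[\f_{1}]$ and $h^{(1)}_{\beta}[\f_{2}]$; this construction makes $h^{(1)}_{\beta}[0] = h^{(2)}_{\beta}[\f_{1},0] = 0$ automatic. From scale $j^{*}$ onwards a single polymer can contain both clusters, and the combined contribution of these scales is $h^{(2)}_{\beta}[\f_{1},\f_{2}](\cz,y)$. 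The RG contractivity of \cite{dgauss1, dgauss2} suppresses every scale-$j$ connected polymer of diameter $\geq L^{j^{*}}$ by a factor $L^{-\alpha j}$, so summing over $j \geq j^{*}$ gives the $O_{\beta,R}(\|y\|_{2}^{-\alpha})$ bound on $h^{(2)}_{\beta}$; the analogous estimate at top scale yields $O_{\beta,R}(|\Lambda_{N}|^{-\alpha})$ for the volume error $\psi_{\beta,\Lambda_{N}}$.

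The main obstacle I anticipate is the passage to complex $\cz$. The norms of \cite{dgauss1, dgauss2} are tailored to real perturbations, so one must design a polymer norm that treats $\cz$ as an external analytic parameter over $S_{R,\htau}$, control $\cz$-derivatives at every scale, and verify that the RG map remains contractive on this analytic functional class with constants uniform in $N$. A secondary subtlety is the intrinsic one-cluster construction of $h^{(1)}_{\beta}[\f_{a}]$: these objects must be defined from the flow of a single cluster run all the way to the final scale, and one must then verify that the two-cluster output genuinely splits as $h^{(1)}_{\beta}[\f_{1}] + h^{(1)}_{\beta}[\f_{2}] + h^{(2)}_{\beta}[\f_{1},\f_{2}]$ rather than as some inseparable combination. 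With these two ingredients in place, translation invariance of $\mathfrak{C}_{\beta,\Lambda_{N}}$, its $N\to\infty$ limit, and the analyticity of $h^{(a)}_{\beta}$ and $\psi_{\beta,\Lambda_{N}}$ are routine by-products of the RG construction.
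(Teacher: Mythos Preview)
Your proposal is correct and follows essentially the same route as the paper: the Sine-Gordon reformulation, finite-range RG with separate bulk and observable flows, the coalescence scale $j^{*}\approx\log_{L}\|y\|_{2}$ splitting the one- and two-cluster free energies, and the contraction estimates giving the $\|y\|_{2}^{-\alpha}$ and $|\Lambda_{N}|^{-\alpha}$ bounds all match. The two obstacles you flag are precisely the ones the paper works hardest on; its specific resolution of the complex-$\cz$ issue is to write $\cz=\rr+\tau$ with $\rr\in[-R,R]$ real (absorbed by an honest shift) and $\tau\in\D_{\htau}$ small complex (handled by a \emph{scale-by-scale} tilted expectation, since the full complex shift $\tau\,\tilde{C}\tilde{\f}$ is logarithmically divergent in $|\Lambda_{N}|$), together with a norm $\|\cdot\|_{\vec h,T_{j}}$ that tracks $\tau$-derivatives.
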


The rest of the section is organised as follows. 
In Section~\ref{sec:remarks},  we make notes on generalisation of our main theorems,  make a conjecture about a possible refinement of our result and record some consequences of it. 
In Section~\ref{sec:the_rg_method}, we briefly discuss the main method of the proof,
the renormalisation group (RG). 
The RG appears in various contexts in mathematical physics. 
The method was already used in \cite{dgauss1,dgauss2} to study the Discrete Gaussian model, so we also explain how the method of this paper differs from them. 
In Section~\ref{sec:notations}, we attempt to give a brief summary of the  notations.

\subsection{Remarks and conjectures}
\label{sec:remarks}

\subsubsection{Finite range Discrete Gaussian model.} 
\label{sec:finite_range_laplacian}

As observed in \cite{dgauss1}, these results can also be extended to any finite ranged Discrete Gaussian models defined by the measure
\begin{align}
\P^{J-\dg}_{\beta, \Lambda_N} = \frac{1}{Z^{J-\dg}_{\beta, \Lambda_N}} \exp\Big( - \frac{1}{4 |J| \beta} \sum_{x-y \in J} (\sigma_x - \sigma_y)^2 \Big), \qquad \sigma \in \Omega^{\dg}_{\Lambda_N}
\end{align}
for any $J \subset \Z^2$ with compact support that is invariant under lattice rotations and reflections and includes the nearest-neighbour sites of the origin.
However, the uniqueness of infinite volume measure $\langle \cdot \rangle^{J-\dg}_{\beta, \Z^2}$ is not known, 
so only a specific choice of infinite volume limit is used in \cite{dgauss2}.

\subsubsection{Relation to the lattice sine-Gordon model.}
\label{sec:relation_to_sine_gordon}

 By the observation made in \cite{dgauss1} (see also \cite{MR2917175,MR1777310}), the same results hold for the lattice sine-Gordon model defined by the measure
\begin{align}
\P^{\sg}_{\beta, \Lambda_N} = \frac{1}{Z_{\beta, \Lambda_N}^{\sg} } \exp\Big( - \frac{1}{2\beta} (\sigma, -\Delta \sigma) + z \sum_{x\in \Lambda_N} \cos( \sigma(x) )  \Big)
\end{align}
when $\beta > 8\pi$ and activity $z$ is sufficiently small.
Also by Remark~\ref{remark:sg_also_works}, the same holds for a wide class of models of sine-Gordon type (that includes the sine-Gordon model with any activity $z \in \R$ and $\beta$ chosen sufficiently large depending on $z$).

For the lattice sine-Gordon model, near the critical point,  Corollary~\ref{cor:cos_correlation} complements the method of Falco \cite{1311.2237}, where the same quantity is computed for 
$\eta \in (0,  \beta^{1/2})$.
For the case when $J$ has sufficiently long range for finite range models in Section~\ref{sec:finite_range_laplacian}, Falco's method of using the observable fields also extends the range of $\eta \in (0,  \beta^{1/2})$
when $\beta$ is near the critical value.  (It was seen in \cite{dgauss1, dgauss2} that the near-critical values of these models are within the scope.) 
However,  it does not give the required bounds for the usual Discrete Gaussian model where we can implement the renormalisation group method only for large values of $\beta$ (well above the critical point).  
Using the observable fields is actually more common in this context,  for example,  as in \cite{MR3345374, MR3459163, arxiv.2007.10869}.

\subsubsection{Multi-point functions.} 
\label{sec:multi_point_functions}

The proof of Theorem~\ref{thm:main_theorem_generalised}  also implies analogous results for the multi-point functions. 
Let $\vec{y} = (y_i)_{1\leq i \leq I} \subset \Z^2$,  
$
d_{\vec{y}} = \min \{ \norm{y_{i_1} - y_{i_2}}_2 :i_1 \neq i_2 \}
$
and $(\f_i)_{i=1}^I \subset \R^{\Z^2}$ be such that $\sum_i \sum_x \f_i (x) = 0$. 

\begin{theorem}
There is $\beta_0 (R)$ such that whenever $\beta \geq \beta_0 (R)$ and $\cz \in S_{R, \htau}$, we have
\begin{align}
\begin{split}
	\log \big\langle \exp\big( \beta^{-1/2} \cz {\textstyle ( \sum_{i=1}^I T_{y_i} \f_i, \sigma ) } \big) \big\rangle_{\beta, \Z^2}^{\dg} 
	&= \frac{1}{2} \cz^2 \big( {\textstyle \sum_{i=1}^I T_{y_i} \f_i,  \mathfrak{C}_{\beta, \Lambda_N} \sum_{i=1}^I T_{y_i} \f_i } \big) \\
	& \quad 
	+  \sum_{i=1}^I h^{(1)}_{\beta} [\f_i] (\cz) + h^{(2)}_{\beta} \big[(\f_i)_{i=1}^I \big] (\cz,  \vec{y})  \label{eq:multi-point_function}
\end{split}
\end{align}
where 
$h^{(2)}_{\beta} (\cz,  \vec{y}) = O_{\beta} (d_{\vec{y}}^{-\alpha})$ is a function analytic on a bounded domain containing $0$
and $h_{\beta}^{(1)}$ is the function from Theorem~\ref{thm:main_theorem_generalised}. 
\end{theorem}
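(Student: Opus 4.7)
The plan is to run the same renormalisation group (RG) scheme used for Theorem~\ref{thm:main_theorem_generalised}, but carrying $I$ observable insertions localised near the points $y_i$ rather than two. Set $F = \sum_{i=1}^I T_{y_i} \f_i$ and work first on $\Lambda_N$ with $L^N$ large compared to all $\norm{y_i}_{\infty}$ and all $\operatorname{diam}(\operatorname{supp}\f_i)$. The Gaussian integration at each scale produces, by bilinearity alone, the quadratic form $\frac{1}{2}\cz^2(F, \mathfrak{C}_{\beta,\Lambda_N} F)$ with no change from the two-cluster case, since the covariance structure does not see how many clusters are present.

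The core task is to show that the remaining observable activity that flows under the RG decomposes as $\sum_{i=1}^I h^{(1)}_{\beta}[\f_i](\cz) + h^{(2)}_{\beta}\big[(\f_i)_{i=1}^I\big](\cz,\vec y)$ with the stated decay. I would stratify the polymer activities $K_j$ at scale $j$ according to the subset $S \subseteq \{1,\ldots,I\}$ of observable points whose support they touch. Singleton activities $|S|=1$ are, by translation invariance and locality of the RG map at each scale, functionally identical to the single-cluster flow that produces $h^{(1)}_{\beta}[\f_i]$ in Theorem~\ref{thm:main_theorem_generalised} (the $\f_2\equiv 0$ case); summing over $i$ gives $\sum_i h^{(1)}_{\beta}[\f_i]$. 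Activities with $|S|\geq 2$ are supported on polymers whose diameter is at least a constant multiple of $d_{\vec y}$, and so the same large-set estimates that delivered the $\norm{y}_2^{-\alpha}$ decay of $h^{(2)}_{\beta}$ in the two-cluster case now yield $O_{\beta}(d_{\vec y}^{-\alpha})$. Analyticity in $\cz \in S_{R,\htau}$ is inherited scale by scale from the complex-analytic polymer bounds already used, since those bounds are uniform in the number of observables inserted.

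Passing to the infinite volume is then the same procedure as before: the finite volume remainder $\psi_{\beta,\Lambda_N}(\cz,\vec y)$ decays as $|\Lambda_N|^{-\alpha}$ and the covariance converges to $\mathfrak{C}_{\beta,\Z^2}$, both uniformly in $\cz\in S_{R,\htau}$. This yields the multi-point identity \eqref{eq:multi-point_function} with $h^{(2)}_{\beta}$ analytic on a bounded domain containing $0$.

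The main obstacle I expect is the combinatorial bookkeeping behind step two. In the two-cluster case only four subset types ($\emptyset,\{1\},\{2\},\{1,2\}$) need tracking; with $I$ clusters there are $2^I$, and an inclusion-exclusion is needed to extract from the flow the genuinely singleton contributions (those that would arise if the other clusters were absent). This requires showing that the singleton flow associated with $\f_i$ is insensitive to the presence of the other clusters, which follows from the finite range of the RG map at each scale together with the separation $d_{\vec y}$, so long as one has taken $\beta$ large enough that the RG contraction absorbs the combinatorial factors. Once this framework is in place, the analytic estimates are direct consequences of those established for the two-cluster result, and no new RG identities are needed.
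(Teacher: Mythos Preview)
Your proposal is correct and matches the paper's approach: the paper does not give a separate proof for this theorem but states that the proof of Theorem~\ref{thm:main_theorem_generalised} implies the multi-point result directly, via exactly the mechanism you describe (locality of the RG map separating the clusters until they coalesce, after which the uniform decay of $\kg_j^{\rr}$ takes over).

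One simplification worth noting: the $2^I$-subset inclusion--exclusion you flag as the main obstacle is not actually needed. In the two-cluster argument (Lemma~\ref{lemma:before_coalsc_scale} and Corollary~\ref{cor:before_coalsc_scale}) the only threshold is the single coalescence scale $j_{0y}$: below it, locality of the RG map forces $\kg_j^{\rr}[\f_1+T_y\f_2](\Z^2)=\kg_j^{\rr}[\f_1](\Z^2)+\kg_j^{\rr}[T_y\f_2](\Z^2)$ exactly, and above it the tail is bounded by Corollary~\ref{cor:K_j_e_j_bounds}. For $I$ clusters one replaces $j_{0y}$ by the first scale at which \emph{any} two of the $I$ blocks become $(\cdot)^{***}$-adjacent; this scale is $\sim \log_L d_{\vec y}$. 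Below it the free energy splits into $I$ independent one-point flows by the same local-dependence induction, and above it the entire contribution is $O(L^{-\alpha j}) = O(d_{\vec y}^{-\alpha})$ uniformly, so one never needs to resolve which particular subsets have merged at intermediate scales.
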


The analogue of Theorem~\ref{thm:main_theorem} also holds true for any multi-point function,  i.e.,  for any $(y_i)_{i=1}^I \subset \Z^2$ and $(c_i)_{i=1}^I$ such that $\sum_{i} c_i = 0$, there exists $\beta_0 (R)$ such that whenever $\beta \geq \beta_0 (R)$ and $\cz \in S_{R, \htau}$, 
\begin{align}
\begin{split}
	& \log \langle e^{\beta^{-1/2}  \cz \sum_{i=1}^I c_i \sigma_{y_i} } \rangle_{\beta, \Z^2}^{\dg} =  \frac{\cz^2}{2\pi (1+s_0^c (\beta))} \mathfrak{L} (\vec{c}, \vec{y}) + f_{\beta} (\cz, \vec{y})
\end{split}
\end{align}
where
\begin{align}
	\mathfrak{L} (\vec{c}, \vec{y}) := - \sum_{i<j} c_{i} c_j \log \norm{y_i - y_j}_2
\end{align}
and $f_{\beta}(\cz,y)$ is some complex analytic functions such that $ f_{\beta} (0,y) = \partial_{\cz} f_{\beta} (0,y) =0$ and
$\sup_{\cz} |f_{\beta} (\cz, y) - f_{\beta} (\cz, \infty) | = O_{\beta} ( d_{\vec{y}}^{-\alpha} )$.
Results of form Corollary~\ref{cor:CLT},\ref{cor:cos_correlation} also follow. 
In particular, 
\begin{align}
 	\big\langle e^{i \beta^{-1/2} \eta \sum_{i=1}^I c_i \sigma_{y_i} } \big\rangle_{\beta,\Z^2}^{\dg} = C(\beta,\eta ) \prod_{i<j} \norm{y_i - y_j}_2^{\frac{\eta^2}{2\pi (1 + s_0^c )} c_i c_j} \Big( 1+ O\big( d_{\vec{y}}^{-\alpha} \big) \Big)
\end{align}
for $\eta$ small.  This is exactly as the Gaussian free field prediction (see \cite{kang2013gaussian} for an in-depth study and \cite[(2.6)]{MR3369909} for a brief introduction).

\subsubsection{Conjecture on gradient correlation.}
\label{sec:gradient_correlatoin}

If $\f_1 = \delta_{\mu} - \delta_0$ and $\f_2 = (\delta_{\nu} - \delta_0)$ for some $\mu, \nu \in \{ \pm e_1, \pm e_2 \}$,  then with a more careful construction, it is believable that there are $\alpha>2$ and $C' \in \R$ such that
\begin{align}
	h^{(2)}_{\beta} (\cz, y)= C' \nabla^{\mu} \nabla^{-\nu} (-\Delta)^{-1} (y,0) +  O(\norm{y}_{2}^{-\alpha} ) \quad \text{as $\norm{y}_2 \rightarrow \infty$}.
	\label{eq:spectral_decay_of_remainder}
\end{align}
although the current technology only gives access to $\alpha >0$. 
We expect that a refinement of the renormalisation group method would yield such an estimate.
If we had \eqref{eq:spectral_decay_of_remainder}, then by the proof of the theorem,
\begin{align}
	\operatorname{Cov} ( \nabla^{\mu} \sigma (0) , \nabla^{\nu} \sigma(y)  ) = C(\beta) \nabla^{\mu} \nabla^{-\nu} (-\Delta)^{-1} (y,0) + O_{\beta} ( \norm{y}_2^{-2-\epsilon}  )
\end{align}
for some $\epsilon>0$.
Due to the duality relation between the Villain XY model and the DG model, this would also imply the energy correlation of the Villain XY model.

For continuous-valued models, estimates of this type can be found in \cite{Delmotte2005OnET} when the potential is uniformly convex and in \cite{MR2976565, arxiv.2007.10869} when the potential is a small perturbation of a uniformly convex function.

\subsection{Renormalisation group method}
\label{sec:the_rg_method}

We outline the proof of the main theorems here. We first explain the RG method of \cite{dgauss1} and explain how it is modified in Section~\ref{sec:the_rg_method_explained}.

\subsubsection{Bulk RG.}
\label{sec:bulk_RG_explained}

The RG method, which lies at the kernel of our proof, 
was already used in \cite{dgauss1} to study the scaling limit. 
We briefly overview the method here. 

We first reformulate the DG model with a continuous-valued statistical physics model defined in Section~\ref{section:two-point_function_as_a_tilted_expectation},
so that the partition function is written alternatively as a Gaussian expectation
\begin{align}
	Z_{\beta, \Lambda}^{\dg} \propto \lim_{m^2 \downarrow 0} \E\big[ e^{ h_0 (\Lambda, \varphi)} \big], \qquad \varphi \sim \cN(0,  C(s,m^2))
	\label{eq:h_0_refomulation}
\end{align}
for a modified covariance matrix $C(s,m^2)$ (given later by \eqref{eq:C_m^2_definition}) and a modified Hamiltonian (or a potential function)
\begin{align}
	h_0 (\Lambda, \varphi) = \frac{1}{2} s |\nabla \varphi|^2_{\Lambda} + \sum_{x\in \Lambda} \sum_{q \geq 1} z_0^{(q)} \cos( q \beta^{1/2} \varphi(x) )		,
	\qquad \varphi \in \R^{\Lambda_N}
	.
	\label{eq:h_0_definition}
\end{align}
The first term is a reminiscence of the gradient interaction, and the second term arises from smoothing the periodic function $\sum_{m\in \Z} \delta_m$ on $\R$.  

Then the RG method is applied to control the right-hand side of \eqref{eq:h_0_refomulation},
outlined as the following. 
We first decompose the covariance matrix as $C(s,m^2) = \sum_{j=1}^N \Gamma_j$ where $\Gamma_j$ is roughly the fraction of $C(s,m^2)$ at length scale $L^{j}$ (see Section~\ref{sec:frd_brief} for detailed conditions).
This gives
\begin{align}
	Z_{\beta, \Lambda}^{\dg} \propto \lim_{m^2 \downarrow 0} \E^{\zeta_N}_{\Gamma_N} \cdots \E^{\zeta_{1}}_{\Gamma_1} [ e^{h_0 (\Lambda,  \zeta_1 + \cdots + \zeta_N )}  ]
\end{align}
for independent Gaussian fields $\zeta_j \sim \cN(0, \Gamma_j)$.
If we perform integrals upto length scale $L^j$ and normalise with some choice of $E_j$,
we obtain
\begin{align}
	e^{h_j (\Lambda, \varphip )} := e^{-E_j |\Lambda|} \E^{\zeta_{j}}_{\Gamma_{j}} \cdots \E^{\zeta_{1}}_{\Gamma_1} [ e^{h_0 (\Lambda,  \zeta_1 + \cdots + \zeta_j + \varphip )}  ]
	.
\end{align}
The exponent $h_j$ is called the \emph{effective potential} at scale $j$, since it replaces the role of $h_0$ after the integration.
The effective potential contains information about the spin model when we zoom in $L^j$ times and rescale the field size by a factor of $L^{[\varphi]} := L^{\frac{d-2}{2}}$ (in our case, $d=2$, so is just 1).
These are two scalings that determine the natural scaling of $h_j$.

The RG map is a function that describes the evolution of $h_j$'s. 
However, the dynamics is not always stable under arbitrary choice of $h_0$, so we identify those components of $h_j$ as
\begin{align}
	U_j (\Lambda, \varphip) = \frac{1}{2} s_j |\nabla \varphi|^2_{\Lambda} + \sum_{x\in \Lambda} \sum_{q\geq 1} L^{-2j} z_j^{(q)} \cos(q \beta^{1/2} \varphip (x) )
	,
\end{align}
and the remaining (stable) part as $K_j$. 
Notice that $U_j$ is already naturally rescaled:
for the first term, the volume scaling $L^{-2j}$ and scaling for the gradient $L^{2j}$ cancel, and for the second term, only the volume scaling is present.
Then formally,  the RG map is a function $(s_j, (z_j^{(q)})_q , K_j) \mapsto (s_{j+1}, (z_{j+1}^{(q)})_q,K_{j+1})$ for each $j \in \N_{\ge 0} $.
It can be seen that $s_j$ and $z_j^{(q)}$ can be chosen to satisfy
\begin{align}
	s_{j+1} = s_j + O(L^{-\alpha j}) , \qquad z_{j+1}^{(q)} = L^{2-\frac{\beta}{4\pi} q^2} z_j^{(q)} 	.
\end{align}
Using a stable manifold theorem for $\beta >0$ sufficiently large,
we can tune the value of initial $s = s_0$ so that the flow of $(U_j, K_j)_{j\geq 0}$ is stable and tends to 0 as $j\rightarrow \infty$.
This means that the effective potential in the limit $j\rightarrow \infty$ is null, so the spin system is only described by the background Gaussian $\sum_{k\geq j} \Gamma_{k}$.
The choice of $s_0$ is an important input from \cite{dgauss1}, and is stated again in Theorem~\ref{thm:tuning_s-v2}.

Rigorous implementation of this procedure is achieved by representing the effective potentials in terms of polymer expansion described in Section~\ref{sec:Renormalisation_group_coordinates}.

\subsubsection{Observable RG--novelties of this paper.}
\label{sec:the_rg_method_explained}

In this paper, we append two-point observables on top of the bulk RG flow. 
After reformulating the DG model by \eqref{eq:h_0_refomulation}, 
the moment generating function of the two-point function is replaced by the  expectation of $e^{h_0 (\Lambda, \varphi)}$ exponentially tilted by a two-cluster function.
This is the content of Lemma~\ref{lemma:first_reformulation_Z2}.
The exponential tilting also shifts the effective potentials, thus a new observable RG flow has to be constructed and analysed. 
However, the observable RG flow can be treated as a perturbation of the bulk RG flow.  Due to the robustness of analysis settled in \cite{dgauss1} and stability of the RG map with respect to the perturbation,  we can take the bulk RG flow as the reference point to prove stability of the observable RG flow. 

In Section~\ref{sec:overview_of_proof},  we first assume the key estimate (Proposition~\ref{prop:overview_of_the_proof}) on the perturbed system and prove the main theorems based on this estimate. 
The key estimate is finally proved in Section~\ref{sec:renormalisation-group-computation}, using the observable RG flow.  
Construction of the observable RG flow and its properties appear in Section~\ref{sec:polymer_activities_and_norms}--Section~\ref{sec:proof-of-theorem-thm-local-part-of-K},
and the stability is proved in Section~\ref{sec:renormalisation-group-computation}.
A detailed overview on the technical aspects of the proof will also appear in Section~\ref{sec:Renormalisation_group_coordinates}.

Controlling the observable RG flow can be compared to that of \cite{dgauss2}, where the scaling limit of the DG model was obtained.
In \cite{dgauss2}, a (discretised) smooth function was tested against the field,  making the perturbation of the observable RG flow negligible compared to untilted RG flow in the scaling limit. 
On the other hand, in this work, the deviation of the observable RG flow is not negligible and builds up extra free energy along the flow. 
This necessitates dealing with an extra non-regularity, and is the origin of $h^{(\alpha)}_{\beta}$'s of Theorem~\ref{thm:main_theorem_generalised}.

Another new aspect of this paper is that the analyticity of the effective potentials are tracked along the RG flow.  
(Note that the analyticity of the left-hand side of \eqref{eq:main_theorem} on $\cz \in (-\htau,\htau) + i\R \subset \C$ is automatic once we have \eqref{eq:main_theorem} just on $\cz \in (-\htau,\htau)$, by the Vitali's convergence theorem. The difficult part is having  the correct estimate.)
Usually, the results of types Corollary~\ref{cor:CLT} and Corollary~\ref{cor:cos_correlation} are derived from very different analysis, but in this paper, they can be treated in the same framework due to the analyticity.

\subsection{Notations}
\label{sec:notations}

\subsubsection{Derivatives.}

There are three different derivatives.
If $D\subset \C$ is some domain 
and we have $f(\varphi, \tau)$, a smooth function $\R^{\Lambda_N} \times D \rightarrow \C$, we will use $D f$ for the derivative in the argument $\varphi$ and $\partial_{\tau} f$ for the derivative in $\tau$. 
Given $\varphi \in \R^{\Lambda_N}$, $\nabla^{\mu} \varphi(x) = \varphi(x+ \mu) - \varphi(x)$ will denote the discrete derivative in direction $\mu \in \hat{e} := \{\pm e_1, \pm e_2 \}$ ($(e_1, e_2)$ is the stander basis of $\Z^2$).
Any $\vec{\mu} = (\mu_1, \cdots, \mu_n) \in \hat{e}^n$ is called a multi-index with $|\vec{\mu}| = n$, and defines
\begin{align}
\nabla^{\vec{\mu}} \varphi = \nabla^{\mu_n} \cdots \nabla^{\mu_1} \varphi, \qquad \nabla_j^{\vec{\mu}} = L^{2j} \nabla^{\vec{\mu}}.
\end{align}
For $n\geq 0$, $\nabla^n \varphi$ will denote the collection $(\nabla^{\vec{\mu}} \varphi)_{|\vec{\mu}| = n}$ and
\begin{align}
|\nabla \varphi (x)|^2 = \sum_{\mu \in \hat{e}} | \nabla^{\mu} \varphi (x)  |^2 
.
\end{align}
Then for $j\geq 0$ and $X\subset \Lambda_N$, we define $\norm{\nabla_j^n \varphi}_{L_j^2(X)}$, $\norm{\nabla_j^n \varphi}_{L_j^2(\partial X)}$, $\norm{\nabla_j^n \varphi}_{L^{\infty}(X)}$ and $\norm{\varphi}_{C_j^2}$
as in \cite[Definition~5.2]{dgauss1}:
\begin{align}
	&\norm{\nabla^n_j f}_{L^{\infty}(X)} = \max_{x\in X} \max_{\vec{\mu}}  |\nabla^{\vec{\mu}}_j f (x)| \label{eq:norminfty}\\
	&\norm{\nabla^n_j f}^2_{L^{2}_j (X)} = L^{-2j} \sum_{x\in X} \sum_{\vec{\mu}} 2^{-n}
	|\nabla_j^{\vec{\mu}} f (x)|^2  \label{eq:normL2}\\
	&\norm{\nabla^n_j f}^2_{L^{2}_j (\partial X)} = L^{-j} \sum_{x\in \partial X} \sum_{\vec{\mu}} 2^{-n} 
	|\nabla^{\vec{\mu}}_j f (x)|^2 \label{eq:normL2boundary}\\
	& \norm{f}_{C^2_j (X)} = \max_{n=0,1,2} \norm{\nabla^n_j f}_{L^{\infty}(X)} \label{eq:normC2}
	.
\end{align}

\subsubsection{Polymers.}

Our implementation of the RG method, called the polymer expansion, uses the notions of $j$-scale blocks ($\cB_j$) and polymers ($\cP_j$). These are fairly standard in this field, so one is likely to find out similar notations in similar contexts,  for example,  as in \cite{1910.13564,  MR2523458,  MR1777310}. 
Let
\begin{align}
	B_0^j =
		\big[ -\ell_j (L) ,  L^j - \ell_j (L) - 1  \big]^2  \cap \Z^2  
\end{align}
where $\ell_j (L) = \frac{L^j -1}{2}$ if $L$ is odd and $\ell_j (L) = \frac{L (L^j -1)}{2(L-1)}$ if $L$ is even.
Also in $\Lambda_N$, let $B_0^j$ denote an analogous object, whenever a distinguished point $0\in \Lambda_N$ is specified.
In particular, they satisfy $\operatorname{dist}_{\infty} (0,  (B_0^j)^c) \geq \frac{L^j }{3}$.
Either on $\Z^2$ or $\Lambda_N$, $\cB_j$ is the set of $L^j \Z^2$-translations of $B_0^j$ and $\cP_j$ is the set of any union of blocks in $\cB_j$.
(The empty set is also in $\cP_j$.)
By the choice of $B_0^j$, any $j+1$-polymer is also a $j$-polymer for any $j\geq 0$.
The following is a list of relevant definitions.
\begin{itemize}
\item For $X\in \cP_j$, denote $\cB_j (X)$ for the collection of $B\in \cB_j$ such that $B\subset X$. Also denote $|X|_j = |\cB_j (X) | = L^{-2j} |X|$.
\item $X, Y \in \cP_j$ are connected (or $X\sim Y$) if $\dist_{\infty} (X,Y) \leq 1$. The set of connected polymers is denoted $\Conn_j$ and the connected components are denoted $\Comp_j (X)$.

\item The set of small polymers, $\cS_j$, are the collection of $X\in \cP_j \backslash \{ \emptyset \}$ such that $|X|_j \leq 4$ and $X\in \Conn_j$. For $X\in \cP_j$, denote $X^*$ (small set neighbourhood of $X$) for the union of small polymers intersecting $X$. 
\item For $X\in \cP_j$, denote $\bar{X}$ for the smallest $j+1$-scale polymer containing $X$.
\end{itemize}

Although $\f_1$ and $\f_2$ are supported on $B_0^j$ by \eqref{quote:assumpfa}, it is not necessarily true that $T_y \f_2$ is supported on a single scale-$j$ block.  New notations are required to denote these blocks:
for $y = (y_1, y_2) \in \Z^2$, let
\begin{align}
	& B_{1}^j = \text{unique block in $\cB_j$ that contains ${\textstyle \big( y_1 - \ell_j (L) , y_2 - \ell_j (L) \big) }$} \\
	& B^j_2 = B^j_1 + L^j e_1, 
	\quad  B^j_3 = B^j_1 + L^j e_2,
	\quad B^j_4 = B^j_1 + L^j (e_1 + e_2 )
\end{align}
so $\supp (T_y u_{j,2}) \subset \cup_{k=1}^4 B^j_k$. 
For future use, we also define $j$-polymers
\begin{align}
	P^j_y := \cup_{k=0}^4 B^j_k ,
	\qquad Q_y^j = \cup_{k=1}^4 B_k^j
.
\label{eq:union_of_B^j_k's}
\end{align}

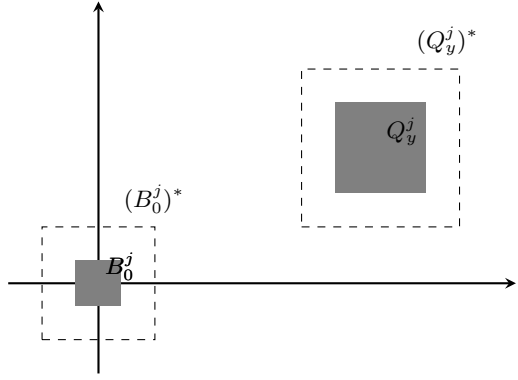
\begin{figure}[t] 
\centering
\begin{tikzpicture}[scale=1.5]

\draw[thick,-stealth] (-8.3,3) -- (-3.8,3);
\draw[thick,-stealth]  (-7.5,2.2) -- (-7.5,5.5);

\path [fill=gray, opacity=0.15] (-7.7,3.2) rectangle (-7.3,2.8);
\path [draw=black, opacity=0.6, dashed] (-8.0,3.5) rectangle (-7.0,2.5);
\node at (-7.35, 3.15) {{ \footnotesize $B_0^j$ }};
\node at (-7.05, 3.75) {{ \footnotesize $(B_0^j )^*$ }};

\filldraw[gray] (-5.1,4.05) circle (1pt) ; 
\path [fill=gray, opacity=0.15] (-5.4,3.8) rectangle (-4.6,4.6);
\path [draw=black, opacity=0.6, dashed] (-5.7,3.5) rectangle (-4.3,4.9);
\node at (-4.85, 4.35) {{ \footnotesize $Q_y^j$ }};
\node at (-4.45, 5.15) {{ \footnotesize $(Q_y^j)^*$ }};
\node at (-7.35, 3.15) {{ \footnotesize $B_0^j$ }};

\end{tikzpicture}
\caption{$y$ is the grey dot and $B_0^j$ and $Q_y^j$ are grey rectangles. The dotted rectangles are small set neighbourhoods of the grey rectangles, which will be used later.}
\end{figure}

Smooth functions on polymers are called polymer activities, defined as the following.

\begin{definition}
\label{def:polymer_activities}
Fix $\Lambda_N$, $\beta >0$ and $\htau >0$.

\begin{itemize}
\item Given $X \in \cP_j$, $\cN_j (X)$ is the set of smooth functions $\varphi \mapsto F (X, \varphi)$ of $\varphi \in \R^{\Lambda_N}$ that only depends on $\varphi |_{X^*}$
and $\cN_j$ is the collection $(F (X,  \cdot) \in \cN_j (X) : X \in \Conn_j )$.
$F \in \cN_j$ is said to obey lattice symmetries if 
$F(X, \varphi) = F(S X, \varphi \circ S)$ for $S$ either a translation or rotation or reflection (either in $\Lambda_N$ or $\Z^2$).
$F$ is said to be periodic if for any $n \in 2\pi \beta^{-1/2} \Z$, $F$ satisfies $F(X, \varphi + n \one ) = F(X, \varphi)$.
These are just as in \cite[Definition~5.1]{dgauss1}.

\item 
$\cN_{j, \kt} (X)$ is the subset of $C^{\infty} (\R^{\Lambda} \times \D_{\htau})$ such that $F(\varphi ; \tau)$ only depends on $(\varphi|_{X^*}, \tau)$ 
and $D^n F (\varphi ; \tau)$ is analytic in $\tau \in \D_{\htau}$ for each $n \geq 0$. 
Also, let $\cN_{j, \kt}$, the $\kt$-polymer activities, 
be the set of collections $F = (F(X, \cdot ; \cdot))_{X\in \Conn_j}$ such that $F(X, \cdot ; \cdot ) \in \cN_{j, \htau} (X)$ for each $X\in \Conn_j$.
\end{itemize}
\end{definition}

The analyticity of $D^n F (\cdot ; \tau)$ is is used only when $n=0$ for $F\in \cN_{j, \kt}$, 
but appending the case of general $n$ does not cause any extra difficulty. 
If $H$ is some function of $\cP_j$, then we also use the notions of polymer powers,
\begin{align}
H^X := \prod_{B\in \cB_{j} (X)} H(B),
\qquad
H^{[X]} := \prod_{Y \in \Comp_{j} (X)} H(Y) 
.
\label{eq:polymer_power_convention}
\end{align}
For $F$ in either $\cN_j$ or $\cN_{j,\kt}$,
we can define a natural extension to any polymer $X\in \cP_j$ by
\begin{align}
	F(X, \varphi) := F^{[X]} (\varphi) =  \prod_{Z \in \Comp_j (X)} F(Z,  \varphi) ,
	\qquad \varphi \in \R^{\Lambda}
	.
\label{eq:factorisation_of_polymer_activities}
\end{align}

\subsubsection{Gaussian integrals.}

We use special notations for Gaussian integrals:
for a finite set $V$,
a Gaussian random variable $X$ on $\R^{V}$ with mean $x \in \R^{V}$ and covariance $C \in \R^{V \times V}$ is denoted $X \sim \cN(x, C)$.
If $F : \R^{V} \rightarrow \R$ is a measurable function, then denote $\E^X_C [F(X)]$ for the Gaussian expectation with $X\sim \cN(0, C)$. 
The covariance $C$ and random variable $X$ are omitted if they are clear from the context, e.g., by just denoting $\E [F(X)]$.
Gaussian integrals are also called the fluctuation integrals. 

\ifx\newpageonoff\undefined
{\red command undefined!!}
\else
  \if\newpageonoff1
  \newpage
  \fi
\fi

\section{Two-point function as a tilted expectation}
\label{section:two-point_function_as_a_tilted_expectation}

This section shows how to reduce the two-point function in the DG model to a partition function of a modified statistical physics model with an external field and a shifted environment, 
which will be written in terms of an expectation with exponential tilting. 
Renormalisation group method will be used to control this tilted expectation,
so we also introduce basic elements that are required to construct the renormalisation group.

\subsection{Regularisation of the model}
\label{sec:regularisation_process}

$\langle e^{\beta^{-1/2} \cz (\f, \sigma)} \rangle^{\dg}_{\beta, \Lambda_N}$ is computed by first approximating the model with a massive model (with also the state space rescaled) defined by
\begin{align}
	\langle F(\sigma) \rangle_{\beta, m^2, \Lambda_N}
	= 
	\frac{\sum_{\sigma \in \Z^{\Lambda_N}_{\beta}} e^{-\frac{1}{2} (\sigma, (-\Delta +m^2) \sigma )} F(\sigma)}
	{\sum_{\sigma \in \Z^{\Lambda_N}_{\beta}} e^{-\frac{1}{2} (\sigma, (-\Delta +m^2) \sigma )}}
	\label{eq:massive_model_defi}
\end{align}
where $\Z_{\beta}^{\Lambda_N} = (2 \pi \beta^{-1/2} \Z)^{\Lambda_N}$.
Now $\sigma$ need not be fixed to be $\sigma (0) = 0$ and the model can be considered as a discrete model with covariance $(-\Delta +m^2)^{-1}$ acting on the configuration space $\Z_{\beta}^{\Lambda_N}$.
Then by \cite[Lemma~2.1]{dgauss1},
\begin{align}
\langle F( \beta^{-1/2} \sigma) \rangle^{\dg}_{\beta, \Lambda_N} = \lim_{m^2 \downarrow 0} \langle F(\sigma) \rangle_{\beta,m^2, \Lambda_N}
\label{eq:m^2_limit_justification}
\end{align}
whenever $F : \R^{\Lambda_N} \rightarrow \C$ is such that $F(\psi) = F(\psi + n \one)$ for any $n\in 2\pi \beta^{-1/2} \Z$, the constant field $\one (x) \equiv 1$
and $F$ is uniformly integrable with respect to the measures appearing in the equation.

Secondly, the underlying discreteness of the model is smoothened by integrating the `ultralocal' part of the covariance and the stiffness renormalisation is performed by extracting $\frac{1}{2} s (\varphi, -\Delta \varphi)$ from the covariance. 
Specifically, we take new covariances
\begin{align}
	C(m^2 ) = (-\Delta + m^2)^{-1} - \gamma \id,
	\qquad
	C(s,m^2) = (C(m^2)^{-1}  - s \Delta)^{-1}
\label{eq:C_m^2_definition}
\end{align}
($\gamma$ and $s$ are taken sufficiently small 
so that $C(s,m^2)$ is still positive definite--see \cite[Section~3]{dgauss1} for specific conditions and also Section~\ref{sec:choice_of_parameters} for the choice of $\gamma$)
and rewrite the moment generating function in the next lemma.

\begin{lemma}
\label{lemma:first_reformulation_Z2}
For all $\beta >0$,  $\gamma \in (0,1/3)$, $m^2 \in (0,1]$, $s$ small, 
$\cz \in \C$,
\begin{equation} 
	\big\langle e^{\cz (\f, \sigma)} \big\rangle_{\beta, m^2,\Lambda_N}
   		=  e^{\frac{\gamma}{2} \cz^2 (\f,  \tilde{\f})} \frac{ \E_{C(s,m^2)} \big[ e^{\cz (\tilde{\f}, \varphi )} Z_0(\varphi + \gamma \cz \f) \big] }{ \E_{C(s,m^2)} \big[ Z_0(\varphi) \big] } 
    	.
    \label{eq:reformulation-Z2-intermediate}
\end{equation}
where 
$\tilde{\f} = (1 + s\gamma \Delta ) \f$ and
\begin{align}
Z_0 (\varphi) 
:= e^{h_0 (\Lambda, \varphi)} := \exp \Big( \frac{1}{2} s_0 |\nabla \varphi|^2_{\Lambda} + \sum_{x\in \Lambda} \sum_{q\geq 1} z_0^{(q)} \cos \big(  q \beta^{1/2} \varphi(x) \big)  \Big)
\label{eq:Z_0_definition}
\end{align}
with $s_0 = s$ and some $(z_0^{(q)})_{q \geq 1}$ satisfying $| z_0^{(q)} (\beta) | = O( e^{- \frac{1}{4} \gamma \beta (1+q) } )$ for each $n \in \N$ and $\beta >0$.
\end{lemma}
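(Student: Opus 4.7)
The plan is a two-step reduction: first convert the discrete lattice sum into a Gaussian integral over $\R^{\Lambda_N}$ via Poisson summation, and then reorganise the result so that the background covariance matches $C(s,m^2)$ and the cosine weights assemble into $Z_0$.

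Starting from the numerator $\sum_{\sigma \in \Z_\beta^{\Lambda_N}} e^{-\frac{1}{2}(\sigma, (-\Delta+m^2)\sigma) + \cz(\f,\sigma)}$, I would apply Poisson summation on $\Z_\beta^{\Lambda_N} = (2\pi\beta^{-1/2}\Z)^{\Lambda_N}$, which rewrites the sum as a $\beta$-dependent constant times
\begin{align*}
\int_{\R^{\Lambda_N}} d\varphi\, e^{-\frac{1}{2}(\varphi,(-\Delta+m^2)\varphi)+\cz(\f,\varphi)} \sum_{k\in \Z^{\Lambda_N}} e^{i\beta^{1/2}(k,\varphi)}.
\end{align*}
Using $(-\Delta+m^2)^{-1} = \gamma \id + C(m^2)$, I would split $\varphi = \xi + \eta$ into independent Gaussians $\xi \sim \cN(0,\gamma\id)$ and $\eta \sim \cN(0, C(m^2))$ and integrate out the ultralocal part $\xi$ first. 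With $J = \cz\f + i\beta^{1/2}k$, the elementary identity $\E_\xi[e^{(J,\xi)}] = e^{\frac{\gamma}{2}(J,J)}$ produces the prefactor $e^{\frac{\gamma\cz^2}{2}(\f,\f)}$ and converts $e^{i\beta^{1/2}(k,\eta)}$ into $e^{i\beta^{1/2}(k,\eta+\gamma\cz\f) - \frac{\gamma\beta}{2}|k|^2}$. The $k$-sum then factorises across sites as
\begin{align*}
\prod_{x\in \Lambda_N}\Big(1 + 2\sum_{q\ge 1} e^{-\gamma\beta q^2/2}\cos\big(q\beta^{1/2}(\eta(x)+\gamma\cz\f(x))\big)\Big).
\end{align*}

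To bring this product into the claimed exponential of cosines, I would exponentiate each per-site factor via $\log(1+w)=\sum_{n\ge 1}(-1)^{n-1} w^n/n$ and apply the product-to-sum identity $\prod_{i=1}^n\cos(q_i\alpha) = 2^{1-n}\sum_{\epsilon\in\{\pm1\}^n}\cos\big((\sum_i\epsilon_i q_i)\alpha\big)$, regrouping by the net frequency $q = \sum_i\epsilon_i q_i$. The resulting $z_0^{(q)}$ inherits Gaussian weights $e^{-\gamma\beta\sum_i q_i^2/2}$, and the elementary inequality $q_i^2 \geq (1+q_i)/2$ for $q_i\ge 1$ yields $\sum_i q_i^2 \geq (1+|q|)/2$ and hence the bound $|z_0^{(q)}(\beta)| = O(e^{-\gamma\beta(1+q)/4})$ after absorbing the combinatorial factor using the remaining exponential decay. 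Finally, I would change the reference Gaussian from $C(m^2)$ to $C(s,m^2)$ using the density ratio associated with $C(s,m^2)^{-1}-C(m^2)^{-1} = -s\Delta$; the compensating stiffness contribution is absorbed into the gradient term $\tfrac{s}{2}|\nabla\varphi|^2$ of $Z_0$ evaluated at the shifted argument $\varphi+\gamma\cz\f$. Expanding $|\nabla(\varphi+\gamma\cz\f)|^2$ and performing a discrete integration by parts, the cross term produces exactly the modification $\cz(\f,\varphi)\mapsto \cz(\tilde{\f},\varphi)$ with $\tilde{\f} = (1+s\gamma\Delta)\f$, while the pure $\cz^2$ piece combines with the earlier $e^{\frac{\gamma\cz^2}{2}(\f,\f)}$ to give the overall prefactor $e^{\frac{\gamma\cz^2}{2}(\f,\tilde{\f})}$. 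Dividing numerator by denominator removes all remaining $\beta$-, $m^2$-, Poisson- and determinant-dependent constants.

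The main obstacle is the cosine expansion step: verifying that $\log(1+w)$ admits a convergent regrouping of the form $\sum_{q\ge 1} z_0^{(q)}\cos(q\beta^{1/2}\varphi)$ on each site, and carrying the exponential weight through the product-to-sum identity accurately enough to reach the sharp rate $\tfrac{1}{4}\gamma\beta(1+q)$. This requires splitting the Gaussian weight $e^{-\gamma\beta q_i^2/2}$ into two halves, using the first half for the $(1+|q|)$-decay via the inequality above and the second half to dominate the combinatorial blowup of compositions and sign choices introduced by the log series. The rest of the argument is algebraic manipulation of Gaussian integrals.
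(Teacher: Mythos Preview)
Your proposal is correct and is essentially the paper's own argument, reordered: the paper first applies the Gaussian convolution identity $e^{-\frac{1}{2}(\sigma,(-\Delta+m^2)\sigma)} \propto \int e^{-\frac{1}{2\gamma}|\varphi-\sigma|^2} e^{-\frac{1}{2}(\varphi,C(m^2)^{-1}\varphi)}\,d\varphi$ and then performs the (now site-local) $\sigma$-sum, whereas you Poisson-sum first and then split the resulting Gaussian into $\xi+\eta$, but both routes land on the same per-site theta series $1+2\sum_{q\ge 1} e^{-\gamma\beta q^2/2}\cos(q\beta^{1/2}\,\cdot\,)$ and the same $\tilde{\f}$-algebra. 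The paper defers the $z_0^{(q)}$ decay to \cite[(2.30)--(2.31), Lemma~2.2]{dgauss1} (via the weighted Wiener algebra $\hat{\ell}^1(c)$), which is precisely the regrouping-and-weight-splitting you sketch explicitly.
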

\begin{proof}
We use the convolution formula \cite[(2.13)]{dgauss1}
\begin{align}
	e^{-\frac{1}{2} (\sigma, (-\Delta +m^2) \sigma)} 
		& \propto \int_{\R^{\Lambda}} e^{-\frac{1}{2\gamma} \sum_x (\varphi_x - \sigma_x)^2} e^{-\frac{1}{2} (\varphi, C(m^2)^{-1} \varphi)} d\varphi \nnb
		& = \int_{\R^{\Lambda}} e^{-\frac{1}{2\gamma} \sum_x (\varphi_x - \sigma_x)^2 + \frac{1}{2} s (\varphi, -\Delta \varphi) } e^{-\frac{1}{2} (\varphi, C(s,m^2)^{-1} \varphi)} d\varphi
\end{align}
where the second line follows simply by subtracting and adding $\frac{1}{2} s (\varphi, -\Delta \varphi)$ on $C(s,m^2)^{-1}$.
Plugging this into \eqref{eq:massive_model_defi} and performing the discrete sum, 
we arrive at a
new statistical physics model on configuration space $\R^{\Lambda_N}$:
\cite[(2.30)--(2.31)]{dgauss1} states that
\begin{align}
	\sum_{\sigma \in \Z^{\Lambda}_{\beta}} e^{-\frac{1}{2} (\sigma,  (-\Delta +m^2) \sigma ) } e^{\cz(\f, \sigma)} 
	\propto
	e^{\frac{\gamma}{2} \cz^2 (\f,\f) } \E^{\varphi}_{C(s,m^2)} \big[ e^{\cz (\f, \varphi)} e^{\frac{1}{2} s (\varphi, -\Delta \varphi) + \sum_{x\in \Lambda} U (\varphi (x) + \gamma \cz \f (x))} \big]
	\label{eq:reformulation_inter1}
\end{align}
where 
${U} (\theta) = \sum_{q=1}^{\infty} z_0 (\beta) \cos (q \beta^{1/2} \theta)$ for any $\theta \in \C$ and some $z_0^{(q)} \in \R$ with
\begin{align}
	| z_0 (\beta) | = O( e^{- \frac{1}{4} \gamma \beta (1+q) } ) , 
	\qquad q \in \N,  \; \beta >0
	.
\end{align}

After adding and subtracting $g (\varphi) := \frac{1}{2} s (\varphi + \gamma \cz \f ,  (-\Delta) (\varphi + \gamma \cz \f ) ) - \frac{1}{2} s (\varphi, - \Delta \varphi)$ in the exponent of the right-hand side of \eqref{eq:reformulation_inter1}, we obtain 
\begin{align}
	\sum_{\sigma \in \Z^{\Lambda}_{\beta}} e^{-\frac{1}{2} (\sigma,  (-\Delta +m^2) \sigma ) } e^{\cz(\f, \sigma)} 
	& \propto
	e^{\frac{\gamma}{2} \cz^2 (\f,\f)} \E^{\varphi} \big[ e^{\cz (\f,\varphi)} e^{-g (\varphi)} e^{\frac{1}{2} s (\varphi + \gamma \cz \f ,  (-\Delta) (\varphi + \gamma \cz \f ) ) +  \sum_{x\in \Lambda} {U} (\varphi (x) + \gamma \cz \f (x)) }   \big] \nnb
	& = e^{\frac{\gamma}{2} \cz^2 (\f,\f)} \E^{\varphi} \big[ e^{\cz (\f,\varphi) - g (\varphi)} Z_0 (\varphi + \gamma \cz f)  \big]
\end{align}
with $\varphi \sim \cN (0,C(s,m^2))$,
where the second line follows from \eqref{eq:Z_0_definition}.
Also, we can expand out $g (\varphi)$ to see that
\begin{align}
	\frac{\gamma}{2} \cz^2 (\f,\f) + \cz (\f,\varphi) + g (\varphi) 
		= \frac{\gamma}{2} \cz^2 (\f,  \tilde{\f}) + \cz (\tilde\f,  \varphi )
		,
\end{align}
which gives the desired conclusion.
\end{proof}

\begin{remark}[Sine-Gordon model]
\label{remark:sg_also_works}

At high temperature,
the procedure that shows \eqref{eq:reformulation_inter1} can also be applied to show a similar statement on a generalised class of models of sine-Gordon type defined by
\begin{align}
& \P^{\gsg}_{\beta,m^2} (d \phi) = \frac{1}{Z^{\gsg}_{\beta,m^2}} e^{- H^{\gsg}_{\beta,m^2} (\phi)  } d\phi, 
\\
& H^{\gsg}_{m^2} (\phi) = \frac{1}{2 \beta} (\phi,  (-\Delta + m^2) \phi) + \log \prod_{x\in \Lambda_N} \sum_{q\geq 0} \lambda_q \cos(q \phi(x) )
\end{align}
where $\lambda_q \in \R$ satisfies growth condition $|\lambda_q | \leq C e^{ (\eta + \beta \theta ) q^2 } \lambda_0$ for $\theta < \frac{1}{2} \gamma $ and when the sum $\sum_{q\geq 0}(\cdots)$ makes sense as a distribution.
A similar growth condition appeared in \cite{MR634447},
and the DG model can be considered to be the case $\lambda_0 = 1$, and $\lambda_q = 2$ for $q\neq 0$.

Now we explain why the growth condition $|\lambda_q| \leq C e^{(\eta + \beta \theta) q^2} \lambda_0$ with $\theta < \frac{\gamma}{2}$ is sufficient. 
Indeed,
\begin{align}
	\frac{1}{\sqrt{2\pi \gamma \beta}} \int_{\R} \lambda_q \cos(q x) e^{-\frac{1}{2\gamma \beta} (y-x)^2 } dx = \lambda_q e^{-\frac{1}{2} \beta \gamma q^2} \cos(q y ) 
	= 
	O \Big( e^{- \big( \frac{1}{2} \beta \gamma - \beta \theta  - \eta \big) q^2} \Big) \lambda_0 \cos(qy).
\end{align}
If we consider the unital Banach algebra of periodic functions
\begin{align}
	& \hat{\ell}^1 (c) = \{ f \in L^{\infty} (\R) : f(x + 2\pi) = f(x), \;\; \norm{f}_{\hat{\ell}^1 (c)} < \infty  \}, \\
	& \norm{f}_{\hat{\ell}^1 (c)} = \sum_{q \in \Z} e^{c |q|} |\hat{f} (q)|, \qquad \hat{f} (q) = \frac{1}{2\pi} \int_0^{2\pi} f(x) e^{-iq x} dx, 
\end{align}
then $f \mapsto \log (1 + f)$ is a well-defined continuously differentiable function $\{ \norm{f}_{\hat{\ell}^1 (c)} <  1 \} \rightarrow \hat{\ell} (c)$. Thus with sufficiently large $\beta$ and the choice $c = \beta(\gamma/2 - \theta)$, we see that
\begin{align}
\log \Big( \lambda_0 + \frac{1}{\sqrt{2\pi \gamma}} \int_{\R} \sum_{q \geq 1} \lambda_q \cos(\beta^{1/2} q x) e^{-\frac{1}{2\gamma} (y-x)^2 } dx \Big)  = \log \lambda_0 + \tilde{U} (y)
\end{align}
where
\begin{align}
\tilde{U} (y) = \sum_{q=0}^{\infty} \tilde{z}^{(q)} \cos( \beta^{1/2} q y), \qquad |\tilde{z}^{(q)} (\beta)| \leq c_1  e^{-c_2 (1+q) \beta}
\end{align}
for some $c_1, c_2 >0$ that depend on $\eta, \theta, \gamma$.  (See \cite[Lemma	~2.2]{dgauss1} for a similar argument.)
Along with the covariance decomposition $(-\Delta + m^2)^{-1} = C(m^2) + \gamma \operatorname{id}$,  this computation shows that
\begin{align}
	Z^{\gsg}_{\beta,m^2} 
	& \propto \int_{ (\R^{\Lambda_N} )^2 } e^{-\frac{1}{2} (\varphi, C(m^2)^{-1} \varphi) - \frac{1}{2 \gamma} |\zeta|^2} \prod_{x\in \Lambda_N} \sum_{q\geq 0} \lambda_q \cos (\beta^{1/2} q (\varphi(x) + \zeta(x) )) d\zeta d\varphi \nnb
	& \propto 
	\int_{\R^{\Lambda_N}} e^{-\frac{1}{2} (\varphi, C(m^2)^{-1} \varphi)} e^{\sum_x \tilde{U} (\varphi(x)) } d\varphi
	\nnb
	& \propto 
	\int_{\R^{\Lambda_N}} e^{-\frac{1}{2} (\varphi, C(s,m^2)^{-1} \varphi)} e^{\frac{1}{2} s(\varphi, (-\Delta) \varphi) +  \sum_x \tilde{U} (\varphi(x)) } d\varphi. 
\end{align}
The same procedure can also be applied to replace the moment generating function of \eqref{eq:reformulation_inter1}.
This can be used to prove a version of Lemma~\ref{lemma:first_reformulation_Z2} with ${z}^{(q)}$ replaced by $\tilde{z}^{(q)}$ when $\beta$ is sufficiently large.
\end{remark}

As a special case, one could think of the usual lattice sine-Gordon with any activity $z\in \R$,
i.e., the measure
\begin{align}
	\P^{\sg}_{\beta,m^2} (d\phi) = \frac{1}{Z^{\sg}_{\beta,m^2}} e^{-H^{\sg}_{\beta,m^2} (\phi)} d\phi, \qquad \phi \in \R^{\Lambda_N}
\end{align}
with Hamiltonian $H^{\sg}_{\beta,m^2} (\phi) = \frac{1}{2 \beta} (\phi,  (-\Delta + m^2) \phi) +  \sum_{x\in \Lambda_N} z \cos(\phi(x) )$.

\begin{corollary}
$\langle e^{\cz (\f,\phi)} \rangle_{\beta,m^2}^{\sg}$ satisfies the statement of Lemma~\ref{lemma:first_reformulation_Z2} for sufficiently large $\beta$.
\end{corollary}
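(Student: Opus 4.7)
The plan is to realise the lattice sine-Gordon as a special case of the generalised sine-Gordon framework of Remark~\ref{remark:sg_also_works} and verify the growth hypothesis there. The key identity is the Jacobi-Anger expansion
\[
e^{z\cos\theta} \;=\; I_0(z) \;+\; 2\sum_{q\geq 1} I_q(z)\,\cos(q\theta),
\]
so choosing $\lambda_0 = I_0(z)$ and $\lambda_q = 2I_q(z)$ for $q\geq 1$ gives $\prod_{x}\sum_{q\geq 0}\lambda_q\cos(q\phi(x)) = \prod_x e^{z\cos(\phi(x))}$, which places the lattice sine-Gordon Hamiltonian in the generalised form $H^{\gsg}_{\beta,m^2}$ (up to an innocuous sign convention in the definition of the $\log$ term). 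First I would verify the growth condition $|\lambda_q|\le C e^{(\eta+\beta\theta)q^2}\lambda_0$ with $\theta < \gamma/2$: the asymptotic $I_q(z) = O((z/2)^q/q!)$ as $q\to\infty$ for fixed $z\in\R$ shows that $|\lambda_q|/\lambda_0$ decays super-exponentially in $q$, so the required bound holds trivially with any positive $\eta,\theta$ and every $\beta>0$.

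Second, I would apply the calculation of Remark~\ref{remark:sg_also_works} verbatim. Using the decomposition $(-\Delta+m^2)^{-1} = C(m^2) + \gamma\operatorname{id}$, split the Gaussian integral into a fluctuation piece with covariance $C(s,m^2)$ and a convolution piece with covariance $\gamma\operatorname{id}$, and perform the convolution mode by mode; the Gaussian damping $e^{-\beta\gamma q^2/2}$ multiplies each $\lambda_q$, producing smoothened coefficients $\tilde z^{(q)}$ with $|\tilde z^{(q)}|\leq c_1 e^{-c_2(1+q)\beta}$ thanks to the margin $\gamma/2 - \theta > 0$ in the growth condition. Then I would insert the tilt $e^{\cz(\f,\phi)}$ and run the complete-the-square step used in the proof of Lemma~\ref{lemma:first_reformulation_Z2}, namely adding and subtracting $\tfrac{1}{2}s(\varphi+\gamma\cz\f,(-\Delta)(\varphi+\gamma\cz\f)) - \tfrac{1}{2}s(\varphi,(-\Delta)\varphi)$ inside the exponent and expanding the cross term to identify $\tilde\f = (1+s\gamma\Delta)\f$. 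The outcome is the identity \eqref{eq:reformulation-Z2-intermediate} with the coefficients $z_0^{(q)}$ replaced by their sine-Gordon analogues $\tilde z^{(q)}$, which satisfy the same exponential decay required by Lemma~\ref{lemma:first_reformulation_Z2}.

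The only delicate point is the Banach-algebra step from Remark~\ref{remark:sg_also_works}: one needs $\log(1+f)$ well-defined in $\hat\ell^1(c)$ with $c=\beta(\gamma/2-\theta)$, which requires the relevant $\hat\ell^1(c)$-norm to be strictly less than $1$. For the sine-Gordon case this norm is bounded by $\sum_{q\geq 1} e^{c|q|}\cdot 2|I_q(z)|e^{-\beta\gamma q^2/2}/|I_0(z)|$, and the super-exponential Bessel decay provides an ample safety margin, so the required smallness is secured once $\beta$ is taken large enough depending on $z$. Beyond this, no genuinely new obstruction arises; the corollary is essentially the observation that, for any fixed activity $z\in\R$, the lattice sine-Gordon is a rapidly decaying member of the generalised sine-Gordon family to which the derivation of Lemma~\ref{lemma:first_reformulation_Z2} applies without modification.
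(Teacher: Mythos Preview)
Your proposal is correct and essentially the same as the paper's proof: both verify that the sine-Gordon model is a generalised sine-Gordon model satisfying the growth hypothesis of Remark~\ref{remark:sg_also_works}, after which the conclusion follows from that remark. The only cosmetic difference is that you invoke the Jacobi--Anger expansion and the Bessel asymptotics $I_q(z)=O((z/2)^q/q!)$ by name, whereas the paper expands $e^{z\cos x}$ directly via $\cos x = \tfrac12(e^{ix}+e^{-ix})$ and the binomial theorem, arriving at the cruder but still sufficient bound $|\lambda_q|\le 2(|z|/2)^q e^{z^2/4}$; both estimates trivially satisfy the growth condition $|\lambda_q|\le C e^{(\eta+\beta\theta)q^2}\lambda_0$.
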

\begin{proof}
Due to the previous remark, 
it is enough to check that the lattice sine-Gordon model is a generalised sine-Gordon model.
If we expand out the periodic part of the sine-Gordon Hamiltonian,
we obtain
\begin{align}
e^{z \cos(x)} 
=  \sum_{n=0}^{\infty} \sum_{k=0}^{n} \frac{(z/2)^n}{k! (n-k) !} \Re[ e^{i(2k-n)x} ]
& = \sum_{k=0}^{\infty} \sum_{q = -k}^{\infty} \frac{(z/2)^{2k + q}}{k! (k +q) !} \Re[ e^{i q x} ] \nnb
&= \sum_{k=0}^{\infty} \sum_{q=0}^{\infty} I(q,k) \frac{(z/2)^{2k + q}}{k! (k+q) !} \cos(q x)
\end{align}
where reparametrisation $n-2k =q$ was made in the second equality
and $I(q,k) = 2$ if $1 \leq q \leq k$ and $I(q,k) =1$ otherwise.
By making a trivial bound $\sum_{k < \infty} I(q,k)  \frac{q!}{k ! (k+q)!} (z/2)^{2k+q}  \leq 2 (z/2)^q e^{z^2/4}$, we see that the lattice sine-Gordon model with any activity $z\in \R$ satisfies the condition above. 
\end{proof}

\subsection{Finite range decomposition}
\label{sec:frd_brief}

We study the expectations in Lemma~\ref{lemma:first_reformulation_Z2} using a multi-scale analysis. 
The Gaussian field $\varphi \sim \cN (0, C(s,m^2))$ is decomposed into independent Gaussian random variables residing in different scales, and this is equivalent to the decomposition of the covariance $C(s,m^2)$. 
\cite[Section~3]{dgauss1} constructs a constant $\epsilon_s >0 $ and a collection of (rotation, reflection and translation invariant) covariance matrices $(\Gamma_{j} (m^2) )_{j\geq 0}$ and $\Gamma_N^{\Lambda_N} (m^2)$,  and $t_N (m^2) \in \R_{> 0}$ such that, on $\Lambda_N$,
\begin{align}
C  (s,m^2) = \sum_{j=1}^{N-1} \Gamma_{j} (m^2) + \Gamma_{N}^{\Lambda_N} (m^2) + t_N (m^2) Q_N
\label{eq:frd}
\end{align}
whenever $s \in [-\epsilon_s, \epsilon_s]$ and $m^2 \in (0,1]$
such that satisfy the following.
\begin{align} 
	& \parbox{\dimexpr\linewidth-4em}{%
	(1) $\Gamma_j (x,y) = 0$ whenever $\operatorname{dist}_{\infty} (x,y) \geq \frac{1}{4} L^j$ (giving the name \emph{finite range decomposition})
	and, 
	for any $n \in \N$ and multi-index $\vec{\mu} = (\mu_1, \cdots, \mu_n) \in \hat{e}^n = \{ \pm e_1, \pm e_2 \}^n$,
	}
	\nnb
	& \qquad \quad \sup_{x, y \in \Lambda_N} | \nabla^{\vec{\mu}} \Gamma_{j} (m^2) (x,y) | 
	\leq
	\begin{array}{ll}
	\begin{cases}
	C_{n} L^{-n (j - 1)} & \text{if} \;\; |\vec{\mu}| = n > 0 \\
	C_0 \log L & \text{if} \;\; |\vec{\mu}| = 0
	.
	\end{cases}
	\end{array} \label{eq:Gamma_j_decay}
	\\
	& \parbox{\dimexpr\linewidth-4em}{%
	The same bound holds for $\Gamma_N^{\Lambda_N}$ and the bounds are uniform in $m^2$.
	}
	\stepcounter{equation} \tag{\theequation $\Gammaone$} \label{quote:Gamma_one}
\end{align}
\begin{equation} \stepcounter{equation}
	\tag{\theequation $\Gammatwo$} \label{quote:Gamma_two}
	\parbox{\dimexpr\linewidth-4em}{%
	(2) $\Gamma_j (0,x)$'s are independent of $\Lambda_N$ if $j \leq N-1$. Note that this makes sense only because of \eqref{quote:Gamma_one}.
	}
\end{equation}
\begin{align} 
	& \parbox{\dimexpr\linewidth-4em}{%
	(3) $\Gamma_j (m^2)$ (respectively $\Gamma_N^{\Lambda_N} (m^2)$) is continuous in $m^2 \in (0,1]$ and attains limit $\Gamma_j (0) \equiv \lim_{m^2 \downarrow 0} \Gamma_j (m^2)$ (respectively $\Gamma_N^{\Lambda_N} (0) \equiv \lim_{m^2 \downarrow 0} \Gamma_N^{\Lambda_N} (m^2)$) that satisfies
	}
	\nnb
	& \qquad\quad \Gamma_{j} (m^2 = 0) (0,0) = \frac{1}{2\pi (1+s)} \big( \log L  + O(1) L^{-(j-1)} \big) 
	,
	\qquad j\leq N-1
	\\
	& \parbox{\dimexpr\linewidth-4em}{%
	with $O(1)$ independent of $j,L$
	(but this is not necessarily true for $\Gamma_N^{\Lambda_N} (0)$).
	}
	\stepcounter{equation} \tag{\theequation $\Gammathree$} \label{quote:Gamma_three}
\end{align}
\begin{equation} \stepcounter{equation}
	\tag{\theequation $\Gammafour$} \label{quote:Gamma_four}
	\parbox{\dimexpr\linewidth-4em}{%
	(4) $t_N (s,m^2) \in ( \max\{ m^{-2} - C L^{2N} ,  0 \} ,  m^{-2})$ for some $C >0$ and $Q_N$ is the matrix with all entries equal to $L^{-2N}$.
	}
\end{equation}

One immediate consequence of \eqref{quote:Gamma_three} is that the limit $m^2 \downarrow 0$ only needs to be taken carefully when integrating against the covariance $t_N (m^2) Q_N$.
Indeed,  by this decomposition,  the expectation $\E_{C(s,m^2)}$ can be decomposed as
\begin{align}
	\E^{\varphi}_{C(s,m^2)} [F (\varphi)] 
		= \E^{\phi^{(m^2)}} \E^{\zeta^{(m^2)}} \Big[ F \big( \phi^{(m^2)} + \zeta^{(m^2)} \big) \Big]
	\label{eq:E_Z_0_decomposition}
\end{align} 
where $\zeta^{(m^2)}$ and $\phi^{(m^2)}$ are independent Gaussian random variables with
\begin{align}
	\phi^{(m^2)} \sim \cN (0,t_N Q_N),  \qquad
	\zeta^{(m^2)} \sim \cN \big(0,  {\textstyle \sum_{j=1}^{N-1} \Gamma_j + \tilde{\Gamma}_N^{\Lambda_N}  } \big)	
\end{align}
so that $\zeta^{(m^2)} + \phi^{(m^2)} \sim \cN(0, C(s,m^2))$.
But since $\Gamma_j (m^2)$'s and $\Gamma_N^{\Lambda_N} (m^2)$ converge as $m^2 \downarrow 0$,  we can also take the same limit for $\zeta^{(m^2)}$ in \eqref{eq:E_Z_0_decomposition}.
(Certain integrability condition has to be verified to prove this limit, and taking $s$ in the definition of $Z_0$ sufficiently small will do this job.)
Hence the moment generating function will have its final alternative expression,
whereafter we always take $m^2 = 0$ in $\Gamma_j$ and $\Gamma_N^{\Lambda_N}$.

In the following, we denote the tilted measure (but \emph{not} necessarily a probability measure) by
\begin{align}
	\Et [F(\varphi)] = \frac{\E[ e^{\tau ( \tilde{\f},\varphi)} F(\varphi)]}{\E[ e^{\tau (\tilde{\f},\varphi)} ]},
	\qquad \tau \in \C
	\label{eq:Etau_definition}
\end{align}
for a random variable $\varphi$ on $\R^{\Lambda_N}$ (or $\R^{\Z^2}$) whenever the integrands are integrable.

\begin{proposition}
\label{prop:final_alternative_form_of_mgf}

For $\gamma > 0$ and $|s|$ sufficiently small,  $\f$ as in \eqref{quote:assumpfa},  $\tilde{\f} = ( 1 + s \gamma \Delta ) \f$,  $\beta >0$, $\rr \in \R$, $\tau \in \C$, $\cz = \rr + \tau$,  and $\Et$ defined by \eqref{eq:Etau_definition}, 
\begin{align}
	\langle e^{\beta^{-1/2} \cz (\f, \sigma)}  \rangle^{\dg}_{\beta, \Lambda_N} =  e^{\frac{1}{2} \cz^2 (\tilde{\f}, \tilde{C} (s)\tilde{\f}) + \frac{\gamma}{2} \cz^2 (\f, \tilde{\f})}
	\lim_{m^2 \downarrow 0} F_{N,m^2} [\f] (\rr, \tau)
	\label{eq:final_alternative_form_of_mgf}
\end{align}
where
\begin{align}
	 F_{N,m^2} [\f] (\rr, \tau) = \frac{\E^{\phi^{(m^2)}} \Et^{\bar{\zeta}} [ Z_0 (\phi^{(m^2)} + \bar{\zeta} + \cz \gamma \f + \rr {\tilde{C}(s)} \tilde{\f})] }{\E^{\phi^{(m^2)}} \E^{\bar{\zeta}} [ Z_0 (\phi^{(m^2)} + \bar{\zeta} )]}
	 \label{eq:F_N_m^2_definition}
\end{align}
with $\tilde{C} (s) = \sum_{j=1}^{N-1} \Gamma_{j}(m^2 = 0) + \Gamma_N^{\Lambda_N} (m^2 = 0)$,  $\phi^{(m^2)} \sim \cN(0, t_N (m^2) Q_N)$ and $\bar{\zeta} \sim \cN (0, \tilde{C} (s))$.
\end{proposition}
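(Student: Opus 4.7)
The proposition chains together three ingredients already in place: the regularisation identity \eqref{eq:m^2_limit_justification}, the reformulation in Lemma~\ref{lemma:first_reformulation_Z2}, and the finite range decomposition of Section~\ref{sec:frd_brief}. The plan is to apply Lemma~\ref{lemma:first_reformulation_Z2} with the substitution $\cz \mapsto \beta^{-1/2}\cz$ imposed by \eqref{eq:m^2_limit_justification}, then manipulate the $C(s,m^2)$-expectation by (i) separating the zero-mode contribution $\phi^{(m^2)}$ from the fluctuating part $\zeta^{(m^2)}$ using \eqref{eq:frd}, and (ii) absorbing the linear tilt $e^{\cz(\tilde{\f},\varphi)}$ into a Cameron--Martin shift. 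The $m^2 \downarrow 0$ limit is then taken on the $\zeta^{(m^2)}$ side via the continuity \eqref{quote:Gamma_three}, while the $\phi^{(m^2)}$ expectation is kept inside the limit because $t_N(m^2) \to \infty$ by \eqref{quote:Gamma_four}.

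\textbf{The core computation.} Decompose $\varphi = \phi + \zeta$ with $\phi \sim \cN(0, t_N(m^2) Q_N)$ and $\zeta \sim \cN(0, \tilde{C}_{m^2})$, where $\tilde{C}_{m^2} := \sum_{j=1}^{N-1}\Gamma_j(m^2) + \Gamma_N^{\Lambda_N}(m^2)$. Since $Q_N = L^{-2N}\one\one^T$ is rank-one, $\phi$ is a random constant field; combined with the zero-sum condition $\sum_x \f(x) = 0$ from \eqref{quote:assumpfa} and $\sum_x \Delta \f(x) = 0$ on the torus, this gives $\sum_x \tilde{\f}(x) = 0$ and hence $(\tilde{\f}, \phi) = 0$. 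Thus the tilt $e^{\cz(\tilde{\f},\varphi)}$ depends only on $\zeta$. Writing $\cz = \rr + \tau$, I split $e^{\cz(\tilde{\f},\zeta)} = e^{\rr(\tilde{\f},\zeta)}e^{\tau(\tilde{\f},\zeta)}$ and apply Cameron--Martin to the real factor, shifting $\zeta \to \zeta + \rr \tilde{C}_{m^2}\tilde{\f}$. The remaining complex factor $e^{\tau(\tilde{\f},\zeta)}$ is grouped with its normaliser $\E^\zeta[e^{\tau(\tilde{\f},\zeta)}]$ to produce $\Et^\zeta$ as in \eqref{eq:Etau_definition}; the accumulated Gaussian constants give
\begin{align*}
    \E_{C(s,m^2)}\bigl[e^{\cz(\tilde{\f},\varphi)} Z_0(\varphi + \gamma\cz\f)\bigr]
    = e^{\frac{1}{2}\cz^2(\tilde{\f},\tilde{C}_{m^2}\tilde{\f})}\,
      \E^{\phi}\Et^\zeta\bigl[Z_0(\phi + \zeta + \rr \tilde{C}_{m^2}\tilde{\f} + \gamma\cz\f)\bigr].
\end{align*}
Dividing by the normalising expectation $\E_{C(s,m^2)}[Z_0(\varphi)] = \E^\phi\E^\zeta[Z_0(\phi+\zeta)]$ and combining with the $e^{\frac{\gamma}{2}\cz^2(\f,\tilde{\f})}$ prefactor from Lemma~\ref{lemma:first_reformulation_Z2} produces the claimed formula at finite $m^2 > 0$.

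\textbf{The $m^2\downarrow 0$ limit.} By \eqref{quote:Gamma_three}, $\tilde{C}_{m^2} \to \tilde{C}(s)$ pointwise as $m^2\downarrow 0$, so the Gaussian prefactor $e^{\frac{1}{2}\cz^2(\tilde{\f},\tilde{C}_{m^2}\tilde{\f})}$ converges to $e^{\frac{1}{2}\cz^2(\tilde{\f},\tilde{C}(s)\tilde{\f})}$ and the law of $\zeta^{(m^2)}$ converges to $\bar{\zeta}\sim\cN(0,\tilde{C}(s))$. Passing the limit through the $\zeta$-integral requires a dominated convergence argument, which is where the only real work lies: the integrand $Z_0(\phi + \zeta + \rr\tilde{C}_{m^2}\tilde{\f} + \gamma\cz\f)$ must be bounded uniformly in $m^2$ by something integrable against $\E^\zeta$. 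The cosine terms in $Z_0$ (see \eqref{eq:Z_0_definition}) are uniformly bounded on real arguments, and the stiffness term $e^{\frac{s_0}{2}|\nabla\varphi|^2_{\Lambda}}$ is controlled by the uniform covariance estimate \eqref{eq:Gamma_j_decay} provided $s_0 = s$ is sufficiently small; the complex tilt $e^{\tau(\tilde{\f},\zeta)}$ is also handled because $|e^{\tau(\tilde{\f},\zeta)}| \leq e^{|\Re\tau|\,\|\tilde{\f}\|_1\,\|\zeta\|_\infty}$, which is square-integrable against $\zeta^{(m^2)}$ uniformly in $m^2$ thanks again to \eqref{eq:Gamma_j_decay}. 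The $\phi^{(m^2)}$ expectation stays inside the limit, matching the form \eqref{eq:F_N_m^2_definition}.

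\textbf{Main obstacle.} The only nontrivial point is the uniform integrability needed for the $m^2 \downarrow 0$ passage: one must verify that the shifts $\rr\tilde{C}_{m^2}\tilde{\f}$ (which are bounded uniformly in $m^2$ because $\tilde{\f}$ has compact support and the kernel of $\tilde{C}_{m^2}$ is controlled scale-by-scale by \eqref{eq:Gamma_j_decay}) do not destroy integrability of $Z_0$ against the $\zeta$-measure. The legitimacy of Cameron--Martin for complex $\tau$ is standard by analytic continuation from real $\tau$, since the integrand is entire in $\tau$ and integrability holds on the real axis for small $|\tau|$. The extension to complex $\cz$ in the stated range is automatic once the identity is established for real $\cz$ and both sides are seen to be analytic.
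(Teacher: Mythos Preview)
Your proposal is correct and follows essentially the same route as the paper's proof: start from Lemma~\ref{lemma:first_reformulation_Z2} via \eqref{eq:m^2_limit_justification}, split $\varphi = \phi^{(m^2)} + \zeta^{(m^2)}$, use the zero-sum of $\tilde{\f}$ to drop the $\phi$-dependence of the tilt, and pass to the massless $\zeta$-covariance by \eqref{quote:Gamma_three}. The only cosmetic difference is the order in which you interleave the Cameron--Martin shift and the $m^2\downarrow 0$ limit on the $\zeta$-covariance: you shift by $\rr\tilde{C}_{m^2}\tilde{\f}$ at finite $m^2$ and then let $m^2\downarrow 0$, whereas the paper first replaces $\zeta^{(m^2)}$ by $\bar{\zeta}\sim\cN(0,\tilde{C}(s))$ and then performs the single shift $\zeta\to\zeta+\rr\tilde{C}(s)\tilde{\f}$. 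Both orderings give the same result; the paper's order has the slight advantage that the shift is done once with the massless object, so the appearance of $\tilde{C}(s)$ in the definition of $F_{N,m^2}$ is immediate rather than requiring a further convergence $\tilde{C}_{m^2}\tilde{\f}\to\tilde{C}(s)\tilde{\f}$ inside the argument of $Z_0$.
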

\begin{proof}
We start from \eqref{eq:m^2_limit_justification} and \eqref{eq:reformulation-Z2-intermediate},  saying
\begin{align}
	\langle e^{\beta^{-1/2} \cz (\f, \sigma)} \rangle^{\dg}_{\beta, \Lambda_N} 
	= \lim_{m^2 \downarrow 0} \langle e^{\cz (\f, \sigma)} \rangle_{\beta, m^2, \Lambda_N}
	= \lim_{m^2 \downarrow 0} e^{\frac{\gamma}{2} \cz^2 (\f,  \tilde{\f})} \frac{ \E_{C(s,m^2)} \big[ e^{\cz (\tilde{\f} , \varphi)} Z_0(\varphi + \gamma \cz \f) \big] }{ \E_{C(s,m^2)} \big[ Z_0(\varphi) \big] }
	\label{eq:final_alternative_form_of_mgf-interm1}
\end{align}
Since we have decomposition $\varphi = \phi^{(m^2)} + \zeta^{(m^2)}$ for some independent Gaussian random variables $\phi^{(m^2)} \sim \cN(0, t_N Q_N)$, $\zeta^{(m^2)} \sim \cN(0,  \sum_{j=1}^{N-1} \Gamma_j (m^2) + \Gamma_N^{\Lambda_N} (m^2))$,
and since $t_N Q_N \tilde{\f} \equiv 0$, if we can justify
\begin{align}
	\lim_{m^2 \downarrow 0} 
 \frac{\E^{\phi^{(m^2)}} \E^{\zeta^{(m^2)}} [ e^{\cz (\tilde{\f}, \zeta^{(m^2)} )} Z_0 (\phi^{(m^2)} + \zeta^{(m^2)} + \gamma \cz \f)] }
{\E^{\phi^{(m^2)}} \E^{\zeta^{(m^2)}} [ Z_0 (\phi^{(m^2)} + \zeta^{(m^2)})]} 
	=
	\lim_{m^2 \downarrow 0}  
	\frac{\E^{\phi^{(m^2)}} \E^{\bar{\zeta}} [ e^{\cz (\tilde{\f}, \bar{\zeta})} Z_0 (\phi^{(m^2)} + \bar{\zeta} + \gamma \cz \f )] }
	{\E^{\phi^{(m^2)}} \E^{\bar{\zeta}} [ Z_0 (\phi^{(m^2)} + \bar{\zeta})]} 
	\label{eq:final_alternative_form_of_mgf-mass_to_0_limit}
\end{align}
(for $\bar{\zeta} \sim \cN(0, \tilde{C} (s))$),
then we are done by a change of variables $\zeta \rightarrow \zeta + \rr \tilde{C}(s) \tilde{\f}$. 
But \eqref{eq:final_alternative_form_of_mgf-mass_to_0_limit} is implied by \eqref{quote:Gamma_three}.
\end{proof}

Having the DG regularised,  now $F_{N,m^2}[\f] (\rr, \tau)$ can be defined independent of the assumptions $\sum_{x\in \Lambda_N} \f (x) = 0$, although the limit $m^2 \downarrow 0$ cannot be justified in this case. 
However, the case $\sum_{x\in \Lambda_N} \f (x) \neq 0$ will also be used crucially in the course of our proof, and this motivates the utility of \eqref{quote:assumpf}.

\begin{remark}  \label{remark:shifted_environment_warning}
Even if have assumed that $F$ is an analytic function with nice decay properties so that 
\begin{align}
	\E_{(\tau), \tilde{C} (s)} [ F(\bar{\zeta} )] = \E_{\tilde{C} (s)} [ F(\bar{\zeta} + \tau \tilde{C} (s) \tilde{\f} )]
\end{align}
holds, the size of $\tilde{C}(s) \tilde{\f}$ diverges logarithmically in $|\Lambda_N|$.  We will see in later sections that such a divergence of the complex shift is not preferable in the technical point of view.
Thus we use an alternative approach in the following sections, where we also make multi-scale decomposition of the complex shifts.
\end{remark}

\subsection{Choice of parameters}
\label{sec:choice_of_parameters}

The RG map constructed in \cite{dgauss1} is sensitive to the choice of parameters that will appear in the following sections. 
Since the current work also depends on this RG map, 
the choice of these parameters will also play non-negligible role.
But this will not be at the centre of our discussion, 
so we will try to simplify our account on this problem in later chapters
once we define these parameters in this section. 

\begin{itemize}
\item $\gamma > 0$ is a constant related to the discrete Laplacian and $c_f = \frac{1}{4} \gamma$ is a constant related to the DG model. 
They are introduced in \cite[Section~3 and 7]{dgauss1}.

\item $L$ is a parameter defining the renormalisation group map. It is chosen $L \geq L_0 (M, \rho, \rr)$ where $L$ satisfies the assumptions \cite[Theorem~7.7]{dgauss1}, 
Lemma~\ref{lemma:extfield_bound},
Theorem~\ref{thm:local_part_of_K_j+1} and Proposition~\ref{prop:stability_of_dynamics}. 
Also it is restricted to take value of the form $L = \ell^{N'}$ for some $\ell$ and $N'$ both sufficiently large as in Lemma~\ref{lemma:G_change_of_scale_external_field}.

\item $A,c_2, c_4, c_w, c_{\kappa}, c_{h}, c_{\kt},\kappa, h, \htau >0$ appear in the definition of  norms introduced in Section~\ref{sec:polymer_activities_and_norms} and related definitions in Appendix~\ref{sec:norm_inequalities}.
We require $A \geq A_0 (L)$ to satisfy the assumptions of \cite[Theorem~7.7]{dgauss1}
and Proposition~\ref{prop:largeset_contraction_external_field}.
$c_2, c_4, c_w, c_{\kappa}, c_h$ are chosen according to \cite[Section~5-7]{dgauss1}, \cite[Section~3]{dgauss2} and Lemma~\ref{lemma:contraction_of_charge_q_term},
$\kappa= c_k / \log L$ and $h = \max\{ 1,  c_h \sqrt{\beta} \}$. 
$c_{\kt}$ and $\htau$ are fixed so that $\htau = c_{\kt} (\log L)^{-3/2}$ and satisfy Lemma~\ref{lemma:Et_F_bound}, Lemma~\ref{lemma:Et_by_complex_shift} and Lemma~\ref{lemma:F'_is_analytic}.

\item $\beta \geq \beta_0$ is chosen to satisfy the assumptions of \cite[Theorem~7.7]{dgauss1} and Proposition~\ref{prop:stability_of_dynamics}.

\item There are constants that might share the same symbol but differ from line to line,
e.g.,  $C$,  $C_1$,  $C_2$,  $C'$,  $C''$ and so on.
\end{itemize}

Some conditions are omitted in the list of references above. This is either because the condition is abundant or it is only a minor modification to the condition introduced in \cite{dgauss1,dgauss2}.
The main difference in the choice of parameters in this work with that of \cite{dgauss1} will be due to the introduction of new parameters $M, \rho, R$. 
However, this dependence might not always be apparent in the writing. 

We finally add a remark on $\gamma >0$.
The choice of $\gamma >0$ is related to the specific construction of the finite range decomposition in \cite[Section~3]{dgauss1}.
Its specific value does not play any important role as long as $C(s,m^2)$ defined by \eqref{eq:C_m^2_definition} admits decomposition of Section~\ref{sec:frd_brief}.

\ifx\newpageonoff\undefined
{\red command undefined!!}
\else
  \if\newpageonoff1
  \newpage
  \fi
\fi

\section{Overview of the proof}
\label{sec:overview_of_proof}

In the first half of this section, we prove the main theorems Theorem~\ref{thm:main_theorem} and Theorem~\ref{thm:main_theorem_generalised} assuming Proposition~\ref{prop:overview_of_the_proof}. 
In the second half of this section, we give a brief overview of how the renormalisation group argument is used to prove the proposition.

In view of Proposition~\ref{prop:final_alternative_form_of_mgf}, control of the moment generating function is just due to the control of the ratio $F_{N,m^2}$. 
Thus it is the objective of Proposition~\ref{prop:overview_of_the_proof} to show how $F_{N,m^2}$ is controlled. 
In the statement, the \emph{coalescence scale} $j_{0y}$ is used:
\begin{align}
	j_{0y} = \min \Big\{ j \geq 0 \; : \; (B^j_0)^{***} \cap Q_y^j  \neq \emptyset \Big\} 
	.
	\label{eq:joy_defi}
\end{align}
(Recall $Q^j_y =\cup_{i=1}^4 B^j_i$, and $(B_0^j)^{***}$ is taking the small set neighbourhood three times.)
The coalescence scale is roughly just $\log_L \operatorname{dist}_{\infty} (\supp (\f_{1}),  y+\supp ( \f_{2}) )$, but stated in the language of blocks.
If either $\f_{1} \equiv 0$ or $\f_{2} \equiv 0$,  we use convention $j_{0y} = \infty$
Also, for fixed $\f_1$ and $\f_2$, there exists $C >0$ such that
\begin{align}
	C^{-1} \norm{y}_2 \leq L^{ j_{0y}}  \leq C \norm{y}_2 
	\label{eq:charac_of_coalescence}
	.
\end{align}

\begin{proposition}
\label{prop:overview_of_the_proof}

Let $\f$ be as in \eqref{quote:assumpfa}. Then there exists $s_0^c (\beta) = O(e^{-c \beta})$ such that the following holds when $Z_0$ is defined by \eqref{eq:Z_0_definition} with $s = s_0^c (\beta)$:
under the assumptions of Theorem~\ref{thm:main_theorem_generalised},
if $\cz = \rr + \tau$ for some $\rr \in \R$ and $\tau \in \D_{\htau}$, then 
\begin{align}
	\lim_{m^2 \downarrow 0} F_{N,m^2} [\f] (\rr, \tau) =
	e^{\tilde{g}_{\infty}^{\rr}[\f_1] (\tau) + \tilde{g}_{\infty}^{\rr}[\f_2] (\tau) +  \sum_{j\geq j_{0y}}  \tilde{\kg}_{j}^{(2),\rr}[\f_1, T_y \f_2] (\tau)  } \big( 1 + \tilde{\psi}_{\Lambda_N}^{\rr} (\tau,y) \big)
\end{align}
where $\tilde{g}_{\infty}^{\rr} [\f_i] (\tau)$, $\tilde{\kg}_{j}^{(2),\rr} [\f_1, T_y \f_2] (\tau)$ and $\tilde{\psi}_{\Lambda_N}^{\rr} (\tau, y)$ are analytic functions in $\tau \in \D_{\htau}$ and satisfy
\begin{align}
	& \norm{ \tilde{\kg}_{j}^{(2),\rr} [\f_1, T_y \f_2]  }_{L^{\infty} (\D_{\htau})} \leq O(  L^{-\alpha j} ) ,
		\qquad  j \geq j_{0y},  
	\label{eq:overview_of_the_proof_1}
	\\
	& \norm{ {  \tilde{\psi}_{\Lambda_N}^{\rr}  (\cdot,  y)  } }_{L^{\infty} (\D_{\htau})} \leq O(  L^{-\alpha N} )
	\label{eq:overview_of_the_proof_2}
\end{align}
for some $\alpha >0$.
\end{proposition}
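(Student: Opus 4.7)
The plan is to analyze $F_{N,m^2}[\f](\rr, \tau)$ by running a renormalisation group flow on numerator and denominator in parallel, using the finite range decomposition \eqref{eq:frd} to integrate out $\bar\zeta = \sum_j \zeta_j$ one scale at a time, and then taking the ratio. To avoid the logarithmic divergence warned about in Remark~\ref{remark:shifted_environment_warning}, I would first replace the single complex shift $\rr \tilde C(s) \tilde\f$ and the tilt $e^{\tau(\tilde\f, \bar\zeta)}$ by their scale decompositions, producing at each scale $j$ a bounded complex shift $\rr \Gamma_j \tilde\f$ and a bounded tilt $e^{\tau(\tilde\f, \zeta_j)}$. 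This matches the scale structure needed to apply the bulk RG flow of \cite{dgauss1} (with tuning $s = s_0^c(\beta)$ so that the $U_j$-flow is contractive) as a reference, with the observable pieces treated as a small perturbation.

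Next I would construct an observable RG flow by inductively decomposing the scale-$j$ effective potential $h_j$ into a bulk part $(U_j, K_j)$ of \cite{dgauss1} and observable pieces localised near $\supp(\f_1)$ and $\supp(T_y \f_2)$, with the latter carrying all of the $\rr,\tau,\f$-dependence. As long as $j < j_{0y}$, by definition of $j_{0y}$ (cf.\ \eqref{eq:joy_defi}) the small-set neighbourhoods around the two insertions are disjoint, so the observable contribution at scale $j$ factorises into a product of single-cluster terms $e^{g_j^{\rr}[\f_i](\tau)}$; summing the increments gives the limits $\tilde g_\infty^{\rr}[\f_i](\tau)$. The ratio in $F_{N,m^2}$ cancels all purely bulk free energy, leaving only the observable contributions together with the remainder~$K_N$.

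Past the coalescence scale $j \geq j_{0y}$, the two observable clusters interact and the effective potential no longer factorises across them; I would define $\tilde{\kg}_j^{(2),\rr}[\f_1, T_y\f_2](\tau)$ as precisely the non-factorising increment of scale $j$. The bound \eqref{eq:overview_of_the_proof_1} then comes from the contraction of the bulk RG map: the interaction can only propagate through polymer activities connecting $\cB_j$-neighbourhoods of the two clusters, and the $K_j$-norm estimates of \cite{dgauss1} (adapted to the observable setting with the norms introduced in Section~\ref{sec:polymer_activities_and_norms}) contract at a geometric rate $L^{-\alpha j}$, summable over $j \geq j_{0y}$. The finite-volume error $\tilde\psi_{\Lambda_N}^{\rr}(\tau, y)$ collects three sources: the difference between $\sum_{j>N}\Gamma_j$ and $\Gamma_N^{\Lambda_N}$, the zero-mode integration against $\phi^{(m^2)}$ in the limit $m^2 \downarrow 0$ (justified via \eqref{quote:Gamma_three} and \eqref{quote:Gamma_four}), and the terminal $K_N$ from the RG flow; each is $O(L^{-\alpha N})$ by the same contractivity, giving \eqref{eq:overview_of_the_proof_2}. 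Analyticity of each piece in $\tau \in \D_{\htau}$ is preserved because the polymer activity spaces $\cN_{j,\kt}$ are defined to track analyticity (Definition~\ref{def:polymer_activities}), the Gaussian expectations $\Et$ preserve it, and the choice $\htau = c_{\kt}(\log L)^{-3/2}$ in Section~\ref{sec:choice_of_parameters} keeps the perturbative estimates uniform over $\D_{\htau}$.

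The main obstacle is the stability of the observable RG flow itself: tuning $s_0^c(\beta)$ stabilises the \emph{bulk} flow, but the observable insertions shift the effective potentials by an amount that, although small, persists across infinitely many scales and must not destabilise the flow. Concretely, one must show that the observable-modified $K_j$ still enjoys the large-set contraction and relevant-direction control of \cite[Theorem~7.7]{dgauss1} uniformly in $\tau \in \D_{\htau}$, $\rr \in [-R,R]$ and $y \in \Z^2$. This is where the novelties mentioned in Section~\ref{sec:the_rg_method_explained} — the extension to microscopic observables and to complex $\tau$ — enter in an essential way, and the technical work (carried out in Sections~\ref{sec:polymer_activities_and_norms}--\ref{sec:renormalisation-group-computation}) is almost entirely devoted to this point.
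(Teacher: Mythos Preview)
Your outline matches the paper's strategy: construct an observable RG flow as a perturbation of the bulk flow of \cite{dgauss1}, extract a free-energy increment $\kg_j^{\rr}$ at each scale, use disjointness of the two clusters below $j_{0y}$ to factorise, and collect the terminal RG data and zero-mode integration into $\tilde\psi_{\Lambda_N}^{\rr}$.

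There is, however, one step you gloss over that the paper flags as genuinely non-obvious. You write that below the coalescence scale the observable contribution factorises into single-cluster terms and that summing gives $\tilde g_\infty^{\rr}[\f_i](\tau)$. But cluster~2 sits at $y$, not at the origin: what the RG produces is $\kg_j^{\rr}[T_y\f_2]$, and its sum over $j$ depends \emph{a priori} on where $y$ lies relative to the fixed block grid $(\cB_j)_j$. The polymer expansion is not translation invariant, so there is no reason the one-point energy for $T_y\f_2$ should equal that for $\f_2$. The paper resolves this (Proposition~\ref{prop:one_point_energy}) not by further analysis of the flow but by going back to the original object: $F_{N,m^2}[T_y\f_2]$ is manifestly $y$-independent by translation invariance of the DG measure, and since $\exp\big(\sum_j \kg_j^{\rr}[T_y\f_2](\Z^2;\tau)\big)$ is its $N\to\infty$ limit (uniformly in $m^2$), the exponent is $y$-independent too. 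Only then can one set $\tilde g_\infty^{\rr}[\f_2] := \sum_j \kg_j^{\rr}[\f_2](\Z^2;\tau)$ and define $\tilde\kg_j^{(2),\rr}$ by subtracting the two one-point energies; without this step your ``non-factorising increment'' would secretly carry a $y$-dependence that does not match the statement.

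A smaller inaccuracy: among your three sources for $\tilde\psi_{\Lambda_N}^{\rr}$ you list ``the difference between $\sum_{j>N}\Gamma_j$ and $\Gamma_N^{\Lambda_N}$''. The paper does not compare these covariances; rather it shows by a locality argument (Proposition~\ref{prop:spatial_locality_of_rg_map}) that the RG coordinates $(\cE_j,\kg_j^{\rr},\cU_j)$ for $j<N$ are $N$-independent, and then the finite-volume remainder is simply the tail $\sum_{j>N}\kg_j^{\rr}$ together with the terminal $U_N^{\Lambda_N}$, $(K_N^{\rr,\Lambda_N})^{\dagger}$ and the zero-mode integral.
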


We add a short remark on $s_0^c (\beta)$. 
We import the choice of $s_0^c (\beta)$ from \cite{dgauss1}, 
where the family $(s_0^c (\beta) : \beta \ge \beta_0)$ is constructed as the stable manifold of a renormalisation group flow (that we will also recall in Section~\ref{sec:the_rg_method}).
Although it is not clear from the proof whether such value of $s_0^c (\beta)$ is unique or depends on $\gamma$, 
we know \textit{a posteriori} from Corollary~\ref{cor:CLT} that $s_0^c (\beta)$ is characterised by the limiting distribution of $\sigma_0 - \sigma_y$,  thus unique and independent of $\gamma$.

The proof will only appear in Section~\ref{sec:renormalisation-group-computation}.
We emphasize that each one-point free energy $\tilde{g}_{\infty}^{\rr} [\f_i]$ ($i=1,2$) is independent of choice of $y$. This is not an obvious consequence of the renormalisation group method, as it has strong dependence on the choice of the gridding,  i.e., the choice of configurations of $\cB_j$.

\subsection{Proof of the main theorems}

The main theorems are direct consequences of Proposition~\ref{prop:overview_of_the_proof}.

\begin{proof}[Proof of Theorem~\ref{thm:main_theorem_generalised}]

Let  $Z_0$ be defined by \eqref{eq:Z_0_definition} and $s = s_0^c (\beta)$ where $s_0^c (\beta)$ is as in Proposition~\ref{prop:overview_of_the_proof}.
Then by Proposition~\ref{prop:final_alternative_form_of_mgf} and Proposition~\ref{prop:overview_of_the_proof}, 
\begin{align}
	\log \langle e^{\beta^{-1/2} \cz (\f,\sigma)} \rangle^{\dg}_{\beta, \Lambda_N} & =
	\frac{1}{2} \cz^2 (\tilde{\f},  \tilde{C} (s_0^c (\beta)) \tilde{\f}) +  \frac{\gamma}{2} \cz^2 (\f, \tilde{\f}) \nnb
	&\qquad + \tilde{g}^{\rr}_{\infty} (\tau) [\f_1] + \tilde{g}^{\rr}_{\infty} (\tau) [\f_2] + \sum_{j=j_{0y} }^{\infty} \tilde{\kg}_j^{(2),\rr} [\f_1, T_y \f_2] (\tau)
	+ \log \big( 1+  \tilde{\psi}_{\Lambda_N}^{\rr} (\tau,y) \big) .
\end{align}
By \eqref{eq:overview_of_the_proof_1},\eqref{eq:overview_of_the_proof_2} and \eqref{eq:charac_of_coalescence}, they satisfy
\begin{align}
	\Big| \sum_{j=j_{0y} }^{\infty} \tilde{\kg}_j^{(2),\rr} [\f_1, T_y \f_2] \Big| \leq O(L^{-\alpha j_{0y}}) \leq O(\norm{y}_2^{-\alpha}), \qquad | \tilde{\psi}_{\Lambda_N}^{\rr} (\tau,y) | \leq O(L^{-\alpha N}) \leq O(|\Lambda_N|^{-\alpha})
	.
	\label{eq:main_thoerem_inter_bound}
\end{align}

If we let
\begin{align}
	H_N (\cz,y) & = \log \langle e^{\beta^{-1/2} \cz ( \f,\sigma)} \rangle^{\dg}_{\beta, \Lambda_N} - \frac{1}{2} \cz^2 (\tilde{\f},  \tilde{C} (s_0^c (\beta)) \tilde{\f}) -  \frac{\gamma}{2} \cz^2 (\f, \tilde{\f})
	\nnb
	& =  \tilde{g}^{\rr}_{\infty} [\f_1] (\tau)  + \tilde{g}^{\rr}_{\infty} [\f_2] (\tau)  + \sum_{j \geq j_{0y} } \tilde{\kg}_j^{(2),\rr} [\f_1, T_y \f_2] (\tau)
	+  \log (1 + \tilde{\psi}_{\Lambda_N}^{\rr} (\tau, y))  ,
\end{align}
then it is an analytic function of $\cz \in S_{R, \htau}$ (well-defined, because it is independent of the decomposition $\rr + \tau$). 
It also follows from taking limits $N, \norm{y}_2 \rightarrow \infty$ that
functions $h_{\beta}^{(1)}$, $h_{\beta}^{(2)}$ and $\psi_{\beta,N}$ defined by
\begin{align}
	& h^{(1)}_{\beta} [ \f_1 ] ( \cz ) + h^{(1)}_{\beta} [\f_2] (\cz) := \frac{\gamma}{2} \cz^2 \sum_{i=1,2} (\f_i, \tilde{\f}_i ) + \lim_{\norm{y}_2, N\rightarrow \infty} H_N (\cz, y) \label{eq:h_one_f_definition} \\
	& h^{(2)}_{\beta} [\f_1, \f_2] (\cz, y) := \frac{\gamma}{2} \cz^2  (\f_1 + T_y\f_2, \tilde{\f}_1 + T_y \tilde{\f}_2) - h^{(1)}_{\beta} [ \f_1 ] ( \cz ) - h^{(1)}_{\beta} [\f_2] (\cz)  + \lim_{N\rightarrow \infty} H_{N} (\cz, y) \\
	& \psi_{\beta, N} (\cz, y) := H_{N} (\cz, y) -  \lim_{N\rightarrow \infty} H_{N} (\cz, y)
\end{align}
are also analytic functions of $\cz$,  independent of the decomposition $\cz = \rr + \tau$.
\eqref{eq:h_one_f_definition} leaves infinitely many choices of each $h_{\beta}^{(1)} [\f_i] (\cz)$ ($i =1,2$), but we choose the branch where $h_{\beta}^{(1)} [\f_i] (\cz) =  \tilde{g}^{0}_{\infty} [\f_i] (\cz) + \frac{\gamma}{2} \cz^2 (\f_i, \tilde{\f}_i)$ for $\cz \in \D_{\htau}$.
Thus we obtain \eqref{eq:thm_main_theorem_generalised} with the above choices of $h_{\beta}^{(1)}, h_{\beta}^{(2)}, \psi_{\beta,N}$ and
\begin{align}
	\mathfrak{C}_{ \beta, \Lambda_N} = (1 + s_0^c (\beta) \gamma \Delta ) \tilde{C} (s_0^c (\beta)) (1 + s_0^c (\beta) \gamma \Delta )
	\label{eq:kC_definition}
 	.
\end{align}
The required estimates follow from \eqref{eq:main_thoerem_inter_bound}.
Also, for any $g = g_1 + T_y g_2$ that satisfies the assumptions of \eqref{quote:assumpf}, we have 
$| (g ,  \Gamma_j g) | = O\big( L^{-j} |y| \big) $ 
and 
$| (g, \tilde{\Gamma}_N^{\Lambda_N} g)  | = O\big( L^{-N} |y| \big)$ 
by \eqref{quote:Gamma_one}, so
$\lim_{N\rightarrow \infty} (g, \mathfrak{C}_{\beta, \Lambda_N} g)$ absolutely converges. 
Hence, if we let
\begin{align}
	\mathfrak{C}_{\beta, \Z^2} = \sum_{j=1}^{\infty} (1 + s_0^c (\beta) \gamma \Delta ) \Gamma_j (1 + s_0^c (\beta) \gamma \Delta )
	\label{eq:kC_Z2}
	,
\end{align}
then $(\f, \mathfrak{C}_{\beta, \Z^2} \f)$ is well-defined and equals $\lim_{N\rightarrow \infty} (\f, \mathfrak{C}_{\beta, \Lambda_N} \f)$.

\end{proof}

For the proof of Theorem~\ref{thm:main_theorem},
we will work in the Fourier space to compute the limit of $(f_y, \mathfrak{C}_{\Lambda_N,\beta} f_y)$ as $N \rightarrow \infty$.
As a preliminary step, 
we obtain the following lemmas.

\begin{lemma}
\label{lemma:main_thm_lemma1}
Let $f_y = \delta_0 - \delta_y$.
With $\mathfrak{C}_{\beta,\Z^2}$ as in \eqref{eq:kC_Z2},
\begin{align}
	(f_y, \mathfrak{C}_{\beta,\Z^2} f_y)
		= \frac{1}{4\pi^2} \int_{[-\pi, \pi]^2} dp \, | 1- e^{-i y \cdot p} |^2 |1- s_0^c (\beta) \gamma \lambda(p)|^2  \frac{ \lambda(p)^{-1} - \gamma }{1 +  s_0^c (\beta) \lambda(p) ( \lambda(p)^{-1} -\gamma )  } 
\end{align}
where $\lambda(p) = 4 - 2\cos (p_1) - 2\cos(p_2)$
is the Fourier transformation of $-\Delta$.
\end{lemma}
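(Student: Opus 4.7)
The plan is to pass to Fourier variables on $\Z^2$. First I would note that each $\Gamma_j(m^2 = 0)$ for $j \geq 1$ is a translation-invariant operator on $\Z^2$ by \eqref{quote:Gamma_two} and the finite-range property \eqref{quote:Gamma_one}, hence so is every summand of $\mathfrak{C}_{\beta,\Z^2} = \sum_{j\geq 1}(1+s_0^c\gamma\Delta)\Gamma_j(1+s_0^c\gamma\Delta)$. Writing $\hat{\mathfrak{C}}(p)$ for its Fourier symbol, Parseval gives
\begin{align}
(f_y,\mathfrak{C}_{\beta,\Z^2}f_y) = \frac{1}{(2\pi)^2}\int_{[-\pi,\pi]^2}|\widehat{f_y}(p)|^2\,\hat{\mathfrak{C}}(p)\,dp,
\end{align}
with $\widehat{f_y}(p) = 1 - e^{-iy\cdot p}$. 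Since $-\Delta$ has symbol $\lambda(p)$, the operator $1 + s_0^c\gamma\Delta$ has real symbol $1 - s_0^c\gamma\lambda(p)$, so the sandwich structure contributes the factor $|1-s_0^c\gamma\lambda(p)|^2$, and it remains to identify the symbol of $\mathfrak{D} := \sum_{j\geq 1}\Gamma_j(m^2=0)$.

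Next I would read off $\hat{\mathfrak{D}}(p)$ from the finite-range decomposition \eqref{eq:frd}. On $\Lambda_N$,
\begin{align}
C(s_0^c,m^2) = \sum_{j=1}^{N-1}\Gamma_j(m^2) + \Gamma_N^{\Lambda_N}(m^2) + t_N(m^2)Q_N,
\end{align}
and $Q_N$, being a multiple of the all-ones matrix, is annihilated by $f_y$ since $\sum_x f_y(x) = 0$. Taking $m^2\downarrow 0$ using \eqref{quote:Gamma_three} and then $N\to\infty$ (the uniform decay in \eqref{quote:Gamma_one} provides the pointwise control needed to pass to the limit in the quadratic form on mean-zero test functions), I obtain $\mathfrak{D}$ as the infinite-volume symbol of $C(s_0^c,m^2=0)$. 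From the definitions $C(m^2) = (-\Delta+m^2)^{-1} - \gamma\operatorname{id}$ and $C(s,m^2)=(C(m^2)^{-1}-s\Delta)^{-1}$ in \eqref{eq:C_m^2_definition}, the Fourier symbol of $C(s,m^2=0)$ is directly
\begin{align}
\bigl((\lambda(p)^{-1}-\gamma)^{-1} + s\lambda(p)\bigr)^{-1} = \frac{\lambda(p)^{-1}-\gamma}{1+s\lambda(p)(\lambda(p)^{-1}-\gamma)}.
\end{align}

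Combining the two ingredients gives exactly the claimed formula. The only point requiring minor care is integrability near $p=0$: the symbol of $\mathfrak{D}$ has a $|p|^{-2}$ singularity inherited from $\lambda(p)^{-1}$, but $|\widehat{f_y}(p)|^2 = |1-e^{-iy\cdot p}|^2 = O(|p|^2)$ cancels it, so the integral converges absolutely and dominated convergence justifies interchanging the $m^2\downarrow 0$ and $N\to\infty$ limits with the integration. There is no substantive obstacle here; the lemma is essentially a rewriting of the construction of $\mathfrak{C}_{\beta,\Z^2}$ in Fourier variables.
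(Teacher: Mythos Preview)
Your proposal is correct and follows essentially the same Fourier-analytic approach as the paper. The only cosmetic difference is that the paper first computes the discrete Fourier transform of $\mathfrak{C}_{\beta,\Lambda_N}$ on the torus $\Lambda_N^*$ (keeping $m^2>0$), obtains $(f_y,\mathfrak{C}_{\beta,\Lambda_N}f_y)$ as a Riemann sum, and then passes to the $N\to\infty$ and $m^2\downarrow 0$ limits to get the integral, whereas you work directly on $\Z^2$ via Parseval and identify the symbol of $\sum_{j\geq 1}\Gamma_j$ with that of $C(s_0^c,0)$; both routes compute the same symbol and handle the $p=0$ singularity by the same cancellation $|1-e^{-iy\cdot p}|^2 = O(|p|^2)$.
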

\begin{proof}
For brevity, we just denote $s = s_0^c (\beta)$.
Since $C(s,m^2)$, $\tilde{C} (s)$ and $\mathfrak{C}_{\beta, \Lambda_N}$ are translation invariant operations, 
we can compute their Fourier transforms.
We denote the Fourier transforms by applying hat($\hat{\cdot}$).
Recalling \eqref{eq:C_m^2_definition} for $C(s,m^2)$,
Proposition~\ref{prop:final_alternative_form_of_mgf} for $\tilde{C} (s) = \lim_{m^2 \downarrow 0} C(s, m^2) - t_N (m^2) Q_N$
and \eqref{eq:kC_definition} for $\mathfrak{C}_{\beta,\Lambda_N}$,
we see in order the Fourier transforms are
\begin{align}
	\hat{C} (s,m^2) (p) 
		&= \Big( \frac{1}{(\lambda (p) + m^2 )^{-1} - \gamma} + s \lambda (p) \Big)^{-1} \\
	\hat{C} (s_0^c) (p)
		&= \lim_{m^2 \downarrow 0} \Big(  \frac{(\lambda(p) +m^2)^{-1} - \gamma }{1 +  s \lambda(p) ( (\lambda(p) +m^2)^{-1} -\gamma )  } - t_N \delta_{0} (p) \Big) \\
	\hat{\mathfrak{C}}_{\beta,\Lambda_N} (p)
		&= |1- s \gamma \lambda(p)|^2  
	\lim_{m^2 \downarrow 0} \Big(  \frac{(\lambda(p) +m^2)^{-1} - \gamma }{1 +  s \lambda(p) ( (\lambda(p) +m^2)^{-1} -\gamma )  } - t_N \delta_{0} (p) \Big)
\end{align}
for $p \in \Lambda_N^* = 2\pi L^{-N} \Lambda_N$, the Fourier dual lattice of $\Lambda_N$.
Since $\hat{f_y} (p) = 1 - e^{-i y \cdot p}$,
we obtain
\begin{align}
	& (f_y, \mathfrak{C}_{\beta,\Lambda_N} f_y) \nnb
	&= \frac{1}{|\Lambda_N|} \sum_{p\in \Lambda_N^*} | 1- e^{-i y \cdot p} |^2 |1- s \gamma \lambda(p)|^2  
	\lim_{m^2 \downarrow 0}  \frac{(\lambda(p) +m^2)^{-1} - \gamma }{1 +  s \lambda(p) ( (\lambda(p) +m^2)^{-1} -\gamma )  } 
	.
\end{align}
We ignored $t_N \delta_{0} (p)$ since $1- e^{-i y \cdot p}$ vanishes at $p =0$.
In the limit $N \rightarrow \infty$,
we can simply replace the discrete sum by a 2-dimensional integral on $[-\pi , \pi )^2$,
thus
\begin{align}
\begin{split}
	& (f_y, \mathfrak{C}_{\beta,\Z^2} f_y) \\
		& \quad = \lim_{m^2 \downarrow 0} \frac{1}{4\pi^2} \int_{[-\pi, \pi]^2} dp \, | 1- e^{-i y \cdot p} |^2 |1- s \gamma \lambda(p)|^2  \frac{(\lambda(p) +m^2)^{-1} - \gamma }{1 +  s \lambda(p) ( (\lambda(p) +m^2)^{-1} -\gamma )  } 
	.
\end{split}
\end{align}
Finally, if we observe $| 1- e^{-i y \cdot p} |^2 = O(|y \cdot p|^2)$ and $\lambda(p) \ge c \norm{p}_2$ as $p \rightarrow 0$ and some $c >0$, 
we see that the limit $m^2 \downarrow 0$ can be exchanged with the integral,
giving the desired conclusion.
\end{proof}

\begin{lemma} \label{lemma:main_thm_lemma2}
Let $f_y = \delta_0 - \delta_y$.
Then for some constant $C_1$,
\begin{align}
	(f_y, \mathfrak{C}_{\beta,\Z^2} f_y) 
		= \frac{1}{1 + s_0^c (\beta)} (f_y, (-\Delta)^{-1} f_y) + C_1 + R(y)
\end{align}
where $R(y) = O(\norm{y}_2^{-2})$ as $\norm{y}_2 \rightarrow \infty$.
\end{lemma}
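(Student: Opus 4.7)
The plan is to substitute the explicit Fourier representation from Lemma~\ref{lemma:main_thm_lemma1} and isolate the Gaussian-free-field singular part of the integrand at $p = 0$. Writing $s = s_0^c(\beta)$ and $\mu = \lambda(p)$, I would define
\begin{align}
g(p) := |1 - s\gamma\mu|^2 \frac{\mu^{-1} - \gamma}{1 + s(1-\gamma\mu)} - \frac{\lambda(p)^{-1}}{1+s} = \frac{1}{\mu}\left[\frac{(1-s\gamma\mu)^2(1-\gamma\mu)}{1 + s(1-\gamma\mu)} - \frac{1}{1+s}\right].
\end{align}
A direct evaluation shows that the bracketed rational function of $\mu$ vanishes at $\mu = 0$, so after dividing by $\mu$ it extends to an analytic function of $\mu$ near $0$. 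Combined with the fact that $1 + s(1-\gamma\mu)$ is bounded away from $0$ on $[-\pi,\pi]^2$ (which holds because $s_0^c(\beta) = O(e^{-c\beta})$ is small and $\gamma < 1/3$ makes $|1-\gamma\mu|$ bounded), this shows $g$ extends to a real-analytic function on the torus.

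Next, I would split the integral from Lemma~\ref{lemma:main_thm_lemma1} as
\begin{align}
(f_y, \mathfrak{C}_{\beta,\Z^2} f_y) = \frac{1}{1+s_0^c(\beta)} \cdot \frac{1}{4\pi^2}\int_{[-\pi,\pi]^2}|1-e^{-iy\cdot p}|^2 \lambda(p)^{-1} dp + \frac{1}{4\pi^2}\int_{[-\pi,\pi]^2} |1-e^{-iy\cdot p}|^2 g(p) dp.
\end{align}
The first term equals $(1+s_0^c(\beta))^{-1} (f_y, (-\Delta)^{-1} f_y)$ via the standard Fourier representation of the lattice Green's function paired against a zero-sum vector (cf.\ \cite{MR4043225}). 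In the second term, I would expand $|1-e^{-iy\cdot p}|^2 = 2 - 2\cos(y\cdot p)$ to write it as $C_1 + R(y)$, where $C_1 = \frac{1}{2\pi^2}\int g(p) dp$ and $R(y) = -\frac{1}{2\pi^2}\int \cos(y\cdot p) g(p) dp$ is a Fourier coefficient of $g$ at integer frequency $y \in \Z^2$. Smoothness of $g$ (in fact real-analyticity) then gives decay faster than any polynomial, and in particular $R(y) = O(\norm{y}_2^{-2})$.

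The only nontrivial point is the cancellation of the apparent pole of $g(p)$ at $p = 0$, which reduces to the algebraic identity that $(1+s)(1-s\gamma\mu)^2(1-\gamma\mu) - (1 + s(1-\gamma\mu))$ vanishes at $\mu = 0$ (both sides equal $1+s$ there); all other steps are routine manipulations of Fourier integrals.
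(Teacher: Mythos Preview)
Your proposal is correct and follows essentially the same route as the paper's proof: both subtract $\frac{1}{(1+s)\lambda(p)}$ from the Fourier integrand of Lemma~\ref{lemma:main_thm_lemma1}, check that the resulting function (your $g$, the paper's $A$) is smooth and $2\pi$-periodic, and then read off the $O(\norm{y}_2^{-2})$ decay of $R(y)$ from the regularity of this function. The only cosmetic difference is that the paper makes the last step explicit---it writes $\norm{y}_2^2 R(y)$ via $-\Delta_p e^{iy\cdot p} = \norm{y}_2^2 e^{iy\cdot p}$ and integrates by parts once against $A$---whereas you invoke the standard fact that smooth periodic functions have rapidly decaying Fourier coefficients; both arguments are the same in substance.
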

\begin{proof}
Again for brevity, denote $s = s_0^c (\beta)$.
Starting from Lemma~\ref{lemma:main_thm_lemma1},
the bound follows from a simple harmonic analysis.
To see this, first denote
\begin{align}
	{A} (p) = |1 - s \gamma \lambda(p) |^2 \frac{ \lambda(p)^{-1} - \gamma }{1 +  s \lambda(p) ( \lambda(p)^{-1} -\gamma )  } - \frac{1}{(1 + s) \lambda(p)} 
\end{align}
so that ${A} (p)$ is an analytic function invariant under $p \mapsto p + 2 \pi \vec{n}$ for $\vec{n} \in \Z^2$.
By Lemma~\ref{lemma:main_thm_lemma1},
\begin{align}
	(f_y, \mathfrak{C}_{\beta,\Z^2} f_y ) = \frac{1}{1 + s} (f_y,  (-\Delta)^{-1} f_y) + \frac{1}{4\pi^2} \int_{[-\pi,\pi]^2} dp \,  |1 - e^{-i y \cdot p}|^2 A(p)
	.
\end{align}
The constant part is simply given by $C_1 = \frac{1}{2\pi^2} \int_{[-\pi,\pi]^2} dp \,  {A} (p)$ and the rest is
\begin{align}
	R (y) = \frac{1}{4\pi^2} \int_{[-\pi,\pi]^2} dp \,  |1 - e^{-i y \cdot p}|^2 A(p) - C_1 = \frac{1}{4\pi^2} \int_{[-\pi,\pi]^2} dp \,   (e^{iy \cdot p} + e^{-iy \cdot p}) A(p)
	.
\end{align}
Since $-\Delta_p e^{iy \cdot p} = \norm{y}_2^2 e^{iy \cdot p}$ (where $-\Delta_p$ is the Laplacian acting on the $p$-variable),
\begin{align}
	\norm{y}_2^2 R(y)
		& = \frac{1}{4\pi^2} \int_{[-\pi,\pi]^2} dp \,   (-\Delta_p) (e^{iy \cdot p} + e^{-iy \cdot p}) A(p) \nnb
		& = \frac{1}{4\pi^2} \int_{[-\pi,\pi]^2} dp \,  (e^{iy \cdot p} + e^{-iy \cdot p})  (-\Delta_p) A(p)
\end{align}
where the second equality follows from integration by parts and the periodicity of $A(p)$.
This is bounded by $\norm{-\Delta_p A(p)}_{L^{\infty} ([-\pi,\pi]^2)}$, 
so we see that $R(y) = O(\norm{y}_2^{-2})$ as desired.
\end{proof}

The final input is a simple statement about the symmetry of $h_{\beta}^{(2)}$.

\begin{lemma} \label{lemma:main_thm_lemma3}
In the setting of Theorem~\ref{thm:main_theorem_generalised},
$h_{\beta}^{(2)} [\f_1, \f_2] (\cz, y)$ is an even function of $\cz$.
\end{lemma}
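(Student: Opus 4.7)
The strategy is to exploit the global $\mathbb{Z}_2$ symmetry $\sigma \mapsto -\sigma$ of the Discrete Gaussian measure and read off evenness of $h^{(2)}_\beta$ from the decomposition \eqref{eq:thm_main_theorem_generalised}, using the decay of $h^{(2)}_\beta$ in $y$ to separate it from the $y$-independent pieces.

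First, I would observe that the map $\sigma \mapsto -\sigma$ preserves the state space $\Omega^{\dg}_{\Lambda_N}=\{\sigma\in(2\pi\Z)^{\Lambda_N}:\sigma(0)=0\}$ (and the gradient quotient used in the infinite volume) as well as the Dirichlet energy $\sum_{x\sim y}(\sigma_x-\sigma_y)^2$. Hence $\P^{\dg}_{\beta,\Lambda_N}$ is invariant under this involution, and for every $\cz\in\C$
\begin{align}
  \langle e^{\beta^{-1/2}\cz(\f,\sigma)} \rangle^{\dg}_{\beta,\Lambda_N}
  = \langle e^{-\beta^{-1/2}\cz(\f,\sigma)} \rangle^{\dg}_{\beta,\Lambda_N},
\end{align}
so the log-moment generating function on the left-hand side of \eqref{eq:thm_main_theorem_generalised} is an even function of $\cz$, and the same holds in the infinite volume limit $N\to\infty$ by the convergence statement of Theorem~\ref{thm:main_theorem_generalised}.

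Next, I would insert this evenness into the decomposition \eqref{eq:thm_main_theorem_generalised}. The quadratic term $\tfrac{1}{2}\cz^2(\f,\mathfrak{C}_{\beta,\Lambda_N}\f)$ is manifestly even in $\cz$, and $\psi_{\beta,\Lambda_N}(\cz,y)=O(|\Lambda_N|^{-\alpha})$ vanishes as $N\to\infty$. Evaluating \eqref{eq:thm_main_theorem_generalised} at $\cz$ and $-\cz$, subtracting, and passing to $N\to\infty$ yields, for every fixed $y$,
\begin{align}
  h^{(2)}_\beta[\f_1,\f_2](\cz,y) - h^{(2)}_\beta[\f_1,\f_2](-\cz,y)
  = \sum_{i=1,2}\bigl( h^{(1)}_\beta[\f_i](-\cz) - h^{(1)}_\beta[\f_i](\cz) \bigr).
\end{align}

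Finally, the right-hand side of the displayed identity is independent of $y$, whereas the left-hand side satisfies $|h^{(2)}_\beta[\f_1,\f_2](\pm\cz,y)| = O_{\beta,R}(\|y\|_2^{-\alpha})$ by the second bullet of Theorem~\ref{thm:main_theorem_generalised}, and hence tends to $0$ as $\|y\|_2\to\infty$. Therefore both sides are identically zero, and in particular $h^{(2)}_\beta[\f_1,\f_2](\cz,y) = h^{(2)}_\beta[\f_1,\f_2](-\cz,y)$, which is the claim. I do not foresee a serious obstacle; the only point requiring care is that the $\sigma\mapsto-\sigma$ symmetry persists under the various reductions (periodic boundary, gradient Gibbs quotient, and the $N\to\infty$ and $m^2\downarrow 0$ limits used to define the infinite volume log-mgf), all of which are routine to verify. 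As a by-product of the argument one also obtains that each $h^{(1)}_\beta[\f_i]$ is itself even in $\cz$.
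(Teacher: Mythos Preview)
Your proof is correct and follows essentially the same approach as the paper: both use the spin-flip symmetry $\sigma\mapsto-\sigma$ to make the log-mgf even in $\cz$, then read off evenness of $h_\beta^{(2)}$ from the decomposition \eqref{eq:thm_main_theorem_generalised}. The only minor tactical difference is in how the $y$-independent pieces $h_\beta^{(1)}[\f_i]$ are shown to be even: the paper isolates $h_\beta^{(1)}[\f_1]$ directly by setting $\f_2\equiv 0$ and invoking the first bullet of Theorem~\ref{thm:main_theorem_generalised} ($h_\beta^{(2)}[\f_1,0]=0$), whereas you use the second bullet (decay of $h_\beta^{(2)}$ as $\|y\|_2\to\infty$) to separate the $y$-dependent from the $y$-independent parts.
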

\begin{proof}
We apply Theorem~\ref{thm:main_theorem_generalised} with $\f = \f_1 + T_y \f_2$ and take $N\rightarrow \infty$.
Since the DG model is invariant under spin flip $\sigma \mapsto -\sigma$,
we see that $\langle e^{\beta^{-1/2} \cz (\f,\sigma)} \rangle_{\beta,\Z^2}^{\dg}$ is an even function of $\cz$.
If we insert $\f_2 =0$, 
we see that
\begin{align}
	h_{\beta}^{(1)} [\f_1] (\cz) = \log \langle e^{\beta^{-1/2} \cz (\f_1, \sigma)} \rangle_{\beta,\Z^2}^{\dg} - \frac{1}{2} \cz^2 (\f_1, \mathfrak{C}_{\beta,\Z^2} \f_1)
\end{align}
is also an even function and similarly, $h_{\beta}^{(2)} [\f_2] (\cz)$ is even.
Thus the remaining part $h_{\beta}^{(2)} [\f_1, \f_2] (\cz,y)$ is naturally also an even function of $\cz$.
\end{proof}

\begin{proof}[Proof of Theorem~\ref{thm:main_theorem}]

For $y \in \Z^2$, 
by Lemma~\ref{lemma:main_thm_lemma2} and \eqref{eq:mgf_gff},
\begin{align}
	(f_{y}, \mathfrak{C}_{\beta,\Z^2} f_{y}) 
	& = \frac{1}{1 + s_0^c (\beta)} \big( f_{y} ,  (-\Delta)^{-1} f_{y} \big) + C_1 + R(y) \nnb
	& = \frac{1}{\pi (1 + s_0^c (\beta))} \log \norm{y}_2 + C_2 + R_2 (y)
\end{align}
for some constants $C_1, C_2$ and $R_2 (y) = O(\norm{y}_2^{-\alpha'})$ for some $\alpha' >0$ as $\norm{y}_2 \rightarrow \infty$.
So Theorem~\ref{thm:main_theorem_generalised} with $\f = f_y = \delta_0 - \delta_y$ gives
\begin{align}
	\log \langle e^{\beta^{-1/2} \cz (\sigma(0)- \sigma(y))} \rangle_{\beta, \Z^2} & = C_3 (y) +  \frac{1}{2 \pi (1 + s_0^c (\beta))} \cz^2 \Big( \log \norm{y}_2 + C_4 + r_{\beta} (\cz,y)  \Big)
	\nnb
	& \quad + \big( h_{\beta}^{(1)} [\delta_0] (\cz) + h_{\beta}^{(1)} [-\delta_0] (\cz) - h_{\beta}^{(1)} [\delta_0] (0) - h_{\beta}^{(1)} [-\delta_0] (0 ) \big)
	\label{eq:main_theorem_conclusion}
\end{align}
for some function $C_3(y)$ and a constant $C_4$ where $C_3$ absorbed $\cz =0$ parts of $h_{\beta}^{(\alpha)}$'s and
\begin{align}
	\frac{r_{\beta} (\cz,y)}{2 \pi (1 + s_0^c (\beta))} = R_2 (y) + \frac{h_{\beta}^{(2)} [\delta_0, -\delta_0] (\cz,y) -  h_{\beta}^{(2)} [\delta_0, -\delta_0] (0,y) }{\cz^2} = O(\norm{y}_2^{-\alpha})
	,
\end{align}
which is again an analytic function of $\cz$ due to Lemma~\ref{lemma:main_thm_lemma3}.
But since the left-hand side of \eqref{eq:main_theorem_conclusion} vanishes when $\cz =0$,  we should have $C_3 (y) \equiv 0$.
We have the desired conclusion once we let
\begin{align}
	f_{\beta} (\cz, \infty) &=  \frac{C_4  \cz^2  }{2 \pi (1 + s_0^c (\beta))}  +  \big( h_{\beta}^{(1)} [\delta_0] (\cz) + h_{\beta}^{(1)} [-\delta_0] (\cz) - h_{\beta}^{(1)} [\delta_0] (0) - h_{\beta}^{(1)} [-\delta_0] (0 ) \big) \\
	f_{\beta} (\cz, y) &= \frac{\cz^2  }{2 \pi (1 + s_0^c (\beta))} r_{\beta} (\cz,y)  + f_{\beta} (\cz, \infty) .
\end{align}
\end{proof}

\subsection{Tilted expectation as shifted environment}

In the rest of the section, we give a brief overview of how Proposition~\ref{prop:overview_of_the_proof} is proved. 

As is explained in Remark~\ref{remark:shifted_environment_warning}, the complex shift will have to be dealt by making a multi-scale decomposition. Namely, we consider
\begin{align}
	\E^{\bar{\zeta}}_{(\tau),\tilde{C} (s)} [F(\bar{\zeta})] = \E^{\zeta_N}_{(\tau), \Gamma_N^{\Lambda_N}}  \E^{\zeta_{N-1}}_{(\tau),\Gamma_{N-1}} \cdots \E^{\zeta_1}_{(\tau), \Gamma_1} [F(\zeta_1 + \cdots + \zeta_N)]
	.
\end{align}
It will turn out in Lemma~\ref{lemma:Et_by_complex_shift} that, under mild conditions on some function $F_j$ analytic on a neighbourhood of the real space, that $\E^{\zeta_j}_{(\tau), \Gamma_{j + 1}} \big[  F_j ( \varphip + \zeta_j ) \big] = \E^{\zeta_j}_{\Gamma_{j + 1}} \big[  F_j ( \varphip + \zeta_j + \tau \Gamma_{j+1} \tilde{\f}  ) \big]$,
so the tilted expectation can be written as an ordinary expectation with shifted environment interchangeably. 
Hence we prepare a lemma that controls the complex shifts seen in each renormalisation group step.

\begin{definition} \label{def:extfield_def}
For $\f = \f_1 + T_y \f_2$ as in \eqref{quote:assumpf}, $s\in \R$
and $\alpha \in \{1,2 \}$,
let  $(u_{j, \alpha})_{j\geq 1}$ be defined by 
    \begin{align} \label{eq:extfield_def}
    u_{j,\alpha}  & =
    \begin{cases}
      \gamma \f_{\alpha} & \text{if} \;\;  j=0 \\
      \Gamma_{j} (1 +s\gamma\Delta )\f_{\alpha} & \text{if} \;\;  1 \leq j < N \\
      \Gamma_N^{\Lambda_N} (1 +s\gamma\Delta )\f_{\alpha} & \text{if} \;\;  j=N.
    \end{cases}
	\end{align}
	and $u_j = u_{j,1} + T_y u_{j,2}$.	
\end{definition}

Since $\tilde{C} (s) = \sum_{j=1}^{N-1} \Gamma_j + \Gamma_N^{\Lambda_N}$, 
if we let $\tilde{\f}_{\alpha} = (1+ s\gamma \Delta) \f_{\alpha}$, 
these definitions say
\begin{align}
{\textstyle \sum_{j=1}^{N} } u_{j, \alpha} =  \tilde{C} (s)   \tilde{\f}_{\alpha}, \quad \alpha \in \{1,2 \}. \label{eq:extfield_defining_property}
\end{align}
They also satisfy the following estimates,
where $M$ and $\rho$ are as in \eqref{quote:assumpfa}.

\begin{lemma} 
\label{lemma:extfield_bound}

If $L\geq 12(\rho +2)$,
then there exists $C_n  \equiv C_n (M,\rho) > 0$ for each $n\geq 0$ such that, 
for each multi-index $\vec{\mu} = (\mu_1, \cdots, \mu_n) \in \hat{e}^n$
and $\alpha \in \{1,2\}$,
\begin{align}
	\norm{\nabla^{\vec{\mu}} u_{j,\alpha}}_{L^{\infty}} \leq
	\begin{array}{ll}
	\begin{cases}
	C_0 M \rho^2  \log L & \text{if} \;\; n = |\vec{\mu} |=0 \\
	C_n M \rho^2 L^{-n  ( 0 \vee (j-1) )  }  & \text{if} \;\; n = |\vec{\mu} | \geq 1
	\end{cases}
	\end{array}
	\label{eq:u_j,alpha_bound}
\end{align}
and $\operatorname{supp} (u_{j,\alpha}) \subset B_0^j$, where $B_0^j$ is the unique $j$-scale block that contains $0$.
\end{lemma}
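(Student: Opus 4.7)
My plan is to read off both conclusions from the finite-range decay estimates of $\Gammaone$ applied to the auxiliary field $g_\alpha := (1+s\gamma\Delta)\f_\alpha$. As a preparatory step, I would record that, since $\operatorname{supp}\f_\alpha$ has diameter at most $\rho$ and contains $0$, it sits inside the $\ell^\infty$-ball of radius $\rho$ around $0$; the discrete Laplacian enlarges a support by at most one lattice step, so $\operatorname{supp} g_\alpha$ is contained in the $\ell^\infty$-ball of radius $\rho+1$ around $0$, has cardinality at most $(2\rho+3)^2$, and satisfies $\norm{g_\alpha}_{L^\infty}\le(1+4|s|\gamma)M\le CM$ uniformly in $|s|\le\epsilon_s$.

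For the support claim, the finite-range property of $\Gammaone$ gives $\Gamma_j(x,y)=0$ (and similarly for $\Gamma_N^{\Lambda_N}$) whenever $\operatorname{dist}_\infty(x,y)\ge L^j/4$. Hence $u_{j,\alpha}(x)\ne 0$ forces $\operatorname{dist}_\infty(x,\operatorname{supp} g_\alpha)<L^j/4$, so $\norm{x}_\infty<\rho+1+L^j/4$. The assumption $L\ge 12(\rho+2)$ yields $L^j\ge 12(\rho+1)$ for $j\ge 1$, so $\rho+1+L^j/4\le L^j/3\le\operatorname{dist}_\infty(0,(B_0^j)^c)$, proving $\operatorname{supp} u_{j,\alpha}\subset B_0^j$.

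For the pointwise bounds I would simply write
\begin{equation*}
\nabla^{\vec{\mu}}u_{j,\alpha}(x) \;=\; \sum_{y\in\operatorname{supp} g_\alpha}\nabla_x^{\vec{\mu}}\Gamma_j(x,y)\,g_\alpha(y),
\end{equation*}
and insert the pointwise estimates from $\Gammaone$. The sum has at most $(2\rho+3)^2$ nonzero terms, each $g_\alpha$-factor bounded by $CM$. For $n=|\vec{\mu}|=0$ this yields $|u_{j,\alpha}(x)|\le C\rho^2 M\cdot\log L$, matching the first branch of \eqref{eq:u_j,alpha_bound}; for $n\ge 1$ it yields $|\nabla^{\vec{\mu}}u_{j,\alpha}(x)|\le C\rho^2 M\cdot L^{-n(j-1)}$, the second branch. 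The case $j=N$ is identical with $\Gamma_N^{\Lambda_N}$ in place of $\Gamma_j$, using that $\Gammaone$ provides the same bounds uniformly; and (if included) $j=0$ reduces to $u_{0,\alpha}=\gamma\f_\alpha$ and is immediate.

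Since every ingredient is supplied by $\Gammaone$ and the explicit assumptions on $\f_\alpha$, I do not anticipate any real technical obstacle. The only point that must be tracked carefully is that the multiplier $1+s\gamma\Delta$ preserves compact support up to one lattice step and $L^\infty$-norm up to a universal constant (using $|s|\le\epsilon_s$); this is what allows the factors $M$ and $\rho$ to propagate through as the simple prefactors claimed in \eqref{eq:u_j,alpha_bound}.
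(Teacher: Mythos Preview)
Your proposal is correct and follows essentially the same route as the paper: both arguments bound the convolution $\Gamma_j(1+s\gamma\Delta)\f_\alpha$ pointwise by counting the at most $O(\rho^2)$ points in the support of $(1+s\gamma\Delta)\f_\alpha$ and inserting the decay bounds from \eqref{quote:Gamma_one}, and both deduce the support claim from the finite-range property together with $\operatorname{dist}_\infty(0,(B_0^j)^c)\ge L^j/3$. The only cosmetic difference is that the paper treats $\Gamma_j\f_\alpha$ and $\Gamma_j\Delta\f_\alpha$ separately rather than packaging them as $g_\alpha$.
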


\begin{proof}
When $j=0$, the bounds are direct from the definition.  When $j \in \{1, \cdots, N-1\}$, for each $\alpha \in \{1,2\}$,
\begin{align}
	\norm{\nabla^{\vec{\mu}} \Gamma_j \f_{\alpha} }_{L^{\infty}} 
	= \sup_{x\in \Lambda_N} \big| \sum_{y\in \Lambda_N} \nabla^{\vec{\mu}} \Gamma_j (x-y,0) \f_{\alpha}(y) \big| 
	\leq M \rho^2 \norm{\nabla^{\vec{\mu}} \Gamma_j}_{L^{\infty}}
	,
\end{align}
while by \eqref{quote:Gamma_one}, 
$\norm{\nabla^{\vec{\mu}} \Gamma_j}_{L^{\infty}} \leq C L^{-n (j-1)}$ for $n\geq 1$ and $\norm{\Gamma_j}_{L^{\infty}} \leq C  \log L$ for $n=0$. 
The same method also bounds $\norm{ \nabla^{\vec{\mu}} \Gamma_j \Delta \f_{\alpha} }_{L^{\infty}}$, and these are enough to bound $\norm{\nabla^{\vec{\mu}} u_{j,\alpha}}_{L^{\infty}}$.
Finally, for $j = N$,  $\nabla^{\vec{\mu}} \Gamma_{N}^{\Lambda_N}$ satisfies bounds analogous to \eqref{eq:Gamma_j_decay},  so we have the desired bounds.

To see that $\operatorname{supp} (u_{j,\alpha}) \subset B_0^j$, 
recall $\Gamma_j (x,y) =0$ whenever $\operatorname{dist}_{\infty} (x,y) \geq \frac{1}{4} L^j$,  
so $u_{j,\alpha} (x) = 0$ whenever $\norm{x}_{\infty} \geq \rho + 2 + \frac{1}{4} L^j$. 
But by the definition of $B_0^j$, we have $\dist_{\infty} (0,  (B_0^j )^c) \geq \frac{L^j}{3}$, so $u_{j,\alpha} (x) = 0$ for any $x \not\in B_0^j$ and whenever $L \geq 12(\rho +2)$.
\end{proof}

Although the lemma is proved for general $|\vec{\mu}| = n \geq 0$, we actually only need this result for $n \leq 2$. 
That is, we only use the fact that
\begin{align}
	\norm{u_{j,\alpha} }_{C_j^2} 
	= \max_{n=0,1,2} \sup_{\vec{\mu} \in \hat{e}^n}  L^{nj}  \norm{\nabla^{\vec{\mu}} u_{j,\alpha} }_{L^{\infty}} 
\end{align}
is bounded by a uniform constant.
The following assumption on $(u_j)_{\geq 0}$'s can be used as well as \eqref{quote:assumpf} when $\f_j$'s 
are not mentioned directly. 

\begin{equation} \stepcounter{equation}
	\tag{\theequation $\assumpu$} \label{quote:assumpu}
	\parbox{\dimexpr\linewidth-4em}{%
	Let $M, \rho >0$.
	The sequence $(u_j)_{j\geq 1}$ has decomposition $u_j = u_{j, 1} + T_y u_{j, 2}$ where $u_{j,1}$ and $u_{j, 2}$ satisfy the following:
	for $\alpha \in \{1,2\}$,
	there exists $C > 0$
	such that $\norm{u_{j, \alpha}}_{C_{0 \vee (j-1)}^2} \leq C M \rho^2 \log L$ for each $j\leq N$,
	and $u_{j, \alpha}$ is supported on the unique $B^j_0 \in \cB_j$ 
	such that $0\in B^j_0$.
	}
\end{equation}

Note that $\supp (u_{j,2}) \subset B_0^j$ also implies
\begin{align}
	\supp (T_y u_{j,2}) \subset Q_y^j .
\end{align}
Thus by the definition of the coalescence scale, \eqref{eq:joy_defi}, 
we have $\supp (u_{j,1}) \, \cap \, \supp (T_y u_{j,2}) = \emptyset$ for $j < j_{0y}$.

\subsection{Effective potentials}
\label{sec:effpt}

An important implication of the covariance decomposition \eqref{eq:frd} is that it can be used to define the RG flow. 
Given $0^{th}$ scale partition function function $Z_0 : \C^{\Lambda_N} \rightarrow \C$, \eqref{eq:Z_0_definition}, 
the partition functions of scale $j$ are $Z^{\rr}_j : \R^{\Lambda_N} \times \D_{\htau} \rightarrow \C$ defined recursively by
\begin{align}
	Z_0^{\rr} (\varphi ; \tau) = Z_0 (\varphi + (\rr + \tau) u_0),
	\qquad \varphi \in \R^{\Lambda_N}
\end{align}
and for $\varphip \in \R^{\Lambda_N}$,
\begin{align}
& Z_{j+1}^{\rr} (\varphip ; \tau) 
= \begin{array}{ll}
\begin{cases}
\E_{\Gamma_{j+1}, (\rr + \tau)} [Z^{\rr}_j (\varphip + \zeta ; \tau)] 
& \text{if} \;\;  0 \leq j \leq N-2 \\
\E_{\Gamma_{N}^{\Lambda_N}, (\rr + \tau)} [Z^{\rr}_{N-1} (\varphip + \zeta  ; \tau)] 
& \text{if} \;\;  j  = N-1 \\
\end{cases}
\end{array}
\label{eq:bulk_RG_flow}
\end{align}
where the tilted expectations acts on $\zeta$.
Recall that $h_j := \log Z_j^{0} (\cdot ; 0)$ was called the effective potential, as it represents the renormalised theory at scale $j$.
Thus $\log Z_j^{\rr} (\cdot ; \tau)$ can be thought as a perturbation of the effective potential.
Note that
\begin{align}
	\E_{\Gamma_{j+1}, (\rr + \tau)} [Z_j^{\rr} (\varphip + \zeta ; \tau )] =	\E_{\Gamma_{j+1}, (\tau)} [Z_j^{\rr} (\varphip + \zeta + \rr u_{j+1} ; \tau )],
\end{align}
and similar holds for $\E_{\Gamma_N^{\Lambda} ,(\rr+ \tau)} [\cdots]$,
so by \eqref{eq:extfield_defining_property},
\begin{align}
& \E^{\zeta}_{ \tilde{C}^{\Lambda_N} (s), (\tau)} \big[ Z_0 (\varphip + \zeta + \cz \gamma \f + \rr {\tilde{C}(s)} \tilde{\f}) \big] = Z^{\rr}_N  (\varphip ; \tau) 
\label{eq:bulk_rg_final1}
\\
& \E^{\zeta}_{ \tilde{C}^{\Lambda_N} (s)} \big[ Z_0 (\varphip + \zeta ) \big]= Z^{0}_N (\varphip ; 0) .
\label{eq:bulk_rg_final2}
\end{align}
Thus $F_{N,m^2}$ of Proposition~\ref{prop:overview_of_the_proof} and $Z_N^{\rr}$ are related by
\begin{align}
	F_{N,m^2} [\f] (\rr,\tau) = \frac{ \E^{\varphi}_{t_N Q_N} Z_N^{\rr} (\varphi ; \tau) }{\E^{\varphi}_{t_N Q_N} Z_N^0 (\varphi ; 0)}
	.
\end{align}

\subsection{Renormalisation group coordinates}
\label{sec:Renormalisation_group_coordinates}

We seek for a representation of $Z^{\rr}_j (\varphi ; \tau)$ given by $j$-scale polymer expansion
\begin{align}
	Z^{\rr}_j (\varphi ; \tau) 
	= e^{-E_j |\Lambda_N | + g_j^{\rr} (\Lambda_N ; \tau) } \sum_{X\in \cP_j} e^{U_j (\Lambda \backslash X , \varphi + (\rr + \tau) u_{j})} K^{\rr}_j (X, \varphi ; \tau), 
	\quad 
	j\geq 1
	\label{eq:Z_j_generic_expression}
\end{align}
for RG coordinates $(E_j, g_j^{\rr}, U_j, K_j^{\rr})$, elements of normed spaces defined in Section~\ref{sec:polymer_activities_and_norms}. We give only a brief overview here:
$E_j$ is a real number, 
$g^{\rr}_j (\Lambda_N ; \tau)$ is an analytic function of $\tau \in \D_{\htau}$ and
\begin{align}
& U_j (X, \varphi) = \sum_{x\in X} \frac{1}{2} s_j |\nabla \varphi (x)|^2 +  W_j (X, \varphi) 
\label{eq:U_j_form}  \\
& W_j (X, \varphi) = \sum_{q\geq 1} L^{-2j} z_j^{(q)} \cos \big( q \beta^{1/2} \varphi(x) \big)
\label{eq:W_j_form}
\end{align}
for any $\varphi \in \C^{\Lambda}$ and some $s_j \in \R$, $(z_j^{(q)})_{q\geq 1} \subset \R$.
The polymer activity $K^{\rr}_j (X, \varphi ; \tau)$ parametrises the deviation of $Z^{\rr}_j$ from $e^{U_j}$ (up to a constant multiple).
The domain of $\varphi$-component of $K_j^{\rr}$ is initially $\R^{\Lambda}$, but it will be extended to some strip of $\C^{\Lambda_N}$ containing $\R^{\Lambda_N}$,
whose specification becomes important when we discuss the analyticity of $K_j^{\rr}$.

Section~\ref{sec:generic_rg_step} gives an explicit construction of the RG coordinates in scale $j+1$, given the coordinates in scale $j$. This is called the RG map. 
Namely, if there exists a specific choice of $(E_{j'}, g^{\rr}_{j'}, U_{j'}, K^{\rr}_{j'})_{j' \leq j}$ satisfying \eqref{eq:bulk_RG_flow} and \eqref{eq:Z_j_generic_expression}, 
then $(E_{j+1}, g^{\rr}_{j+1}, U_{j+1}, K^{\rr}_{j+1})$ 
with certain contraction properties is constructed via the RG map
\begin{align}
	\Phi^{\rr}_{j+1} : (U_j, K^{0}_j,K^{\rr}_j) \rightarrow (E_{j+1} - E_j , g^{\rr}_{j+1} - g^{\rr}_{j}, U_{j+1}, K^{0}_{j+1},K^{\rr}_{j+1}) 
	\label{eq:rg_map_first_view}
	.
\end{align}
The construction is based on operations defined in Section~\ref{sec:loc_and_reblocking}.
Key analytic properties of $\Phi^{\rr}_{j+1}$ are also stated Section~\ref{sec:generic_rg_step}, but we defer the technical  proof to Section~\ref{sec:proof-of-theorem-thm-local-part-of-K}. A reader not familiar with the content of \cite{dgauss1} may skip that section on the first read. 

In general, the existence of the maps $\Phi^{\rr}_{j+1}$ is not guaranteed for arbitrary large choice of $N$ and $j \leq N$.
The content of the first part of Section~\ref{sec:renormalisation-group-computation} is to prove such an existential theorem.
This theorem relies on the `stable manifold' theorem of
  \cite{dgauss1}--also see Section~\ref{sec:vacuum_energy}--where the existential theorem is obtained with
$\cz =\rr=\tau=0$,  $g_j^{\rr} \equiv 0$,$s_j, (z_j^{(q)})_{q\geq 0}, K_j^{\rr} \rightarrow 0$ (in norms specified later) and a specific initial condition.
Thus we only have to show the same convergence is also valid with the same initial condition and general $\rr \in \R$, $\tau \in \D_{\htau}$. 

The second half of Section~\ref{sec:renormalisation-group-computation} concludes the proof of Proposition~\ref{prop:overview_of_the_proof}.
By \eqref{eq:bulk_rg_final1}, \eqref{eq:bulk_rg_final2} and \eqref{eq:Z_j_generic_expression}, 
the ratio of partition functions in \eqref{eq:final_alternative_form_of_mgf} will converge to $\lim_{j\rightarrow \infty} \exp( g^{\rr}_{j} )$ as $N\rightarrow \infty$. 
Since the multi-scale polymer expansion \eqref{eq:Z_j_generic_expression} has strong dependence on the structure of the grids, the conclusion will be subject to the fact that the limit $\lim_{j\rightarrow \infty} g^{\rr}_{j}$ is free of the grid bias.
Indeed, this will be seen to be the case using a simple argument involving the translation invariance of the system.

\section{Polymer activities and norms}
\label{sec:polymer_activities_and_norms}

As in \eqref{eq:Z_j_generic_expression},  the partition function at scale $j$ is parametrised by coordinates $E_j, g_j, U_j$ and $K_j^{\rr}$.
While the form of $U_j$ is restricted by \eqref{eq:U_j_form},
$K^{\rr}_j$ can be a polymer activity of fairly high degree of freedom.  
In this section, we recall the norms defined on the polymer activities $U_j$ and $K^{\rr}_j (X, \cdot ; \cdot)$ from \cite{dgauss2}
and generalise them so we can encode information about analyticity in the parameter $\tau$.

\subsection{Norms}

We recall the norm on polymer activities from \cite{dgauss1} using the \emph{regulator} $G_j$ in the following definition.
A number of parameters, $A,h, c_{\kappa}, c_w,c_2 >0$ and $\kappa = c_{\kappa} (\log L)^{-1}$ will be appearing in the definition of the norm. 
The choice of these parameters are summarised in Section~\ref{sec:choice_of_parameters}.

\begin{definition}
\label{def:regulator}
For $X\in \cP_j$ and $\varphi \in \R^{\Lambda_N}$, define the regulator at scale $j$ by
\begin{align}
\begin{split}
	G_j ( X, \varphi ) &= \exp\Big( \kappa \big( \norm{\nabla_j \varphi }^2_{L^2_j (X)}
	+ c_2  \norm{\nabla_j \varphi }^2_{L^2_j (\partial X)} +  W_j(X,  \nabla^2_j \varphi )^2 \big) \Big) 
	\label{eq:regulator_definition}
\end{split}
\end{align}
where
\begin{align}
	W_j (X,  \nabla^2_j \varphi)^2 = \sum_{B\in \cB_j (X)} \norm{\nabla^2_j \varphi   }^2_{L^{\infty} (B^*)} .
\label{eq:W^r_j_definition}
\end{align}
Also define
\begin{align}
w_j (X, \varphi)^2 =  \sum_{B\in \cB_j (X)} \max_{n=1,2} \norm{\nabla^n_j \varphi}^2_{L^{\infty}} 
.
\end{align}
\end{definition}

The role of $w_j$ can be explained in terms of submultiplicativity as the following.
While
\begin{align}
	X\cap Y = \emptyset 
	\quad \not\Rightarrow \quad 
	G_j (X, \varphi) G_j (Y, \varphi) \leq C G_j (X \cup Y , \varphi)
\end{align}
for any $C >0$,
it is true, by \cite[Lemma~5.8]{dgauss1}, 
for $X, Y \in \cP_j$ such that $X \cap Y = \emptyset$,
\begin{align}
	e^{c_w \kappa w_j (X, \varphi)^2} G_j (Y, \varphi) \leq G_j (X \cup Y, \varphi)
	\label{eq:strong_regulator_key_bound}
\end{align}
whenever $c_w$ is chosen sufficiently small.  In particular, we have $e^{c_w \kappa w_j (X, \varphi )^2} \leq G_j (X , \varphi)$, and for this reason, 
we call $e^{c_w \kappa w_j (X, \varphi )^2}$ the \emph{strong regulator}. 

Then we define seminorms on polymer activities,
where we recall that $\cN_j$ defined in Definition~\ref{def:polymer_activities}.

\begin{definition}
For $F \in \cN_j$,  define 
\begin{align}
	\norm{D^n F(X,\varphi) }_{n, T_j (X, \varphi) } &= \sup\{ |D^n F (X,\varphi) (f_1, \cdots, f_n)|  \, : \, \norm{f_k}_{C_j^2 (X^*)} \leq 1 \; \text{for each} \; k \} \\
	\norm{F(X, \varphi)}_{h, T_j (X, \varphi)} &= \sum_{n=0}^{\infty} \frac{h^n}{n!} \norm{D^n F (X, \varphi)}_{n, T_j (X, \varphi)}
	.
\end{align}
\end{definition}

We also use polymer activities that are analytic in parameter $\tau$, and the corresponding seminorm and norms.

\begin{definition}
For $\ctau >0$ a constant,  let $\htau = \ctau (\log L)^{-3/2}$.
Define the space $W^+ (\D_{\htau})$ to be the space of holomorphic functions $f: \D_{\htau} \rightarrow \C$ such that the norm
\begin{align}
	\norm{f}_{\htau,  W} = \sum_{n=0}^{\infty} \frac{\htau^n}{n!} \Big| \partial_{\tau}^n f (\tau) |_{\tau = 0}  \Big|
\end{align}
is finite. 
For $A' >0$, $F \in \cN_{j, \kt}$ and $\vec{h} = (h, \htau)$, define
\begin{align}
	& \norm{F (X, \varphi ; \cdot )}_{\vec{h},  T_j (X, \varphi) } = \sum_{n=0}^{\infty} \frac{\htau^n}{n!} \big\| \partial_{\tau}^n F (X, \varphi; \tau) |_{\tau = 0} \big\|_{h, T_j (X, \varphi)} \\
	& \norm{F (X, \cdot ; \cdot)}_{\vec{h}, T_j (X) } = \sup_{\varphi \in \R^{\Lambda_N}} G_j (X, \varphi)^{-1} \norm{F (X, \varphi  ; \cdot)}_{\vec{h}, T_j (X, \varphi)}
	\label{eq:Xnorm_defi}	
	 \\
	& \norm{F}_{\vec{h}, T_j, A'} = \sup_{X\in \Conn_j (\Lambda_N)}  (A')^{|X|_j} \norm{F (X ; \cdot)}_{\vec{h}, T_j (X)}  . 
\end{align}
\end{definition}

While $\norm{\cdot}_{\htau, W}$ and $\norm{\cdot}_{\vec{h}, T_j (X,\varphi)}$ are independent of choice of $\tau$, 
we sometimes make $\tau$ explicit as an argument of $F$ when we want to make clear in which variable we are bounding the complex derivative.

When $A' = A$ 
(recall that $A$ is a constant fixed in Section~\ref{sec:choice_of_parameters}),
then the suffix $A'$ will often be omitted, i.e., $\norm{K_j (X ; \cdot)}_{\vec{h}, T_j} = \norm{K_j (X ; \cdot)}_{\vec{h}, T_j, A}$.
$\norm{F(X)}_{h, T_j (X)}$ and $\norm{F}_{h, T_j}$ (without $\tau$-dependence) are defined accordingly.

\begin{remark} \label{remark:new_norm}
Another way to think about the norm $\norm{K_j (X)}_{\vec{h}, T_j (X)}$ is to add an `external point' $\bigtriangleup$ and let $(\varphi, \tau)$ be a real-valued field on $\Lambda_N \cup  \bigtriangleup $. In other words,  $(\varphi, \tau) \in \R^{\Lambda_N} \oplus \R^{ \bigtriangleup }$.  
Also,  since we have set $\htau = \ctau (\log L)^{-3/2} $, so if $\R^{\bigtriangleup}$ is endowed with the norm $|x|_{\bigtriangleup} = h \ctau^{-1} (\log L)^{3/2} |x|$,  then 
\begin{align}
	\norm{K_j (X, \varphi ; \cdot)}_{\vec{h}, T_j (X, \varphi)} = \norm{ \bar{K}_j (X,  (\varphi, \tau) ) }_{h, T_j (X,  (\varphi, \tau) )} \big|_{\tau=0}
\end{align}
where $\bar{K}_j$ is a polymer activity on $\cP_j \times ( \R^{\Lambda_N} \times \R^{\bigtriangleup} )$ defined by
$
\bar{K}_j (X,  (\varphi, \tau)) = K_j (X, \varphi ; \tau)
$. 
Using this observation, we can re-derive most of the results on $\norm{\cdot}_{h, T_j (X, \varphi)}$, obtained in \cite{dgauss1},
for $\norm{\cdot}_{\vec{h}, T_j (X, \varphi)}$.
In this concern, we will mention that certain results of \cite{dgauss1} can be translated into results here, 
often without justifications in full detail.  
Vice versa,  most bounds proved for $\norm{\cdot}_{\vec{h}, T_j (X, \varphi)}$ should also be true for $\norm{\cdot}_{h, T_j(X, \varphi)}$
if there is no specific dependence on $\tau$. 
\end{remark} 

For $U_j$ in a restricted form, we can define and use a stronger norm. 

\begin{definition}
\label{def:U_j_norm}
Let $c_f = \frac{\gamma}{4}$. 
Given $( s_j ,  (z_j^{(q)})_{q \geq 1} ) \in \R \times \R^{\N}$ and $U_j$ given by \eqref{eq:U_j_form}--\eqref{eq:W_j_form}, define 
\begin{align}
	\norm{U_j}_{\Omega_j^U} =
	{ A }	 
	\max\Big\{ |s_j|,   \sup_{q \geq 1} e^{c_f \beta q} |z_j^{(q)}|  \Big\}
	.
\end{align}
\end{definition}

It is known from \cite[Lemma~7.4, Lemma~7.13]{dgauss1} with the choice of $\beta$ and $c_h$ as in Section~\ref{sec:choice_of_parameters}, that
\begin{align}
	\Big\| \frac{1}{2} s_j |\nabla \varphi|^2_B \Big\|_{{2 h}, T_j (B, \varphi)} 
	\leq C |s_j| w_j (B, \varphi)^2 , 
	\qquad 
	\norm{W_j (B, \varphi)}_{{2 h} , T_j (B, \varphi)} 
	\leq C { A^{-1} } \norm{W_j}_{\Omega_j^U}.
\label{eq:components_of_U_j_bound}
\end{align}
($h$ is replaced by $2h$ here, which is okay if we choose $C$ sufficiently large and $\beta \geq 8 c_f^{-1}$.)

\subsection{Key inequalities}

We highlight the most important properties of the norms defined in the previous section.
Whenever we are talking about $j$-scale polymer activities,  $\E$ means we are taking expectation over $\zeta \sim \cN (0, \Gamma_{j+1})$ if $j+1 < N$ and $\zeta \sim \cN (0, \Gamma_{N}^{\Lambda_N})$ if $j+1 = N$. The non-random part of the field is often denoted $\varphip$.

\begin{lemma}[Submultiplicativity of the seminorm]
\label{lemma:submultiplicativity_semi-norm}
For $\varphi \in \R^{\Lambda_N}$ and $Y_1,Y_2,X \in \cP_j$ such that $Y_1, Y_2 \subset X$, 
suppose $F_1 \in \cN_{j,\kt} (Y_1)$, $F_2 \in \cN_{j, \kt} (Y_2)$ with $\norm{F_{\alpha} (Y_{\alpha}, \varphi)}_{\vec{h}, T_j (Y_{\alpha}, \varphi)} < \infty$,  $\alpha \in \{1,2\}$.  Then
\begin{align}
	\norm{F_{1} (Y_1, \varphi ; \cdot ) F_{2} (Y_2, \varphi ; \cdot )}_{\vec{h}, T_j (X, \varphi)} \leq \norm{F_{1} (Y_1, \varphi ; \cdot ) }_{\vec{h}, T_j (Y_1, \varphi )} \norm{ F_{2} (Y_2, \varphi ; \cdot )}_{\vec{h}, T_j (Y_2, \varphi)} .
\label{eq:submultiplicativity}
\end{align}
Also for each $k\geq -1$,
\begin{align}
	\Big\| e^{F_1 (X, \varphi; \cdot)} - \sum_{m=0}^k \frac{1}{m!} (F_1 (X, \varphi ;\cdot))^m  \Big\|_{\vec{h}, T_j (X,\varphi)} \leq \sum_{m=k+1}^{\infty} \frac{1}{m!} \norm{F_{1} (X, \varphi ; \cdot) }_{\vec{h}, T_j (X, \varphi)}^m
\label{eq:exp_taylor_bound}
\end{align}
with convention $\sum_{m=0}^{-1} (\cdots) \equiv 0$.
\end{lemma}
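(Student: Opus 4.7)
The plan is to reduce both inequalities to the corresponding statements for the plain $h$-norm $\norm{\cdot}_{h, T_j(X, \varphi)}$, which are standard (see e.g. the analogous statements in \cite{dgauss1}), and then add a Leibniz step in the $\tau$-variable. In fact, Remark~\ref{remark:new_norm} already suggests the cleanest route: adjoin a formal ``external point'' $\bigtriangleup$ carrying a real coordinate $\tau$, and identify $\norm{F(X,\varphi;\cdot)}_{\vec{h}, T_j(X,\varphi)}$ with $\norm{\bar F(X,(\varphi,\tau))}_{h, T_j(X,(\varphi,\tau))}|_{\tau=0}$ for the augmented polymer activity $\bar F$. Under this identification, both statements reduce verbatim to the analogous results already proved in \cite{dgauss1}, so the proof can be essentially quoted. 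I would include one paragraph that writes out the direct Leibniz argument for bookkeeping purposes, rather than only appeal to Remark~\ref{remark:new_norm}.

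For the first inequality, I would first observe that since $F_1(Y_1,\cdot;\cdot)$ depends on $\varphi$ only through $\varphi|_{Y_1^*}$, and $Y_1^*\subset X^*$, test functions in the definition of $\norm{D^n F_1(Y_1,\varphi)}_{n,T_j(X,\varphi)}$ can be freely modified outside $Y_1^*$ without affecting $D^n F_1$, giving $\norm{F_1(Y_1,\varphi;\cdot)}_{\vec{h},T_j(X,\varphi)}=\norm{F_1(Y_1,\varphi;\cdot)}_{\vec{h},T_j(Y_1,\varphi)}$, and similarly for $F_2$. Then for the product I would apply the classical Leibniz rule both in $\varphi$ and in $\tau$: expand $D^n\partial_\tau^m (F_1 F_2)$ as a double binomial sum of $D^a\partial_\tau^b F_1 \cdot D^{n-a}\partial_\tau^{m-b}F_2$, bound each term in $\norm{\cdot}_{n,T_j(X,\varphi)}$ by submultiplicativity in the test functions, multiply by $h^n\htau^m/(n!m!)$, and resum; the binomial coefficients absorb exactly into the factorials to yield the product of the two $\vec{h}$-norms. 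Evaluating the $\tau$-derivatives at $\tau=0$ does not hurt this since the expansion is pointwise in $\tau$.

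For the second inequality I would use the first: by induction and \eqref{eq:submultiplicativity} with $F_1=F_2=\cdots=F_1$ on the same polymer $X$,
\begin{align*}
\norm{F_1(X,\varphi;\cdot)^m}_{\vec{h},T_j(X,\varphi)} \leq \norm{F_1(X,\varphi;\cdot)}_{\vec{h},T_j(X,\varphi)}^m.
\end{align*}
Since $e^{F_1}-\sum_{m=0}^k F_1^m/m!=\sum_{m\geq k+1} F_1^m/m!$, the triangle inequality (which holds for $\norm{\cdot}_{\vec{h},T_j(X,\varphi)}$ because it is a sum of seminorms) combined with the previous display gives \eqref{eq:exp_taylor_bound}. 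Convergence of the resulting series is automatic whenever the right-hand side of \eqref{eq:exp_taylor_bound} is finite, which is the only situation in which the claim has content.

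The only mild obstacle is making sure that the $\tau$-derivatives commute with the $\varphi$-derivatives inside the seminorm, which is justified by the assumption in Definition~\ref{def:polymer_activities} that $D^n F(\varphi;\tau)$ is holomorphic in $\tau\in\D_{\htau}$ for every $n\geq 0$; this lets us interchange $D^n$ and $\partial_\tau^m$ and evaluate at $\tau=0$ after Leibniz. Everything else is bookkeeping that parallels the analogous proof in \cite{dgauss1}.
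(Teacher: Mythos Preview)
Your approach to the first inequality \eqref{eq:submultiplicativity} is essentially the paper's: reduce to the known submultiplicativity of $\norm{\cdot}_{h,T_j(X,\varphi)}$ from \cite{dgauss1} via the external-point trick of Remark~\ref{remark:new_norm}, or equivalently via the direct double Leibniz expansion you sketch. One minor slip: you assert equality $\norm{F_1(Y_1,\varphi;\cdot)}_{\vec{h},T_j(X,\varphi)}=\norm{F_1(Y_1,\varphi;\cdot)}_{\vec{h},T_j(Y_1,\varphi)}$, but in general only $\leq$ is immediate (the unit ball of $C_j^2(X^*)$ restricts \emph{into} the unit ball of $C_j^2(Y_1^*)$, not onto it). Fortunately $\leq$ is all you need, so this is harmless.

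For the second inequality your argument has a gap that the paper's proof addresses explicitly. You write $e^{F_1}-\sum_{m=0}^k F_1^m/m!=\sum_{m\geq k+1}F_1^m/m!$ and invoke ``the triangle inequality'', but the triangle inequality for \emph{infinite} sums in $\norm{\cdot}_{\vec h,T_j(X,\varphi)}$ is not automatic: one must first know that $e^{F_1}$ has finite norm and that the norm is lower semicontinuous along the partial sums. Your remark that ``convergence is automatic whenever the right-hand side is finite'' elides precisely this point. The paper fills the gap as follows: set $H_k=\sum_{m\leq k}F_1^m/m!$, bound $\norm{H_k}_{\vec h,T_j(X,\varphi)}\leq \exp(\norm{F_1}_{\vec h,T_j(X,\varphi)})$ via submultiplicativity, observe that $D^n\partial_\tau^m H_k\to D^n\partial_\tau^m e^{F_1}$ pointwise for every $n,m$, and then use lower semicontinuity of each individual seminorm together with a Fatou-type argument on the outer $(n,m)$-sum to conclude $\norm{e^{F_1}}_{\vec h,T_j(X,\varphi)}<\infty$. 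Only after this is \eqref{eq:exp_taylor_bound} obtained (via Taylor's theorem applied to $e^{F_1}$). Your argument becomes correct once this limiting step is inserted.
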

\begin{proof}
The first inequality is a result of the submultiplicativity of the $\norm{\cdot}_{h, T_j (X, \varphi)}$-seminorm,  \cite[(5.22)]{dgauss1}
and Remark~\ref{remark:new_norm}.
To see the second,  consider the sequence of polymer activities
\begin{align}
	H_k (X, \varphi; \tau) = \sum_{m=0}^k \frac{1}{m!} (F_1 (X, \varphi))^m .
\end{align}
Then by the submultiplicativity, \eqref{eq:submultiplicativity},  we have
\begin{align}
	\norm{ H_k (X, \varphi ; \cdot)}_{\vec{h}, T_j (X, \varphi) }
	\leq \exp\big( \norm{F_1 (X, \varphi ; \cdot)}_{\vec{h}, T_j (X, \varphi)} \big)
	.
\end{align}
Since $e^{F_1}$ is a pointwise limit of $H_k$ and $D^n \partial_{\tau}^m H_k (X, \varphi; \tau) \rightarrow D^n \partial_{\tau}^m e^{F_1} (X, \varphi ; \tau)$ as $k\rightarrow \infty$ for each $n, m \geq 0$, we have
\begin{align}
	\sum_{n, m \geq 0}^{n+m \leq N'} \frac{\htau^m h^n}{m! n!} \big\| D^n \partial_{\tau}^m e^{F_1 (X, \varphi ; 0) } \big\|_{n, T_j (X, \varphi)} \leq \limsup_{k\rightarrow \infty} \norm{ H_k (X, \varphi ; \cdot)}_{\vec{h}, T_j (X, \varphi) }
\end{align}
for any $N' >0 $
so we see in fact
\begin{align}
	\big\| e^{F_1 (X, \varphi; \cdot)}  \big\|_{\vec{h}, T_j (X,\varphi)} \leq \exp\big( \norm{F_1 (X, \varphi ; \cdot )}_{\vec{h}, T_j (X, \varphi)} \big) < \infty. 
\end{align}
Then \eqref{eq:exp_taylor_bound} is obtained from the Taylor's theorem applied on $e^{F_1}$.
\end{proof}

The following inequalities control the norms of the polymer activities under renormalisation group maps. 

\begin{lemma}
\label{lemma:E_G^r_j}
For any $X\in \cP_j$, $\varphi \in \R^{\Lambda_N}$ and $0\leq j \leq N-1$,
\begin{align}
	\E[ G_j (X, \varphip + \zeta) ] 
	\leq 2^{|X|_j} G_{j+1} (\bar{X}, \varphip ).
\end{align}
If $\f$ is as in \eqref{quote:assumpf} and $u_j$ is defined by Definition~\ref{def:extfield_def}, then for some $C \equiv C(M, \rho, \rr)$, 
\begin{align}
	\E[ G_j (X, \varphip + \zeta + \rr u_{j+1}) ] 
	\leq C 2^{|X|_j} G_{j+1} (\bar{X}, \varphip ).
\end{align}
\end{lemma}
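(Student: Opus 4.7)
The first inequality is the standard regulator contraction bound at a single RG step, modelled on \cite[Lemma~5.8--5.10]{dgauss1}. Its proof expands each quadratic form in $\kappa^{-1}\log G_j(X, \varphi+\zeta)$ using Young's inequality $(a+b)^2 \le (1+\epsilon)a^2 + (1+\epsilon^{-1})b^2$; dominates the $(1+\epsilon)$-multiples of the $\varphi$-contributions by $\kappa^{-1}\log G_{j+1}(\bar X, \varphi)$ via the scale identity $\nabla_{j+1} = L^2 \nabla_j$ together with the inclusion $X\subset\bar X$; and bounds the Gaussian exponential moments of the remaining $\zeta$-quadratic forms using \eqref{eq:Gamma_j_decay} and the smallness of $\kappa = c_{\kappa}/\log L$. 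The $2^{|X|_j}$ prefactor absorbs the per-block overhead.

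The substance of the lemma lies in the second inequality. The key structural observation is that $u_{j+1}$ lies in the range of the fluctuation covariance: by Definition~\ref{def:extfield_def}, $u_{j+1} = \Gamma_{j+1} \tilde{\f}$ for $1 \le j+1 \le N-1$ (and $u_N = \Gamma_N^{\Lambda_N} \tilde{\f}$ when $j+1 = N$), where $\tilde{\f} = (1+s\gamma\Delta)\f$ is compactly supported with $\|\tilde{\f}\|_{L^\infty}$ bounded in terms of $M$ alone. The Cameron-Martin identity for Gaussians -- valid without inverting $\Gamma_{j+1}$, as long as the shift is in its range, since it can be checked directly via characteristic functions -- then yields
\begin{align*}
\E^{\zeta}\!\left[G_j(X, \varphi + \zeta + \rr u_{j+1})\right] = e^{-\tfrac12 \rr^2 (\tilde{\f}, u_{j+1})}\, \E^{\zeta}\!\left[e^{\rr(\tilde{\f}, \zeta)} G_j(X, \varphi + \zeta)\right].
\end{align*}
I then apply Cauchy-Schwarz to separate the exponential tilt from the regulator:
\begin{align*}
\E^{\zeta}\!\left[e^{\rr(\tilde{\f}, \zeta)} G_j(X, \varphi+\zeta)\right] \le \E^{\zeta}\!\left[e^{2\rr(\tilde{\f}, \zeta)}\right]^{1/2}\, \E^{\zeta}\!\left[G_j(X, \varphi+\zeta)^2\right]^{1/2}.
\end{align*}

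The first factor is the Gaussian moment generating function of a linear functional and equals $e^{2\rr^2(\tilde{\f}, u_{j+1})}$, bounded by a constant $C(M,\rho,\rr)$ (with the mild $\log L$-factor from $\|\Gamma_{j+1}\|_{L^\infty}$ absorbed into the ambient parameter choice). The second factor falls under the first inequality applied with $\kappa$ replaced by $2\kappa$: since $G_j^2$ is functionally the regulator $G_j$ with $\kappa \to 2\kappa$, provided $c_\kappa$ from Section~\ref{sec:choice_of_parameters} is chosen a factor of two smaller than required for the first inequality in isolation, the first inequality yields $\E^{\zeta}[G_j(X, \varphi+\zeta)^2] \le 2^{|X|_j}\, G_{j+1}(\bar X, \varphi)^2$. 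Combining the three contributions gives a bound of the form $C(M,\rho,\rr)\, 2^{|X|_j/2}\, G_{j+1}(\bar X, \varphi) \le C(M,\rho,\rr)\, 2^{|X|_j}\, G_{j+1}(\bar X, \varphi)$, which is the claim with room to spare.

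The main technical subtlety I anticipate is purely organisational: one must ensure the first inequality can be simultaneously arranged with both $\kappa$ and $2\kappa$, which amounts to tightening $c_\kappa$ in Section~\ref{sec:choice_of_parameters} by a factor of two. Because each constraint on $c_\kappa$ is either a one-sided smallness requirement or is tuned independently, this adjustment is compatible with every other parameter choice. Beyond this bookkeeping I do not foresee any real obstacle, since the identity $u_{j+1} = \Gamma_{j+1}\tilde{\f}$ is immediate from Definition~\ref{def:extfield_def}, and the compact support and $L^\infty$-bound on $\tilde{\f}$ are immediate from \eqref{quote:assumpf}.
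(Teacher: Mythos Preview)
Your treatment of the first inequality matches the paper's (it simply cites \cite[Proposition~5.9]{dgauss1}). For the second inequality your route is genuinely different: the paper does \emph{not} use Cameron--Martin plus Cauchy--Schwarz, but instead invokes the subscale decomposition of Appendix~\ref{sec:norm_inequalities}. Concretely, it writes $\zeta=\sum_{s\in I_{N'}}\xi_{j+s,j+s'}$ and $u_{j+1}=\sum_s u_{j+s,j+s'}$, applies Lemma~\ref{lemma:G_change_of_scale_external_field} iteratively to get \eqref{eq:G^r_j_G_j+1_ratio}, and then integrates each factor $g_{j+s}^2$ via Lemma~\ref{lemma:g_j+s_bound_by_quadratic}. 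The point of the subdecomposition is that each of the $N'$ shifts $\rr\,u_{j+s,j+s'}$ costs only $e^{O(\kappa)}=e^{O(1/\log L)}$, so the total overhead is $e^{O(N'/\log L)}=e^{O(1/\log\ell)}$, a constant independent of $L$.

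Your approach, while structurally sound, does not achieve this. The combined prefactor after Cameron--Martin and Cauchy--Schwarz is $\exp\big(\tfrac12\rr^2(\tilde\f,\Gamma_{j+1}\tilde\f)\big)$, and since $\|\Gamma_{j+1}\|_{L^\infty}\le C_0\log L$ by \eqref{quote:Gamma_one} while $\tilde\f$ need not have zero sum under \eqref{quote:assumpf}, this is $\exp\big(O(\rr^2 M^2\rho^4\log L)\big)=L^{O(\rr^2 M^2\rho^4)}$. That is a polynomial-in-$L$ constant, not a ``mild $\log L$-factor'' that can be absorbed: it sits in the exponent. The lemma as stated asks for $C\equiv C(M,\rho,\rr)$, and this $L$-independence is used downstream --- for instance, the constant $C_1$ in Theorem~\ref{thm:rg_map_definition_estimates} is explicitly required to be independent of $L$, and that constant is fed by the contraction estimates (Proposition~\ref{prop:Loc-contract_v2}, Lemma~\ref{lemma:Loc_minus_Tay}) which invoke the present lemma. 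So your argument gives a correct inequality but with a weaker constant than claimed; to recover the stated form you would still need the subscale machinery the paper uses.
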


\begin{lemma} 
\label{lemma:Et_F_bound}

Let $F\in \cN_{j,\kt} (X)$ be such that $\norm{F (X ; \tau)}_{\vec{h}, T_{j} (X)} < \infty$ and $\htau \leq (C_1 \log L)^{-3/2}$ for sufficiently large $C_1 \equiv C_1 (M, \rho)$. 
Then $\D_{\htau} \ni \tau \mapsto \E_{(\tau)} [D^nF (X , \varphip + \zeta ; \tau)]$ is analytic in $\tau \in \D_{\htau}$ for each $n\geq 0$ and satisfies
\begin{align}
\big\| \E_{(\rr + \tau)} [F (X , \varphip + \zeta  ; \cdot) ] \big\|_{\vec{h}, T_j (X, \varphip)} \leq C_2 2^{|X|_j} \norm{F(X; \cdot)}_{\vec{h}, T_j (X)} G_{j+1} (\bar{X}, \varphip) .
\label{eq:Et_F_bound}
\end{align}
for some $C_2 \equiv C_2 (M, \rho,\rr) >0$.
\end{lemma}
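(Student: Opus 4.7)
The plan is to first convert the tilted expectation to an ordinary Gaussian expectation with shifted mean via Lemma~\ref{lemma:Et_by_complex_shift}, then apply the standard convolution bound of \cite{dgauss1} (essentially, $\norm{\E[G(X,\varphip+\zeta)]}_{h,T_j(X,\varphip)}\leq C 2^{|X|_j}\norm{G(X)}_{h,T_j(X)}G_{j+1}(\bar X,\varphip)$) together with the shifted regulator estimate of Lemma~\ref{lemma:E_G^r_j}, carefully tracking how the extra analytic $\tau$-dependence propagates through the $\vec h$-seminorm.

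Concretely, I would start from
\begin{equation*}
\E^\zeta_{(\rr+\tau),\Gamma_{j+1}}[F(X,\varphip+\zeta;\tau)] = \E^\zeta_{\Gamma_{j+1}}[F(X,\varphip+\zeta+(\rr+\tau)u_{j+1};\tau)],
\end{equation*}
valid as an identity of analytic functions on $\D_{\htau}$ once one adopts the tilted-integrand representation $e^{-(\rr+\tau)^2(\tilde\f,\Gamma_{j+1}\tilde\f)/2}\E[e^{(\rr+\tau)(\tilde\f,\zeta)} F(X,\varphip+\zeta;\tau)]$ (so that $F$ is never evaluated at a complex field). The pointwise bound $|F(X,\varphi;\tau)|\leq G_j(X,\varphi)\norm{F(X;\cdot)}_{\vec h, T_j(X)}$ combined with Gaussian moments on $G_j(X,\varphip+\zeta)e^{(\rr+\mathrm{Re}\,\tau)(\tilde\f,\zeta)}$ gives integrability uniform on compact subsets of $\D_{\htau}$, so Morera's theorem and dominated convergence yield the analyticity of $\tau\mapsto\E_{(\tau)}[D^n F(X,\varphip+\zeta;\tau)]$ for each $n\geq 0$.

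For the norm bound, I would commute $\partial_\tau^n$ and $D^k$ through the expectation; the chain rule in the shift gives
\begin{equation*}
\partial_\tau^n[F(X,\varphi+(\rr+\tau)u_{j+1};\tau)]\big|_{\tau=0} = \sum_{p+m=n}\binom{n}{p} D^p(\partial_\tau^m F)(X,\varphi+\rr u_{j+1};0)(u_{j+1}^{\otimes p}).
\end{equation*}
The expectation of each summand is bounded via the standard convolution estimate and Lemma~\ref{lemma:E_G^r_j} (which absorbs the real shift $\rr u_{j+1}$ at the cost of a constant $C(M,\rho,\rr)$), with $u_{j+1}^{\otimes p}$ absorbed into $\norm{u_{j+1}}_{C_j^2}^p$ by multilinearity. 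Summing the weights $\htau^n h^k/(n! k!)$ and reindexing via $l=k+p$, the binomial factors recombine to $(h+\htau\norm{u_{j+1}}_{C_j^2})^l/l!$, giving
\begin{equation*}
\norm{\E_{(\rr+\tau)}[F(X,\varphip+\zeta;\cdot)]}_{\vec h, T_j(X,\varphip)} \leq C(M,\rho,\rr)\, 2^{|X|_j} G_{j+1}(\bar X,\varphip) \norm{F(X;\cdot)}_{(h+\htau\norm{u_{j+1}}_{C_j^2},\htau),T_j(X)}.
\end{equation*}

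The main obstacle is replacing the enlarged parameter $(h+\htau\norm{u_{j+1}}_{C_j^2},\htau)$ on the right-hand side by the original $\vec h=(h,\htau)$ up to a multiplicative constant; this is delicate because, for a generic $F$, the ratio $\norm{F}_{(h',\htau)}/\norm{F}_{\vec h}$ with $h'>h$ is not universally bounded. The resolution uses that the inflation is small: by Lemma~\ref{lemma:extfield_bound} one has $\norm{u_{j+1}}_{C_j^2}\leq CM\rho^2\log L$, and the hypothesis $\htau\leq(C_1\log L)^{-3/2}$ with $C_1\equiv C_1(M,\rho)$ sufficiently large forces $\htau\norm{u_{j+1}}_{C_j^2}\leq C'(\log L)^{-1/2}$, which is much smaller than $h\geq c_h\sqrt\beta$ once $\beta$ is large. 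The scaling $\htau\sim(\log L)^{-3/2}$ is tuned precisely so that this absorption succeeds, via the extended-field interpretation of the $\vec h$-norm from Remark~\ref{remark:new_norm} (which lets one rearrange the Taylor coefficients in $(h+\delta)^l$ against the $\htau$-weighted $\tau$-derivatives), after which the residual multiplicative constant is folded into the final $C_2(M,\rho,\rr)$.
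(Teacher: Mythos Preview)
Your argument correctly reaches the bound with the enlarged parameter $(h+\htau\norm{u_{j+1}}_{C_j^2},\htau)$ on the right-hand side --- this is exactly the content of Lemma~\ref{lemma:F'_is_analytic} and \eqref{eq:weaker_Et_bound}. But the final step, where you claim to absorb the increment $\delta=\htau\norm{u_{j+1}}_{C_j^2}$ back into $\vec h$ up to a multiplicative constant, is a genuine gap. As you yourself note, for generic $F$ the ratio $\norm{F}_{(h+\delta,\htau)}/\norm{F}_{\vec h}$ is unbounded: the factor $(1+\delta/h)^l$ grows without limit in the derivative order $l$, no matter how small $\delta>0$ is. Remark~\ref{remark:new_norm} does not help here --- it only reinterprets $\norm{\cdot}_{\vec h}$ as an $h$-norm on an extended field space, and gives no mechanism to compare norms at different values of $h$. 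In fact the paper explicitly flags (in the paragraph following Lemma~\ref{lemma:F'_is_analytic}) that your route produces only the weaker bound with $\norm{F}_{(2h,\htau),T_j(X)}$ on the right, and that Lemma~\ref{lemma:Et_F_bound} is strictly stronger.

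The paper's proof takes a different route precisely to avoid this obstruction. After the chain rule step you wrote, instead of bounding $\E[D^p(\partial_\tau^m F)(\,\cdot\,;0)(u_{j+1}^{\otimes p})]$ by $\norm{u_{j+1}}^p$ times a derivative norm, it uses Gaussian integration by parts (Lemma~\ref{lemma:gauss_int_by_parts}): since $u_{j+1}=\Gamma_{j+1}\tilde\f$, each derivative in the $u_{j+1}$-direction is traded for a polynomial factor $(\zeta|_{X^*},\tilde\f)$ under the expectation, together with Hermite-polynomial combinatorics. This removes the $\varphi$-derivatives entirely, leaving $\E[(\zeta|_{X^*},\tilde\f)^{k}\,\partial_\tau^m F]$, which is then bounded by Lemma~\ref{lemma:G_j_times_polynomial_bound}. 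The resulting factors are $(C_1\log L)^{3k/2}\lceil k/2\rceil!$ rather than $\norm{u_{j+1}}^k$, and summing against $\htau^k$ converges exactly when $\htau\leq (C_1\log L)^{-3/2}$ --- this is the origin of the $-3/2$ exponent in the hypothesis. The upshot is that integration by parts converts what would be an uncontrolled norm inflation in $h$ into controlled polynomial moments of the Gaussian, and the right-hand side stays at the original $\vec h$.
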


Both lemmas are proved in Appendix~\ref{sec:norm_inequalities}.

\subsection{Neutralisation}
\label{sec:neutralisation}

Suppose $F$ is a ($2\pi \beta^{-1/2}$-)periodic polymer activity, 
i.e., for any $n \in 2\pi \beta^{-1/2} \Z$, $F$ satisfies $F(X, \varphi + n \one ) = F(X, \varphi)$.
As the DG model only has a gradient Gibbs measure that is translation invariant in the infinite volume and in the delocalised phase,  effective observables are only written in terms of $\nabla \varphi$.
Thus we use an operation that isolates the part of the polymer activity that only depends on $\nabla \varphi$.  This can be achieved by simply taking the $0^{th}$ component of the Fourier decomposition:
\begin{align}
	\hat{F}_0 (X, \varphi) := \int_0^{1} F(X, \varphi + 2\pi \beta^{-1/2} y \one) dy,
\end{align}
also called the \emph{neutral part} or the \emph{charge $0$ part} of $F$.
It can be observed from the definition that
\begin{align}
	\norm{\hat{F}_0 (X, \varphi)}_{h, T_j (X, \varphi)} 
		\leq \norm{F (X)}_{h,  T_j (X)} G_j (X, \varphi)
	\label{eq:neutralisation_bound}
	.
\end{align}
Indeed, 
\begin{align}
	\norm{\hat{F}_0 (X, \varphi)}_{h, T_j (X, \varphi)} 
		& \leq \int_0^1 \norm{ F(X, \varphi + 2\pi \beta^{-1/2} y \one) }_{h,T_j (X,\varphi)} dy
		\nnb
		& \leq \int_0^1 \norm{ F(X) }_{h,T_j (X)} G_j (X, \varphi + 2\pi \beta^{-1/2} y \one ) dy
		\nnb
		& = \int_0^1 \norm{ F(X) }_{h,T_j (X)} G_j (X, \varphi ) dy
		\nnb
		& = \norm{ F(X) }_{h,T_j (X)} G_j (X, \varphi )
\end{align}
where the equality in the third line uses that the the regulator is invariant under addition of a constant field (see Definition~\ref{def:regulator}).

\subsection{Analyticity of polymer activities}
\label{sec:analyticity-of-polymer-activities}

We start with an observation made in \cite{MR1101688}, where boundedness of a polymer activity implies the (complex-)analyticity of the polymer activity on a neighbourhood of the real space. 

\begin{proposition} 
\label{prop:analytic_on_strip}

Let $h, \htau>0$, $X\in \cP_j$ and $\norm{F (X)}_{\vec{h},T_j (X)} < +\infty$. 
Then $F(X, \cdot ; \tau)$ can be extended to the domain $S_h (X)= \{ \varphi + i\psi  \in \C^{\Lambda_N}  : \varphi(x), \psi(x) \in \R, \; \norm{\psi}_{C_j^2 (X^*)} < h \}$
where each $D^n F(X, \cdot ; \tau)$ is complex analytic 
and satisfies
\begin{align}
|F(X, \varphi + \phi; \tau)| \leq \norm{F (X, \varphi ; \tau)}_{h, T_j (X, \varphi ; \tau)}
\label{eq:bound_in_the_analytic_strip}
\end{align}
whenever $\varphi \in \R^{\Lambda_N}$, $\phi \in \C^{\Lambda_N}$, $\norm{\phi}_{C_j^2 (X^*)} < h$. 
\end{proposition}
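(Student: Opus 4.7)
The plan is to construct the holomorphic extension by Taylor series around each real configuration and then use the norm $\norm{F(X)}_{\vec{h},T_j(X)} < \infty$ to control convergence and the pointwise bound simultaneously. Concretely, fix $\varphi \in \R^{\Lambda_N}$ and $\phi \in \C^{\Lambda_N}$ with $\norm{\phi}_{C_j^2(X^*)} < h$, and define
\begin{align}
\tilde{F}(X, \varphi + \phi; \tau) := \sum_{n=0}^{\infty} \frac{1}{n!} D^n F(X, \varphi; \tau)(\phi, \ldots, \phi),
\end{align}
where $D^n F(X, \varphi; \tau)$ is extended from a real multilinear form to a complex multilinear form on $\C^{\Lambda_N}$ in the obvious way. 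By definition of the seminorm,
\begin{align}
\big| D^n F(X, \varphi; \tau)(\phi^{\otimes n}) \big| \leq \norm{D^n F(X, \varphi; \tau)}_{n, T_j(X, \varphi)} \cdot \norm{\phi}_{C_j^2(X^*)}^n,
\end{align}
so summing over $n$ gives absolute convergence of the series, together with the required bound $|\tilde F(X, \varphi + \phi; \tau)| \leq \norm{F(X, \varphi; \tau)}_{h, T_j(X, \varphi)}$, since $\norm{\phi}_{C_j^2(X^*)} < h$.

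Next I argue holomorphicity. Each partial sum $\sum_{n=0}^{N'} \frac{1}{n!} D^n F(X, \varphi; \tau)(\phi^{\otimes n})$ is a polynomial in the complex variables $(\phi(x))_{x \in X^*}$, and the above bound shows these partial sums converge uniformly on compact subsets of the open set $\{\phi : \norm{\phi}_{C_j^2(X^*)} < h\}$. Hence by Weierstrass' theorem the limit $\tilde F(X, \varphi + \phi; \tau)$ is holomorphic in $\phi$ on that open set. The same reasoning, applied to the termwise-differentiated series, shows that all derivatives $D^n \tilde F(X, \varphi + \phi; \tau)$ are holomorphic in $\phi$.

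It remains to verify that $\tilde F$ is well defined on $S_h(X)$, i.e.\ independent of the choice of base point $\varphi$, and that it agrees with $F$ on the real slice. Agreement on the real slice is Taylor's theorem for the smooth function $F(X, \cdot; \tau)$ on $\R^{\Lambda_N}$: the absolute convergence established above lets one pass to the limit in the integral remainder, giving $\tilde F = F$ whenever $\phi$ is real. For independence of base point, note that $S_h(X)$ is connected, and any two choices of base point yield holomorphic extensions that coincide on the real slice $\R^{\Lambda_N}$; the uniqueness of holomorphic continuation then forces them to agree on any connected overlap, so the definitions glue to a single holomorphic function on $S_h(X)$.

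The only nontrivial step is the passage from a real-multilinear tensor bounded in the $\norm{\cdot}_{n, T_j(X, \varphi)}$-norm to its complex-multilinear extension: this is routine because $D^n F(X, \varphi; \tau)$ maps $n$-tuples of real test functions bounded by $1$ in $C_j^2(X^*)$ to numbers bounded by $\norm{D^n F(X, \varphi; \tau)}_{n, T_j(X, \varphi)}$, and complexifying the test functions at worst doubles constants (controlled by splitting $\phi = \Re \phi + i \Im \phi$), which is absorbed into the fact that the $C_j^2$-norm is the one appearing in both the seminorm and the domain $S_h(X)$. Everything else is soft, following from absolute convergence and Weierstrass.
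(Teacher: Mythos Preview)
Your approach is essentially the paper's own: define the extension by the Taylor series around a real base point, use finiteness of $\norm{F(X)}_{\vec h,T_j(X)}$ to get absolute convergence and the pointwise bound, and invoke Weierstrass for holomorphy. The paper simply cites \cite[Proposition~5.6]{dgauss1} and notes that the bound \eqref{eq:bound_in_the_analytic_strip} comes from the Taylor expansion.

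One point to correct: your last paragraph on complexification is not right as written. Splitting $\phi=\Re\phi+i\,\Im\phi$ and expanding an $n$-linear form by multilinearity produces $2^n$ terms, so the naive bound is $|D^nF(\varphi)(\phi^{\otimes n})|\le 2^n\norm{D^nF(\varphi)}_{n,T_j}\norm{\phi}_{C_j^2}^n$, and the factor $2^n$ is \emph{not} absorbed into \eqref{eq:bound_in_the_analytic_strip}. There are two clean ways out. First, in the Brydges--Slade framework underlying \cite{dgauss1} the supremum defining $\norm{D^nF}_{n,T_j}$ is taken over complex test directions, in which case your worry evaporates and the bound is immediate. Second, if you insist on real test functions, expand instead around the real point $\varphi+\Re\phi$: then the increment is purely imaginary, $i\,\Im\phi$, and
\[
D^nF(\varphi+\Re\phi)\big((i\,\Im\phi)^{\otimes n}\big)=i^n\,D^nF(\varphi+\Re\phi)\big((\Im\phi)^{\otimes n}\big)
\]
is bounded directly by the real-test-function norm, giving $|F(\varphi+\phi)|\le \norm{F(\varphi+\Re\phi)}_{h,T_j(X,\varphi+\Re\phi)}$ whenever $\norm{\Im\phi}_{C_j^2(X^*)}<h$. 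This is exactly the form used in the applications (e.g.\ the complex shift in Lemma~\ref{lemma:complex_shift_of_variables}), and it also delivers analyticity on all of $S_h(X)$ since that domain is defined by a bound on the imaginary part only.
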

\begin{proof}
The proof is the same as that of \cite[Proposition~5.6]{dgauss1},
just replacing the norm $\norm{\cdot}_{h, T_j}$ by $\norm{\cdot}_{\vec{h}, T_j}$.
The inequality \eqref{eq:bound_in_the_analytic_strip} follows from expanding out $F(X, \varphi + \cdot ; \tau)$ in Taylor series (see \cite[(5.23)]{dgauss1}).
\end{proof}

By the proposition,  we can make complex shift of variables in each Gaussian integrals using the Cauchy's integral theorem as long as the shift is not too large. This result is summarised in the next lemma,
whose proof is be presented in Appendix~\ref{sec:complex-valued-shift-of-variables}.
Again, when working with $j$-scale polymer activities,  
$\E$ is an expectation over $\zeta \sim \cN (0, \Gamma_{j+1})$ if $j+1 < N$ and $\zeta \sim \cN (0, \Gamma_{N}^{\Lambda_N})$ if $j+1 = N$.

\begin{lemma}[Gaussian complex shift of variable]
\label{lemma:Et_by_complex_shift}
 
Let $\f$ be as in \eqref{quote:assumpf}, 
$u_{j+1}$ be as in Definition~\ref{def:extfield_def}
and $F \in \cN_{j} (X)$ with $\norm{F (X)}_{h, T_j (X)} < \infty$.  
Then for $\htau < (C \log L)^{-1} h$ with $C >0$ sufficiently large and $\tau \in \D_{\htau}$, 
\begin{align}
\Et \big[ F (X,\varphip + \zeta)   \big] =  \E \big[  F \big( X, \varphip + \zeta + \tau u_{j+1}  \big)  \big]
.
\end{align}
\end{lemma}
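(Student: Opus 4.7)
The plan is to prove the identity first for real $\tau$ using the Cameron--Martin formula (equivalently, completing the square in the Gaussian density), and then to extend it to complex $\tau \in \D_{\htau}$ by analytic continuation, using Proposition~\ref{prop:analytic_on_strip} to obtain the necessary holomorphic extension of $F$ and the domination needed to exchange limits with the Gaussian expectation.

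The preliminary setup is as follows. Since $\norm{F(X)}_{h,T_j(X)} < \infty$, Proposition~\ref{prop:analytic_on_strip} extends $F(X,\cdot)$ to a holomorphic function on the strip $S_h(X)$ with pointwise bound \eqref{eq:bound_in_the_analytic_strip}. Lemma~\ref{lemma:extfield_bound} (applied at scale $j$, combined with the scaling of $\nabla_j$) gives $\norm{u_{j+1}}_{C_j^2(X^*)} = O(\log L)$, and therefore the hypothesis $\htau < (C\log L)^{-1} h$ with $C$ sufficiently large ensures that $\norm{\tau u_{j+1}}_{C_j^2(X^*)} < h$ uniformly in $\tau \in \D_{\htau}$; consequently $\varphip + \zeta + \tau u_{j+1} \in S_h(X)$ for every $\zeta \in \R^{\Lambda_N}$, so the right-hand side makes pointwise sense. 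For real $\tau \in (-\htau,\htau)$, recall $u_{j+1} = \Gamma_{j+1}\tilde\f$ (or $\Gamma_N^{\Lambda_N}\tilde\f$ when $j+1 = N$), so completing the square in the Gaussian density yields the Cameron--Martin identity
\begin{align*}
\E\bigl[F(X,\varphip + \zeta + \tau u_{j+1})\bigr] = e^{-\frac{1}{2}\tau^2(\tilde\f,\Gamma_{j+1}\tilde\f)} \E\bigl[e^{\tau(\tilde\f,\zeta)} F(X,\varphip + \zeta)\bigr],
\end{align*}
and dividing by $\E[e^{\tau(\tilde\f,\zeta)}] = e^{\frac{1}{2}\tau^2(\tilde\f,\Gamma_{j+1}\tilde\f)}$ converts the right-hand side into $\Et[F(X,\varphip+\zeta)]$, establishing the claim in the real case.

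To extend to complex $\tau \in \D_{\htau}$, I would verify that both sides are holomorphic functions of $\tau$ there and then invoke the identity theorem on the real interval $(-\htau,\htau)$. The left-hand side is a ratio whose denominator is the entire nonvanishing function $e^{\frac{1}{2}\tau^2(\tilde\f,\Gamma_{j+1}\tilde\f)}$, and whose numerator is holomorphic by Morera/dominated convergence, using that $|e^{\tau(\tilde\f,\zeta)}| \le e^{\htau \|\tilde\f\|_1 |\zeta|_\infty}$ is integrable against the Gaussian measure times the $F$-bound. The right-hand side is holomorphic in $\tau$ because $F(X,\varphip+\zeta+\tau u_{j+1})$ is holomorphic in $\tau$ for each $\zeta$ (Proposition~\ref{prop:analytic_on_strip}), and the dominating integrable majorant comes from \eqref{eq:bound_in_the_analytic_strip} and Lemma~\ref{lemma:E_G^r_j} applied to a shifted argument. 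The main technical obstacle is exactly this last step: producing a $\tau$-independent integrable dominator for $|F(X,\varphip + \zeta + \tau u_{j+1})|$. This reduces to a regulator estimate of the form $\E\bigl[G_j(X,\varphip + \zeta + \tau u_{j+1})\bigr] < \infty$ uniformly in $\tau \in \D_{\htau}$, which follows from Lemma~\ref{lemma:E_G^r_j} after absorbing the bounded complex shift $\tau u_{j+1}$ into an additional real shift via the quadratic form in the regulator (at the price of adjusting $C$ in the condition $\htau < (C\log L)^{-1}h$).
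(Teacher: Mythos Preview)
Your approach is correct but takes a genuinely different route from the paper. The paper proves the lemma by appealing to a more general complex Cameron--Martin identity (Lemma~\ref{lemma:complex_shift_of_variables}), which it establishes by a direct contour shift in the Gaussian integral: it uses the covariance subdecomposition $\Gamma_{j+1}=\sum_{s}\Gamma_{j+s,j+s'}$ to perform the imaginary shift in increments small enough that each step stays inside the analyticity strip of $F$, and at each step verifies the decay at infinity needed for Cauchy's theorem via the regulator estimates. Your argument instead proves the real-$\tau$ case by ordinary Cameron--Martin and then analytically continues both sides by Morera plus dominated convergence. Conceptually this is cleaner and avoids the explicit contour manipulation; on the other hand, the paper's Lemma~\ref{lemma:complex_shift_of_variables} is reused elsewhere (e.g.\ in the charge-$q$ contraction estimate), so proving it once has organisational value.

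One imprecision worth fixing: the regulator $G_j$ is only defined on real fields, so the expression $G_j(X,\varphip+\zeta+\tau u_{j+1})$ for complex $\tau$ does not make sense as written. The correct application of \eqref{eq:bound_in_the_analytic_strip} gives $|F(X,\varphip+\zeta+\tau u_{j+1})|\le \|F(X)\|_{h,T_j(X)}\,G_j(X,\varphip+\zeta+\Re(\tau)\,u_{j+1})$, and you then need a $\tau$-independent majorant of the right-hand side. Your ``absorb via the quadratic form'' sketch works, but the cleanest route is to note that $\log G_j(X,\varphi+a u_{j+1})$ is convex in $a\in\R$ (each term in Definition~\ref{def:regulator} is a squared norm of an affine function of $a$), so its supremum over $|a|\le\htau$ is attained at $a=\pm\htau$; hence $G_j(X,\varphip+\zeta+\htau u_{j+1})+G_j(X,\varphip+\zeta-\htau u_{j+1})$ is a valid dominator, integrable by the second part of Lemma~\ref{lemma:E_G^r_j}. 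The analogous domination for the left-hand side follows by writing $e^{|\tau(\tilde\f,\zeta)|}\le e^{\htau(\tilde\f,\zeta)}+e^{-\htau(\tilde\f,\zeta)}$ and applying the real Cameron--Martin shift to each term before invoking Lemma~\ref{lemma:E_G^r_j}; no $\E[G_j^2]$ bound is needed.
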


As the norm $\norm{F}_{\vec{h}, T_j}$ exploits the analyticity of $F$ even further, we would have to study this a bit further.

\begin{lemma}
\label{lemma:analyticity_preserved_under_E}
Let $\vec{h} = (h, \htau) >0$ and $F \in \cN_{j,\kt} (X)$ be such that $\norm{F(X; \cdot )}_{\vec{h},T_j (X)} < +\infty$ where $X\in \Conn_j$.  
Then for $\varphip \in \R^{\Lambda_N}$ and $n,m\geq 0$,
$\D_{\htau} \ni \tau \mapsto \E [D^n F(X, \varphip + \zeta ; \tau)]$ 
is a complex analytic function
and $\E[ \partial_\tau^m D^n F (X, \varphip + \zeta ; \tau) ] = \partial_\tau^m \E[  D^n F (X, \varphip + \zeta ; \tau) ]$.
\end{lemma}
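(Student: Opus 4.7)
The plan is to fix test vectors $f_1, \ldots, f_n \in \R^{\Lambda_N}$ supported in $X^*$ with $\norm{f_k}_{C_j^2(X^*)} \leq 1$ and reduce to the scalar-valued function
\[
  g_{\vec f, \varphip}(\tau) := \E\bigl[D^n F(X, \varphip + \zeta; \tau)(f_1, \ldots, f_n)\bigr].
\]
Since $\R^{\Lambda_N}$ is finite-dimensional, analyticity of the multilinear-form-valued expectation and the commutation identity $\E[\partial_\tau^m D^n F] = \partial_\tau^m \E[D^n F]$ both reduce component-wise to the scalar statements for $g_{\vec f, \varphip}$.

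First, by the definition of $\cN_{j,\kt}(X)$ (Definition~\ref{def:polymer_activities}), for each fixed realisation of $\varphip + \zeta$ the map $\tau \mapsto D^n F(X, \varphip + \zeta; \tau)(f_1, \ldots, f_n)$ is complex analytic on $\D_{\htau}$. To promote pointwise analyticity to analyticity of the expectation, I need an integrable Gaussian majorant uniform on every compact $K \subset \D_{\htau}$. To obtain it, I would unravel the norm definitions: the bound $\norm{f_k}_{C_j^2(X^*)} \leq 1$ gives
\[
  \bigl|D^n F(X, \varphi; \tau)(f_1, \ldots, f_n)\bigr| \leq \norm{D^n F(X, \varphi; \tau)}_{n, T_j(X, \varphi)},
\]
and for $\tau \in K$ with $r := \sup_{\tau \in K}|\tau| < \htau$, Cauchy estimates applied to the Taylor series defining $\norm{\cdot}_{\vec h, T_j(X, \varphi)}$ yield
\[
  \norm{D^n F(X, \varphi; \tau)}_{n, T_j(X, \varphi)} \leq \frac{n!}{h^n}\cdot \frac{\htau}{\htau - r} \, \norm{F(X, \varphi; \cdot)}_{\vec h, T_j(X, \varphi)}.
\]
The defining supremum in $\norm{F(X;\cdot)}_{\vec h, T_j(X)}$ then bounds this by $C_{K,n,h,\htau} \norm{F(X;\cdot)}_{\vec h, T_j(X)}\, G_j(X, \varphi)$. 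Lemma~\ref{lemma:E_G^r_j} (applied with $\rr = 0$) provides $\E[G_j(X, \varphip + \zeta)] \leq 2^{|X|_j} G_{j+1}(\bar X, \varphip) < \infty$, delivering the required $\tau$-uniform integrable majorant.

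With the majorant in hand, the conclusion follows from a standard Morera--Fubini argument: for any closed contour $\gamma \subset \D_{\htau}$, Fubini's theorem applied to the joint integration against $\zeta$ and $\gamma$ (legal by the uniform majorant) combined with Cauchy's theorem applied pointwise in $\zeta$ gives $\oint_\gamma g_{\vec f, \varphip}(\tau)\, d\tau = 0$, hence $g_{\vec f, \varphip}$ is analytic on $\D_{\htau}$. The interchange $\partial_\tau^m \E[\cdots] = \E[\partial_\tau^m \cdots]$ is then obtained by representing $\partial_\tau^m$ via a Cauchy contour integral on a small closed disk around each $\tau_0 \in \D_{\htau}$ and invoking Fubini once more, or equivalently by dominated convergence on the $m$-th order difference quotients, using the same uniform majorant. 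The only genuine technical step is the middle paragraph: confirming that the Taylor-series-based $\vec h$-norm indeed translates into pointwise Cauchy estimates in $\tau$ whose Gaussian expectation is finite. Once that bookkeeping is complete, the remainder is routine complex analysis for vector-valued Gaussian integrals.
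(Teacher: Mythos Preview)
Your proposal is correct and follows essentially the same approach as the paper: both obtain a $\tau$-uniform Gaussian-integrable majorant by extracting $\norm{D^n F(X,\varphi;\tau)}_{n,T_j(X,\varphi)} \leq \frac{n!}{h^n}\norm{F(X,\varphi;\cdot)}_{\vec h,T_j(X,\varphi)} \leq \frac{n!}{h^n}\norm{F(X;\cdot)}_{\vec h,T_j(X)}\,G_j(X,\varphi)$ from the $\vec h$-norm and then invoke Lemma~\ref{lemma:E_G^r_j}. The paper phrases the conclusion via the Cauchy integral formula plus dominated convergence rather than Morera--Fubini, but this is only a cosmetic difference; your extra factor $\htau/(\htau-r)$ is harmless and arises only because you bound each Taylor coefficient separately instead of using $|z|^k \le \htau^k$ directly.
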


\begin{proof}
Let $\tau \in \D_{(1-2 \delta) \htau}$ for some $\delta > 0$.
Then by the Cauchy's integral formula, for each $n,m \geq 0$,
\begin{align}
	\norm{ \partial_{\tau}^m D^{n} F (X, \varphi ; \tau) }_{n, T_j (X, \varphi)} 
	& = 
	\Big\| \frac{m !}{2\pi i} \int_{|z| = (1-\delta') \htau } \frac{ D^{n} F (X, \varphi ; z)}{(z- \tau)^{m+1}} dz  \Big\|_{n, T_j (X, \varphi)}
	\nnb
	& \leq
	\frac{(1-\delta') n!}{(\delta - \delta')^2 \htau h^n} \norm{F(X)}_{\vec{h}, T_j (X)} G_j (X, \varphi)
\end{align}
where we have used, for $z \in \D_{\htau}$,
\begin{align}
	\norm{ D^{n} F (X, \varphi ; z) }_{n, T_j (X, \varphi)} 
	& \leq  
	\sum_{k=0}^{\infty} \frac{|z|^k}{k!} \norm{\partial^k_{\tau} D^{n} F (X, \varphi ; \tau) |_{\tau =0} }_{n,  T_j (X, \varphi)}  
	\nnb
	& \leq
	\frac{n!}{h^n} \norm{F (X, \varphi ; \cdot) }_{\vec{h},  T_j (X, \varphi)}  
	\nnb
	& \leq \frac{n!}{h^n} \norm{F (X,  \cdot; \cdot) }_{\vec{h},  T_j (X)}  G_j (X, \varphi)
	\label{eq:D^n_F_bound_from_norm}
\end{align}
and the final inequality is due to \eqref{eq:Xnorm_defi}.
By Lemma~\ref{lemma:E_G^r_j}, $\E [G_j (X, \varphip + \zeta) ] \leq 2^{|X|_j} G_{j+1} (\bar{X} , \varphip)$ for each $\varphip \in \R^{\Lambda}$,
so the Dominated convergence theorem guarantees that $\tau \mapsto\E [D^n F(X, \varphip + \zeta ; \tau)]$ is complex analytic for $\tau \in \D_{\htau}$ and $\E[ \partial_\tau^m D^n F (X, \varphip + \zeta ; \tau) ] = \partial_\tau^m \E[  D^n F (X, \varphip + \zeta ; \tau) ]$.
\end{proof}

\begin{lemma}
\label{lemma:F'_is_analytic}
Let $h, \htau>0$ be such that $\htau < (C \log L)^{-1} h$ for sufficiently large $C$. 
Let $F \in \cN_{j,\kt} (X)$ be such that $\norm{F(X; \tau)}_{(h'', \htau),T_j (X)} < +\infty$ where 
$h'' = h + \htau \norm{u_{j+1}}_{C_{j}^2}$ and
$X\in \Conn_j$. 
If we define 
\begin{align}
F' (X, \varphi  ; \, \cdot \,) : \D_{\htau} \rightarrow \C, \quad \tau \mapsto F(X, \varphi + \tau u_{j+1} ; \tau),
\end{align}
then $D^n F'(X, \varphi ; \cdot)$ and $\E [D^n F' (X,\varphip + \zeta ; \cdot) ]$ are complex analytic functions of $\tau \in \D_{\htau}$ for each $n\geq 0$,
and satisfies
\begin{align}
	&
	\norm{F' (X, \varphi  ; \cdot)}_{\vec{h}, T_j (X, \varphi)} 
	\leq \norm{F(X, \varphi ; \cdot)}_{ (h'' , \htau ) , T_j (X, \varphi)}
	\label{eq:F'_bound}
	\\
	& 
	\norm{\E_{(\cdot)} [F(X, \varphip + \zeta ; \cdot )] }_{\vec{h}, T_j (X, \varphip)} 
	\leq 	\norm{\E [ F(X, \varphip + \zeta ; \cdot) ]}_{(h'', \htau), T_j (X, \varphip)}
	.
	\label{eq:weaker_Et_bound}
\end{align}
\end{lemma}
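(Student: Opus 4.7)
The plan is to establish the norm inequality \eqref{eq:F'_bound} first as a purely algebraic statement about reorganising a bivariate Taylor expansion, and then to deduce \eqref{eq:weaker_Et_bound} from it by recognising that Lemma~\ref{lemma:Et_by_complex_shift} lets one interchange the tilted expectation with the complex shift at each fixed $\tau$. The analyticity statements will follow as byproducts from Proposition~\ref{prop:analytic_on_strip} and Lemma~\ref{lemma:analyticity_preserved_under_E}.

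First I would check that $F'$ is well defined and jointly holomorphic in $(\varphi,\tau)$ on a suitable domain. By Proposition~\ref{prop:analytic_on_strip} applied with the pair $(h'',\htau)$, for each fixed $\tau \in \D_{\htau}$ the map $\varphi \mapsto F(X,\varphi;\tau)$ extends complex analytically to the strip $S_{h''}(X)$. For $\tau \in \D_{\htau}$ the shift satisfies $\|\tau u_{j+1}\|_{C^2_j(X^*)} \le \htau \|u_{j+1}\|_{C^2_j} = h'' - h < h''$, so $\varphi + \tau u_{j+1} \in S_{h''}(X)$ whenever $\varphi \in \R^{\Lambda_N}$. Joint holomorphicity of $F'$, and hence of every $D^n F'(X,\varphi;\cdot)$, follows since $\tau \mapsto \varphi+\tau u_{j+1}$ is affine and $F$ is jointly analytic by assumption.

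For the norm bound \eqref{eq:F'_bound}, the key identity is
\begin{align}
D^n \partial_\tau^l F'(X,\varphi;0)[f_1,\ldots,f_n] = \sum_{\substack{k,m\ge 0\\ k+m=l}} \binom{l}{m}\, \partial_\tau^m D^{n+k} F(X,\varphi;0)\bigl[f_1,\ldots,f_n,u_{j+1}^{\otimes k}\bigr],
\end{align}
obtained by Leibniz applied to the two $\tau$-dependencies in $F'$. Using $\|u_{j+1}^{\otimes k}\|_{k,T_j(X,\varphi)} \le \|u_{j+1}\|_{C^2_j}^{\,k}$ and plugging into the definition of $\|\cdot\|_{\vec{h},T_j(X,\varphi)}$, one gets
\begin{align}
\|F'(X,\varphi;\cdot)\|_{\vec{h},T_j(X,\varphi)} \le \sum_{m,k,n\ge 0} \frac{\htau^{m+k} h^n \|u_{j+1}\|_{C^2_j}^{\,k}}{m!\,k!\,n!}\,\|\partial_\tau^m D^{n+k} F(X,\varphi;0)\|_{n+k,T_j(X,\varphi)}.
\end{align}
Substituting $n' = n+k$ and summing over $k$ via the binomial theorem collapses the $k$-sum into $(h + \htau \|u_{j+1}\|_{C^2_j})^{n'} = (h'')^{n'}$, leaving exactly $\|F(X,\varphi;\cdot)\|_{(h'',\htau),T_j(X,\varphi)}$. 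Absolute convergence, required to permute the sums, is supplied by the assumed finiteness of $\|F(X;\cdot)\|_{(h'',\htau),T_j(X)}$ together with $\htau < (C\log L)^{-1} h$.

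For \eqref{eq:weaker_Et_bound}, I would invoke Lemma~\ref{lemma:Et_by_complex_shift} (whose hypothesis $\htau < (C\log L)^{-1} h$ is exactly the one assumed) pointwise in $\tau$, which yields
\begin{align}
\E_{(\tau)}\bigl[F(X,\varphip+\zeta;\tau)\bigr] = \E\bigl[F(X,\varphip+\zeta+\tau u_{j+1};\tau)\bigr] = G'(X,\varphip;\tau),
\end{align}
where $G(X,\varphip;\tau) := \E[F(X,\varphip+\zeta;\tau)]$. Lemma~\ref{lemma:analyticity_preserved_under_E} ensures $G \in \cN_{j,\kt}$ is analytic in $\tau$ with the expectation commuting with $\partial_\tau^m D^n$, so the bound \eqref{eq:F'_bound} applied to $G$ in place of $F$ produces \eqref{eq:weaker_Et_bound}. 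Analyticity of $\E[D^n F'(X,\varphip+\zeta;\cdot)]$ then follows from Lemma~\ref{lemma:analyticity_preserved_under_E} applied to $F'$, whose bounded $\|\cdot\|_{\vec{h},T_j(X)}$-norm is provided by \eqref{eq:F'_bound}. The only mild obstacle is the bookkeeping in the binomial rearrangement; the constant $C$ in $\htau < (C\log L)^{-1} h$ is dictated solely by Lemma~\ref{lemma:Et_by_complex_shift}.
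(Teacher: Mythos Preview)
Your proposal is correct and follows essentially the same approach as the paper: establish analyticity via Proposition~\ref{prop:analytic_on_strip}, prove \eqref{eq:F'_bound} by the Leibniz/chain-rule expansion of $\partial_\tau^l D^n F'$ at $\tau=0$ followed by the binomial reorganisation $n'=n+k$, and then obtain \eqref{eq:weaker_Et_bound} by combining Lemma~\ref{lemma:Et_by_complex_shift} with \eqref{eq:F'_bound}. Your phrasing of the last step---writing $\E_{(\tau)}[F]=G'$ with $G=\E[F]$ and applying \eqref{eq:F'_bound} to $G$---is exactly what the paper means by ``applying the same type of argument on $\E[F']$'', just stated more explicitly.
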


\begin{proof} 
By Proposition~\ref{prop:analytic_on_strip}, $z \mapsto D^n F(X, \varphi + z u_{j+1} ; \tau)$ is analytic whenever $\norm{z u_{j+1}}_{C_{j}^2 (X^*) } < h$. 
However, since $\norm{u_{j+1}}_{C_j^2 (X^*)} \leq 2 C(M, \rho) \kappa^{-1} $, this condition is satisfied whenever $|z| \leq \htau < (2 C)^{-1 } \kappa h$, making $F' (X, \varphi; \tau)$ analytic in $\tau \in \D_{\htau}$. 
Now by the Chain rule, 
\begin{align}
\frac{d^m}{d\tau^m} D^n F' (X,\varphi ; \tau) = \sum_{k=0}^m
\begin{pmatrix}
m \\
k
\end{pmatrix}
D^{n+k} \partial_{\tau}^{m-k} F(X, \varphi + \tau u_{j+1} ; \tau) (u_{j+1}^{\otimes k})
\end{align}
($D$ and $\partial_{\tau}$ are partial derivatives)
so
\begin{align}
\norm{F' (X, \varphi ; \cdot)}_{\vec{h}, T_j (X, \varphi)}
& \leq \sum_{n,m=0}^{\infty} \sum_{k=0}^{m} \frac{h^n \htau^m}{n! m^!} 
\begin{pmatrix}
m \\
k
\end{pmatrix}
\norm{D^{n+k} \partial_{\tau}^{m-k} F(X, \varphi ; 0) }_{n+k, T_j (X, \varphi)} \norm{u_{j+1}}^k \nnb
& = \sum_{n,k=0}^{\infty} \sum_{m'=0}^{\infty} \frac{h^n \htau^{k+m'}}{n! k! m' !} \norm{u_{j+1}}^k  \norm{D^{n+k} \partial_{\tau}^{m'} F(X, \varphi ; 0) }_{n+k, T_j (X, \varphi)} \nnb
& = \sum_{m' =0}^{\infty} \frac{\htau^{m'}}{m' !} \norm{\partial_{\tau} ^{m'} F(X, \varphi ;0)}_{h + \htau \norm{u_{j+1}}, T_j (X, \varphi) }
,
\end{align}
where the second line follows from change of variable $m' = m-k$ and $\norm{u_{j+1}} = \norm{u_{j+1}}_{C_j^2 (X^*)}$.
This yields \eqref{eq:F'_bound}, 
hence by Lemma~\ref{lemma:analyticity_preserved_under_E}, we also have that $\E [D^n F' (X, \varphip + \zeta ; \tau)]$ complex analytic in $\tau \in \D_{\htau}$ for each $n\geq 0$.
Now \eqref{eq:weaker_Et_bound} follows from Lemma~\ref{lemma:Et_by_complex_shift}, saying that
$
\Et [F(X, \varphip + \zeta ; \tau )] = \E [ F' (X, \varphip + \zeta ; \tau ) ] 
$
and applying the same type of argument on $\E [ F' (X, \varphip + \zeta ; \tau ) ]$.

\end{proof}

This lemma is usually used after setting $C_1 > 0$ sufficiently large so that $h'' \leq 2h$.
Then if we bound the right-hand side of \eqref{eq:weaker_Et_bound} using the definition of $\norm{\cdot}_{\vec{h}, T_j (X)}$-norm, 
we have
\begin{align}
	\norm{\E_{(\rr + \tau)} [F(X,  \varphip + \zeta ; \tau )] }_{\vec{h}, T_j (X, \varphip)} 
	\leq C 2^{|X|_j} \norm{F(X, \cdot ; \tau) ]}_{(2h, \htau), T_j (X)} G_{j+1} (\bar{X} , \varphip)
	,
\end{align}
which is similar to the conclusion of Lemma~\ref{lemma:Et_F_bound}, but weaker.
However, \eqref{eq:weaker_Et_bound} has use of its own as it makes is easy to import inequalities from \cite{dgauss1}.

\ifx\newpageonoff\undefined
{\red command undefined!!}
\else
  \if\newpageonoff1
  \newpage
  \fi
\fi

\section{Localisation and reblocking}
\label{sec:loc_and_reblocking}

In this section, we treat important operations used to define the renormalisation group map and state their key algebraic and analytic properties. 

\subsection{Localisation operator}
\label{sec:loc_operator}

Localising a polymer activity $F(X, \varphi)$ isolates the local relevant terms.
In the case of our interest, these terms firstly take the neutral part of the polymer activity and secondly take
 the constant terms and the second degree terms when expanded out in $\nabla \varphi$. 
Hence we require the localisation operator to approximate the Taylor polynomial of order 2 (see Lemma~\ref{lemma:Loc_minus_Tay}), 
but it is designed to satisfy a specific identity(see \eqref{eq:cU_definition}).
Explicit definition of the localisation is skipped and the proofs of the analytic properties are deferred to the appendix, except for some key lemmas, because they are essentially repetitions of discussions from \cite{dgauss1, dgauss2}, with appendage only to do with analyticity in $\tau$. 
However, these can not be skipped because there are still some technical subtleties coming from complex shift of variables combined with the localisation operators.

\begin{definition} \label{def:Loc}
Let $F_1 \in \cN_j$ be periodic,  
and let $\hat F_0$ be its neutral part.
For $X \in \cS_j$, $B \in \cB_j(X)$, 
define
$\Loc_{X, B} F_1(X)$ and $\Loc_{X} F_1(X)$ according to 
\cite[Definition~6.4]{dgauss1}.
For periodic $F_2 \in \cN_{j, \kt}$, $\hat{F}_{2,0}$ its neutral part and $X\in \cS_j$, define
\begin{align}
	\Loco_{X} F_2 (X) = \hat{F}_{2,0} (X,0) 
	.
\end{align}
\end{definition}

The following shows that $\Loc \Et$ and $\Loco \Et$ are bounded operators, and proved in Section~\ref{sec:Loc_Et_K_j_bound_proof}.

\begin{lemma} \label{lemma:Loc_Et_K_j_bound}
Let $F_1 \in \cN_{j, \kt}$ be periodic and let $X\in \cS_j$, $B\in \cB_j (X)$.
If $\norm{F_1 (X) }_{\vec{h}, T_j (X)} < \infty$, 
then $\Loco_X \E_{(\rr + \tau)} F_1(X, \varphip + \zeta ; \tau)$ is an analytic function of $\tau \in \D_{\htau}$
and there is $C \equiv C (M, \rho, \rr)$ such that
\begin{align}
	\norm{ \Loco_X \E_{(\rr + \cdot)} F_1 (X, \varphip + \zeta ; \cdot) }_{\htau, W} \leq C \norm{F_1 (X; \cdot)}_{\vec{h}, T_j (X)}
	\label{eq:Loco_Et_K_j_bound}
	.
\end{align} 
If $F_2 \in \cN_j$ is periodic with $\norm{F_2}_{h, T_j} < \infty$, then there is $C' \equiv C' (M, \rho, L, \rr)$ such that
\begin{align}
	\norm{\Loc_{X, B} \E_{(\rr + \cdot)} [  F_2 ( X, \varphip + \zeta ) ]}_{\vec{h}, T_j (B, \varphip)} \leq C' \norm{F_2 (X )}_{h,  T_j (X)} e^{c_w \kappa  w_j (B, \varphip)^2} 
	.
\label{eq:Loc_Et_K_j_bound}
\end{align}
\end{lemma}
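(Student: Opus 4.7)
The plan is to reduce both estimates to the corresponding $\tau$-free bounds of \cite[Section~6]{dgauss1}, using the analyticity machinery of Section~\ref{sec:analyticity-of-polymer-activities} to handle the $\tau$-dependence. The first step, shared by both bounds, is to replace the tilted expectation by an ordinary one using Lemma~\ref{lemma:Et_by_complex_shift}:
\begin{align*}
	\E_{(\rr+\tau)}[F(X, \varphip+\zeta; \tau)] = \E[F(X, \varphip+\zeta+(\rr+\tau) u_{j+1}; \tau)].
\end{align*}
By Lemma~\ref{lemma:F'_is_analytic}, this preserves analyticity in $\tau \in \D_{\htau}$ and produces a norm estimate with an inflated $h' = h + \htau \norm{u_{j+1}}_{C_j^2}$. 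The bound $\norm{u_{j+1}}_{C_j^2} \lesssim \log L$ from Lemma~\ref{lemma:extfield_bound}/\eqref{quote:assumpu}, combined with $\htau = \ctau (\log L)^{-3/2}$, guarantees $h' \leq 2h$ once $L$ is sufficiently large, so we may work with the original $\vec{h}$-norm up to a constant depending on $M, \rho, \rr$.

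For \eqref{eq:Loco_Et_K_j_bound}, I would use that $\Loco_X G(X) = \hat{G}_0(X, 0) = \int_0^1 G(X, 2\pi\beta^{-1/2} y \one)\, dy$ is a scalar in $\tau$, so its $W$-norm is controlled by $\sup_{\tau \in \D_{\htau}}|\Loco_X G(\tau)|$ via Cauchy estimates. Pointwise, I would bound $|G(X, 2\pi\beta^{-1/2}y\one)|$ by $\norm{G(X, 2\pi\beta^{-1/2} y\one ; \tau)}_{h, T_j}$ using Proposition~\ref{prop:analytic_on_strip}; the regulator factor is $G_j(X, 0) = 1$ since $G_j$ is invariant under addition of a constant field (Definition~\ref{def:regulator}). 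Taking $G = \E_{(\rr+\tau)}F_1(X, \cdot+\zeta ; \tau)$ and applying Lemma~\ref{lemma:E_G^r_j} to integrate the regulator against the Gaussian density yields \eqref{eq:Loco_Et_K_j_bound}; the factor $2^{|X|_j}$ from the regulator bound is absorbed into $C$ since $|X|_j \leq 4$ for $X \in \cS_j$, and no $L$-dependence appears because $\Loco$ involves no reblocking.

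For \eqref{eq:Loc_Et_K_j_bound}, I would adapt \cite[Lemma~6.5]{dgauss1}: $\Loc_{X,B} G(X,\varphi)$ is a polynomial in $\nabla\varphi$ of degree at most $2$ supported on $B^*$, whose coefficients are low-order Taylor derivatives of $\hat{G}_0(X, \cdot)$ at $0$ and are controlled by $\norm{G(X)}_{h, T_j(X)}$ via the neutralisation bound \eqref{eq:neutralisation_bound} and Proposition~\ref{prop:analytic_on_strip}. Such polynomials are pointwise bounded by the strong regulator $e^{c_w \kappa w_j(B, \varphi)^2}$, with the $L$-dependence in $C'$ arising from combinatorial factors in the localisation formula. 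Taking $G = \E_{(\rr+\cdot)}[F_2(X, \cdot+\zeta)]$ and combining with the reduction in Step~1 gives the claim; analyticity in $\tau$ is maintained because $\Loc_{X,B}$ is linear and acts only on the $\varphi$-dependence, so $\partial_\tau$ commutes with $\Loc_{X,B}$. The main technical obstacle throughout is ensuring the $\vec{h}$-norm structure survives every operation: that $\partial_\tau$ commutes with $\E$ (by Lemma~\ref{lemma:analyticity_preserved_under_E}), with the neutral integral, and with $\Loc_{X,B}$, and that the Cauchy estimates used to pass from pointwise bounds to $W$-norm bounds do not introduce problematic $\log L$ factors---this is precisely why $\htau$ is chosen to scale as $(\log L)^{-3/2}$.
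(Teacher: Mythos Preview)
Your overall strategy matches the paper's, but two points deserve correction or sharpening.

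For \eqref{eq:Loco_Et_K_j_bound}, your claim that the $W$-norm is controlled by $\sup_{\tau\in\D_{\htau}}|\cdot|$ via Cauchy estimates is not right as stated: Cauchy estimates on $\D_{\htau}$ give $|\partial_\tau^n f(0)|\leq n!\,r^{-n}\sup_{|z|=r}|f|$ for $r<\htau$, so $\sum_n(\htau/r)^n$ diverges. The correct mechanism is simpler and does not need Cauchy: the $\htau,W$-norm of a scalar $g(\tau)$ is literally the $n=0$ piece of the $\vec{h},T_j(X,0)$-seminorm, hence $\norm{g}_{\htau,W}\leq\norm{G(X,0;\cdot)}_{\vec{h},T_j(X,0)}$. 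From there the paper goes one step more directly than you: rather than passing through Lemma~\ref{lemma:Et_by_complex_shift}, Lemma~\ref{lemma:F'_is_analytic} and Lemma~\ref{lemma:E_G^r_j} separately, it invokes Lemma~\ref{lemma:Et_F_bound} once, which already packages the tilted expectation bound in the $\vec{h}$-norm. Combined with the neutralisation bound \eqref{eq:neutralisation_bound} and $G_j(X,0)=1$, this yields \eqref{eq:Loco_Et_K_j_bound} immediately; analyticity is the analyticity clause of Lemma~\ref{lemma:Et_F_bound}.

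For \eqref{eq:Loc_Et_K_j_bound}, your route and the paper's coincide in substance (both reduce to \cite[(6.16)]{dgauss1}), but the paper orders the steps differently: it first exploits that $\Loc_{X,B}$ outputs a degree-$2$ polynomial to pass from the $(h,\htau)$-norm to the $(h/2,\htau)$-norm at cost $4$, and only then applies \eqref{eq:weaker_Et_bound}, so that the inflated parameter $h/2+\htau\norm{u_{j+1}}_{C_j^2}$ lands back at $h$ rather than at $2h$. This lets the paper quote \cite[(6.16)]{dgauss1} verbatim with parameter $h$. Your version, inflating first to $h'\leq 2h$ and then invoking \cite[Lemma~6.5]{dgauss1}, works too (again using the degree-$2$ structure to rescale), so this is a cosmetic difference.
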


\subsubsection{Irrelevance of non-local terms.}

The following proposition claims that the terms that are not included in $\operatorname{Loc}^{(k)} \Et$ ($k \in \{0,2\}$) contracts upon the fluctuation integral $\E_{(\rr + \tau)}$ with factor
\begin{equation} \label{e:Loc-contract-kappa}
    \alpha_{\operatorname{Loc}}^{(k)}
    = C (L^{-2} \log L)^{(k+1)/2} + C\min \ha{1,\sum_{q\geq 1}
    	e^{2 \sqrt{\beta}qh}
    	e^{-(q-1/2)\beta\Gamma_{j+1}(0)}}
    ,
 \end{equation}
and thus we see  that the localisations serve as an even better approximations of the polymer activity upon fluctuation integral and change of scale.

\begin{proposition} \label{prop:Loc-contract_v2}
  There exists a constant $c_h>0$ such that the following holds for all
periodic $F \in \cN_{j, \kt}$ and $X \in \cS_j$.
Assume $h \geq \max \{c_h  \sqrt{\beta},  1\}$, $\htau \leq (C \log L)^{-1} h$,
  $L \geq L_0 (R)$ for $C$ and $L_0 (R)$ sufficiently large and $A' \geq 1$.
If $\norm{F}_{\vec{h}, T_j,A'} < \infty$, then for all $\varphip \in \R^{\Lambda_N}$,
  \begin{align} \label{eq:Loco-contract-full}
   &   \| \Loco_X \E_{(\rr + \cdot)} F(X, \varphip+\zeta ; \cdot) - \E_{(\rr + \cdot)} F(X, \varphip+\zeta ; \cdot)\|_{\vec{h},T_{j+1}(\bar X, \varphip)}
    \nnb
   & \qquad \qquad\qquad\qquad\qquad \qquad\qquad\qquad    \leq  
    \alphaLoco (A')^{-|X|_j}
    \|F\|_{\vec{h},T_j, A'} G_{j+1}(\bar X,\varphip)
    .
  \end{align}
If we assume instead instead $\norm{F(X, \cdot ; 0)}_{h, T_j, A'} < \infty$ and $F(X, \varphi) = F(X, -\varphi)$,
then
  \begin{align} \label{eq:Loc-contract-full}
   &   \|  \Loc_X \E_{(\rr + \cdot)} F(X, \varphip+\zeta ; 0) - \E_{(\rr + \cdot)} F(X, \varphip+\zeta ; 0)\|_{\vec{h},T_{j+1}(\bar X, \varphip)}
    \nnb
   & \qquad \qquad\qquad\qquad\qquad \qquad\qquad\qquad    \leq  
    \alphaLoc (A')^{-|X|_j}
    \norm{F (\cdot ; 0)}_{h, T_j, A'} G_{j+1}(\bar X,\varphip)
    .
  \end{align}
\end{proposition}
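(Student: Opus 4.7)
The plan is to adapt the arguments of \cite[Section~6]{dgauss1} for the corresponding untilted estimate to the present setting, where we must track the analyticity in $\tau$ via the $\vec{h}$-norm. As a first reduction, I would apply Lemma~\ref{lemma:Et_by_complex_shift} to write $\E_{(\rr+\tau)}[F(X, \varphip+\zeta;\tau)] = \E[F(X, \varphip+\zeta + (\rr+\tau) u_{j+1};\tau)]$, valid because $\htau \leq (C\log L)^{-1} h$ with $C$ large. This converts the tilting into a complex shift of the Gaussian field of size controlled by $\norm{u_{j+1}}_{C_j^2} = O(\log L)$ from \eqref{quote:assumpu}, and the analyticity in $\tau$ of the shifted integrand is preserved by Lemma~\ref{lemma:F'_is_analytic}. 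I would then decompose $F$ into its charge sectors $F(X, \varphi;\tau) = \sum_{q\in\Z} \hat F_q(X, \varphi;\tau)$ with $\hat F_q(X, \varphi + s\one;\tau) = e^{iq\beta^{1/2}s} \hat F_q$, and estimate the neutral sector ($q=0$) and charged sectors ($|q|\geq 1$) separately, using the analogue of the projection bound \eqref{eq:neutralisation_bound} so that each $\hat F_q$ inherits the $\vec{h}$-norm of $F$.

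For the neutral part $\hat F_0$, which depends only on $\nabla\varphi$, the operator $\Loco$ subtracts $\hat F_0(X, 0)$ while $\Loc$ subtracts its second-order Taylor polynomial in the discrete gradient. The resulting difference is a Taylor remainder of order $k+1$, which I would bound via the integral remainder formula, estimating each term against $\norm{\cdot}_{h, T_j}$. The gain comes from the change of scale $T_j \to T_{j+1}$: a test function satisfying $\norm{f}_{C_{j+1}^2}\leq 1$ satisfies $\norm{f}_{C_j^2} = O(L^{-1})$, and each factor of $\nabla\varphi$ absorbed into the regulator contributes $O(\sqrt{\log L})$ by \eqref{quote:Gamma_three}; combining these gives $O(L^{-1}\sqrt{\log L})$ per order of remainder, hence $(L^{-2}\log L)^{(k+1)/2}$ for a $(k+1)$-fold remainder. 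In the $\Loc$ case the symmetry hypothesis $F(X, \varphi) = F(X,-\varphi)$ kills odd-order Taylor coefficients so that the remainder genuinely begins at order three, matching $k=2$.

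For each charged sector with $|q|\geq 1$, the decisive computation is the Gaussian identity $\E[\cos(q\beta^{1/2}(\varphip + \zeta + w))] = e^{-\frac12 q^2 \beta \Gamma_{j+1}(0,0)} \cos(q\beta^{1/2}(\varphip + w))$, applied with $w = (\rr+\tau)u_{j+1}$, which produces the exponential suppression $e^{-\frac12 q^2 \beta \Gamma_{j+1}(0,0)}$. Bounding the $h$-weighted derivatives of $\cos(q\beta^{1/2}\cdot)$ in the $T_j$-seminorm costs at most a factor $e^{\sqrt{\beta}qh}$ (and $e^{2\sqrt{\beta}qh}$ upon subtracting the localised piece), and using the elementary inequality $\tfrac12 q^2 \geq q-\tfrac12$ to loosen the suppression to $e^{-(q-1/2)\beta\Gamma_{j+1}(0,0)}$, summation over $q\geq 1$ yields the second term of $\alphaLoc^{(k)}$. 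The minimum with $1$ reflects that we always have the trivial bound $\|F - \Loc F\|\leq (1+\|\Loc\|)\|F\|$.

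The main bookkeeping obstacle is to assemble the neutral and charged contributions into a single $\vec h$-norm bound on the left-hand side while producing both the weight $(A')^{-|X|_j}$ and the scale $j+1$ regulator $G_{j+1}(\bar X, \varphip)$ on the right. The scale change of the regulator from $G_j$ on $X$ to $G_{j+1}$ on $\bar X$ is handled by Lemma~\ref{lemma:E_G^r_j}, and any residual strong-regulator factors $e^{c_w\kappa w_j^2}$ generated along the way are absorbed via \eqref{eq:strong_regulator_key_bound}. The $(A')^{-|X|_j}$ factor emerges by inserting the definition of $\norm{F}_{\vec h, T_j, A'}$ at each $X\in\cS_j$, using that the size of $\cS_j$ is controlled by $|X|_j\leq 4$. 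As explained in Remark~\ref{remark:new_norm}, once $\tau$ is treated as an extra coordinate each step has a direct analogue in the scalar-field proof of \cite[Proposition~6.16]{dgauss1}, so the genuinely new work is verifying that Lemma~\ref{lemma:F'_is_analytic} preserves analyticity at every stage of the reduction, in particular that the complex shift $\tau\mapsto(\rr+\tau)u_{j+1}$ stays strictly inside the strip of analyticity of $F$ throughout the integration and localisation operations.
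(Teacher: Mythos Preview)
Your proposal is correct and follows essentially the same route as the paper: Fourier decompose $F$ into charge sectors, extract the Gaussian suppression $e^{-(|q|-1/2)\beta\Gamma_{j+1}(0)}$ from each $|q|\geq 1$ sector, and bound the neutral sector by a Taylor remainder of order $k+1$ gaining $(L^{-2}\log L)^{(k+1)/2}$ through the change of scale $T_j\to T_{j+1}$, with the $\tau$-dependence handled throughout by the complex-shift machinery of Lemma~\ref{lemma:Et_by_complex_shift}/Lemma~\ref{lemma:F'_is_analytic}. The paper packages these steps as Proposition~\ref{prop:contraction_estimates_external_field}(1)--(2) and Lemma~\ref{lemma:Loc_minus_Tay} and then assembles them exactly as you describe; the only cosmetic difference is that the paper obtains the charged-sector factor $e^{-(|q|-1/2)\beta\Gamma_{j+1}(0)}$ directly via a single imaginary shift $v=i\sqrt{\beta}\,\delta_{x_0}$ (following \cite[Lemma~6.13]{dgauss1}) rather than first producing $e^{-\frac12 q^2\beta\Gamma_{j+1}(0)}$ from a cosine identity and then loosening it.
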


This proposition is proved in Section~\ref{sec:proof_of_Loc-contract}.

\subsection{Rescale-reblocking}

When we are given with a polymer expansion at scale $j$, the easiest way to rewrite the expansion in scale $j+1$ is to introduce the rescale-reblocking operator
\begin{align}
	\mathbb{S} F(X) = \sum_{Y\in \Conn_j}^{\bar{Y} = X}  F(Y), \qquad X\in \cP_{j+1}^c.
\end{align}
For general $X\in \cP_{j+1}$, let $\mathbb{S} F(X) = \prod_{Z\in \Comp_{j+1} (X)} \mathbb{S} F(Z)$.
The following proposition claims that the polymer activities on small polymers have dominant contribution on $\mathbb{S} F$.

\begin{proposition} \label{prop:largeset_contraction_external_field}
  There exists a constant $\eta >0$ such that the following holds. 
Let $L\geq 5$, $A' \geq A_0 (L)$ where $A_0 (L)$ is some function only polynomially large in $L$.
If $F \in \cN_{j, \kt}$ is supported on large sets (i.e., non-small sets) and 
$\norm{F}_{h, T_j,A'} \leq \epsilon_{rb} (A')$, 
then for $X\in \cP_{j+1}^c$,
\begin{equation}
	\big\| \mathbb{S} \big[ \E_{(\rr + \cdot)} [ F(X, \cdot + \zeta)  ] \big] \big\|_{\vec{h}, T_{j+1} (X), A'} 
	\leq (L^{-1} (A')^{-1} )^{|X|_{j+1}} \norm{F}_{\vec{h},  T_j}.
\end{equation}
\end{proposition}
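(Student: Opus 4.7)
The plan is to reduce to connected $X$, bound each term in the reblocking sum using Lemma~\ref{lemma:Et_F_bound}, and control the resulting combinatorial sum using lattice-animal counts combined with the large-set hypothesis. First, for general $X \in \cP_{j+1}^c$ I decompose into connected components $X = \sqcup_{Z \in \Comp_{j+1}(X)} Z$. The product formula $\mathbb{S}[\E F](X) = \prod_Z \mathbb{S}[\E F](Z)$, combined with the submultiplicativity of the seminorm (Lemma~\ref{lemma:submultiplicativity_semi-norm}) and the identity $|X|_{j+1} = \sum_Z |Z|_{j+1}$, reduces the claim to the case of connected $X \in \cP_{j+1}^c$.

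For connected $X$, the defining sum gives $\mathbb{S}[\E F](X) = \sum_{Y \in \Conn_j:\, \bar Y = X} \E_{(\rr+\cdot)}[F(Y, \cdot + \zeta)]$, where the large-set hypothesis restricts the sum to $Y \notin \cS_j$. Applying the triangle inequality together with Lemma~\ref{lemma:Et_F_bound} and the definition of the $A'$-weighted norm,
\begin{align*}
  \|\mathbb{S}[\E F](X)\|_{\vec h, T_{j+1}(X)} \leq C \|F\|_{\vec h, T_j, A'} \sum_{\substack{Y \in \Conn_j \\ \bar Y = X,\, Y \text{ large}}} (2/A')^{|Y|_j}.
\end{align*}
Analyticity in $\tau \in \D_{\htau}$ is preserved throughout, since Lemma~\ref{lemma:Et_F_bound} produces output in $\cN_{j+1, \kt}$ and the reblocking sum is finite.

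The combinatorial sum is then controlled by three ingredients: the rooted lattice-animal bound (at most $c_0^n$ connected $j$-polymers $Y$ of size $|Y|_j = n$ containing a fixed block); the $|X|_j = L^2 |X|_{j+1}$ possible root positions inside $X$; and the lower bounds $|Y|_j \geq |X|_{j+1}$ (since $\bar Y = X$ forces $Y$ to hit every $j+1$-block of $X$) and $|Y|_j \geq 5$ (a connected polymer outside $\cS_j$ has at least five blocks). These together give
\begin{align*}
  \sum_{Y} (2/A')^{|Y|_j} \leq L^2 |X|_{j+1} \sum_{n \geq \max(5, |X|_{j+1})} (2c_0/A')^n.
\end{align*}
Choosing $A_0(L)$ a sufficiently high polynomial in $L$, the geometric series is dominated by its leading term and the polynomial prefactor $L^2 |X|_{j+1}$ is absorbed into the exponent.

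The main obstacle is squeezing out the full contraction rate $(L^{-1}(A')^{-1})^{|X|_{j+1}}$ rather than the weaker $(C/A')^{|X|_{j+1}}$ that a naive geometric estimate yields. This requires separating the regimes $|X|_{j+1} \leq 4$, in which the large-set surplus $|Y|_j \geq 5$ generates the extra factors of $L^{-1}$, and $|X|_{j+1} \geq 5$, in which the constraint $|Y|_j \geq |X|_{j+1}$ binds; the latter regime may need to be sharpened via a geometric traversal argument, namely that a connected scale-$j$ set $Y$ spanning a scale-$(j+1)$ polymer $X$ must traverse at least $\sim L\cdot \mathrm{diam}_{j+1}(X)$ scale-$j$ cells. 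The resulting bookkeeping forces $A_0(L)$ to be taken as a polynomial of appropriately high degree, and this is the step where the precise exponent $\eta$ in the hypothesis enters.
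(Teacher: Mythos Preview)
Your approach matches the paper's: reduce to the combinatorial sum via Lemma~\ref{lemma:Et_F_bound}, then control $\sum_{Y \in \Conn_j \setminus \cS_j,\ \bar Y = X} (A'/2)^{-|Y|_j}$. Two remarks. First, the reduction to connected $X$ is unnecessary: the notation $\cP_{j+1}^c$ already means connected $(j+1)$-polymers, so the proposition is stated only for connected $X$ to begin with. Second, and more substantively, the paper does not attempt the combinatorial estimate from scratch; it invokes Lemma~\ref{lemma:largeset_contraction_building_block}, which in turn cites \cite[(6.115)--(6.118)]{dgauss1} for the bound
\[
\sum_{Y\in \Conn_j \backslash \cS_j}^{\bar{Y} = X} (A')^{-|Y|_j} \leq \big(C L^2 (A')^{-(1+\eta)} \big)^{|X|_{j+1}}.
\]
The essential geometric input here is that for large connected $j$-polymers one has $|\bar Y|_{j+1} \leq (1+\eta)^{-1}|Y|_j + O(1)$, which is exactly the ``traversal'' fact you flag as possibly needed. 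Your lattice-animal count plus the naive bound $|Y|_j \geq |X|_{j+1}$ only yields $(C/A')^{|X|_{j+1}}$, which after multiplying by $(A')^{|X|_{j+1}}$ gives no decay in $A'$ at all and certainly no factor of $L^{-1}$; the extra $\eta$ in the exponent is precisely what buys both, since then $C L^2 (A')^{-(1+\eta)} \leq L^{-1}(A')^{-1}$ once $A' \geq A_0(L)$ with $A_0$ a suitable polynomial in $L$. So your diagnosis of the obstacle is correct, but the resolution is the cited geometric lemma from \cite{dgauss1} rather than an ad hoc case split on $|X|_{j+1} \leq 4$ versus $\geq 5$.
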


This proposition is proved in Section~\ref{sec:prop_largeset_contraction_external_field_proof}.

\subsection{Irrelevance of the perturbations}

When the $j$-scale polymer expansion \eqref{eq:Z_j_generic_expression} is perturbed by a field, then we may use field-reblocking operator to rewrite them in terms of a polymer expansion free of the perturbation. This operator is defined as the following. 

\begin{definition} \label{def:K_j^dagger_definition}
Given $v_j \in \C^{\Lambda_N}$ and $K_j \in \cN_{j, \kt}$,  $U_j \in \cN_{j}$, 
define for $X \in \Conn_j$,
\begin{align}
	& \cR_j [v_j, U_j, K_j ] (X, \varphi) 
	=  \sum_{Y\in \cP_j (X)} (e^{U_j(\cdot, \varphi + v_j)  } - e^{U_j (\cdot, \varphi)} )^{X \backslash Y} K_j (Y, \varphi)
\end{align}
where
$(\cdot)^{X\backslash Y}$ is defined according to \eqref{eq:polymer_power_convention}. 
This can be extended to general $Z\in \cP_j$ by $\cR_j (Z) = \cR_j ^{[Z]}$ (recall \eqref{eq:polymer_power_convention}).

Given $u_j$ as in \eqref{quote:assumpu}, $\rr \in \R$, $\tau \in \C$, define
\begin{align}
	K_j^{\dagger} = \cR_j [ (\tau + \rr) u_j, U_j, K_j].
\label{eq:K_j^dagger_definition}
\end{align}
\end{definition}

The rewriting of the polymer expansion is a purely algebraic result, as stated in the next proposition. Also, Lemma~\ref{lemma:reblocking_estimate} indicates that field-reblocking can be done without much cost on the norm, as long as the size of the perturbed field is not too large.

\begin{proposition} 
\label{prop:reblocking_Z_with_Psi}
Let $K_j \in \cN_{j, \kt}$ and let $U_j \in \cN_j$ be additive over blocks,  i.e.,~$U_j(X \cup Y)=U_j(X)+ U_j(Y)$ for all $X\cap Y =\emptyset$, $X,Y \in \cP_j$.
Let $Z_j^{\rr}$ be given by \eqref{eq:Z_j_generic_expression}
and define $K_j^{\rr,\dagger}$ according to Definition~\ref{def:K_j^dagger_definition} (with $K_j^{\rr}$ in place of $K_j$).
Then for $c = -E_{j} |\Lambda_N| + g_{j}^{\rr} (\Lambda_N ;\tau)$,
\begin{equation}
Z^{\rr}_j (\varphi) = e^{c} \sum_{X\in \cP_j} e^{U_j (\Lambda \backslash X, \varphi)}
K_j^{\rr,\dagger}(X, \varphi)
.
\label{eq:reblocking_Z_w_ext_field} 
\end{equation}
If $K^{\rr}_j, U_j$ are ($2\pi \beta^{-1/2}$-)periodic, then so is $K_j^{\rr, \dagger}$.
If $u_j$ is as in \eqref{quote:assumpu} and $P_y^j$ is given by \eqref{eq:union_of_B^j_k's}, 
then 
$K^{\rr}_j (X) = K_j^{\rr, \dagger} (X)$ whenever $X \cap P_y^j = \emptyset$.
\end{proposition}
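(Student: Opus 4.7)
The plan is to prove all three assertions by purely algebraic manipulation; no norm estimates or analytic input are needed. For \eqref{eq:reblocking_Z_w_ext_field}, I will start from the definition \eqref{eq:Z_j_generic_expression} of $Z_j^{\rr}$ and use the block-additivity of $U_j$ to factorize
\[
e^{U_j(\Lambda\backslash X,\,\varphi+(\rr+\tau)u_j)}=\prod_{B\in\cB_j(\Lambda\backslash X)}e^{U_j(B,\,\varphi+(\rr+\tau)u_j)},
\]
and then rewrite each block factor as $e^{U_j(B,\varphi)}+d_B$ with $d_B:=e^{U_j(B,\varphi+(\rr+\tau)u_j)}-e^{U_j(B,\varphi)}$. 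Expanding the product produces a sum over subsets $S\subseteq\cB_j(\Lambda\backslash X)$, which I identify with $j$-polymers $Z:=\bigcup_{B\in S}B\in\cP_j(\Lambda\backslash X)$. Multiplying by $K_j^{\rr}(X,\varphi)$ and reindexing the resulting double sum via $X':=X\cup Z$, $Y:=X$, $Z=X'\backslash Y$, the inner sum becomes exactly $\cR_j[(\rr+\tau)u_j,U_j,K_j^{\rr}](X',\varphi)=K_j^{\rr,\dagger}(X',\varphi)$. For disconnected $X'$ this identification also requires that both $K_j^{\rr}$ (by \eqref{eq:factorisation_of_polymer_activities}) and the differenced-exponential factor (by block-additivity of $U_j$) factorize over connected components, so as to match the extension $\cR_j(X')=\cR_j^{[X']}$.

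Periodicity will follow by direct inspection: every summand of $K_j^{\rr,\dagger}(X,\varphi)$ is a product of block differences $d_B$ and a factor $K_j^{\rr}(Y,\varphi)$. Under the shift $\varphi\mapsto\varphi+n\one$ with $n\in 2\pi\beta^{-1/2}\Z$, the perturbation $(\rr+\tau)u_j$ is independent of $\varphi$, so both $U_j(B,\varphi+(\rr+\tau)u_j)$ and $U_j(B,\varphi)$ are invariant by periodicity of $U_j$, and $K_j^{\rr}(Y,\cdot)$ is invariant by hypothesis; hence so is $K_j^{\rr,\dagger}$.

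For the locality assertion I use that $\supp u_{j,1}\subseteq B_0^j$ and $\supp T_y u_{j,2}\subseteq Q_y^j$ by \eqref{quote:assumpu}, and hence $\supp u_j\subseteq P_y^j$ by \eqref{eq:union_of_B^j_k's}. When $X\cap P_y^j=\emptyset$, $(\rr+\tau)u_j$ vanishes on the region relevant for evaluating $U_j(B,\cdot)$ for every $B\in\cB_j(X)$, so $d_B=0$ for all such $B$. Consequently, $\prod_{B\in\cB_j(X\backslash Y)}d_B=0$ unless $X\backslash Y=\emptyset$, and only the term $Y=X$ (whose exponential-difference product is empty) survives in $\cR_j[(\rr+\tau)u_j,U_j,K_j^{\rr}](X,\varphi)$, yielding $K_j^{\rr,\dagger}(X,\varphi)=K_j^{\rr}(X,\varphi)$. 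The proposition is purely combinatorial so there is no genuine analytic obstacle; the only care needed is the reindexing bookkeeping in the first step and verifying compatibility with the convention $\cR_j(X')=\cR_j^{[X']}$ for disconnected outer polymers.
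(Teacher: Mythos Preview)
Your proposal is correct and follows essentially the same approach as the paper's own proof. The paper writes the polymer expansion of $e^{U_j(\Lambda\backslash X,\varphi+\cz u_j)}$ directly (your block-factorize-and-expand is exactly how one derives it), reindexes via $Z=X\cup Y$, and then invokes the factorisation property \eqref{eq:factorisation_of_polymer_activities} to match the extension $\cR_j(Z)=\cR_j^{[Z]}$; the periodicity and locality statements are dismissed in the paper as ``clear from the definition,'' which your argument spells out in slightly more detail.
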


\begin{proof}
Let $\cz = \rr + \tau$ so $K_{j}^{\rr,\dagger} = \cR_j [\cz u_j, U_j, K_j^{\rr}]$.
Using polymer expansion
\begin{align}
	e^{U_j (\Lambda\backslash X, \varphi + \cz u_j)} = \sum_{Y \in \cP_j (\Lambda \backslash X)} \big( e^{U_j (\cdot, \varphi + \cz u_j)} - e^{U_j (\cdot, \varphi)} \big)^{Y} e^{U_j (\Lambda \backslash (X\cup Y),  \varphi)},
\end{align}
and denoting $Z = X \cup Y$, we can rewrite \eqref{eq:Z_j_generic_expression} as
\begin{align}
	Z^{\rr}_j (\varphi) = e^{c} \sum_{Z\in \cP_j} e^{U_j (\Lambda \backslash Z, \varphi)} \sum_{Y \in \cP_j (Z)} \big( e^{U_j (\cdot, \varphi + \cz u_j)} - e^{U_j (\cdot, \varphi)} \big)^{Y} K^{\rr}_j (Z\backslash Y, \varphi) .
\end{align}
But because of factorisation property of $K_j$,  \eqref{eq:factorisation_of_polymer_activities}, we actually have
\begin{align}
	& \sum_{Y \in \cP_j (Z)} \big( e^{U_j (\cdot, \varphi + \cz u_j)} - e^{U_j (\cdot, \varphi)} \big)^{Y} K_j (Z\backslash Y, \varphi) \nnb
	& \qquad \qquad = \prod_{Z' \in \Comp_j (Z)} \sum_{Y \in \cP_j (Z')} \big( e^{U_j (\cdot, \varphi + \cz u_j)} - e^{U_j (\cdot, \varphi)} \big)^{Y} K^{\rr}_j (Z' \backslash Y, \varphi) ,
\end{align}
giving \eqref{eq:reblocking_Z_w_ext_field}. 
Final remarks are also clear from the definition of the operation $\cR_j$'s.
\end{proof}

\begin{lemma}
\label{lemma:reblocking_estimate}
Suppose $u_j$ satisfies \eqref{quote:assumpu},
 $U_j$ given in form \eqref{eq:U_j_form},  $K_j \in \cN_{j,\kt}$ and $\htau < (C \log L)^{-1} h$ for $C$ sufficiently large.
Then there exists $\epsilon_{r} \equiv \epsilon_r (M, \rho, A,L, \rr) > 0$  
such that whenever $\tau \in \D_{\htau}$ and $\max\{ \norm{U_j}_{\Omega_j^U} , \norm{K_j}_{\vec{h}, T_j} \} \leq \epsilon_{r}$,
\begin{align}
	& \norm{K_j^{\dagger} }_{\vec{h}, T_j, \frac{1}{2} A }, 
	\leq \max\big\{ \norm{U_j}_{\Omega_j^U} , \norm{K_j}_{\vec{h}, T_j} \big\}
	.
	\label{eq:reblocking_estimate1}
\end{align}
\end{lemma}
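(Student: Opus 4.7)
The plan is to expand $K_j^{\dagger}$ blockwise via Definition~\ref{def:K_j^dagger_definition}, reduce the $\vec{h}, T_j$ norm to products of block-level norms via submultiplicativity, bound each block factor using the smallness of $U_j$ and $K_j$, and absorb the resulting regulators into $G_j(X,\varphi)$ by iterated application of the strong-regulator inequality \eqref{eq:strong_regulator_key_bound}. The structure mirrors the reblocking estimates of \cite{dgauss1, dgauss2}; the new ingredients are the complex shift by $v_j := (\tau+\rr)u_j$ and the requirement to preserve analyticity in $\tau$, which is where the constraint $\htau < (C\log L)^{-1}h$ enters critically. Setting $J(B,\varphi;\tau) := e^{U_j(B, \varphi + v_j)} - e^{U_j(B, \varphi)}$ for $B\in \cB_j$ and extending to $\cP_j$ via \eqref{eq:polymer_power_convention}, I have for $X \in \Conn_j$
\[
K_j^{\dagger}(X, \varphi; \tau) = \sum_{Y \in \cP_j(X)} J(X\setminus Y, \varphi; \tau) \, K_j^{[Y]}(\varphi; \tau),
\]
and submultiplicativity \eqref{eq:submultiplicativity} reduces the control of $\|K_j^{\dagger}(X;\cdot)\|_{\vec{h}, T_j(X)}$ to block-level bounds on $J$ combined with the standard bound $\|K_j(Y';\cdot)\|_{\vec{h}, T_j(Y',\varphi)} \leq \|K_j\|_{\vec{h}, T_j} A^{-|Y'|_j} G_j(Y',\varphi)$.

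The main (and only nontrivial) step is the per-block estimate of the form
\[
\|J(B, \cdot; \cdot)\|_{\vec{h}, T_j(B)} \leq C(M, \rho, L, \rr) \, A^{-1} \|U_j\|_{\Omega_j^U} \, e^{c_w \kappa w_j(B, \cdot)^2}.
\]
To obtain this I would factor $J(B) = e^{U_j(B,\varphi)}\bigl(e^{U_j(B,\varphi+v_j) - U_j(B,\varphi)} - 1\bigr)$, control the outer exponential using \eqref{eq:exp_taylor_bound} together with \eqref{eq:components_of_U_j_bound}, and Taylor-expand the inner factor in powers of $v_j$ starting at first order; the first-order term is dominant and is bounded by $(|\rr| + \htau)\|u_j\|_{C_j^2(B^*)} h^{-1}\|U_j(B,\cdot)\|_{h, T_j(B)}$, which explains the linear dependence on $\|U_j\|_{\Omega_j^U}$. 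Since $\|u_j\|_{C_j^2} \leq C(M,\rho)\log L$ by \eqref{quote:assumpu}, the constraint $\htau < (C'\log L)^{-1} h$ keeps the shift $v_j$ within the analyticity strip of Proposition~\ref{prop:analytic_on_strip} uniformly in $\tau \in \D_{\htau}$, so the Taylor series converges and each coefficient remains analytic in $\tau$; analyticity of $J$ is then preserved by the same mechanism as in Lemma~\ref{lemma:F'_is_analytic} (applied with $u_j$ in place of $u_{j+1}$) combined with Lemma~\ref{lemma:analyticity_preserved_under_E}. The strong regulator $e^{c_w\kappa w_j^2}$ on the right-hand side originates from the $\tfrac{1}{2} s_j |\nabla \varphi|^2$ contribution to $U_j$ via \eqref{eq:components_of_U_j_bound}.

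Inserting the per-block estimate on $J$ and the norm bound on $K_j$ into submultiplicativity, I apply \eqref{eq:strong_regulator_key_bound} iteratively to absorb
\[
\prod_{B \in \cB_j(X\setminus Y)} e^{c_w\kappa w_j(B,\varphi)^2} \prod_{Y' \in \Comp_j(Y)} G_j(Y',\varphi) \leq G_j(X, \varphi),
\]
divide by $G_j(X,\varphi)$, and use $\|K_j\|_{\vec{h},T_j}^{\#\Comp_j(Y)} \leq \|K_j\|_{\vec{h}, T_j}$ for $Y \neq \emptyset$ (valid since $\|K_j\| \leq \epsilon_r \leq 1$). The sum over $Y \in \cP_j(X)$ then factorises blockwise into a geometric product bounded by $A^{-|X|_j} \bigl(C\|U_j\|_{\Omega_j^U} + \|K_j\|_{\vec{h},T_j}\bigr)^{|X|_j}$ up to an overall factor of $\max\{\|U_j\|_{\Omega_j^U}, \|K_j\|_{\vec{h}, T_j}\}$; multiplying by $(A/2)^{|X|_j}$ and choosing $\epsilon_r(M,\rho,A,L,\rr) > 0$ small enough that $(C\|U_j\|_{\Omega_j^U} + \|K_j\|_{\vec{h},T_j})/2 \leq 1$ yields \eqref{eq:reblocking_estimate1}.

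The principal obstacle is the careful treatment of the complex shift in the per-block estimate: the bound on $\|J(B)\|_{\vec{h}, T_j(B)}$ must be simultaneously linear in $\|U_j\|_{\Omega_j^U}$, analytic in $\tau \in \D_{\htau}$, and retain the strong regulator factor essential for the regulator absorption. This requires working in the $\vec{h}, T_j$ norm with its joint control of $D^n$ and $\partial_\tau^m$ (via the reinterpretation in Remark~\ref{remark:new_norm}), and tracking constants carefully enough that the contraction closes after multiplying by $(A/2)^{|X|_j}$ — essentially relying on the factor of $A$ hidden inside $\|U_j\|_{\Omega_j^U}$ together with the $A$-to-$A/2$ slack.
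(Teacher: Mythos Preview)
Your proposal is correct and follows essentially the same approach as the paper's proof: factor $J(B) = e^{U_j(B,\varphi)}\bigl(e^{U_j(B,\varphi+v_j)-U_j(B,\varphi)} - 1\bigr)$, obtain a per-block bound linear in $\|U_j\|_{\Omega_j^U}$ with a strong-regulator factor via \eqref{eq:components_of_U_j_bound} and Lemma~\ref{lemma:F'_is_analytic}, then combine with the $K_j$ bound using submultiplicativity and \eqref{eq:strong_regulator_key_bound}, and finally sum over $Y\in\cP_j(X)$. The paper handles the $\rr$-shift and the $\tau$-shift in two separate steps before combining, whereas you describe a single Taylor expansion in $v_j=(\rr+\tau)u_j$; these are equivalent routes to the same per-block estimate, and the closing combinatorics (extracting one factor of $x=\max\{\|U_j\|_{\Omega_j^U},\|K_j\|_{\vec h,T_j}\}$ and bounding the remaining sum by $2^{|X|_j}$) is identical.
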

\begin{proof}
By Lemma~\ref{lemma:F'_is_analytic} and \eqref{eq:components_of_U_j_bound} with the assumption $h + \htau < \kappa (2M)^{-1} h + h < 2h$,  one may write
\begin{align}
	\norm{ U_j (B, \varphi + (\rr + \tau) u_j)  - U_j (B, \varphi + \rr u_j) }_{\vec{h}, T_j (B, \varphi)} & \leq  \norm{U_j (B, \varphi + \rr u_j )}_{h + \htau \norm{u_j} , T_j (B, \varphi)}
	\nnb
	& \leq C {  A^{-1}} \norm{U_j}_{\Omega_j^U} w_j (B, \varphi + \rr u_j)^2.
\end{align}
Also by \eqref{eq:components_of_U_j_bound}, 
\begin{align}
	\norm{ U_j (B, \varphi + \rr u_j)  - U_j (B, \varphi + u_j) }_{\vec{h}, T_j (B, \varphi)} \leq C A^{-1} \norm{U_j}_{\Omega_j^U} ( 1+ C' (\rr) \norm{ u_j }_{C_j^2}^2 )
\end{align}
Then Lemma~\ref{lemma:submultiplicativity_semi-norm} gives
\begin{align}
	\norm{ e^{U_j (B, \varphi + (\rr + \tau ) u_j) }  - e^{U_j (B, \varphi)}}_{\vec{h}, T_j (B, \varphi)}
	& \leq
	\norm{ e^{U_j (B, \varphi)}}_{\vec{h}, T_j (B, \varphi)} \norm{ e^{U_j (B, \varphi + (\rr +\tau) u_j) - U_j (B, \varphi)}  - 1}_{\vec{h}, T_j (B, \varphi)}
	\nnb
	& \leq C {  A^{-1}}  \norm{U_j }_{\Omega_j^U} e^{ C {  A^{-1}}  \norm{U_j }_{\Omega_j^U}   (w_j (B, \varphi)^2 + C' (\rr) \norm{ u_{j}}_{C^2_j}^2 ) }
	.
\end{align}
Hence, 
if $x := \max\{ \norm{U_j }_{\Omega_j^U} ,  \norm{K_j}_{\vec{h}, T^{0}_j, A} \}$ is sufficiently smaller than $\min\{A, c_w \kappa\}$ and $(C' (\rr) \norm{u_{j}}_{C^2_j}^2)^{-1}$, 
then by \eqref{eq:submultiplicativity} and \eqref{eq:strong_regulator_key_bound},
\begin{align}
	& \Big\| \Big( e^{U_j (\cdot, \varphi + ( \rr + \tau) u_j) }  - e^{U_j (\cdot, \varphi)} \Big)^{X \backslash Y} K_j (Y, \varphi)  \Big\|_{\vec{h}, T_j (X, \varphi)} \nnb
	& \qquad\qquad\qquad\qquad\qquad\qquad \leq
	A^{  -|X|_j} ( C x)^{^{|X \backslash Y|_j}} e^{c_w \kappa w_j (X\backslash Y, \varphi)^2} x^{|\operatorname{Comp}_j (Y)|} G_j (Y, \varphi)
	\nnb
	& \qquad\qquad\qquad\qquad\qquad\qquad \leq
	A^{  -|X|_j} ( C x)^{^{|X \backslash Y|_j}} x^{|\operatorname{Comp}_j (Y)|} G_j (X, \varphi)
\end{align}
for $X, Y \in \cP_j$, $Y\subset X$.
By the definition of $K_j^{\dagger}$,  now with $x$ sufficiently smaller than $A$,
\begin{align}
	\norm{K_j^{\dagger} (X ; \tau)}_{\vec{h}, T_j (X)} 
	\leq 
	A^{-|X|_j}
	\sum_{Y \in \cP_j (X)} (C x)^{|X \backslash Y|_j} x^{|\operatorname{Comp}_j (Y)|}  \leq (A/2)^{-|X|_j} x
	.
\end{align}
Together with \eqref{eq:components_of_U_j_bound},  we obtain the bound on $K_j^{\dagger}$.
\end{proof}

\ifx\newpageonoff\undefined
{\red command undefined!!}
\else
  \if\newpageonoff1
  \newpage
  \fi
\fi

\section{Renormalisation group map}
\label{sec:generic_rg_step}

The objective of this section is to define $(E_{j+1},  g^{\rr}_{j+1}, U_{j+1}, K_{j+1}^{\rr})$ as a function of given $(E_j,  g^{\rr}_j, U_j, K_j^{\rr,\dagger})$ using
\begin{align}
	\Phi^{\rr, \Lambda_N}_{j+1} 
	: (U_j,  \vec{K}_j) \mapsto \big( E_{j+1} - E_j,  g_{j+1}^\rr - g_j^\rr,  U_{j+1}, \vec{K}_{j+1} \big)
	.
	\label{eq:rg_map_definition}
\end{align}
If we consider each coordinate-map as a function of $(U_j,  \vec{K}_j)$, 
we can also denote $\Phi^{\rr, \Lambda_N}_{j+1} = (\cE_{j+1} , \kg_{j+1}^\rr, \cU_{j+1},  \vec{\cK}_{j+1})$.
The inductive relation of Section~\ref{sec:effpt} is a crucial restriction on them.

\begin{definition}
\label{def:rgmap}
The map $\Phi^{\rr,\Lambda}_{j+1}$ as in \eqref{eq:rg_map_definition} is called an \emph{RG map} if 
\begin{align}
	(E_{j+1}, g_{j+1}^\rr, U_{j+1}, \vec{K}_{j+1}) = (E_j + \cE_{j+1} ,  g^{\rr}_j + \kg_{j+1}^\rr, \cU_{j+1},  \vec{\cK}_{j+1})
\end{align}
satisfies the following.
If $Z_j^{\rr}$ and $Z_{j+1}^{\rr}$ are given in form \eqref{eq:Z_j_generic_expression},
then they satisfy the inductive relation satisfy \eqref{eq:bulk_RG_flow}. 
\end{definition}

The domain of $\Phi_{j+1}^{\rr,\Lambda_N}$ is given as normed spaces as the following.

\begin{definition}
We denote the RG coordinates and their spaces as the following.
\begin{itemize}
\item 
Denote $U_j$ and $W_j$ to be implicit functions of the coupling constants $(s_j,  (z_j^{(q)})_{q\geq 1} ) \subset \R \times \R^{\N}$
defined by
\eqref{eq:U_j_form} and \eqref{eq:W_j_form}.
The space of $U_j$ of such form with $\norm{U_j}_{\Omega_j^U} < \infty$ (see Definition~\ref{def:U_j_norm}) is called $\Omega_j^U$.

\item Let $\Omega_{j}^{K}$ be the space of $F_1 \in \cN_j$ 
that is invariant under lattice symmetries, ($2\pi \beta^{-1/2}$-)periodic and $F_1 (\cdot , \varphi) = F_1 (\cdot, -\varphi)$,
and let $\norm{F_1}_{\Omega_j^K} := \norm{F_1}_{h, T_j} < \infty$.

\item Let $\Omega_{j, \kt}^K$ be the space of periodic $F_2^{\rr}\in \cN_{j, \kt}$ such that $\norm{F_2^{\rr}}_{\vec{h}, T_j}  < \infty$
and ${\Omega}_{j, \kt, \dagger}^{K}$ is the space of periodic $F_3^{\rr} \in \cN_{j, \kt}$ such that $\norm{F_3^{\rr}}_{\vec{h}, T_j, A/2} < \infty$. 
Then
$\bar{\Omega}_{j, \kt}^K \subset \Omega_j^{K} \times {\Omega}_{j, \kt}^K$ 
(respectively $\bar{\Omega}_{j, \kt, \dagger}^K \subset \Omega_j^{K} \times {\Omega}_{j, \kt, \dagger}^K$) 
is the space of pairs $(F_2^0 (\cdot ; 0), F_2^{\rr} )$ 
(respectively $(F_3^0 (\cdot ; 0), F_3^{\rr} )$) such that 
$F_2^{\rr} (X, \varphi ; \tau) =  F_2^0 (X, \varphi ; 0)$ (respectively $F_3^{\rr} (X, \varphi ; \tau) =  F_3^0 (X, \varphi ; 0)$) whenever $X\in \cP_j$ has $X \cap (P_y^j)^* = \emptyset$.
Then denote
\begin{align}
	\norm{ (F_2^0, F_2^{\rr})}_{\bar{\Omega}_{j, \kt}^K} 
	& 
	= \max\{ \norm{F_2^0 (\cdot ; 0)}_{h, T_j} ,  \norm{F_2^{\rr}}_{\vec{h},  T_j}   \}  
	\\
	\norm{ (F_3^0, F_3^{\rr})}_{\bar{\Omega}_{j, \kt, \dagger}^K} 
	& 
	= \max\{ \norm{F_3^0 (\cdot ; 0)}_{h, T_j} ,  \norm{F_3^{\rr}}_{\vec{h},  T_j, A/2}   \}
.
\end{align}

\item Define $\Omega_j = \Omega_j^{U} \times \Omega_j^K$,  
$\bar{\Omega}_{j,\kt} = \Omega_j^U \times \bar{\Omega}_{j, \kt}^K$
and
$\bar{\Omega}_{j,\kt, \dagger} = \Omega_j^U \times \bar{\Omega}_{j, \kt, \dagger}^K$
endowed with norms
\begin{align}
	\norm{(U_j, F_1)}_{\Omega_j} 
	&= \max \{ \norm{U_j}_{\Omega_j^U} , \norm{F_1}_{\Omega_j^K} \} 
	\\
	\norm{(U_j, F_2^0, F_2^{\rr})}_{\bar{\Omega}_{j, \kt}} 
	&= \max \{ \norm{U_j}_{\Omega_j^U} ,  \norm{(F_2^0, F_2^{\rr})}_{\bar{\Omega}^K_{j, \kt}} \}
	\\
	\norm{(U_j, F_3^0, F_3^{\rr})}_{\bar{\Omega}_{j, \kt, \dagger}} 
	&= \max \{ \norm{U_j}_{\Omega_j^U} ,  \norm{(F_3^0, F_3^{\rr})}_{\bar{\Omega}^K_{j, \kt, \dagger}} \}
\end{align}
\end{itemize}
\end{definition}

An element of the space $\Omega_j$ or $\bar{\Omega}_{j, \kt}$ or $\bar{\Omega}_{j, \kt, \dagger}$ is typically denoted $\omega_j^0$ or $\omega_j^{\rr}$ or $\omega_j^{\rr, \dagger}$, respectively. 
Given an element $\omega_j^{\rr} = (U_j, K_j^0, K_j^{\rr}) \in \bar{\Omega}_{j, \kt}$,  we have $(K_j^{\rr})^{\dagger}$ defined by Definition~\ref{def:K_j^dagger_definition}. 
Then by Proposition~\ref{prop:reblocking_Z_with_Psi} and Lemma~\ref{lemma:reblocking_estimate}, we see that
$\omega_j^{\rr, \dagger} := (U_j, K_j^0, (K_j^{\rr})^{\dagger} ) \in \bar{\Omega}_{j, \kt, \dagger}$
and satisfy
\begin{align}
\norm{\omega_{j}^{\rr, \dagger}}_{\bar{\Omega}_{j, \kt, \dagger}} \leq C \norm{\omega_{j}^{\rr}}_{\bar{\Omega}_{j, \kt}}
.
\label{eq:omega_ddag_omega_bound}
\end{align}
Thus a bounded function $\omega_j^{\rr,\dagger} \mapsto (\cE_{j+1}, \kg_{j+1}^\rr,  \omega_{j+1}^{\rr})$ can be used to define a bounded map $\omega_j^{\rr} \mapsto (\cE_{j+1}, \kg_{j+1}^\rr, \omega_{j+1}^{\rr} )$.

We state the existence and estimates on an RG map in the main theorems of this section.

\begin{theorem}
\label{thm:rg_map_definition}

Let $L, A , \beta > 0$ be sufficiently large. 
Then there exists $\epsilon_{nl} \equiv \epsilon_{nl} ( \beta, A, L) > 0$  (only polynomially small in $\beta$) and an RG map (as in Definition~\ref{def:rgmap})
\begin{align}
	\Phi^{\rr, \Lambda_N}_{j+1} : \big\{ \omega_j^{\rr} \in \bar{\Omega}_{j,\kt} : \norm{\omega_j^{\rr}}_{\bar{\Omega}_{j, \kt}} \leq \epsilon_{nl}  \big\} 
	& \rightarrow 
	\R \times W^+ (\D_{\htau}) \times \Omega_{j+1}^{U} \times \bar{\Omega}^{K}_{j+1, \kt} 
\end{align}
with coordinate maps $(\cE_{j+1}, \kg_{j+1}^\rr, \cU_{j+1}, \vec{\cK}_{j+1})$.
\end{theorem}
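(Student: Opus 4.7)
The plan is to construct $\Phi^{\rr,\Lambda_N}_{j+1}$ following the standard polymer-expansion recipe of \cite{dgauss1,dgauss2}, adapted to carry along both the observable-decomposition (the pair $(K_j^0,K_j^\rr)$ living in $\bar\Omega_{j,\kt}^K$) and the $\tau$-analyticity. As a preparation step I pass from $\omega_j^\rr = (U_j,K_j^0,K_j^\rr)$ to its ``daggered'' form $\omega_j^{\rr,\dagger}=(U_j,K_j^0,(K_j^\rr)^\dagger)$: by Proposition~\ref{prop:reblocking_Z_with_Psi} this rewrites $Z_j^\rr$ in the shift-free form \eqref{eq:reblocking_Z_w_ext_field}, and by Lemma~\ref{lemma:reblocking_estimate} together with \eqref{eq:omega_ddag_omega_bound} this operation is bounded on a ball of sufficiently small radius in $\bar\Omega_{j,\kt}$. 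The daggering is essential because it absorbs the $(\rr+\tau)u_j$-shift into the polymer activity, so that the next fluctuation integral becomes an ordinary tilted Gaussian expectation to which the norm machinery of Section~\ref{sec:polymer_activities_and_norms} applies directly.

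Next I apply $\E_{\Gamma_{j+1},(\rr+\tau)}[\cdot]$ to \eqref{eq:reblocking_Z_w_ext_field} and regroup the result as a scale-$(j+1)$ polymer expansion. For each block $B\in\cB_{j+1}$ I collect the small-polymer $j$-scale contributions sitting inside $B^*$, and split them into local and non-local parts using $\Loc$ on the $\tau$-independent symmetric piece and $\Loco$ on the residual $\tau$-dependent neutral piece (Definition~\ref{def:Loc}). Large $j$-polymers are handled by the rescale-reblocking operator $\mathbb{S}$. The aggregated local parts define the new coordinates:
\begin{itemize}
\item $\cU_{j+1}\in\Omega_{j+1}^U$ is read off from the $\Loc$-contributions, updating $s_{j+1}$ and $(z_{j+1}^{(q)})_{q\ge1}$;
\item $\cE_{j+1}\in\R$ is the block-independent scalar at $\rr=\tau=0$ (the vacuum-energy renormalisation from \cite{dgauss1});
\item $\kg_{j+1}^\rr-\kg_j^\rr\in W^+(\D_{\htau})$ is the block-independent $\tau$-dependent scalar, which is analytic in $\tau\in\D_{\htau}$ by Lemma~\ref{lemma:analyticity_preserved_under_E} and Lemma~\ref{lemma:Loc_Et_K_j_bound}.
\end{itemize}
What remains, namely the small-polymer $\Loc/\Loco$-remainders plus the $\mathbb{S}$-reblocked large-polymer contributions, is packaged as $\vec{\cK}_{j+1}=(\cK_{j+1}^0,\cK_{j+1}^\rr)$. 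Verifying Definition~\ref{def:rgmap} then amounts to checking that the resulting expression for $Z_{j+1}^\rr$ matches \eqref{eq:Z_j_generic_expression} at scale $j+1$, which is the algebraic identity by construction.

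The required norm bounds follow by composing the key estimates already at our disposal: Lemma~\ref{lemma:Et_F_bound} converts the scale-$j$ regulator to the scale-$(j+1)$ regulator while picking up a factor $2^{|X|_j}$; Proposition~\ref{prop:Loc-contract_v2} shows that the small-polymer remainder contracts by $\alphaLoc+\alphaLoco=O((L^{-2}\log L)^{1/2})$ plus an exponentially small sine-Gordon correction; Proposition~\ref{prop:largeset_contraction_external_field} shows that $\mathbb{S}$ gains a factor $L^{-1}(A')^{-1}$ on large polymers; and \eqref{eq:components_of_U_j_bound} together with Lemma~\ref{lemma:Loc_Et_K_j_bound} controls the scalar and $U_{j+1}$ coordinates. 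Choosing $L$ and $A$ sufficiently large and $\epsilon_{nl}$ polynomially small in $\beta$, these contracting factors compose to keep the output inside the target ball, so each coordinate map is well-defined and bounded on the claimed domain.

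The main obstacle is enforcing two structural invariants simultaneously. First, the observable-decomposition property defining $\bar\Omega_{j+1,\kt}^K$ — namely $\cK_{j+1}^\rr(X)=\cK_{j+1}^0(X)$ whenever $X\cap(P_y^{j+1})^*=\emptyset$ — must survive every operation in the construction. This relies on $\operatorname{supp} u_j\subset P_y^j$ (from \eqref{quote:assumpu}) together with the finite range $\tfrac14 L^{j+1}$ of $\Gamma_{j+1}$ (from \eqref{quote:Gamma_one}), so that the $\rr$-tilting affects only polymers meeting $(P_y^{j+1})^*$, and it must be traced carefully through the $\cR_j$, $\Loc$, $\Loco$ and $\mathbb{S}$ steps. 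Second, $\tau$-analyticity must be preserved on the full disk $\D_{\htau}$ without shrinking $\htau$; this is precisely why the $\Loco$-split is used on the $\tau$-dependent part and why Lemma~\ref{lemma:F'_is_analytic} is invoked with the compatibility condition $\htau<(C\log L)^{-1}h$ fixed in Section~\ref{sec:choice_of_parameters}. The detailed technical verification of these two points — especially the $W^+(\D_{\htau})$-estimate on $\kg_{j+1}^\rr-\kg_j^\rr$ and the decoupling of $\cK_{j+1}^\rr$ from $\cK_{j+1}^0$ on far-away polymers — is the content deferred to Section~\ref{sec:proof-of-theorem-thm-local-part-of-K}.
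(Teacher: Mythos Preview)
Your proposal is correct and follows essentially the same approach as the paper: pass to the daggered form via Proposition~\ref{prop:reblocking_Z_with_Psi} and Lemma~\ref{lemma:reblocking_estimate}, define the coordinate maps by the explicit $\Loc/\Loco$-based polymer expansion (Definitions~\ref{def:evolution_of_g_j} and~\ref{def:expression_for_K_j+1_external_field}), verify $\kg_{j+1}^\rr\in W^+(\D_{\htau})$ via Lemma~\ref{lemma:Loc_Et_K_j_bound}, the finiteness of $\|\cK_{j+1}^\rr\|_{\vec{h},T_{j+1}}$ via Theorem~\ref{thm:local_part_of_K_j+1}, and the $\bar\Omega_{j+1,\kt}^K$ decoupling by the observation that $X\cap(P_y^{j+1})^*=\emptyset$ forces $X\cap(P_y^j)^*=\emptyset$, with the algebraic RG-map identity inherited from \cite[Theorem~7.5]{dgauss1}. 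The paper's own proof is terser---it simply cites the explicit definitions and Theorem~\ref{thm:local_part_of_K_j+1} rather than re-motivating the construction---but the logical content is identical.
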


\begin{theorem}
\label{thm:rg_map_definition_estimates}

With $\Phi_{j+1}^{\rr,\Lambda_N} = (\cE_{j+1}, \kg_{j+1}^\rr, \cU_{j+1}, \vec{\cK}_{j+1})$ as in Theorem~\ref{thm:rg_map_definition},
let $(\cK_{j+1}^0, \cK_{j+1}^\rr)$ be the two components of $\vec{\cK}_{j+1}$.
We can decompose $\cK_{j+1}^{\rr} = \cL_{j+1}^{\rr} + \cM_{j+1}^{\rr}$ with a linear map $\cL_{j+1}^{\rr}$ of $\vec{K}^{\rr}_{j}$ and a differentiable map $\cM_{j+1}$ of $(U_j, \vec{K}_j^{\rr})$ satisfying
\begin{align}
	& \norm{\cL_{j+1}}_{\Omega^K_{j+1, \rr}} \leq C_1 (L^2 \alphaLoc \norm{K_j^0 (\cdot ; 0)}_{\Omega_j^K} + \alphaLoco \norm{\vec{K}^{\rr, \dagger}_j}_{\bar{\Omega}^K_{j, \rr, \dagger}} ) \label{eq:rg_map_main_estimate1} \\
	& \norm{D \cM_{j+1}}_{\Omega^K_{j+1, \rr}} \leq C_2 \norm{\omega_j^{\rr, \dagger} }_{\bar{\Omega}_{j, \kt, \dagger}}
	\label{eq:rg_map_main_estimate2}
\end{align}
for $(\alphaLoc, \alphaLoco)$ as in \eqref{e:Loc-contract-kappa}
and some $C_1 \equiv C_1 (\rr) >0$ independent of $\beta, A,L$ and $C_2 \equiv C_2 (\rr, \beta, A,L)$.
\end{theorem}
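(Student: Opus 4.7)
The plan is to construct $\Phi_{j+1}^{\rr,\Lambda_N}$ in the same spirit as the bulk RG map of \cite[Section~7]{dgauss1}, using that map itself to produce $(\cE_{j+1},\cU_{j+1},\cK_{j+1}^0)$ from $(U_j,K_j^0)$, and then running the analogous construction for the observable data $(\kg_{j+1}^\rr,\cK_{j+1}^\rr)$ with $\Loco$ replacing $\Loc$. Starting from \eqref{eq:Z_j_generic_expression} expressed via $K_j^{\rr,\dagger}$ (Proposition~\ref{prop:reblocking_Z_with_Psi}) and applying $\E_{(\rr+\tau)}$, one has
\begin{equation*}
    \E_{(\rr+\tau)}[Z_j^{\rr}(\varphip+\zeta;\tau)]
    = e^{c}\sum_{X\in\cP_j} e^{U_j(\Lambda\setminus X,\varphip)}\,
    \E_{(\rr+\tau)}\bigl[e^{U_j(\Lambda\setminus X,\varphip+\zeta)-U_j(\Lambda\setminus X,\varphip)}K_j^{\rr,\dagger}(X,\varphip+\zeta;\tau)\bigr].
\end{equation*}
Passing from the $U_j$- to the $U_{j+1}$-form, the local relevant parts on small sets $X\in\cS_j$ are absorbed into $\cE_{j+1},\kg_{j+1}^\rr,\cU_{j+1}$ via $\Loc$/$\Loco$, while the irrelevant residual defines $\cK_{j+1}^\rr$. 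The output lives in the correct normed spaces by Lemma~\ref{lemma:Loc_Et_K_j_bound} and Lemma~\ref{lemma:F'_is_analytic}.

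The linear map $\cL_{j+1}^\rr$ is then the part of $\cK_{j+1}^\rr$ containing exactly one factor from $K_j^0$ or $K_j^{\rr,\dagger}$; schematically, for $B\in\cB_{j+1}$,
\begin{equation*}
    \cL_{j+1}^\rr(B,\varphip;\tau)
    = \!\!\!\sum_{\substack{X\in\cS_j\\ \bar X=B}}\!\!
    (1-\Loco_X)\E_{(\rr+\tau)}[K_j^{\rr,\dagger}(X,\varphip+\zeta;\tau)]
    + \mathbb{S}\bigl[\E_{(\rr+\tau)}[K_j^{\rr,\dagger}(\cdot,\varphip+\zeta;\tau)]\bigr]_{\text{lg}}(B)
    + \text{(bulk)}.
\end{equation*}
The small-set term is contracted by Proposition~\ref{prop:Loc-contract_v2} to give a factor $\alphaLoco$, and the large-set term is contracted by Proposition~\ref{prop:largeset_contraction_external_field} to give $L^{-1}A^{-1}$ (absorbable into $\alphaLoco$ up to constants); together they yield the $\alphaLoco\norm{\vec K_j^{\rr,\dagger}}_{\bar\Omega_{j,\kt,\dagger}^K}$ bound. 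The bulk contribution from $K_j^0$ is the delicate point: translation invariance of $K_j^0$ means it has already been used to build $(\cE_{j+1},\cU_{j+1})$ in the untilted RG, so its contribution to $\cK_{j+1}^\rr$ arises only from scale-$j$ blocks lying in the observable neighbourhood $(P_y^j)^*$. Since at most $O(L^2)$ such scale-$j$ blocks fit inside a given scale-$(j+1)$ block $B\subset P_y^{j+1}$, summing Proposition~\ref{prop:Loc-contract_v2} over these blocks gives the factor $L^2\alphaLoc\norm{K_j^0(\cdot;0)}_{\Omega_j^K}$.

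For $\cM_{j+1}^\rr:=\cK_{j+1}^\rr-\cL_{j+1}^\rr$, every summand contains at least two factors drawn from $\{U_j,K_j^0,K_j^{\rr,\dagger}\}$. Differentiating with respect to $\omega_j^{\rr,\dagger}$, applying the submultiplicativity \eqref{eq:submultiplicativity}, and using \eqref{eq:components_of_U_j_bound} for the $U_j$-factors, one factor of $\norm{\omega_j^{\rr,\dagger}}_{\bar\Omega_{j,\kt,\dagger}}$ can always be pulled out, yielding \eqref{eq:rg_map_main_estimate2}. The Gaussian expectations on the regulator are handled by Lemma~\ref{lemma:E_G^r_j} and Lemma~\ref{lemma:Et_F_bound}, with the strong-regulator bound \eqref{eq:strong_regulator_key_bound} absorbing the $w_j$-factors produced by the $U_j$-estimates.

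The hard part will be maintaining analyticity in $\tau$ while performing the small-set contraction: the $\Loco\E_{(\rr+\tau)}$ operator must be shown to produce an analytic $W^+(\D_\htau)$-valued contribution to $\kg_{j+1}^\rr$ whose $\tau$-derivatives remain bounded after the $(1-\Loco)$-subtraction, and the remainder must simultaneously retain membership in $\bar\Omega_{j+1,\kt}^K$. The framework built in Section~\ref{sec:polymer_activities_and_norms} (cf.\ Remark~\ref{remark:new_norm} and Lemma~\ref{lemma:F'_is_analytic}) is designed precisely to make this bookkeeping routine; nevertheless, its combination with the Gaussian shift of variables (Lemma~\ref{lemma:Et_by_complex_shift}) and the careful accounting of which polymers intersect $(P_y^j)^*$ constitutes the technical core of the argument, to be carried out in full in Section~\ref{sec:proof-of-theorem-thm-local-part-of-K}.
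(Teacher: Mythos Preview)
Your overall strategy—linearise $\cK_{j+1}^{\rr}$, bound the linear piece via the contraction estimates (Proposition~\ref{prop:Loc-contract_v2}, Proposition~\ref{prop:largeset_contraction_external_field}), and handle the remainder by differentiability—matches the paper. However, your identification of the linear part is structurally off, and this matters.

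The paper does \emph{not} apply $(1-\Loco)$ to $K_j^{\rr,\dagger}$. Instead it splits $K_j^{\rr,\dagger} = K_j^0 + D_j^{\rr}$ with $D_j^{\rr} := (K_j^{\rr})^{\dagger} - K_j^0$, and the linear part becomes
\[
\cL_{j+1}^{\rr} = \underbrace{\textstyle\sum_{\bar Y=X,\,Y\in\cS_j}(1-\Loc_Y)\,\E_{(\cz)}K_j^0}_{\cL^{(1)}}
\;+\;\underbrace{\textstyle\sum_{\bar Y=X,\,Y\in\cS_j}(1-\Loco_Y)\,\E_{(\cz)}D_j^{\rr}}_{\cL^{(2)}}
\;+\;\underbrace{\mathbb{S}\bigl[1_{Y\notin\cS_j}\E_{(\cz)}K_j^{\rr,\dagger}\bigr]}_{\cL^{(3)}}.
\]
This decomposition is forced by the \emph{definitions}: $\cU_{j+1}$ is built from $\Loc\,\E K_j^0$ (equation~\eqref{eq:cU_definition}) while $\kg_{j+1}^{\rr}$ is built from $\Loco\,\E D_j^{\rr}$ (Definition~\ref{def:evolution_of_g_j}). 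It is precisely this pairing that makes the $U_j$-dependent second line of the linearised expression cancel exactly, leaving only the three terms above. If you instead extract $\Loco K_j^{\rr,\dagger}$ as your schematic suggests, the second-order (gradient) part of $K_j^0$ is neither absorbed into $\cU_{j+1}$ nor subtracted by $\Loco$, so no such clean cancellation occurs.

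Your explanation of the $L^2\alphaLoc$ factor is also incorrect. It has nothing to do with the observable neighbourhood: $\cL^{(1)}$ lives \emph{everywhere} (indeed $\cK_{j+1}^{\rr}(X)=\cK_{j+1}^0(X)$ for $X\cap(P_y^{j+1})^*=\emptyset$, and the latter already contains $(1-\Loc)K_j^0$). The $L^2$ is the standard combinatorial count of small $j$-polymers $Y$ with $\bar Y=X$ for a fixed $X\in\cB_{j+1}$. The restriction to the observable neighbourhood enters only in $\cL^{(2)}$, through the vanishing of $D_j^{\rr}(Y)$ for $Y\cap(P_y^j)^*=\emptyset$.

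For the nonlinear part, your sketch is morally right but the paper is more systematic: it packages the ingredients into $\bar{\kK}_j=(\cE_{j+1}|X|-\kg_{j+1}^{\rr},\,U_j,\,\bar U_{j+1},\,K_j^{\rr,\dagger},\,\bar K_j^{\rr},\,\cE K_j^{\rr},\,J_j^{\rr})$, verifies the regularity conditions \eqref{quote:regularity_one}--\eqref{quote:regularity_two} for each component (Lemma~\ref{lemma:Ujbound-summary}), and then invokes \cite[Lemma~7.12]{dgauss1} as a black box. This is what lets one track the $A$- and $L$-dependence of $C_2$ cleanly.
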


We note that $\Omega_j^K$ and $\Omega_{j, \kt}^K$ are vector spaces of collections $(F(X, \cdot))_{X\in \Conn_j}$, but not of $(F(X, \cdot))_{X\in \cP_j}$. 
To say that a function on $\Omega_j^K$ or $\Omega_{j,\kt}^K$ is linear means that it is linear in $(F(X, \cdot))_{X\in \Conn_j}$.

As always, at scale $j \in \{0, \cdots, N-1\}$,  $\Eplus$ always mean the expectation taken over $\zeta \sim \cN (0, \Gamma_{j+1})$ if $j+1 <N$ and $\zeta \sim \cN (0, \Gamma_N^{\Lambda_N})$ if $j+1 = N$.
Usually, the remaining non-random coordinate is denoted $\varphip$ and $\varphi = \varphip + \zeta$.

\subsection{Choice of counterterm and vacuum energy}
\label{sec:vacuum_energy}

When $\cz = \rr = \tau = 0$,  
$Z^{0}_j (\varphi ; 0)$ is described by $(E_j, s_j,  (z_j^{(q)})_{q\geq 1})$ and $K_j^0 (\cdot ; 0)$ with $\kg^\rr_j \equiv 0$.
These are studied in \cite{dgauss1} in detail.
In the RG language,
$\frac{1}{2} s_0 |\nabla \varphi|^2_{\Lambda_N}$ in \eqref{eq:Z_0_definition} is the counterterm,
and $\lim_{j \rightarrow \infty} E_j$ is the vacuum energy.
For the RG steps to proceed,
counterterm is chosen so that coordinates $(s_j, (z_j^{(q)})_{q\geq 1}, K_j^0 (\cdot ; 0)) \rightarrow 0$ in the norms of $\Omega_j$'s. 
This is an important reference point for our analysis, as our analysis is a perturbation of the model with vanishing external field. 
Thus we recall the choice of the counterterm and the vacuum energy. 

In the following, 
functions $(\cE_{j+1}, \cU_{j+1} ) : \Omega_j \rightarrow \R \times \Omega_{j+1}^U$, $(U_j, K^0_j) \mapsto (E_{j+1}, U_{j+1})$ are as in \cite[Definition~7.8]{dgauss1}, so that they solve
\begin{align}
\begin{split}
	&-\cE_{j+1} (U_j, K^0_j) |B| + \cU_{j+1} (U_j, K_j, B, \varphip) \\
	&\qquad \qquad \qquad \qquad \qquad = \E U_j (B, \varphip + \zeta) + \sum_{X\in \cS_j : X \supset B} \Loc_{X, B} \E K^0_j (X, \varphip + \zeta)
	\label{eq:cU_definition}
	.
\end{split}	
\end{align}

\begin{definition}
We say that bulk RG flow of length $N$ exists with initial condition $(s_0,  (z_0^{(q)})_{q \ge 1} )$ if the following hold.
Take $s=s_0$ (for $s$ in \eqref{eq:C_m^2_definition}) and define $Z_0$ using parameters $(s_0,z_0$) (as in Lemma~\ref{lemma:first_reformulation_Z2}) and $K^0_0 (X)= 1_{X=\emptyset}$.
Then there exists $(K_j^0 )_{j \in \{1, \cdots, N\}} \in \otimes_{j=1}^N \Omega_j^K$ such that 
\eqref{eq:bulk_RG_flow} and \eqref{eq:Z_j_generic_expression} hold 
with $\cz = \rr = \tau = 0$,
$E_{j+1} = E_j + \cE_{j+1}$,  $g_{j+1} \equiv 0$, 
and
$U_{j+1} = \cU_{j+1} (U_j, K^0_j (\cdot ; 0) )$,
i.e.,
\begin{align}
	e^{-(E_{j+1} - E_j) |\Lambda_N | }\sum_{Z \in \cP_{j+1}} e^{U_{j+1} (\Lambda_N \backslash Z,  \,  \cdot)} K_{j+1}^0 (Z, \, \cdot) = \Eplus^{\zeta}_{\Gamma_{j+1}} \sum_{X \in \cP_{j}} e^{U_j (\Lambda \backslash X, \cdot + \zeta)} K_j^0 (X, \cdot + \zeta)
\end{align}
\end{definition}

The following theorem makes a specific choice of the counterterm by 
putting together \cite[Theorem~7.5, Corollary~8.7]{dgauss1}, 
and also guarantees the existence of the RG flow of any length,
where we recall that $\alphaLoc$ is defined in \eqref{e:Loc-contract-kappa}.

\begin{theorem}
\label{thm:tuning_s-v2}

Let $\beta \geq \beta_0$, $L \geq  L_0$ and $A \geq A_0 (L)$ be sufficiently large.
Then there exists $s_0^c (\beta) = O(e^{-c_f \beta})$ such that
the bulk RG flow of length $N$ exists with initial condition $(s_0,  (z_0^{(q)})_{q \ge 1}) = (s_0^c (\beta),  (\tilde{z}^{(q)})_{q \ge 1} )$ (as in Lemma~\ref{lemma:first_reformulation_Z2}) for any $N\geq 1$.
Moreover, the RG coordinates satisfy
\begin{equation}\label{eq:finalbounds}
    \|U_j\|_{\Omega_j^U} \leq O(e^{-c_f \beta}L^{-\alpha j}), \qquad \| {K}^0_j \|_{\Omega_j^{{K}}} \leq O(e^{- c_f \beta}L^{-\alpha j})
  \end{equation}
for $\alpha > 0$ such that $L^{\alpha} = O\big( L^{-2} (\alphaLoc)^{-1} \big)$, 
and the bounds are uniform in $\Lambda_N$. 
 
\end{theorem}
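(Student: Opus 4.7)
The plan is to observe that, specialised to $\rr = \tau = 0$ (so $\kg_{j+1}^\rr \equiv 0$), the RG map of Theorem~\ref{thm:rg_map_definition} acts on $\Omega_j^U \times \Omega_j^K$ with exactly the coordinates $(s_j, (z_j^{(q)})_{q \geq 1}, K_j^0)$, counterterm prescription \eqref{eq:cU_definition}, and contraction estimates \eqref{eq:rg_map_main_estimate1}--\eqref{eq:rg_map_main_estimate2} analysed in \cite{dgauss1}. The theorem is then essentially a direct application of \cite[Theorem~7.5, Corollary~8.7]{dgauss1}, once one checks that the parameter choices collected in Section~\ref{sec:choice_of_parameters} are compatible with those imposed there.

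If one were to produce the argument from scratch, the strategy is a stable-manifold construction. Linearising the RG map at the origin cleanly separates three regimes: using \eqref{quote:Gamma_three}, the charge-$q$ coefficient satisfies $z_{j+1}^{(q)} \approx L^{2-\beta q^2/(4\pi)} z_j^{(q)}$, which is strictly contractive for every $q \geq 1$ once $\beta > 8\pi$; by \eqref{eq:rg_map_main_estimate1} and the smallness of $\alphaLoc$ from \eqref{e:Loc-contract-kappa}, the polymer activity $K_j^0$ contracts at linear rate $L^2 \alphaLoc < 1$ for $L$ large; and the stiffness $s_j$ is genuinely marginal, receiving only irrelevant $O(L^{-\alpha j})$ feedback, a property engineered by the $\Loc$ operator in \eqref{eq:cU_definition}. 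The single marginal direction is then tuned by a fixed-point argument on sequences $(s_j, (z_j^{(q)}), K_j^0)_{j \leq N}$ in an exponentially weighted norm: for each $N$ this produces a unique $s_0^{c,N}(\beta)$ for which the truncated trajectory stays in the domain of the RG map, and a Cauchy/compactness argument extracts the limit $s_0^c(\beta) = \lim_{N\to\infty} s_0^{c,N}(\beta)$. The smallness of the initial data $|z_0^{(q)}(\beta)| = O(e^{-c_f \beta(1+q)})$ from Lemma~\ref{lemma:first_reformulation_Z2} then feeds through to give $s_0^c(\beta) = O(e^{-c_f\beta})$, and the scale-by-scale bounds \eqref{eq:finalbounds} follow by induction from the geometric contraction rate $L^{-\alpha}$ combined with the quadratic nonlinear estimate \eqref{eq:rg_map_main_estimate2}.

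The main obstacle is precisely the marginal $s$-direction. Without the subtraction built into $\Loc$ -- which ensures that the only part of $K_j^0$ fed back into the $s$-direction is of irrelevant size $O(L^{-\alpha j})$ and hence summable -- the marginal coordinate would drift at a non-summable rate and no critical initial datum could stabilise the flow. Uniformity in $\Lambda_N$ is preserved throughout because the covariance bounds \eqref{quote:Gamma_one}--\eqref{quote:Gamma_three} and all constants in the RG estimates of Theorem~\ref{thm:rg_map_definition_estimates} are $\Lambda_N$-independent, so the Banach-space fixed-point problem and the final bounds \eqref{eq:finalbounds} are formulated in norms that do not depend on $N$.
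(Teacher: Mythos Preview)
Your proposal is correct and matches the paper's treatment: the theorem is stated as an import from \cite[Theorem~7.5, Corollary~8.7]{dgauss1}, and the paper follows it only with the same brief sketch of the linearised dynamics (contracting $z^{(q)}$ and $K_j^0$, marginal $s_j$) and the stable-manifold construction that you outline. Your account is in fact slightly more detailed and accurate (e.g., the $q^2$ in the $z^{(q)}$ recursion) than the paper's own sketch.
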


Existence of such $s_0^c (\beta)$ can be seen from linear approximations \cite[(8.9)--(8.11)]{dgauss1} of the map $(s_j, z_j, K_j) \mapsto (s_{j+1}, z_{j+1}, K_{j+1})$:
\begin{align}
	s_{j+1} & = s_j + O(K_j^0), \quad \\
	z_{j+1}^{(q)} & = L^2 e^{-\frac{1}{2} \beta \Gamma_{j+1} (0,0) } z_{j}^{(q)},  \quad \\
	K^0_{j+1} & = \cL^0_{j+1} (K^0_j) + O \big(\norm{K_j^0}_{\Omega_j^K} + \norm{U_j}_{\Omega_j^U}  \big)^2
\end{align}
where $\cL_{j+1}^0$ is a linear map with its (operator) norm $<1$ for sufficiently large $\beta$.
Then $s_0^c (\beta)$ is constructed as a stable manifold of the dynamical system $(s_j,z_j, K_j)_{j\geq 1}$ with given $z_0$.

Coefficient of the counterterm, $s_0^c (\beta)$, is a fundamental quantity related to the DG model in the delocalised phase,
so we emphasise the initial condition again.

\begin{equation} \stepcounter{equation}
	\tag{\theequation $\rginitial$} \label{quote:rginit}
	\parbox{\dimexpr\linewidth-4em}{%
	Given $\beta \geq \beta_0$, 
the parameter $s$ is set to be $s_0^c (\beta)$, 
the initial coupling constants $U_0 (X, \varphi) = \frac{1}{2} s_0 |\nabla \varphi|^2_X + \sum_{x\in X} \sum_{q\geq 1} z_0^{(q)} \cos(q \beta^{1/2} \varphi(x))$ are given by $s_0=s_0^c(\beta)$, $z_0^{(q)}$ as in Lemma~\ref{lemma:first_reformulation_Z2} and $K_0 (X) = 1_{X = \emptyset}$.
	}
\end{equation}

\subsection{RG map}

Suppose we are given $(E_j,  g^{\rr}_j, U_j, K_j^{\rr})$ and $Z^{\rr}_j$ is in form \eqref{eq:Z_j_generic_expression}.
Then by Proposition~\ref{prop:reblocking_Z_with_Psi}, 
\begin{align}
	Z^{\rr}_j (\varphi  ; \tau) = e^{-E_j |\Lambda_N | + g^{\rr}_j (\Lambda_N ; \tau) } \sum_{X\in \cP_j (\Lambda_N)} e^{U_j (\Lambda_N \backslash X , \varphi ) } (K^{\rr}_j)^{\dagger} (X, \varphi ; \tau)
	.
\end{align}
So we construct our RG map starting from $(U_j,  (K^{\rr}_j)^{\dagger})$.

Given coordinates $U_j$ and $\vec{K}_j^{\rr,\dagger} = (K_j^0, (K_j^{\rr})^{\dagger} ) \in \bar{\Omega}_{j, \kt, \dagger}^K$, we use the map $(U_j, K^0_j(\cdot ; 0)) \mapsto (\cE_{j+1}, \cU_{j+1}, K^0_{j+1} (\cdot ; 0))$ defined according to \eqref{eq:cU_definition} and Theorem~\ref{thm:tuning_s-v2}. 
In the following,  we recall $P_y^j = \cup_{k=0}^4 B_k^j$ (see \eqref{eq:union_of_B^j_k's}).

\begin{definition} \label{def:evolution_of_g_j}

For $0 \leq j  < N$ and given $\vec{K}_j^{\rr, \dagger} \in \bar{\Omega}_{j, \kt, \dagger}^K$,  define 
\begin{align}
\mathfrak{g}^{\rr}_{j+1} (B,  \vec{K}^{\rr}_j ; \tau) = 1_{B \subset (P_y^j)^*}
 \sum_{Y \in \cS_j}^{Y \supset B} \frac{1}{|Y \cap (P_y^j)^* |_j}  \Loco_Y \E_{(\rr + \tau)} \big[ 
 (K^{\rr}_j )^{\dagger} (Y, \zeta ; \tau) - K_j^0 (Y, \zeta ; 0) \big] 
\label{eq:e_j+1_definition} 
\end{align}
for $B \in \cB_j$, and for $X\in \cP_{j+1}$, let
\begin{align}
\mathfrak{g}^{\rr}_{j+1} (X,  \vec{K}^{\rr,\dagger}_j ; \tau) 
= \sum_{B\in \cB_j (X)}  \mathfrak{g}_{j+1}^{\rr} (B,  \vec{K}^{\rr,\dagger}_j ; \tau) .
\end{align}
\end{definition}

It is an immediate consequence of Lemma~\ref{lemma:Loc_Et_K_j_bound} that, if 
$\norm{\vec{K}_j^{\rr,\dagger}}_{\bar{\Omega}^K_{j, \kt,\dagger}} < \infty$,
then $\kg^{\rr}_{j+1} (B, \vec{K}^{\rr}_j ; \tau)$ is an analytic function of $\tau \in \D_{\htau}$ with bound
\begin{align}
 \norm{  \mathfrak{g}_{j+1}^{\rr} (B,  \vec{K}^{\rr,\dagger}_j ; \cdot) }_{\htau,  W} 
 \leq C(M, \rho,\rr) A^{-1} \norm{\vec{K}_j^{\rr,\dagger}}_{\bar{\Omega}^K_{j, \kt,\dagger}}
 \label{eq:kg_are_analytic}
\end{align}
for each $B\in \cB_j$. 
The renormalisation group map for $K$-coordinate is defined as the following. 

\begin{definition} \label{def:expression_for_K_j+1_external_field}

Under the same assumptions, 
the map $\omega^{\rr, \dagger}_j = (U_j, \vec{K}^{\rr, \dagger}_j ) \mapsto K^{\rr}_{j+1}$ is defined by
\begin{align}
	\label{eq:expression_for_K_j+1_external_field}
	& \mathcal{K}^{\rr}_{j+1}  (U_j,  \vec{K}_j^{\rr,\dagger},   X,  \, \cdot \,  ; \tau )
	= \sum_{X_0, X_1, Z, (B_{Z''})}^{*} e^{ \cE_{j+1} |T| - \kg^{\rr}_{j+1} (T ; \tau) } e^{\cU_{j+1} (X \backslash T, \cdot + (\rr + \tau) u_{j+1})} 
	\nnb
	& \quad
	\times \E_{(\rr + \tau)} \Big[ (e^{U_j (B, \cdot)} - e^{ -\cE_{j+1} |B| + \kg^{\rr}_{j+1} (B ; \tau) + \cU_{j+1} (B, \cdot + (\rr + \tau) u_{j+1}) } )^{X_0} (\bar{K}^{\rr}_j ( \cdot + \zeta ; \tau) - \mathcal{E} K^{\rr}_j (\cdot ; \tau))^{[X_1]}  \Big] 
	\nnb
	& \quad \times \prod_{Z'' \in \operatorname{Comp}_{j+1} (Z)} J^{\rr}_j (B_{Z''}, Z'' ,  \, \cdot \,  ; \tau)
	,
\end{align}
whenever the integral $\E_{(\rr + \tau)}$ converges, 
where $\sum^{*}$
is running over disjoint $(j+1)$-polymers $X_0, X_1, Z$ such that
$X_1 \not\sim Z$,  $B_{Z''} \in \cB_{j+1} (Z'')$ for each $Z'' \in \operatorname{Comp}_{j+1} (Z)$,
$T = X_0 \cup X_1 \cup Z$ and $X = \cup_{Z''}B^*_{Z''} \cup X_0 \cup X_1$,  and
\begin{align}
Q^{\rr}_j ( D, Y, \varphip ; \tau) &= 1_{Y\in \cS_j} \Big( \Loc_{Y, D} \E_{(\rr + \tau)} [  K^0_{j} ( Y, \varphip + \zeta ; 0 ) ]  \\
& \qquad \quad \; +  \frac{1_{D \in ( Y \cap ( P_y^j )^* ) }}{|Y \cap (P_y^j)^* |_j} \Loco_Y \E_{(\rr + \tau)} \big[ 
 (K^{\rr}_{j} ) ^{\dagger} (Y, \varphip + \zeta  ; \tau) - K^0_{j} (Y,  \varphip + \zeta ; 0) \big]   \Big)
\nnb
J^{\rr}_j (B, X, \varphip ; \tau)  &= 1_{B\in \cB_{j+1} (X)} \sum_{D\in \cB_j (B)} \sum_{Y\in \cS_j}^{D\in \cB_j (Y)} Q^{\rr}_j (D, Y, \varphip ; \tau) ( 1_{\bar{Y} = X} - 1_{B=X} ) . \label{eq:J_j^Psi_definition} 
\\
\mathcal{E} K^{\rr}_j ( X, \varphip ; \tau ) &= \sum_{B\in \mathcal{B}_{j+1} (X)} J^{\rr}_j (B, X, \varphip ; \tau) \label{eq:cEK^Psi_definition} \\
\bar{K}^{\rr}_j (X, \varphip + \zeta ; \tau) &= \sum_{Y\in \mathcal{P}_j}^{\bar{Y}=X} e^{U_j (X \backslash Y, \varphip + \zeta )}  (K^{\rr}_j )^{\dagger} (Y, \varphip + \zeta ; \tau  ) 
\label{eq:K_bar^Psi_definition}
\end{align}
for $D\in \cB_j$, $B\in \cB_{j+1}$, $Y\in \cP_j$ and $X\in \cP_{j+1}$.
\end{definition}

\begin{remark} \label{remark:algebraic_part_is_trivial}
As is apparent in the notation, if we set $\rr = \tau = 0$ and
$K_{j+1}^0 = \cK_{j+1}^0 (U_j, \vec{K}_j^{0, \dagger})$, then this exactly corresponds to the choice of $K_{j+1}^0$ in Theorem~\ref{thm:tuning_s-v2}.
See \cite[Definition~7.9]{dgauss1}.

Also, by comparing the expressions for $\cK_{j+1}^0$ and $\cK_{j+1}^{\rr}$ and the proof of \cite[Theorem~7.5]{dgauss1}, \eqref{eq:bulk_RG_flow} follows with the RG coordinates at scale $j+1$ given by \eqref{eq:rg_map_definition}, only with minor adaptations.
\end{remark}

\ifx\newpageonoff\undefined
{\red command undefined!!}
\else
  \if\newpageonoff1
  \newpage
  \fi
\fi

\section{Proof of Theorem~\ref{thm:rg_map_definition} and \ref{thm:rg_map_definition_estimates}}
\label{sec:proof-of-theorem-thm-local-part-of-K}

As was mentioned in Remark~\ref{remark:algebraic_part_is_trivial}, the algebraic part of Theorem~\ref{thm:rg_map_definition} will not be discussed in detail.  We focus on the proof of the bounds \eqref{eq:rg_map_main_estimate1} and \eqref{eq:rg_map_main_estimate2}.  The next theorem is essentially just a restatement of the estimates.

\begin{theorem}[Estimate for remainder coordinate] 
  \label{thm:local_part_of_K_j+1}
  
Under the setting of Theorem~\ref{thm:rg_map_definition} and on each $X\in \cP_{j+1}^c$,  the map $\cK_{j+1}^{\rr}$ admits a decomposition
\begin{align}
\mathcal{K}^{\rr}_{j+1} ( \omega_j^{\rr,\dagger} , X  ; \tau)
= \mathcal{L}^{\rr}_{j+1} ( \omega_j^{\rr,\dagger} , X ; \tau) + \mathcal{M}^{\rr}_{j+1} ( \omega_j^{\rr,\dagger} , X; \tau)
\end{align}
into polymer activities at scale $j+1$ such that the following holds provided $L \geq L_0 (\rr)$,  $A \geq A_0(L)$:
\begin{itemize}
\item[(i)] When $X\in \Conn_j$, the map $\omega_j^{\rr, \dagger} \mapsto \mathcal{L}^{\rr}_{j+1} ( \omega_j^{\rr} , X)$ is independent of $U_j$, linear in $\vec{K}_j^{\rr, \dagger}$
such that
\begin{align}
	\norm{\mathcal{L}^{\rr}_{j+1} ( \omega_j^{\rr, \dagger}  )}_{\vec{h}, T_{j+1}^0} 
	\leq C_1
	\big( L^2\alphaLoc 
	\norm{K^0_j (\cdot ; 0)}_{\Omega_j^K} 
	+ \alphaLoco \norm{\vec{K}_j^{\rr, \dagger}}_{\bar{\Omega}^K_{j, \kt, \dagger}} \big)
	.
   \label{eq:bound_for_L_j_K_j}
\end{align}
for some constant $C_1 \equiv C_1 (\rr) >0$ independent of $\beta$, $A$, $L$,
and $(\alphaLoc, \alphaLoco)$ are as in \eqref{e:Loc-contract-kappa}.

\item[(ii)] The remainder map $\mathcal{M}_{j+1}$ satisfies $\mathcal{M}_{j+1} = O( \norm{\omega_j^{\rr,\dagger}}_{\bar{\Omega}_{j, \kt, \dagger}}^2 )$ 
in the sense that there exists $C_2 =C_2 (\rr, \beta,A, L) >0$ such that $\mathcal{M}_{j+1} (\omega_j^{\rr,\dagger})$ is continuously Fr\'{e}chet-differentiable and 
\begin{align}
	& \norm{D \mathcal{M}_{j+1}  (\omega_j^{\rr, \dagger}) }_{\vec{h}, T_{j+1}^0}  
	\leq C_2  \norm{ \omega_j^{\rr, \dagger} }_{\bar{\Omega}_{j, \kt, \dagger}}
\label{eq:bound_for_derivative_of_Nj}
\end{align}
and $\cM_{j+1} (0,0) = 0$.
\end{itemize}
\end{theorem}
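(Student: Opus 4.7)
My plan is to extract from the explicit formula \eqref{eq:expression_for_K_j+1_external_field} for $\mathcal{K}^{\rr}_{j+1}$ the terms that are linear in $\vec{K}_j^{\rr,\dagger}$ (calling this the linear part $\mathcal{L}^{\rr}_{j+1}$) and bundle everything else into the quadratic remainder $\mathcal{M}^{\rr}_{j+1}$. Three sources can in principle produce a single $K$-factor: (a) configurations with $X_0 = X_1 = \emptyset$ and a single connected component $Z'' \in \operatorname{Comp}_{j+1}(Z)$, whose accompanying $J^{\rr}_j$ is linear in $\vec{K}_j^{\rr,\dagger}$ by \eqref{eq:J_j^Psi_definition}; (b) configurations with $X_0 = Z = \emptyset$ and a single connected component in $X_1$, producing the linear factor $\bar{K}^{\rr}_j - \mathcal{E} K^{\rr}_j$ via \eqref{eq:K_bar^Psi_definition}--\eqref{eq:cEK^Psi_definition}; and (c) purely $U_j$-driven linear contributions, which are absent because $\cE_{j+1}$ and $\cU_{j+1}$ were defined in \eqref{eq:cU_definition} to absorb exactly the $\E U_j$ and $\Loc \E K^0_j$ pieces. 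All remaining terms carry at least two factors among $\{U_j, K_j^0 - \Loc \E K_j^0, K_j^{\rr,\dagger} - \Loco \E K_j^{\rr,\dagger}\}$ and go into $\mathcal{M}^{\rr}_{j+1}$.

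To bound $\mathcal{L}^{\rr}_{j+1}$ I would organize by the shape $X \in \cP_{j+1}^c$. The key algebraic point is that $Q^{\rr}_j$, $J^{\rr}_j$ and $\mathcal{E}K^{\rr}_j$ are built precisely so that, after Gaussian integration, the leading $\E K$ cancels against the $\Loc \E K$ and $\Loco \E K$ inserted in $\mathcal{E}K^{\rr}_j$, leaving only $(\Loc - \operatorname{Id})\E$ and $(\Loco - \operatorname{Id})\E$ contributions. Proposition~\ref{prop:Loc-contract_v2} then supplies the contraction factors $\alphaLoc$ and $\alphaLoco$. For a small polymer $X\in \cS_{j+1}$, the $\Loc_{Y,D}\E K_j^0$ part is summed over the at most $L^2$ $j$-blocks $D$ inside a fixed $(j+1)$-block, giving the factor $L^2 \alphaLoc \|K_j^0\|_{\Omega_j^K}$; the $\Loco$-part contributes $\alphaLoco \|\vec{K}_j^{\rr,\dagger}\|_{\bar{\Omega}_{j,\kt,\dagger}^K}$ \emph{without} the $L^2$ enhancement, because $(K_j^{\rr})^{\dagger} - K_j^0$ is supported near $P_y^j$ by the defining property of $\bar{\Omega}_{j,\kt,\dagger}^K$, so the block-sum collapses to a constant. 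For large polymers $X$, Proposition~\ref{prop:largeset_contraction_external_field} supplies a geometric $(L^{-1}A^{-1})^{|X|_{j+1}}$ decay that dominates all combinatorial factors. Analyticity in $\tau \in \D_{\htau}$ of each summand is preserved by iterating Lemma~\ref{lemma:F'_is_analytic} and Lemma~\ref{lemma:analyticity_preserved_under_E}.

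For $\mathcal{M}^{\rr}_{j+1}$, every remaining term carries at least two factors from $\{(e^{U_j(\cdot)} - e^{-\cE + \cU(\cdot)})^B, \bar{K}^{\rr}_j - \mathcal{E}K^{\rr}_j, J^{\rr}_j\}$. Combining submultiplicativity \eqref{eq:submultiplicativity}, the $U_j$-bound \eqref{eq:components_of_U_j_bound}, and the tilted Gaussian bound of Lemma~\ref{lemma:Et_F_bound}, each such term is controlled by $\|\omega_j^{\rr,\dagger}\|_{\bar{\Omega}_{j,\kt,\dagger}}^2$ times a combinatorial factor that is swallowed by the $A^{-|X|_{j+1}}$ weight once $A$ is large. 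Fréchet differentiability in $\omega_j^{\rr,\dagger}$ is straightforward since \eqref{eq:expression_for_K_j+1_external_field} is polynomial in the coordinates after Gaussian integration, and \eqref{eq:bound_for_derivative_of_Nj} follows by differentiating the expansion term by term. The main obstacle is the bookkeeping: one must verify that the $L^2$ enhancement appears with $\|K_j^0\|_{\Omega_j^K}$ but not with $\|\vec{K}_j^{\rr,\dagger}\|_{\bar{\Omega}_{j,\kt,\dagger}^K}$, which is the new ingredient relative to \cite{dgauss1, dgauss2}; this hinges entirely on the $P_y^j$-localization built into the space $\bar{\Omega}_{j,\kt,\dagger}^K$, and on carefully propagating this support information through the reblocking, the $\operatorname{Loc}$ and $\operatorname{Loc}^{(0)}$ operations, and the complex shift $\varphi \mapsto \varphi + \tau u_{j+1}$.
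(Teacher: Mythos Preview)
Your treatment of part~(i) is essentially the paper's argument. The paper linearises \eqref{eq:expression_for_K_j+1_external_field} exactly as you describe, observes that the $X_0$-type contribution vanishes by \eqref{eq:cU_definition} and Definition~\ref{def:evolution_of_g_j}, and splits what remains into $\cL^{\rr,(1)} = \sum_{Y\in\cS_j,\bar Y=X}(1-\Loc_Y)\E_{(\cz)}K_j^0$, $\cL^{\rr,(2)} = \sum_{Y\in\cS_j,\bar Y=X}(1-\Loco_Y)\E_{(\cz)}[(K_j^{\rr})^\dagger - K_j^0]$, and the large-set piece $\cL^{\rr,(3)}$, bounded respectively by Proposition~\ref{prop:Loc-contract_v2} (twice) and Proposition~\ref{prop:largeset_contraction_external_field}. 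Your explanation for why $\cL^{\rr,(2)}$ carries no $L^2$ factor is exactly the mechanism the paper uses: $(K_j^{\rr})^\dagger - K_j^0$ vanishes off $(P_y^j)^*$, so the sum over small $Y$ with $\bar Y = X$ collapses to $O(1)$ terms rather than $O(L^2)$.

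For part~(ii), your outline has the right idea but is too sketchy on the point that actually requires work. The expression \eqref{eq:expression_for_K_j+1_external_field} is \emph{not} polynomial in the coordinates after integration: it involves $e^{U_j}$, $e^{\bar U_{j+1}}$, and products over a variable number of connected components, so ``each such term is $O(\|\omega\|^2)$'' does not by itself yield \eqref{eq:bound_for_derivative_of_Nj}---one must control the sum over all $(X_0,X_1,Z,(B_{Z''}))$ uniformly in $|X|_{j+1}$ and establish Fr\'echet differentiability of this infinite sum. The paper does not do this directly. Instead it packages the ingredients into $\bar\kK_j = (\cE_{j+1}|X|-\kg_{j+1}, U_j, \bar U_{j+1}, (K_j^{\rr})^\dagger, \bar K_j^{\rr}, \cE K_j^{\rr}, J_j^{\rr})$, verifies the regularity estimates \eqref{quote:regularity_one}--\eqref{quote:regularity_two} for each building block (this is Lemma~\ref{lemma:Ujbound-summary}, proved in Section~\ref{sec:proof_of_Lemma_Ujbound-summary}), and then invokes \cite[Lemma~7.12]{dgauss1}, which is a black-box combinatorial lemma that converts exactly these estimates into the bound $\|D\kM_{j+1}\| \leq C\|\omega_j^{\rr,\dagger}\|$. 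The new content relative to \cite{dgauss1} is checking \eqref{quote:regularity_one}--\eqref{quote:regularity_two} for the $\tau$-dependent building blocks $\bar U_{j+1}$, $\kg_{j+1}^{\rr}$, $J_j^{\rr}$, $\cE K_j^{\rr}$ in the $\vec h$-norm, which uses Lemma~\ref{lemma:F'_is_analytic} and Lemma~\ref{lemma:Loc_Et_K_j_bound}. You should invoke this machinery rather than asserting that the combinatorics are ``swallowed by the $A^{-|X|_{j+1}}$ weight''.
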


\begin{proof}[Proof of Theorem~\ref{thm:rg_map_definition}]

Let $(\cE_{j+1}, \cU_{j+1})$ be given by \eqref{eq:cU_definition}.
Given $K^{\rr, \dagger}_{j}$ be given by \eqref{eq:K_j^dagger_definition},
$\kg^{\rr}_{j+1} \equiv \kg^{\rr}_{j+1} (\vec{K}^{\rr, \dagger}_{j})$ be given by Definition~\ref{def:evolution_of_g_j} and $\vec{\cK}^{\rr}_{j+1} \equiv \vec{\cK}^{\rr}_{j+1} (U_j, \vec{K}^{\rr, \dagger}_{j})$ be given by Definition~\ref{def:expression_for_K_j+1_external_field}.
Then $\kg^{\rr}_{j+1} \in W^+ (\D_{\htau})$ by the remark after Definition~\ref{def:evolution_of_g_j}, 
and $\norm{K_{j+1}^{\rr}}_{\vec{h}, T_{j+1}} < \infty$ by Theorem~\ref{thm:local_part_of_K_j+1}, 
as $\norm{\omega_j^{\rr}}_{\bar{\Omega}_{j, \kt}} \leq \epsilon_{nl}$ would imply $\norm{\omega_j^{\rr, \dagger}}_{\bar{\Omega}_{j, \kt, \dagger}} \leq C \epsilon_{nl}$ for some $C>0$.
Also, since any $X \in \cP_{j+1}$ such that $X \cap (P_y^{j+1})^* = \emptyset$ also satisfies $X \cap (P_y^j)^* = \emptyset$, we see that each polymers appearing in \eqref{eq:expression_for_K_j+1_external_field} is disjoint from $(\supp (u_{j+1}) P_y^j)^*$ ($*$ is taken in scale $j$) for such $X$, which means that $K_{j+1}^{\rr} (X ; \tau) = K_{j+1}^0 (X ; 0)$ for any $\tau$. Therefore we have
$\vec{\cK}_{j+1}^{\rr} \in \bar{\Omega}^K_{j+1, \kt}$.

Finally,  \eqref{eq:bulk_RG_flow} follows from the proof of \cite[Theorem~7.5]{dgauss1}, 
only with minor adaptations to the current setting.
\end{proof}

\begin{proof}[Proof of Theorem~\ref{thm:rg_map_definition_estimates}]

The decomposition and the estimates are direct from Theorem~\ref{thm:local_part_of_K_j+1}.
\end{proof}

\subsection{Proof of Theorem~\ref{thm:local_part_of_K_j+1}~(i)}

Given $\cK_{j+1}^{\rr} ( \omega_j^{\rr, \dagger} )$ defined according to \eqref{eq:expression_for_K_j+1_external_field},
consider its formal expansion in linear order of $(U_j, K_j^0 (\cdot ; 0),  (K^{\rr}_j)^{\dagger})$ by 
(1) replacing $e^{ \cE_{j+1} |T| - \kg^{\rr}_{j+1} (T) + \cU_{j+1} (X \backslash T, \cdot + (\rr + \tau) u_{j+1})}$ by 1,  $e^{U_j} - e^{ -\cE_{j+1} |B| + \kg^{\rr}_{j+1} (B) + \cU_{j+1} (B, \cdot + (\rr + \tau) u_{j+1}) }$ by $U_j - \cE_{j+1} |B| + \kg^{\rr}_{j+1} (B) + \cU_{j+1} (B, \cdot + (\rr + \tau) u_{j+1})$,
(2) extracting out terms with 
\begin{align}
\# (X_0, X_1, Z) = |X_0|_{j+1} + |\Comp_{j+1} (X_1)| + |\Comp_{j+1} (Z)| 
\leq 1
,
\end{align}
and (3) approximating $\bar{K}_j^{\rr} (X)$ by $\mathbb{S}[ (K_j^{\rr})^{\dagger} ] (X)$.
These give us the expression 
\begin{align}    
	\mathcal{L}^{\rr}_{j+1} (\omega_j^{\rr, \dagger} ) ( X, \varphip) 
	& := \sum_{Y : \bar{Y} = X} \Big(  1_{Y\in \Conn_j}  \E_{(\rr + \tau)}  (K^{\rr}_j)^{\dagger} (Y, \varphip+\zeta ) 
	- 1_{Y\in \cS_j} \sum_{D\in \cB_j (Y)} Q^{\rr}_j (D, Y, \varphip) \Big) 
	 \nnb
	& \qquad + \sum_{D \in \cB_{j}}^{\bar{D}=X}  \Big( \E_{(\rr + \tau)} [ U_j (D, \varphip+\zeta)] + \cE_{j+1} |D|  - \mathfrak{g}^{\rr}_{j+1} (D) \nnb
	& \qquad \qquad \qquad -  \cU_{j+1} (D, \varphip + (\rr + \tau) u_{j+1}) 
	- \sum_{Y\in \cS_j}^{D\in \cB_j (Y)}  Q^{\rr}_j (D, Y, \varphip) \Big).
\label{eq:L_j_K_j_0}
\end{align}
(See \cite[(7.41)]{dgauss1} for a detailed treatment for a similar computation.)
Also by the definition of $Q_j$ and change of variable 
(justified by Lemma~\ref{lemma:Et_by_complex_shift}),  
\begin{align}
	& \sum_{Y \in \cS_j}^{Y \supset D} Q^{\rr}_j (D, Y, \varphip)  \nnb
	& = \sum_{Y \in \cS_j}^{Y \ni D} \Loc_{Y, D} \E_{(\cz)} [ K^0_j (Y, \varphip + \zeta ; 0)] 
	+ \sum_{Y \in \cS_j}^{Y \supset D} \frac{1_{D\in (P_y^j)^*}}{|Y \cap (P_y^j)^*|_j} \Loco_Y \E_{(\cz)} D_j^{\rr} (Y,  \varphip + \zeta ; \tau) \nnb
	& = \sum_{Y \in \cS_j}^{Y \supset D} \Loc_{Y, D} \E [K^0_j (Y, \varphip + \zeta + \cz u_{j+1} ; 0)] 
	+ \sum_{Y \in \cS_j}^{Y \supset D} \frac{1_{D\in (P_y^j)^*}}{|Y \cap (P_y^j)^*|_j} \Loco_Y \E_{(\cz)} D_j^{\rr} (Y,  \varphip + \zeta ; \tau) 
\end{align}
where $\cz = \rr + \tau$ and
\begin{align}
	D_j^{\rr} (Y, \varphi; \tau) = (K^{\rr}_{j} )^{\dagger} (Y, \varphi ; \tau) - K^0_{j} (Y, \varphi ; 0).
\end{align}
But by \eqref{eq:cU_definition} and Definition~\ref{def:evolution_of_g_j}, we see that the second line of \eqref{eq:L_j_K_j_0} vanishes. 
Hence
\begin{align}
\cL^{\rr}_{j+1} (\omega_j^{\rr,\dagger})  = \sum_{b=1,2,3}  \cL_{j+1}^{\rr, (b)} (\omega_j^{\rr,\dagger}) 
\end{align}
where
\begin{align}
	\cL_{j+1}^{\rr,  (1)} &= \sum_{Y : \bar{Y} = X} 1_{Y\in \cS_j}  \big( 1- \Loc_Y \big) \E_{(\cz)} [K^0_j ( Y, \varphip+\zeta ; 0) ]  \\
	\cL_{j+1}^{\rr,  (2)} &= \sum_{Y : \bar{Y} =X} 1_{Y\in \cS_j} \big(1- \Loco_Y\big) \E_{(\cz)} [ D^{\rr}_j (Y, \varphip + \zeta ; \tau) ]  \\
	\cL_{j+1}^{\rr,  (3)} &= \sum_{Y : \bar{Y} =X} 1_{Y\in \Conn_j \backslash \cS_j} \E_{(\cz)} [ (K^{\rr}_j)^{\dagger} (Y, \varphip + \zeta ; \tau) ] \nnb
	&= \mathbb{S}\big[ 1_{Y\in \Conn_j \backslash \cS_j} \E_{(\cz)} [ (K^{\rr}_j)^{\dagger} (Y, \varphip + \zeta ; \tau) ]  \big] (X)
	\label{eq:L_j_K_j^(k)}
\end{align}
For $b=1$, by Proposition~\ref{prop:Loc-contract_v2}, we have
\begin{align}
\norm{ \cL_{j+1}^{\rr,(1)} (X) }_{\vec{h}, T_{j+1} (X)} \leq C L^2 \alphaLoc A^{-|X|_j} \norm{K^0_j (\cdot ; 0)}_{h, T_j} .
\end{align}
where $L^2$ factor originates from the fact that there are at most $O(L^2)$ small polymers $Y$ such that $\bar{Y} = X$.
For $b=2$,  we see from Proposition~\ref{prop:reblocking_Z_with_Psi} that, $(K_j^{\rr} )^{\dagger} (Y) = K^0_j (Y; 0)$, equivalently $D^{\rr} (Y) = 0$,  unless $Y \cap (P_y^j)^* \neq \emptyset$. 
So 
\begin{align}
\cL_{j+1}^{\rr,(2)} &  = \sum_{Y : \bar{Y} =X}^{Y\cap (P_y^j)^* \neq \emptyset} 1_{Y\in \cS_j} \big(1- \Loco_Y\big) \E_{(\cz)} [ D^{\rr}_j (Y, \varphip + \zeta ; \tau) ]
\end{align}
but then by Proposition~\ref{prop:Loc-contract_v2}, 
we have
\begin{align}
\big\| \big(1- \Loco_Y\big) \E_{(\cz)} [ D^{\rr}_j  (Y, \varphip + \zeta ; \tau) ] \big\|_{\vec{h}, T_j (Y)} \leq C \alphaLoco (A/2)^{-|X|_j} \norm{\omega^{\rr}_j}_{ \bar{\Omega}_{j, \kt}}.
\end{align}
Finally, for $b=3$,  Proposition~\ref{prop:largeset_contraction_external_field} gives
\begin{align}
	\norm{ \cL_{j+1}^{\rr, (3)} }_{\vec{h}, T_{j+1} (X)} \leq (2L^{-1} A^{-1})^{|X|_{j+1}} \norm{( K^{\rr}_j )^{\dagger} }_{\vec{h}, T_{j}} 
	.
\end{align}

\subsection{Bound on the non-linear part}
\label{subsec:M_j+1_decomposition}

The terms with order $\geq 2$ can be identified by following the linearisation process of $\cK^{\rr}_{j+1}$ backwards. 
If we denote
\begin{align}
\bar{U}_{j+1} (X, \varphip) = - \cE_{j+1} |X| + \kg_{j+1}^{\rr} (X) + U_{j+1} (X, \varphip + (\rr + \tau) u_{j+1})
\end{align}
and
\begin{align}
\bar{\kK}_j (U_j,  \vec{K}^{\rr, \dagger}_j) \equiv \bar{\kK}_j (\omega^{\rr, \dagger}_j) 
= (\cE_{j+1} |X| - \kg_{j+1}^{\rr} (X) , U_j, \bar{U}_{j+1},  (K^{\rr}_j)^{\dagger} , \bar{K}^{\rr}_j, \cE K^{\rr}_j,  J^{\rr}_j) (\omega_j^{\rr, \dagger})
\label{eq:bar_kK_j_definition}
\end{align}
then $\cM^{\rr}_{j+1} := \cK^{\rr}_{j+1} (\omega_{j}^{\rr,\dagger}) - \cL^{\rr}_{j+1} (\omega_{j}^{\rr,\dagger})$ can be considered as a function
\begin{align}
\cM_{j+1}^{\rr} (\omega_j^{\rr,\dagger}) =\kM_{j+1}^{\rr} (\bar{\kK}_j (\omega^{\rr,\dagger}_j), X, \varphip )
\end{align}
This can be compared to \cite[(7.52)--(7.55)]{dgauss1}. 
The differentiability of $\kM^{\rr}_{j+1}$ can be checked from regularity estimates of their components, as summarised following.

\begin{align} 
	& \parbox{\dimexpr\linewidth-4em}{%
	(1) For $B \in \cB_{j+1}$, $Z \in \cP_{j+1}$, $\varphi \in \R^{\Lambda_N}$ and $k\in \{0,1,2\}$,
	}
	\nnb
	& \qquad\quad \norm{\mathfrak{U} (B, \varphi ; \cdot)}_{\vec{h}, T_j (B, \varphi)} \leq 
	C(\delta, L) ( 1+  \delta c_w \kappa  w_j (B, \varphi)^2 ) |x| \label{eq:Ujbound_1}  \\  
	& \qquad\quad \Big\| e^{\mathfrak{U} (B, \varphi ; \cdot)} - \sum_{m=0}^{k} \frac{1}{m!} (\mathfrak{U}(B, \varphi ; \cdot ))^m \Big\|_{\vec{h}, T_{j} (B, \varphi)} \leq 
	C(\delta, L)  e^{\delta c_w \kappa  w_j (B, \varphi)^2} |x|^{k+1} \label{eq:Ujbound_2}, \\	
	& \parbox{\dimexpr\linewidth-4em}{%
	for $\mathfrak{U}\in \{ U_j, \bar{U}_{j+1}\}$ and some $C(L)$, and the same inequalities hold with $\mathfrak{U} (B)$ and 
	$C(\delta, L)$ replaced by 
	$\cE_{j+1} |B|$ or $- \kg^{\rr}_{j+1} (B)$ 
	and $C(L)$, respectively, but $\delta$ set to $0$. 
	}	
	\stepcounter{equation}
	\tag{\theequation $\regularityone$} \label{quote:regularity_one}
\end{align}
\begin{align} 
	& \parbox{\dimexpr\linewidth-4em}{%
	(2) With $D$ the Fr\'echet derivative in $x$,
	}
	\nnb
	& \qquad\quad \norm{D e^{\mathfrak{U}' (B, \varphi ; \cdot)} }_{\vec{h}, T_j (B, \varphi)} \leq C(L) e^{ c_w \kappa  w_j (B, \varphi)^2}
	,
	\label{eq:derivatives1}
	\\
	& \qquad\quad \norm{D^2 e^{\mathfrak{U}' (B, \varphi ; \cdot)} }_{\vec{h}, T_j (B, \varphi)} \leq C(L) e^{ c_w \kappa w_j (B, \varphi)^2}
	, 
	\label{eq:derivatives2}
	\\
	& \qquad\quad \norm{D J^{\rr}_j (B, Z, \varphi ; \cdot )}_{\vec{h}, T_j (B, \varphi)} \leq C(L) A^{-1} e^{ c_w \kappa  w_{j} (B, \varphi)^2}
	,
	\label{eq:derivatives3} 
	\\
	& \qquad\quad \norm{D \mathcal{E} K^{\rr}_j (Z, \varphi ; \cdot)  }_{\vec{h}, T_j (Z, \varphi)} \leq C(A, L) A^{- (1+ \eta)|Z|_{j+1}} e^{ c_w \kappa  w_j (Z, \varphi)^2}
	,
	\label{eq:derivatives4} 
	\\
	& \qquad\quad \norm{D  \bar{K}^{\rr}_j (Z, \varphi ; \cdot)  }_{\vec{h}, T_j (Z, \varphi)} \leq C(A, L) A^{- (1+ \eta)|Z|_{j+1}} G_j (Z, \varphi)  
	,
	\label{eq:derivatives5} \\
	& \parbox{\dimexpr\linewidth-4em}{%
	$\mathfrak{U}' \in \{ U_j, \bar{U}_{j+1}, \cE_{j+1} |B| - \kg_{j+1}^{\rr} (B) \}$, and in the case of $\mathfrak{U}' = \cE_{j+1} |B| { - \kg^{\rr}_{j+1} (B) }$, the factor $e^{c_w \kappa  w_j (B, \varphi)^2}$ can be omitted.
The derivatives exist in the space of polymer activities with finite $\norm{\cdot}_{\vec{h}, T_j (B)}$-norm for $e^{\mathfrak{U}'}$, $D e^{\mathfrak{U}'}$, $J^{\rr}_j$, 
finite $\norm{\cdot}_{\vec{h}, T_j (Z)}$-norm for $\cE K^{\rr}_j$
and finite $\norm{\cdot}_{\vec{h}, T_j (Z)}$-norm for $\bar{K}^{\rr}_j$.
	}	
	\stepcounter{equation}
	\tag{\theequation $\regularitytwo$} \label{quote:regularity_two}
\end{align}

\begin{lemma} \label{lemma:Ujbound-summary} 
  Under the assumptions of Theorem~\ref{thm:local_part_of_K_j+1},
  for any $\delta>0$ and $\beta$ sufficiently large,  
  $\epsilon (\delta, L) \equiv \epsilon (\delta, \beta, L)$ (only polynomially small in $L$ and $\beta$) and $\eta >0$
  such that \eqref{eq:Ujbound_1}--\eqref{eq:derivatives5} (with constants possibly depending on $\beta$) hold for $\bar{\kK}_j$ defined by \eqref{eq:bar_kK_j_definition} with $\norm{\omega_j^{\rr, \dagger}}_{\bar{\Omega}_{j, \kt, \dagger}} \leq \epsilon (\delta, \beta,L)$ 
\end{lemma}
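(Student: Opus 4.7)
The plan is to verify conditions \eqref{eq:Ujbound_1}--\eqref{eq:derivatives5} for each of the seven entries of $\bar{\kK}_j$ in \eqref{eq:bar_kK_j_definition}. Analogous estimates (without the $\tau$-parameter) are established component-by-component in \cite[Section~7]{dgauss1}, so the overall strategy is to leverage Remark~\ref{remark:new_norm}: by introducing the auxiliary external point $\bigtriangleup$ and treating $\tau$ as an additional field component, the norm $\norm{\cdot}_{\vec{h},T_j}$ becomes a specialization of the standard norm $\norm{\cdot}_{h,T_j}$ on an enlarged lattice, so most bounds from \cite{dgauss1,dgauss2} transfer with only cosmetic changes. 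Technical input specific to this paper is concentrated in Lemma~\ref{lemma:F'_is_analytic} and Lemma~\ref{lemma:Et_by_complex_shift}, which let us rewrite the complex tilt $\E_{(\rr+\tau)}$ as a shifted Gaussian integral and absorb the shift into an enlarged parameter $h''=h+\htau\norm{u_{j+1}}_{C^2_j}$.

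I would begin with the easy entries. The factor $\cE_{j+1}|B|$ is a real number controlled by Theorem~\ref{thm:tuning_s-v2}, while $\kg^{\rr}_{j+1}(B)$ is bounded in $W^+(\D_{\htau})$ via \eqref{eq:kg_are_analytic}; neither depends on $\varphi$, so both instantly satisfy the second (unshifted) form of \eqref{eq:Ujbound_1}--\eqref{eq:Ujbound_2}. For $U_j$ I would invoke \eqref{eq:components_of_U_j_bound} blockwise, and for $\bar{U}_{j+1}$ I would apply Lemma~\ref{lemma:F'_is_analytic} to translate the complex shift $(\rr+\tau)u_{j+1}$ into the growth of the test function norm, provided $\htau\le (C\log L)^{-1}h$; the exponential bounds \eqref{eq:Ujbound_2} then drop out of \eqref{eq:exp_taylor_bound}. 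For $(K^{\rr}_j)^{\dagger}$, Lemma~\ref{lemma:reblocking_estimate} already delivers the correct $\bar\Omega^K_{j,\kt,\dagger}$-bound.

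The more delicate entries are $\bar{K}^{\rr}_j$, $\cE K^{\rr}_j$ and $J^{\rr}_j$. For $\bar{K}^{\rr}_j$ (defined by \eqref{eq:K_bar^Psi_definition}), I would factor each summand as $e^{U_j(X\setminus Y)}(K^{\rr}_j)^{\dagger}(Y)$, apply submultiplicativity \eqref{eq:submultiplicativity} together with the strong regulator inequality \eqref{eq:strong_regulator_key_bound} to trade $e^{U_j(X\setminus Y)}$ against part of the regulator, and then use Proposition~\ref{prop:largeset_contraction_external_field} (applied to the non-small components of $Y$) to extract the geometric gain $A^{-(1+\eta)|X|_{j+1}}$ from reblocking to scale $j+1$. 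For $J^{\rr}_j$ and $\cE K^{\rr}_j$, the bounds \eqref{eq:Loco_Et_K_j_bound} and \eqref{eq:Loc_Et_K_j_bound} of Lemma~\ref{lemma:Loc_Et_K_j_bound} directly give the block-wise estimates, and the extra factor $A^{-1}$ in \eqref{eq:derivatives3} comes from the constraint $Y\in\cS_j$ being fixed while the single block $D$ varies, exploiting that there are only $O(1)$ small polymers containing a given block.

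The Fr\'{e}chet differentiability claims \eqref{eq:derivatives1}--\eqref{eq:derivatives5} then follow from the chain rule, since each quantity is a composition of multiplications, exponentials, and bounded linear maps ($\Loc$, $\Loco$, $\E_{(\rr+\tau)}$) between the relevant normed spaces, and the first two derivatives of $e^U$ are polynomial multiples of $e^U$. The principal obstacle I expect is bookkeeping: the estimate \eqref{eq:Ujbound_1} carries a \emph{tunable} $\delta$ in the regulator exponent, and the chain of applications (especially through $\bar{K}^{\rr}_j$, which contains $e^{U_j}$ inside a sum over $Y$) must be arranged so that the accumulated weights $e^{\delta c_w\kappa w_j^2}$ remain strictly dominated by $G_j$; concretely, this forces a choice of $\epsilon(\delta,\beta,L)$ that shrinks as $\delta\downarrow 0$, but only polynomially so that subsequent RG steps and the stability argument of Section~\ref{sec:renormalisation-group-computation} remain compatible. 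The analyticity half of the estimates is comparatively painless: all constructions respect the $(\varphi,\tau)$-formalism of Remark~\ref{remark:new_norm}, and Lemma~\ref{lemma:analyticity_preserved_under_E} ensures that analyticity in $\tau$ is preserved under every fluctuation integral encountered.
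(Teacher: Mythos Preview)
Your proposal is correct and follows essentially the same route as the paper: split into the ``easy'' entries ($\cE_{j+1}|B|$, $\kg^{\rr}_{j+1}$, $U_j$, $\bar U_{j+1}$) handled via \cite[Lemma~7.11]{dgauss1}, \eqref{eq:kg_are_analytic}, \eqref{eq:components_of_U_j_bound} and Lemma~\ref{lemma:F'_is_analytic}, then the ``hard'' entries ($J^{\rr}_j$, $\cE K^{\rr}_j$, $\bar K^{\rr}_j$) via Lemma~\ref{lemma:Loc_Et_K_j_bound} and the reblocking estimate from \cite{dgauss1}, all lifted to the $\vec h$-norm through Remark~\ref{remark:new_norm}. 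One small imprecision: Proposition~\ref{prop:largeset_contraction_external_field} is stated for $\mathbb S[\E_{(\rr+\cdot)}[\,\cdot\,]]$ (it carries a fluctuation integral inside), so it does not apply verbatim to $\bar K^{\rr}_j$; the paper instead invokes \cite[Lemma~7.16]{dgauss1} directly (via Remark~\ref{remark:new_norm}), whose proof contains the same combinatorial gain $A^{-(1+\eta)|Z|_{j+1}}$ you describe but without the expectation.
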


The proof of this lemma will be given shortly. 
But before proving it, we emphasize that this lemma is enough to obtain the differentiability of $\kM_{j+1}^{\rr}$, due to the analysis of \cite{dgauss1}.

\begin{proof}[Proof of Theorem~\ref{thm:local_part_of_K_j+1},(ii)] 
We apply \cite[Lemma~7.12]{dgauss1} with its assumptions verified by Lemma~\ref{lemma:Ujbound-summary}--the details of the definitions of the normed spaces differ, but they do not play any crucial role. 
Thus we obtain
\begin{align}
	\norm{D \kM_{j+1} (\bar{\kK}_j (\omega^{\rr, \dagger}_{j} )) }_{\vec{h}, T_{j+1}} 
	\leq C_2 (\rr, \beta,A,L) \norm{\omega^{\rr, \dagger}_j}_{\bar{\Omega}_{j, \kt, \dagger}} 
	.
\end{align}
\end{proof}

\subsection{Proof of Lemma~\ref{lemma:Ujbound-summary}}
\label{sec:proof_of_Lemma_Ujbound-summary}

We are now only left to prove Lemma~\ref{lemma:Ujbound-summary}. The proof of this lemma is divided into two parts, 
the first part discussing \eqref{quote:regularity_one} and the second part discussing \eqref{quote:regularity_two}.

\begin{lemma}
\label{lemma:Ujbound-summary-proof_stage_1}

Suppose we are in the setting of Lemma~\ref{lemma:Ujbound-summary}, 
so $\norm{\omega_j^{\rr, \dagger} }_{\bar{\Omega}_{j, \kt, \dagger}} \leq \epsilon (\delta, \beta, L)$
with $\epsilon (\delta, \beta, L) > 0$ sufficiently small. 
Then \eqref{quote:regularity_one} holds.
\end{lemma}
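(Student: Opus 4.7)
\medskip

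The plan is to verify each of the four cases $\mathfrak{U}\in\{U_j,\bar U_{j+1},\cE_{j+1}|B|,-\kg_{j+1}^{\rr}(B)\}$ separately, using the basic norm estimates already established in Section~\ref{sec:polymer_activities_and_norms} and the choice $\htau\le (C\log L)^{-1}h$ from Section~\ref{sec:choice_of_parameters}. Let $x=\norm{\omega_j^{\rr,\dagger}}_{\bar{\Omega}_{j,\kt,\dagger}}$. For $\mathfrak{U}=U_j$, no $\tau$-dependence is present, so $\norm{U_j(B,\varphi)}_{\vec h,T_j(B,\varphi)}=\norm{U_j(B,\varphi)}_{h,T_j(B,\varphi)}$ and \eqref{eq:components_of_U_j_bound} immediately yields a bound of the form $CA^{-1}\norm{U_j}_{\Omega_j^U}(1+w_j(B,\varphi)^2)$; choosing $\epsilon(\delta,\beta,L)$ so that $CA^{-1}\epsilon(\delta,\beta,L)\le \delta c_w\kappa$ puts this in the required shape $C(\delta,L)(1+\delta c_w\kappa w_j^2)|x|$. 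The scalar $\cE_{j+1}|B|$ has no $\varphi$- nor $\tau$-dependence, so its bound is immediate from the estimate on $\cE_{j+1}$ in the RG construction, and the $w_j^2$-factor is absent (matching the $\delta=0$ claim). Similarly, for $-\kg_{j+1}^{\rr}(B)$ the bound $\norm{\kg_{j+1}^{\rr}(B;\cdot)}_{\htau,W}\le CA^{-1}|x|$ follows directly from \eqref{eq:kg_are_analytic}, and again carries no $w_j^2$.

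The one genuinely non-trivial case is $\mathfrak{U}=\bar U_{j+1}(B,\varphi;\tau)=-\cE_{j+1}|B|+\kg_{j+1}^{\rr}(B;\tau)+U_{j+1}(B,\varphi+(\rr+\tau)u_{j+1})$. The first two pieces have already been bounded. For the last piece, I will apply the complex-shift machinery of Lemma~\ref{lemma:F'_is_analytic}: since $U_{j+1}$ is $\tau$-independent, one has $\norm{U_{j+1}(B,\varphi+\tau u_{j+1})}_{\vec h,T_j(B,\varphi+\rr u_{j+1})}\le \norm{U_{j+1}(B,\varphi+\rr u_{j+1})}_{h+\htau\norm{u_{j+1}}_{C_j^2},T_j(B,\varphi+\rr u_{j+1})}$. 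By Lemma~\ref{lemma:extfield_bound} and the choice $\htau\le(C\log L)^{-1}h$ with $C$ large, $h+\htau\norm{u_{j+1}}_{C_j^2}\le 2h$, so after invoking \eqref{eq:components_of_U_j_bound} (with $h$ replaced by $2h$, which is accommodated by choosing $\beta\ge 8c_f^{-1}$) and noting $w_j(B,\varphi+\rr u_{j+1})^2\le 2w_j(B,\varphi)^2+C(\rr,M,\rho)$, one obtains a bound of the form $C(\rr,L)A^{-1}\norm{U_{j+1}}_{\Omega_{j+1}^U}(1+w_j(B,\varphi)^2)$, which combined with the other two pieces (and smallness of $x$ to absorb the $C(\rr,L)A^{-1}$ constant below $\delta c_w\kappa$) gives the desired estimate in the form $C(\delta,L)(1+\delta c_w\kappa w_j^2)|x|$.

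Having the bounds \eqref{eq:Ujbound_1} in hand, the Taylor remainder bound \eqref{eq:Ujbound_2} is then an algebraic consequence of the submultiplicativity statement \eqref{eq:exp_taylor_bound} in Lemma~\ref{lemma:submultiplicativity_semi-norm}. Writing $a=C(\delta,L)|x|$ and $b=\delta c_w\kappa w_j(B,\varphi)^2$, one has $\norm{\mathfrak U}_{\vec h,T_j(B,\varphi)}\le a(1+b/(c_w\kappa))$ type bound (with the pure $b$ piece absent for the scalar cases), and \eqref{eq:exp_taylor_bound} then gives
\begin{equation*}
	\Big\|e^{\mathfrak{U}}-\sum_{m=0}^{k}\frac{\mathfrak{U}^m}{m!}\Big\|_{\vec h,T_j(B,\varphi)}\le \frac{\norm{\mathfrak{U}}^{k+1}}{(k+1)!}e^{\norm{\mathfrak{U}}}\le C(\delta,L)|x|^{k+1}e^{\delta c_w\kappa w_j^2}
\end{equation*}
once $\epsilon(\delta,\beta,L)$ is chosen small enough that $a\le 1$ (so the $e^a$ factor is absorbed into $C(\delta,L)$) and that the $w_j^2$-coefficient in the exponent stays $\le\delta c_w\kappa$.

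The main obstacle in executing this plan is the $\bar U_{j+1}$ case, specifically controlling the interplay between the complex shift $\tau u_{j+1}$, the analyticity requirement in $\tau$, and the $\vec h=(h,\htau)$-norm: one must verify that $\htau\norm{u_{j+1}}_{C_j^2}$ remains safely bounded (achieved by the $\htau\le(C\log L)^{-1}h$ calibration together with the $\log L$-growth in Lemma~\ref{lemma:extfield_bound}), and that the cumulative factor $w_j(B,\varphi+\rr u_{j+1})^2$ can be re-expressed in terms of $w_j(B,\varphi)^2$ with harmless additive constants. Everything else is a relatively routine repackaging of the $\norm{\cdot}_{h,T_j}$-estimates from \cite{dgauss1} into the $\norm{\cdot}_{\vec h,T_j}$-framework, in the spirit of Remark~\ref{remark:new_norm}.
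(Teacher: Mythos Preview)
Your proposal is correct and follows essentially the same approach as the paper's proof: handle $U_j$ and $\cE_{j+1}|B|$ by reducing to the $\tau$-independent $h$-norm (the paper cites \cite[Lemma~7.11]{dgauss1} where you invoke \eqref{eq:components_of_U_j_bound} directly), handle $\kg_{j+1}^{\rr}$ via \eqref{eq:kg_are_analytic}, and handle $\bar U_{j+1}$ by applying Lemma~\ref{lemma:F'_is_analytic} to pass from $\vec h$ to $h''\le 2h$, then control $w_j(B,\varphi+\rr u_{j+1})^2$ in terms of $w_j(B,\varphi)^2$; the deduction of \eqref{eq:Ujbound_2} from \eqref{eq:Ujbound_1} via \eqref{eq:exp_taylor_bound} is identical. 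The only cosmetic difference is that the paper, instead of invoking \eqref{eq:components_of_U_j_bound} at $2h$, uses the explicit quadratic-plus-trigonometric form of $U_{j+1}$ to observe $\norm{\cdot}_{h'',T_j}\le 4\norm{\cdot}_{h,T_j}$ directly.
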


\begin{proof}
For the case $\mathfrak{U} = U_j$ and $\cE_{j+1} |B|$, since $U_j$ does not have any dependence on $\tau$,  the norm $\norm{\cdot}_{\vec{h}, T_j (B, \varphi)}$ can actually be replaced by $\norm{\cdot}_{h, T_j (B, \varphi)}$. 
However, after the replacement, these bounds are simply implications of \cite[Lemma~7.11]{dgauss1}.
For the case $\mathfrak{U} = \bar{U}_{j+1}$ and $-\kg_{j+1}^{\rr}(B)$, again by \cite[Lemma~7.11]{dgauss1}, for any $\delta >0$, there exists $C(\delta, \beta, L) > 0$ such that
\begin{align}
	\norm{ - \cE_{j+1} |B| + U_{j+1} (B, \varphi) }_{h, T_j (B, \varphip)}  
	\leq C(\delta, \beta, L) \Big(1+ \frac{1}{3} \delta c_w \kappa (L) w_j (B, \varphi)^2 \Big) \norm{\omega_j^{\rr, \dagger} }_{\bar{\Omega}_{j, \kt, \dagger}} .
\end{align}
for $B\in \cB_{j+1}$,
and by \eqref{eq:kg_are_analytic},
\begin{align}
	\norm{\kg_{j+1}^{\rr} (B, K_j^{\rr} ; \cdot) }_{\htau, W}  
	\leq C A^{-1} \norm{\omega_j^{\rr, \dagger} }_{\bar{\Omega}_{j, \kt, \dagger}}
	.
\end{align}
Letting $\bar{U}'_{j+1} (B) = U_{j+1} (B) - \cE_{j+1} |B| + \kg^{\rr}_{j+1} (B)$, we have
\begin{align}
	\norm{ \bar{U}'_{j+1} (B, \varphi) }_{h, T_j (B, \varphip)}  
	\leq C' (\delta, \beta, L) \Big( 1+ \frac{1}{3} \delta c_w \kappa (L) w_j (B, \varphi)^2 \Big) \norm{\omega_j^{\rr, \dagger} }_{\bar{\Omega}_{j, \kt, \dagger}}
\end{align}
and by Lemma~\ref{lemma:F'_is_analytic} 
\begin{align}
	\norm{ \bar{U}'_{j+1} (B, \varphi + \tau u_{j+1}) }_{\vec{h}, T_j (B, \varphi)} 
	\leq 
	\norm{ \bar{U}'_{j+1} (B, \varphi) }_{h'' , T_j (B, \varphi)} ,
\end{align}
where $h'' = h+ \htau \norm{u_{j+1}}_{C_{j}^2}$, and if we set $\htau \leq (C \log L )^{-1} h$ for sufficiently large $C$, then $h'' \leq 2h$. 
But since $\bar{U}_{j+1} (B, \varphi)$ is a polynomial in $\varphi$ with degree 2 plus trigonometric terms, 
we actually have
\begin{align}
	\norm{ \bar{U}'_{j+1} (B, \varphi) }_{h'' , T_j (B, \varphip)} \leq 4 \norm{ \bar{U}'_{j+1} (B, \varphi) }_{h, T_j (B, \varphip)} .
\end{align}
Thus by the definition of $\bar{U}_{j+1}$, 
\begin{align}
	\norm{ \bar{U}_{j+1} (B, \varphi )}_{\vec{h}, T_j (B, \varphi)}  \leq 4C' \Big( 1+ \frac{\delta}{3} c_w \kappa_L w_j (B, \varphi + \rr u_{j+1} )^2 \Big) \leq C'' \Big( 1+ \frac{\delta}{3} c_w \kappa_L w_j (B, \varphi)^2 \Big)
\end{align}
which proves \eqref{eq:Ujbound_1} for $\bar{U}_{j+1}$.
Now \eqref{eq:Ujbound_2} is a pure implication of \eqref{eq:Ujbound_1}. Indeed, by \eqref{eq:exp_taylor_bound}, 
\begin{align}
\Big\|  e^{\kU (B, \varphi)} - \sum_{m=0}^k \frac{1}{m!} (\kU (B, \varphi) )^m  \Big\|_{\vec{h}, T_j (B, \varphi)} \leq \frac{1}{k!} \norm{\kU (B, \varphi)}_{\vec{h}, T_j (B, \varphi)}^{k} e^{\norm{\kU (B, \varphi)}_{\vec{h}, T_j (B, \varphi)}}
\label{eq:exp_taylor_bound_restated}
\end{align}
for $k \in \{ 0,1,2\}$. 
Also again by \eqref{eq:exp_taylor_bound} and \eqref{eq:Ujbound_1}, for $\norm{\omega_j^{\rr, \dagger} }_{\bar{\Omega}_{j, \kt, \dagger}} \leq \frac{1}{C(\delta, \beta, L)}$, 
\begin{align}
& e^{\norm{\kU (B, \varphi)}_{\vec{h}, T_j (B, \varphi)}} \leq e^{1+ \frac{1}{3} \delta c_w \kappa w_j (B, \varphi)^2} 
\\
& \norm{\kU (B, \varphi)}_{\vec{h}, T_j (B, \varphi)}^k \leq C''(\delta, \beta, L) e^{\frac{1}{3} k \delta c_w \kappa w_j (B, \varphi)^2} \norm{\omega_j^{\rr, \dagger} }_{\bar{\Omega}_{j, \kt, \dagger}}^k ,
\end{align}
which concludes the proof after having inserted into \eqref{eq:exp_taylor_bound_restated}.
\end{proof}

\begin{lemma}
Under the setting of Lemma~\ref{lemma:Ujbound-summary},
\eqref{quote:regularity_two} holds.
\end{lemma}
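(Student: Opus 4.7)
The plan is to mimic the proof of the $\tau$-independent analogue in \cite[Lemma~7.11]{dgauss1}, upgrading each $h,T_j$-bound to a $\vec{h},T_j$-bound. Two observations make this upgrade essentially routine. First, Remark~\ref{remark:new_norm} lets us view $\norm{\cdot}_{\vec{h},T_j}$ as an ordinary $T_j$-norm over the enlarged configuration space $\R^{\Lambda_N}\oplus\R^{\bigtriangleup}$, so most estimates of \cite{dgauss1} apply verbatim. Second, Lemma~\ref{lemma:F'_is_analytic} lets us absorb the tilt $\E_{(\rr+\tau)}\leadsto\E$ at the cost of replacing $h$ by $h''\leq 2h$. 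Each of the seven entries of $\bar{\kK}_j$ in \eqref{eq:bar_kK_j_definition} depends on $\omega_j^{\rr,\dagger}$ either linearly or through the smooth maps $\cE_{j+1},\cU_{j+1},\kg_{j+1}^{\rr}$, so all Fr\'echet derivatives can be computed directly by the chain rule, and the desired bounds would follow from combining the corresponding norm estimates on the constituent factors.

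For the three exponentials indexed by $\mathfrak{U}'\in\{U_j,\bar{U}_{j+1},\cE_{j+1}|B|-\kg_{j+1}^{\rr}(B)\}$, the plan is to apply $De^{\mathfrak{U}'}=e^{\mathfrak{U}'}D\mathfrak{U}'$ and $D^2 e^{\mathfrak{U}'}=e^{\mathfrak{U}'}\bigl(D\mathfrak{U}'\otimes D\mathfrak{U}'+D^2\mathfrak{U}'\bigr)$. Since each $\mathfrak{U}'$ depends essentially linearly on $\omega_j^{\rr,\dagger}$, $D\mathfrak{U}'$ inherits the bound from Lemma~\ref{lemma:Ujbound-summary-proof_stage_1} with the overall factor $\norm{\omega_j^{\rr,\dagger}}_{\bar{\Omega}_{j,\kt,\dagger}}$ removed, while $D^2\mathfrak{U}'$ is either zero or a harmless higher-order remainder. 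Applying submultiplicativity (Lemma~\ref{lemma:submultiplicativity_semi-norm}) and the $k=0$ case of \eqref{eq:Ujbound_2} to control $\|e^{\mathfrak{U}'}\|_{\vec{h},T_j(B,\varphi)}$ by $C(L)\,e^{\delta c_w\kappa w_j(B,\varphi)^2}$, with $\delta$ chosen small enough that $2\delta c_w\kappa w_j^2\leq c_w\kappa w_j^2$, would yield \eqref{eq:derivatives1}--\eqref{eq:derivatives2}.

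The bounds on $J_j^{\rr}$, $\cE K_j^{\rr}$, $\bar{K}_j^{\rr}$ would be obtained by differentiating the explicit polymer sums \eqref{eq:J_j^Psi_definition}--\eqref{eq:K_bar^Psi_definition} termwise. For $J_j^{\rr}$ and $\cE K_j^{\rr}$, the derivative selects one copy of either $\Loc_{Y,D}\E_{(\rr+\tau)}[K_j^0(Y,\cdot;0)]$ or $\Loco_Y\E_{(\rr+\tau)}[(K_j^{\rr})^{\dagger}(Y,\cdot;\tau)-K_j^0(Y,\cdot;0)]$ (both linear in $\omega_j^{\rr,\dagger}$), and Lemma~\ref{lemma:Loc_Et_K_j_bound} bounds each by $C(\rr,L)A^{-|Y|_j}e^{c_w\kappa w_j^2}$; the smallness $|Y|_j\leq 4$ supplies the needed $A^{-1}$. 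For $\bar{K}_j^{\rr}$, differentiation in either $U_j$ or $(K_j^{\rr})^{\dagger}$ produces a polymer sum controlled by submultiplicativity, with the $e^{U_j(X\setminus Y)}$ factor handled by \eqref{eq:Ujbound_2} at small $\delta$ and the factor $(A/2)^{-|Y|_j}G_j(Y,\varphi)$ coming from $(K_j^{\rr})^{\dagger}$; the strong-regulator bound \eqref{eq:strong_regulator_key_bound} then consolidates everything into $G_j(Z,\varphi)$. The main obstacle will be the combinatorial bookkeeping of the sum over pairs $(Y,X\setminus Y)$ with $\bar{Y}=Z$: the standard large-set reblocking bound from \cite{MR2523458} (as adapted to this setting in \cite{dgauss1}) controls the number of such configurations by $(C(L))^{|Z|_{j+1}}$, so that taking $A\geq A_0(L)$ sufficiently large absorbs this count and leaves the spare factor $A^{-\eta|Z|_{j+1}}$ required in \eqref{eq:derivatives3}--\eqref{eq:derivatives5}.
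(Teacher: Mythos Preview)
Your proposal is correct and follows essentially the same approach as the paper. Two small remarks: the $\tau$-independent analogue you want is \cite[Lemmas~7.15--7.16]{dgauss1}, not Lemma~7.11 (which concerns the R1 estimates already handled in Lemma~\ref{lemma:Ujbound-summary-proof_stage_1}); and the paper is more economical than your plan in that it does not redo the chain-rule computation for $e^{\mathfrak{U}'}$ or the reblocking combinatorics for $\bar{K}_j^{\rr}$, but instead cites \cite[Lemma~7.15]{dgauss1} (noting its proof derives R2 purely from R1, which you already have) and \cite[Lemma~7.16]{dgauss1}, then invokes Remark~\ref{remark:new_norm} to upgrade the $h,T_j$-bounds to $\vec{h},T_j$-bounds. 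For $J_j^{\rr}$ and $\cE K_j^{\rr}$ the paper proceeds exactly as you describe, bounding $Q_j^{\rr}$ via Lemma~\ref{lemma:Loc_Et_K_j_bound} and using linearity.
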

\begin{proof}
\eqref{eq:derivatives1} and \eqref{eq:derivatives2} for the  cases $\kU = U_j$ or $\cE_{j+1}|B|$ are already proved by \cite[Lemma~7.15]{dgauss1}. 
Also for the case $\kU = \bar{U}_{j+1}$, the proof of \cite[Lemma~7.15]{dgauss1} reveals that these bounds are derived purely from \eqref{eq:Ujbound_1} and \eqref{eq:Ujbound_2}. 
Their differentiabilities are also consequences of the differentiability argument of the same reference.

To derive \eqref{eq:derivatives3} and \eqref{eq:derivatives4}, we first look for a bound on $Q^{\rr}_j$.
But Lemma~\ref{lemma:Loc_Et_K_j_bound} bounds both terms of $Q^{\rr}_j$ by
\begin{align}
	\norm{Q^{\rr}_j (B,Z, \varphip ; \cdot)}_{\vec{h}, T_j (B, \varphip)} 
	\leq 
	C(L) \norm{\omega_j^{\rr, \dagger} }_{\bar{\Omega}_{j, \kt, \dagger}} e^{c_w \kappa  w_j (B, \varphip)^2} .
\end{align}
Then by the definition of $J^{\rr}_j$ and $\cE K^{\rr}_j$,  \eqref{eq:cEK^Psi_definition}--\eqref{eq:J_j^Psi_definition},  and since $Q^{\rr}_j$ is a linear function of $(K^0_j,  K_j^{\rr, \dagger})$, we have the bounds \eqref{eq:derivatives3} and \eqref{eq:derivatives4}.
The indicated differentiabilities of $J_j^{\rr}, \cE K^{\rr}_j$ follow because they are linear functions of $\omega_j^{\rr, \dagger}$

Finally,  to see the bound on $\bar{K}^{\rr}_j$, define the function $\bar{\cF}$ on polymer activities by
\begin{align}
\bar{\cF} (U_j, K) (X, \varphi ; \tau) = \sum_{Y\in \cP_j}^{\bar{Y}=X} e^{U_j (X \backslash Y, \varphip + \zeta )} K (Y, \varphi ; \tau)
\end{align}
so that $\bar{\cF} (U_j,K_j^{\rr,\dagger}) = \bar{K}^{\rr}_j$. 
We already have a good control of $\bar{\cF}$,
since \cite[Lemma~7.16]{dgauss1} gives that
\begin{align}
\norm{D \bar{\cF} (U_j,K^0_j) (Z, \varphi ) }_{h, T_j (Z, \varphi)} \leq C(A,L) A^{-(1+\eta) |Z|_{j+1}} G_j (Z, \varphi)
\end{align}
(where $D$ is a derivative in $(U_j, K^0_j)$)
for some $\eta >0$, $C(A,L) >0$ and sufficiently small $\norm{(U_j, K_j^0)}_{\Omega_j}$.
By Remark~\ref{remark:new_norm}, we can in fact translate this into a result written in terms of $\norm{\cdot}_{\vec{h}, T_j (Z, \varphi)}$,  giving
\begin{align}
\norm{D \bar{\cF} (U_j,K^{\rr,\dagger}_j) (Z, \varphi ; \tau ) }_{\vec{h}, T_j (Z, \varphi)} \leq C(A,L) (A/2)^{-(1+\eta) |Z|_{j+1}} G_j (Z, \varphi)
\end{align}
(with $A/2$ because we only have $\norm{K_j^{\rr,\dagger}}_{\vec{h}, T_j, A/2} < \infty$)
for sufficiently small $\norm{\omega_j^{\rr, \dagger}}_{\bar{\Omega}_{j, \kt, \dagger}}$. This is exactly \eqref{eq:derivatives5}.
The differentiability of $\bar{K}^{\rr}_j$ also follows from the argument of \cite[Lemma~7.16]{dgauss1}.

\end{proof}

\ifx\newpageonoff\undefined
{\red command undefined!!}
\else
  \if\newpageonoff1
  \newpage
  \fi
\fi

\section{Renormalisation group computation of the two-point energy}
\label{sec:renormalisation-group-computation}

In this section, we always assume that $\f_1$ and $\f_2$ satisfy \eqref{quote:assumpf} and $u_1$ and $u_2$ are defined according to \eqref{eq:extfield_def} using $\f_1$ and $\f_2$, respectively. 
However, $\f$ will be either $f_1 + T_y \f_2$ or $\f_1$ or $\f_2$ or $T_y \f_2$ depending on the situation. 
Hence for $\kg_j^{\rr}$ defined according to Definition~\ref{def:evolution_of_g_j}, 
the dependence on $\f_1, \f_2$ will be denoted explicitly by putting $\f$-arguments in square brackets whenever necessary, 
e.g., $\kg^{\rr}_j \equiv \kg^{\rr}_j [f_1 + T_y \f_2]$.

If $j$ is sufficiently small compared to the separation between $u_{j,1}$ and $T_y u_{j,2}$, 
then one may claim that $u_{j,1}$ and $T_y u_{j,2}$ do not feel each other, 
since $\varphi |_{\supp (u_{j_1})}$ and $\varphi|_{\supp (T_y u_{j_2})}$ are independent when integrating with respect to $\E_{\Gamma_j}$.
On the other hand, if $j$ becomes sufficiently large so that the range of $\Gamma_{j}$ is large compared to the separation, then the fluctuation integrals pick up extra terms due to this lack of independence.
This can also be implemented in the RG computation by treating the case $j < j_{0y}$ and the case $j\geq j_{0y}$ differently for $\kg_{j+1}^{\rr}$, recalling that $j_{0y}$ is the coalescence scale
\begin{align}
j_{0y} = \min \Big\{ j \geq 0 \; : \; (B^j_0)^{***} \cap Q_y^j  \neq \emptyset \Big\} 
.
\end{align}

\subsection{The infinite volume limit}

To understand the properties of the DG measure on $\Z^2$,
it is necessary to understand the RG flow under taking limit $N\rightarrow \infty$. 
Since the infinite volume limit was constructed for the case $(\tau, \rr) =(0,0)$ by \cite[Proposition~8.3]{dgauss1},
this result can be extended to $(\tau , \rr) \in \D_{\htau} \times \R$ taking the $(\tau, \rr) = (0,0)$ as the reference point. 
However, this extension is not needed in full generality. 
Rather, we only need the information that 
the coupling constants and the free energy, $(\cU_{j}^{\Lambda_N}, \cE_j^{\Lambda_N}, \kg_j^{\rr, \Lambda_N})$,
are independent of $\Lambda_N$.
This can be seen by a short induction argument, 
taking it guaranteed the existence of the renormalisation group flow for any $j < N$.

\begin{proposition}
\label{prop:spatial_locality_of_rg_map}

Let $L$ be sufficiently large and $N \geq j_{0y} + 2$.
Suppose $(\cE_j^{\Lambda_N}, \kg_j^{\rr, \Lambda_N}, \cU_{j}^{\Lambda_N},  \vec{K}_j^{\rr, \Lambda_N})_{j\leq N}$ can be constructed on $\Lambda_N$ via the maps $(\Phi^{\rr, \Lambda_N}_{j+1} )_{0\leq j \leq N-1}$ of \eqref{eq:rg_map_definition} and initial condition \eqref{quote:rginit}.
Then $(\cE_j^{\Lambda_N}, \kg_j^{\rr, \Lambda_N},  \cU_{j}^{\Lambda_N})$ are independent of $N$ when $j < N$.
\end{proposition}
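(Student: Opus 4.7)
The plan is to proceed by induction on $j \in \{0, 1, \ldots, N-1\}$, exploiting the locality of every step in the RG construction. The base case $j = 0$ is immediate: the initial data $(E_0, g_0^{\rr}, U_0, K_0)$ are given by \eqref{quote:rginit}, with $U_0$ determined by $s_0^c(\beta)$ and $z_0^{(q)}$ defined intrinsically (independently of any finite volume), $K_0(X) = \one_{X = \emptyset}$, $E_0 = 0$, and $g_0^{\rr} \equiv 0$, so none of these depend on $N$.

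The key move is to strengthen the inductive statement so that it includes, in addition to the $N$-independence of $(\cE_{j'}^{\Lambda_N}, \kg_{j'}^{\rr,\Lambda_N}, \cU_{j'}^{\Lambda_N})$, the following locality clause: for every $j' \leq j$ and every $X \in \Conn_{j'}$ whose small-set neighborhood $X^*$ fits inside the fundamental domain $\Lambda'_N$, the polymer activity $K_{j'}^{\rr,\Lambda_N}(X, \cdot\,; \cdot)$ is $N$-independent as a function of $\varphi|_{X^*}$ and $\tau$. The assumption $N \geq j_{0y} + 2$ is precisely what guarantees that the perturbation polymers $P_y^{j'}$, which support the external field $u_{j'}$, fit inside $\Lambda'_N$ for all $j' < N$.

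The inductive step at scale $j+1 < N$ then unfolds as follows. The maps $\cE_{j+1}^{\Lambda_N}, \cU_{j+1}^{\Lambda_N}$ defined by \eqref{eq:cU_definition} and $\kg_{j+1}^{\rr,\Lambda_N}$ from Definition~\ref{def:evolution_of_g_j} involve only (a) the Gaussian expectation against $\Gamma_{j+1}$, which is $N$-independent by \eqref{quote:Gamma_two}; (b) the coupling constants defining $U_j$, which are $N$-independent by the IH; and (c) local operations $\Loc_{X,B}$ and $\Loco_Y$ applied to $K_j^{0,\Lambda_N}(\,\cdot\,;0)$ and $(K_j^{\rr,\Lambda_N})^{\dagger}$ on small polymers $X, Y$ with $|X|_j, |Y|_j \leq 4$, whose small-set neighborhoods have diameter $O(L^j) \ll L^N$ and are therefore non-wrapping; by the strengthened IH these evaluations are $N$-independent. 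The locality clause at scale $j+1$ is then inherited by inspection of Definition~\ref{def:expression_for_K_j+1_external_field}: for $X \in \cP_{j+1}$ with $X^*$ non-wrapping, every sub-polymer appearing in the formula for $\cK_{j+1}^{\rr}(X)$ sits inside a bounded neighborhood of $X$ at scale $j$, and each building block $Q_j^{\rr}, J_j^{\rr}, \mathcal{E}K_j^{\rr}, \bar{K}_j^{\rr}$ reduces to small-polymer evaluations of $K_j^{\rr,\Lambda_N}$ and $U_j$ together with the already-proved $N$-independent $\cE_{j+1}, \kg_{j+1}^{\rr}, \cU_{j+1}$.

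The main technical obstacle I anticipate is the careful propagation of the non-wrapping condition across one RG step. The localisation operators $\Loc, \Loco$, the rescale-reblocking $\mathbb{S}$, the field-reblocking $\cR_j$, and the Gaussian expectation against $\Gamma_{j+1}$ are all local in the sense that they act only within a bounded small-set neighborhood of their input polymer; together with the finite-range property \eqref{quote:Gamma_one}, they guarantee that no operation sees around the torus and that the strengthened IH propagates faithfully. The restriction $j < N$ in the proposition is sharp precisely because at the terminal step $j+1 = N$ the covariance becomes the genuinely $N$-dependent $\Gamma_N^{\Lambda_N}$, and this locality argument breaks down.
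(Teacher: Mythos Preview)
Your proposal is correct and follows essentially the same inductive strategy as the paper: both strengthen the inductive hypothesis to include locality of $K_j^{\rr}(X)$ on polymers that do not wrap around the torus, then use that $\cE_{j+1}, \cU_{j+1}, \kg_{j+1}^{\rr}$ depend only on small polymers near $(P_y^j)^*$, the $N$-independence of $\Gamma_{j+1}$ from \eqref{quote:Gamma_two}, and the condition $N \geq j_{0y}+2$ to ensure everything fits inside the fundamental domain. The paper formalises ``$N$-independence'' by fixing $N' > N$, an embedding $\iota_{N,N'}:\Lambda_N \to \Lambda_{N'}$, and stating the IH as an equality $\vec{K}_j^{\rr,\Lambda_N}(Y,\varphi\circ\iota_{N,N'}) = \vec{K}_j^{\rr,\Lambda_{N'}}(\iota_{N,N'}(Y),\varphi)$ for $Y \subset B_0^{N-1}$; your more informal phrasing is equivalent in spirit but would benefit from this explicit identification, since the polymer activities at different $N$ literally live on different spaces.
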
 

\begin{proof}
This result is implied by a stronger result on the (spatially) local dependence of the coupling constants on the renormalisation group flow. 
To see this, let $N' > N$ be sufficiently large and
$\iota_{N, N'} : \Lambda_{N} \rightarrow \Lambda_{N'}$ be the embedding with $\iota_{N, N'} (0) = 0$. 
Also construct $(\cU_j^{\Lambda_N},  \cE_j^{\Lambda_N}, \kg_j^{\rr,\Lambda_N}, \vec{K}_j^{\rr,\Lambda_N})$ for $j\leq N-1$ using Definition~\ref{def:evolution_of_g_j} on $\Lambda_N$, 
and also construct the corresponding objects on $\Lambda_{N'}$.
Then assume, as an induction hypothesis, that whenever $j< N-1$,
\begin{align}
\vec{K}^{\rr, \Lambda_N}_j (Y,   \varphi \circ \iota_{N,N'} ) = \vec{K}^{\rr, \Lambda_{N'}}_j ( \iota_{N, N'} (Y)  , \varphi), \qquad \varphi \in S_h (\Lambda_{N'})
\label{eq:K_independent_of_N}
\end{align}
for any $Y \in \cP_j (\Lambda_{N})$ such that $Y \subset B_0^{N-1}$.
This holds for $j=0$ by the assumptions on the initial condition.
As an immediate consequence of \eqref{eq:K_independent_of_N}, we have the same for $(K^{\rr, \Lambda_N}_j)^{\dagger}$ and $(K^{\rr, \Lambda_{N'}}_j)^{\dagger}$.
But since $\kg^{\rr}_{j+1}$, $\cE_{j+1}$ and $\cU_{j+1}$ only depend on $(\vec{K}^{\rr,  \cdot }_j )^{\dagger} (Y)$ for $Y\in \cS_j (\Lambda_{N'})$ such that $Y \cap (P_y^{j})^* \neq \emptyset$ 
(to be precise, $\cE_{j+1}$ and $\cU_{j+1}$ are functions of $K_j^0 (X ; 0)$ for $X \in \cS_j$, but since $K_j^0 (\cdot ; 0)$ is translation invariant,  we only need information of $K_j^0 (Y ; 0)$ for $Y\in \cS_j$ and $Y \cap (P_y^j)^* \neq \emptyset$), 
and any such $Y$ is contained in $[-R,R]^2$ where 
\begin{align}
R = 5 L^j + L^{j_{0y}} \leq 6 L^{N-2} \leq \frac{(L-2)L^{N-2}}{2},
\end{align}
we also have
\begin{align}
\cU^{\Lambda_N}_{j+1} = \cU^{\Lambda_{N'}}_{j+1}, 
\quad 
\cE^{\Lambda_N}_{j+1} = \cE^{\Lambda_{N'}}_{j+1}, 
\quad
\kg^{\rr,\Lambda_N}_{j+1} = \kg^{\rr,\Lambda_{N'}}_{j+1}
\label{eq:U,E,kg_independent_of_N}
.
\end{align}
Also for $X \in \cP_{j+1} (\Lambda_{N'})$ such that $X\subset B_0^{N-1}$, 
\eqref{eq:expression_for_K_j+1_external_field} and \eqref{eq:U,E,kg_independent_of_N} indicate that $\vec{K}^{\rr, \Lambda_N}_{j+1} (X)$ is independent of $N$, completing the induction. 
The conclusion follows from \eqref{eq:U,E,kg_independent_of_N}. 
\end{proof}

By the proposition, \eqref{eq:K_independent_of_N} and \eqref{eq:U,E,kg_independent_of_N}, we can almost pretend as if the dependence of $\Lambda_N$ is not present in the RG flow $(\cE^{\Lambda_N}_j, \kg^{\rr,\Lambda_N}_j, U^{\Lambda_N}_j, K^{\Lambda_N}_j)_{j \leq N-1}$. 
Hence we may drop the superscript $\Lambda_N$ when $j \leq N-1$ (but keep it for $j = \Lambda_N$, 
e.g. , 
use $\kg_N^{\rr,\Lambda_N}$).
Even more, we can now talk about the infinite sequence $(\kg^{\rr}_j)_{j \in \Z_{\geq 1}}$ without making reference to $\Lambda_N$
and we may also state the decay of $\kg^{\rr}_j$ as $j\rightarrow \infty$. 
We start this by justifying the existence of the RG flow of infinite length for sufficiently large $\beta >0$. 

\begin{proposition}
\label{prop:stability_of_dynamics}

Let $\tau \in \D_{\htau}$. 
There exists $\beta_0 (\rr), L_0 (\rr) >0$ such that the renormalisation group flow, \eqref{eq:rg_map_definition}, 
is defined for any $N >0$ and all $j \in \{0,1, \cdots, N-1\}$ at $\beta \geq \beta_0 (\rr)$ and $L \geq L_0(\rr)$
when the initial condition is given by \eqref{quote:rginit}.
\end{proposition}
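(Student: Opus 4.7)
The plan is to argue by induction on $j$, taking the bulk flow of Theorem~\ref{thm:tuning_s-v2} as the reference and treating the observable-dependent coordinate $K_j^{\rr}$ as a controlled perturbation. For $\beta$ and $L$ sufficiently large, the bulk flow $(U_j, K_j^0)_{j \leq N}$ exists for any $N$ with the decay estimate \eqref{eq:finalbounds}, and I plan to show that the same $\beta, L$ (possibly enlarged in an $\rr$-dependent way) allow the full coordinate $\omega_j^{\rr} = (U_j, K_j^0, K_j^{\rr}) \in \bar{\Omega}_{j,\kt}$ to be constructed by iterating $\Phi_{j+1}^{\rr,\Lambda_N}$ while remaining inside the admissibility domain $\{\norm{\omega}_{\bar{\Omega}_{j,\kt}} \leq \epsilon_{nl}\}$ of Theorem~\ref{thm:rg_map_definition}.

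The base case $j=0$ is immediate: $K_0(X)=1_{X=\emptyset}$ has zero norm on connected polymers, and \eqref{quote:rginit} together with Lemma~\ref{lemma:first_reformulation_Z2} ($|z_0^{(q)}| = O(e^{-c_f\beta(1+q)})$ and $s_0 = s_0^c(\beta)=O(e^{-c_f\beta})$) gives $\norm{U_0}_{\Omega_0^U}=O(e^{-c_f\beta})$. For the inductive step, assume $\norm{\omega_j^{\rr}}_{\bar{\Omega}_{j,\kt}}\leq \epsilon$ for a small constant $\epsilon\leq \epsilon_{nl}$ to be chosen. Lemma~\ref{lemma:reblocking_estimate} then gives $\norm{\omega_j^{\rr,\dagger}}_{\bar{\Omega}_{j,\kt,\dagger}} \leq C(\rr)\epsilon$, so Theorem~\ref{thm:rg_map_definition} produces $\omega_{j+1}^{\rr}$. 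Writing $\cK_{j+1}^{\rr} = \cL_{j+1}^{\rr} + \cM_{j+1}^{\rr}$ as in Theorem~\ref{thm:rg_map_definition_estimates} and using $\cM_{j+1}(0,0)=0$ with \eqref{eq:rg_map_main_estimate1}--\eqref{eq:rg_map_main_estimate2}, one obtains
\begin{align}
\norm{\vec{\cK}_{j+1}^{\rr}}_{\bar{\Omega}^K_{j+1,\kt}}
\leq C_1(\rr)\bigl(L^2 \alphaLoc \norm{K_j^0}_{\Omega_j^K} + \alphaLoco \norm{\vec{K}_j^{\rr,\dagger}}_{\bar{\Omega}^K_{j,\kt,\dagger}}\bigr) + C_2(\rr,\beta,A,L) \norm{\omega_j^{\rr,\dagger}}_{\bar{\Omega}_{j,\kt,\dagger}}^2.
\end{align}
Because $\alphaLoco$ contains a term of order $(L^{-2}\log L)^{1/2}$ together with a term exponentially small in $\beta$, enlarging $L_0(\rr)$ and $\beta_0(\rr)$ forces $C_1(\rr) C(\rr)\alphaLoco \leq 1/2$, so the linear part contracts; Theorem~\ref{thm:tuning_s-v2} controls the inhomogeneous term by $O(L^{2-\alpha j}e^{-c_f\beta})$, bounded by $\epsilon/4$ for $\beta$ large; and for $\epsilon$ small the quadratic remainder $C_2(\rr,\beta,A,L)(C(\rr)\epsilon)^2$ is bounded by $\epsilon/4$. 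This closes the induction on the $K$-coordinate. The $U$-coordinate $\cU_{j+1}=\cU_{j+1}(U_j, K_j^0(\cdot;0))$ depends only on bulk data and is inherited from Theorem~\ref{thm:tuning_s-v2}; the free-energy coordinate $\kg_{j+1}^{\rr}$ is covered by \eqref{eq:kg_are_analytic}, giving $\norm{\kg_{j+1}^{\rr}(B;\cdot)}_{\htau,W}\leq C(\rr)A^{-1}\norm{\vec{K}_j^{\rr,\dagger}}_{\bar{\Omega}^K_{j,\kt,\dagger}}$, which is finite under the induction hypothesis. Proposition~\ref{prop:spatial_locality_of_rg_map} then guarantees uniformity in $N$ for $j<N$.

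The principal obstacle is the bookkeeping of $\rr$-dependence: the constants $C_1(\rr)$, $C_2(\rr,\beta,A,L)$, the $C(M,\rho,\rr)$ entering Lemma~\ref{lemma:Et_F_bound}, Lemma~\ref{lemma:reblocking_estimate} and Lemma~\ref{lemma:F'_is_analytic} all grow with $\rr$ (through shifts $\varphi+\rr u_{j+1}$ and norms $\norm{u_j}_{C_j^2}$), and the thresholds $L_0(\rr),\beta_0(\rr)$ must absorb every such factor simultaneously. Moreover, $\epsilon_{nl}$ from Theorem~\ref{thm:rg_map_definition} is only polynomially small in $\beta$, so one must check that a \emph{single} $\epsilon>0$ suffices along the entire flow rather than a $j$-dependent sequence; this ultimately follows from the strict contraction $C_1(\rr)\alphaLoco<1$ at the linear level, which is the reason the argument does not require the observable norm itself to decay with $j$.
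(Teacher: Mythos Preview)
Your proof is correct and shares the paper's core architecture: induct on $j$, anchor to the bulk flow from Theorem~\ref{thm:tuning_s-v2}, and use the linear--nonlinear split of Theorem~\ref{thm:rg_map_definition_estimates} together with the smallness of $\alphaLoco$ for the step. The difference lies in the induction hypothesis. You carry only a uniform bound $\norm{\omega_j^{\rr}}_{\bar{\Omega}_{j,\kt}}\leq\epsilon$, whereas the paper propagates the sharper decaying hypothesis $\norm{K_j^{\rr}}_{\vec{h},T_j}\leq C' e^{-c_f\beta}L^{-\alpha j}$. To preserve this decay rate the paper telescopes
\[
K_{j+1}^{\rr}=K_{j+1}^0+\big(K_{j+1}^{\rr}(\cdot;0)-K_{j+1}^0(\cdot;0)\big)+\big(K_{j+1}^{\rr}(\cdot;\cdot)-K_{j+1}^{\rr}(\cdot;0)\big)
\]
and bounds each difference by the Lipschitz constant $C_1\alphaLoco$ of $\cK_{j+1}^{\rr}$ in the $(K_j^{\rr})^{\dagger}$ variable, absorbing the nonlinear remainder into this since $\norm{\omega_j^{\rr,\dagger}}\ll\alphaLoco$ for large $\beta$. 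Because $L^{\alpha}\alphaLoco=O(1)$ (recall $L^{\alpha}=O(L^{-2}(\alphaLoc)^{-1})$ and $\alphaLoco\leq L^2(\log L)^{-1}\alphaLoc$), the factor $L^{-\alpha j}$ survives the step. This yields Corollary~\ref{cor:K_j_e_j_bounds} as an immediate byproduct, and that $L^{-\alpha j}$ decay is what the rest of Section~\ref{sec:renormalisation-group-computation} (Propositions~\ref{prop:inf_vol_energy}--\ref{prop:one_point_energy}, Lemma~\ref{lemma:two_point_energy}) actually consumes. Your uniform bound suffices for the proposition as stated but does not deliver the decay; you would need a second pass through the induction (now knowing the map is everywhere defined) to recover it.

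One minor slip: the inhomogeneous term is $C_1 L^2\alphaLoc\cdot\norm{K_j^0}=O(e^{-c_f\beta}L^{-\alpha(j+1)})$ using $L^2\alphaLoc=O(L^{-\alpha})$, not $O(L^{2-\alpha j}e^{-c_f\beta})$; your conclusion that it is $\leq\epsilon/4$ for large $\beta$ is unaffected.
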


\begin{proof}
By Theorem~\ref{thm:tuning_s-v2},  existence of the flow $(U_j,E_j,  \vec{K}^{0}_j (\cdot ; 0))_{j \leq N-1}$ is guaranteed when $\beta \geq \beta_0$ is taken sufficiently large and the initial condition is given by \eqref{quote:rginit}. They also satisfy
\begin{align}
	\norm{U_j}_{\Omega_j^U},  \; 
	\norm{K^0_j (\cdot ;  0)}_{\Omega_j^K}
 	\leq C e^{-c_f \beta} L^{-\alpha j}
	\label{eq:U_j_K_j_flow_bound}
	.
\end{align}
for some $C>0$.
So we only have to prove bounds on $K^{\rr}_j (\cdot ; \tau)$ and use them as an input to Theorem~\ref{thm:rg_map_definition_estimates}.
For this,  assume as an induction hypothesis, that
\begin{align}
\norm{K^{\rr}_j }_{\vec{h}, T_j} \leq C' e^{-c_f \beta} L^{-\alpha j},  \qquad j < N-1
\label{eq:K_j_inductive_bound}
\end{align}
for some $C ' \equiv C' (\rr) > 0$. 
When $\beta \geq \beta_0 (\rr)$ is taken sufficiently large, then $K^{\rr}_{j}$ sits in the domain where Theorem~\ref{thm:local_part_of_K_j+1} holds.
Then by \eqref{eq:rg_map_main_estimate1} and \eqref{eq:rg_map_main_estimate2}, 
\begin{align}
	\norm{K_{j+1}^{\rr} (\cdot ; \cdot) - K^{\rr}_{j+1} (\cdot ; 0) }_{\vec{h}, T_{j+1}} 
	& = 
	\big\| \cK_{j+1}^{\rr} (\omega_j^{\rr,\dagger}) - \cK^{\rr}_{j+1} \big( U_j, K_j^0 (\cdot ; 0) ,  (K_j^{\rr})^{\dagger} (\cdot ; 0)  \big) \big\|_{\vec{h}, T_{j+1}} \nnb
	& 
	\leq C_1  \alphaLoco \norm{ (K^{\rr}_j)^{\dagger} (\cdot ; \cdot) - (K^{\rr}_j)^{\dagger} (\cdot ; 0)}_{\vec{h}, T_j, A/2} 
\end{align}
and
\begin{align}
	& \norm{K_{j+1}^{\rr} (\cdot ; 0) - K_{j+1}^0 (\cdot ; 0) }_{\vec{h}, T_{j+1}} \nnb
	& \qquad = 
	\norm{\cK_{j+1}^{\rr} ( U_j, K_j^0 (\cdot ; 0) ,  (K_j^{\rr})^{\dagger} (\cdot ; 0) ) - \cK_{j+1}^{\rr} ( U_j, K_j^0 (\cdot ; 0) ,  (K_j^{0})^{\dagger} (\cdot ; 0) ) }_{h, T_{j+1}} 
	\nnb
	& \qquad  \leq 
	C_1  \alphaLoco \norm{ (K^{\rr}_j)^{\dagger} (\cdot ; 0) - (K^{0}_j)^{\dagger} (\cdot ; 0)}_{h, T_j, A/2} 
\end{align}
whenever $\norm{\omega^{\rr, \dagger}_j}_{\bar{\Omega}_{j, \kt, \dagger}} = \norm{(U_j, \vec{K}^{\rr, \dagger}_j)}_{\bar{\Omega}_{j, \kt, \dagger}}$ is sufficiently small compared to $\alphaLoco$--this can be achieved by taking $\beta$ sufficiently large in \eqref{eq:U_j_K_j_flow_bound}, and \eqref{eq:K_j_inductive_bound} and using Lemma~\ref{lemma:reblocking_estimate}.
Also by Lemma~\ref{lemma:reblocking_estimate},
\begin{align}
	& 
	\norm{ (K^{\rr}_j)^{\dagger} (\cdot ; \cdot) }_{\vec{h}, T_j, A/2}, \;\;  
	\norm{ (K^{0}_j)^{\dagger} (\cdot ; 0)}_{h, T_j,A/2} 
	\leq  \norm{\omega^{\rr}_j}_{\bar{\Omega}_{j, \kt}} 
	.
\end{align}
Hence combining these bounds with \eqref{eq:U_j_K_j_flow_bound},
\begin{align}
	\norm{K^{\rr}_{j+1} (\omega_j) (\cdot ; \cdot) }_{\vec{h}, T_{j+1}}
	\leq \big( C + 2 C_1 (C + C') L^{\alpha}  \alphaLoco \big)  e^{-c_f \beta} L^{-\alpha (j+1)} .
\end{align}
But if we use the fact that $L^{\alpha}$ is chosen in such a way that $L^2 \alphaLoc \leq O( L^{-\alpha})$ (see Theorem~\ref{thm:tuning_s-v2}) and $\alphaLoco \leq L^2 (\log L)^{-1} \alphaLoc$,  
we may take $L$ and $C' / C$ sufficiently large to obtain
\begin{align}
C + 2 C_1 (C + C') L^{\alpha}  \alphaLoco \leq C' 
,
\end{align}
completing the induction.
\end{proof}

In fact, the proof also gives a bound on the RG coordinates. 

\begin{corollary}
\label{cor:K_j_e_j_bounds}
Under the assumptions of Proposition~\ref{prop:stability_of_dynamics},  for each $B\in \cB_j$ and some $C >0$,
\begin{align}
\norm{K^{\rr}_j}_{\vec{h}, T_j},  \;\; \norm{\kg^{\rr}_j (B; \tau)}_{\htau, W} \leq C e^{-c_f \beta} L^{-\alpha j}
.
\end{align}
\end{corollary}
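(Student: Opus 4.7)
\textbf{Proof proposal for Corollary~\ref{cor:K_j_e_j_bounds}.} The first bound, on $\norm{K^{\rr}_j}_{\vec{h}, T_j}$, is essentially already contained in the proof of Proposition~\ref{prop:stability_of_dynamics}. The induction hypothesis \eqref{eq:K_j_inductive_bound} is exactly the claim $\norm{K^{\rr}_j}_{\vec{h}, T_j} \leq C' e^{-c_f\beta} L^{-\alpha j}$, so I would simply extract this from the argument already carried out there (with a possibly relabelled constant). Hence the first bound is immediate and no new ingredient is required.

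For the second bound, on $\norm{\kg^{\rr}_j(B;\tau)}_{\htau, W}$, the plan is to chain together three estimates from earlier in the text. First, Theorem~\ref{thm:tuning_s-v2} gives $\norm{U_j}_{\Omega_j^U} \leq O(e^{-c_f\beta} L^{-\alpha j})$. Second, the reblocking estimate, Lemma~\ref{lemma:reblocking_estimate}, upgrades the bound on $K^{\rr}_j$ to a bound on its dagger version:
\begin{align}
\norm{(K^{\rr}_{j-1})^{\dagger}}_{\vec{h}, T_{j-1}, A/2} \leq \max\bigl\{\norm{U_{j-1}}_{\Omega_{j-1}^U}, \norm{K_{j-1}^{\rr}}_{\vec{h}, T_{j-1}}\bigr\} \leq C e^{-c_f\beta} L^{-\alpha(j-1)}
\end{align}
(here invoking also the $j=0$ base case of initial data from \eqref{quote:rginit}, which sits inside the domain of validity). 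Third, the key linear estimate \eqref{eq:kg_are_analytic} deduced after Definition~\ref{def:evolution_of_g_j} reads
\begin{align}
\norm{\kg_{j}^{\rr}(B, \vec{K}^{\rr,\dagger}_{j-1}; \cdot)}_{\htau, W} \leq C(M,\rho,\rr) A^{-1} \norm{\vec{K}_{j-1}^{\rr,\dagger}}_{\bar{\Omega}^K_{j-1,\kt,\dagger}}.
\end{align}
Combining these, and absorbing the harmless $L^{\alpha}$ factor into the constant, yields the claimed bound $\norm{\kg^{\rr}_j(B;\cdot)}_{\htau, W} \leq C e^{-c_f\beta} L^{-\alpha j}$.

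Since all three ingredients (Theorem~\ref{thm:tuning_s-v2}, Lemma~\ref{lemma:reblocking_estimate}, \eqref{eq:kg_are_analytic}) have been established, there is no serious obstacle: the proof is essentially a one-line concatenation. The only minor care needed is to ensure that the norm $\norm{\vec{K}_{j-1}^{\rr,\dagger}}_{\bar{\Omega}^K_{j-1,\kt,\dagger}}$ is controlled uniformly, which follows from combining the already-proved bound on $K^{0}_{j-1}(\cdot; 0)$ from Theorem~\ref{thm:tuning_s-v2} with the bound on $K^{\rr}_{j-1}$ from the first part of the corollary, together with \eqref{eq:omega_ddag_omega_bound}.
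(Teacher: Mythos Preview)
Your proposal is correct and follows essentially the same approach as the paper: the bound on $K^{\rr}_j$ is extracted directly from the induction hypothesis \eqref{eq:K_j_inductive_bound}, and the bound on $\kg^{\rr}_j$ follows from \eqref{eq:kg_are_analytic}. The paper's own proof is even terser (two sentences), but your more explicit chaining through Lemma~\ref{lemma:reblocking_estimate} to control $\norm{\vec{K}_{j-1}^{\rr,\dagger}}_{\bar{\Omega}^K_{j-1,\kt,\dagger}}$ is exactly what is implicitly needed to invoke \eqref{eq:kg_are_analytic}.
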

\begin{proof}
The bound on $K^{\rr}_j$ is from \eqref{eq:K_j_inductive_bound}. 
Then the bound on $\kg^{\rr}_j$ also follows from \eqref{eq:kg_are_analytic}.
\end{proof}

\subsection{Computation of the free energy}

Based on the RG analysis, we can now show that
\begin{align}
	F_{N,m^2} [\f] (\rr, \tau)
	= \frac{ \Et \big[ Z_0 (\phi^{(m^2)} + \gamma \tau \f + \rr {\textstyle \sum_{j=0}^N u_j } ) \big] }
	{\E \big[ Z_0 (\phi^{(m^2)}) \big]}
\end{align}
exhibits a well-defined limit as $m^2 \downarrow 0$ and $N \rightarrow \infty$ described in terms of the renormalisation group coordinates, 
where $\phi^{(m^2)} \sim \cN(0,  \tilde{C} (s_0^c (\beta)) + t_N (m^2) Q_N)$ (see Proposition~\ref{prop:final_alternative_form_of_mgf}).

\begin{proposition} \label{prop:inf_vol_energy}
Let $\f = \f_1 + T_y \f_2$ be as in \eqref{quote:assumpf}.
Under the assumptions of Proposition~\ref{prop:stability_of_dynamics}, 
if $\cz \in \C$, $\tau \in \D_{\htau}$ and $\rr \in \R$ are such that $\cz = \tau + \rr$, then
\begin{align}
	F_{N,m^2} [\f] (\rr,\tau)
\xrightarrow{N\rightarrow \infty} 
\exp \Big( \sum_{j =1}^{\infty} \sum_{B \in \cB_j ((P_y^j)^*)} \kg^{\rr}_j (B ; \tau) [\f] \Big) & \quad \text{uniformly in} \;\; m^2 \in (0,1]
\label{eq:moment_convergence_unif_in_m^2}
\end{align}
where $\tilde{\f} = \f+ s_0^c (\beta) \gamma \Delta \f$.
\end{proposition}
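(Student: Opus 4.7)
The plan is to plug the polymer representation \eqref{eq:Z_j_generic_expression} into the ratio $F_{N,m^2}[\f](\rr,\tau) = \E^{\phi^{(m^2)}}[Z_N^{\rr}] / \E^{\phi^{(m^2)}}[Z_N^{0}]$ at the terminal scale $j = N$, invoke Corollary~\ref{cor:K_j_e_j_bounds} to show that the ratio of expectations tends to $1$ uniformly in $m^2$, and identify the exponential prefactor with the claimed infinite sum using Proposition~\ref{prop:spatial_locality_of_rg_map}. Since $\Lambda_N$ is a single $N$-scale block, $\cB_N = \{\Lambda_N\}$ and $\cP_N = \{\emptyset, \Lambda_N\}$, so \eqref{eq:Z_j_generic_expression} collapses to
\[
Z_N^{\rr}(\varphi;\tau) = e^{-E_N|\Lambda_N| + g_N^{\rr}(\Lambda_N;\tau)}\big(e^{U_N(\Lambda_N, \varphi + (\rr + \tau) u_N)} + K_N^{\rr}(\Lambda_N, \varphi;\tau)\big),
\]
and likewise for $Z_N^{0}$. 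Because $\cE_{j+1}, \cU_{j+1}$ depend only on $(U_j, K_j^{0}(\cdot;0))$ and $g_j^{0}(\cdot;0) \equiv 0$, the $e^{-E_N|\Lambda_N|}$ factors cancel in the ratio, giving
\[
F_{N,m^2}[\f](\rr,\tau) = e^{g_N^{\rr}(\Lambda_N;\tau)} \cdot \frac{\E^{\phi^{(m^2)}}\big[e^{U_N(\Lambda_N, \phi^{(m^2)} + (\rr+\tau) u_N)} + K_N^{\rr}(\Lambda_N, \phi^{(m^2)};\tau)\big]}{\E^{\phi^{(m^2)}}\big[e^{U_N(\Lambda_N, \phi^{(m^2)})} + K_N^{0}(\Lambda_N, \phi^{(m^2)};0)\big]}.
\]

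My next step is to show that both expectations tend to $1$ uniformly in $m^2 \in (0,1]$. The key observation is that $\phi^{(m^2)} \sim \cN(0, t_N(m^2) Q_N)$ is supported on the constant subspace $\R \cdot \one$, so $\nabla \phi^{(m^2)} \equiv 0$ and $G_N(\Lambda_N, \phi^{(m^2)}) = 1$ pointwise. By Corollary~\ref{cor:K_j_e_j_bounds} and the definition of the weighted norm, $|K_N^{\rr}(\Lambda_N, \phi^{(m^2)};\tau)| \leq A^{-1} \norm{K_N^{\rr}}_{\vec{h}, T_N} \leq C L^{-\alpha N}$, uniformly in $\tau \in \D_{\htau}$, in the constant sample $c$, and in $m^2$. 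For the exponent $U_N$, the gradient contribution $\tfrac{1}{2} s_N \sum_x |(\rr+\tau) \nabla u_N(x)|^2$ is $O(|s_N|) = O(L^{-\alpha N})$ by Lemma~\ref{lemma:extfield_bound} (which gives $\sum_x |\nabla u_N(x)|^2 = O(1)$), while the cosine part is bounded by $\sum_q |z_N^{(q)}| \cosh(q\beta^{1/2}(|\rr| + \htau) \norm{u_N}_{\infty}) = O(L^{-\alpha N})$ once $\beta$ is chosen large enough to absorb the $O(\log L)$ from $\norm{u_N}_{\infty}$ into the $e^{-c_f \beta q}$ factor. Hence $e^{U_N} = 1 + O(L^{-\alpha N})$ pointwise, so both numerator and denominator equal $1 + O(L^{-\alpha N})$.

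For the prefactor, iterating $g_{j+1}^{\rr} = g_j^{\rr} + \kg_{j+1}^{\rr}$ and using the support constraint $\kg_{j+1}^{\rr}(B;\tau) = 0$ unless $B \subset (P_y^j)^*$ (Definition~\ref{def:evolution_of_g_j}) gives
\[
g_N^{\rr}(\Lambda_N;\tau) = \sum_{j=0}^{N-1} \sum_{B \in \cB_j((P_y^j)^*)} \kg_{j+1}^{\rr}(B;\tau),
\]
which matches the right-hand side of the proposition after the natural scale reindexing. Absolute convergence in $\norm{\cdot}_{\htau, W}$ follows from combining Corollary~\ref{cor:K_j_e_j_bounds} (giving $\norm{\kg_{j+1}^{\rr}(B;\cdot)}_{\htau, W} \leq C L^{-\alpha j}$) with the fact that $|(P_y^j)^*|_j = O(1)$ independently of $j$. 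Proposition~\ref{prop:spatial_locality_of_rg_map} guarantees that for $j < N-1$ the summands are independent of $\Lambda_N$, while the residual scale-$N$ contribution, which is the only one possibly $\Lambda_N$-dependent through $\Gamma_N^{\Lambda_N}$, is already of order $L^{-\alpha N}$ and vanishes as $N \to \infty$.

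The chief technical obstacle is the uniformity in $m^2 \in (0,1]$ of the second step: the variance $t_N(m^2)$ of the zero-mode $\phi^{(m^2)}$ diverges as $m^2 \downarrow 0$, so it is crucial that the bounds on $U_N$ and $K_N^{\rr}$ at the constant field $c\one$ be independent of $c$. This is exactly what the collapse $G_N(\Lambda_N, c\one) = 1$ and the $L^{\infty}$-control of $\cos$ deliver. The only nontrivial check is that, for complex $\cz = \rr + \tau$, the product $e^{-c_f \beta q + q \beta^{1/2} \htau \cdot O(\log L)}$ remains summable in $q$; this holds for $\beta$ sufficiently large relative to $\log L$, which is consistent with the choice $\htau = O((\log L)^{-3/2})$ and the standing assumption $\beta \geq \beta_0$.
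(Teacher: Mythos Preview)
Your proof is correct and follows essentially the same line as the paper's: collapse the scale-$N$ polymer expansion to two terms, cancel the $e^{-E_N|\Lambda_N|}$ factors, and use that $\phi^{(m^2)}$ is a constant field so $G_N(\Lambda_N,\phi^{(m^2)})=1$ and the residual $U_N,K_N^{\rr}$ contributions are $O(L^{-\alpha N})$ uniformly in $m^2$.

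The only noteworthy difference is how the complex shift $(\rr+\tau)u_N$ in $e^{U_N}$ is handled. The paper invokes the reblocking identity of Proposition~\ref{prop:reblocking_Z_with_Psi} at the terminal step to rewrite
\[
e^{U_N(\Lambda_N,\varphip+(\rr+\tau)u_N)}+K_N^{\rr}(\Lambda_N,\varphip;\tau)
= e^{U_N(\Lambda_N,\varphip)}+(K_N^{\rr})^{\dagger}(\Lambda_N,\varphip;\tau),
\]
and then bounds $(K_N^{\rr})^{\dagger}$ via Lemma~\ref{lemma:reblocking_estimate}; this reduces the $U_N$ estimate to the purely real case $|U_N(\Lambda_N,c\one)|\le\|U_N\|_{\Omega_N^U}$. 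You instead estimate $U_N(\Lambda_N,c\one+(\rr+\tau)u_N)$ directly, controlling $\cos$ of a complex argument via the $e^{-c_f\beta q}$ decay of $z_N^{(q)}$ against $\cosh(q\beta^{1/2}\htau\|u_N\|_\infty)$. Since $\htau\|u_N\|_\infty=O((\log L)^{-1/2})$, the summability condition is $c_f\beta> C\beta^{1/2}(\log L)^{-1/2}$, which is actually \emph{easier} for large $L$ (not harder, as your last paragraph suggests), and certainly holds for $\beta\ge\beta_0$. Your route avoids one appeal to the reblocking machinery at the cost of this explicit cosine estimate; the paper's route is marginally cleaner but uses one more ingredient. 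Both are fine.
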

\begin{proof}

Recall 
$\tilde{C}(s_0^c) = \sum_{j=1}^{N-1}\Gamma_j + \Gamma_N^{\Lambda_N}$ and
\begin{align}
	Z^{\rr}_N (\varphip  ; \tau) = \Et \big[ Z_0 (\varphip + \zeta \gamma \tau \f + \rr {\textstyle \sum_{j=0}^N u_j }  ) \big], \qquad \zeta \sim \cN \big( 0, \textstyle \tilde{C} (s_0^c) \big)
	.
	\label{eq:Z_N_recall}
\end{align}
With initial condition $(\Phi_j^{\rr})_{j \leq N}$ is given by \eqref{quote:rginit}, 
Proposition~\ref{prop:stability_of_dynamics} guarantees that we may iterate the RG map $\Phi_j^{\rr}$ any number of times,
and hence successive application of Theorem~\ref{thm:rg_map_definition} gives
\begin{align}
Z^{\rr}_N (\varphip ; \tau) &= e^{- E_N^{\Lambda_N} |\Lambda_N|} \exp \Big( \sum_{j=1}^N \sum_{B\in \cB_j ((P_y^j)^*)} \kg_{j}^{\rr,\Lambda_N} [\f] (B ; \tau)  \Big) \nnb
& \qquad\qquad   \times
 \big( e^{U_N^{\Lambda_N} (\Lambda_N, \varphip + (\tau + \rr ) u_N) } + K^{\rr,\Lambda_N}_N (\Lambda_N, \varphip  ; \tau) \big)
	\label{eq:Z_N_expression}
\end{align}
with $u_N = \Gamma_{N}^{\Lambda_N} \f$, and satisfy estimates
\begin{align}
	\norm{\kg_N^{\rr,\Lambda_N} [\f]}_{\htau, W}, \;\;
	\norm{U_N^{\Lambda_N}}_{\Omega_N^U}, \;\;
	\norm{K_N^{\rr, \Lambda_N}}_{\vec{h}, T_N}
	\leq C e^{- c_f \beta} L^{-\alpha N}
	.
	\label{eq:prop_inf_vol_energy_inter1}
\end{align}
But also by Proposition~\ref{prop:reblocking_Z_with_Psi} (note that the sum is just over $X \in \cP_N = \{ \emptyset , \Lambda_N \}$)
\begin{align}
e^{U_N^{\Lambda_N} (\Lambda_N, \varphip + (\tau + \rr) u_N) } + K^{\rr, \Lambda_N}_N (\Lambda_N, \varphip  ; \tau) = e^{U_N^{\Lambda_N} (\Lambda_N, \varphip) } + (K^{\rr,\Lambda_N}_N)^{\dagger} (\Lambda_N, \varphip ; \tau)
\label{eq:reblocking_final_step}
\end{align}
and by Lemma~\ref{lemma:reblocking_estimate}, 
\begin{align}
\big\| (K^{\rr,\Lambda_N}_N)^{\dagger} (\Lambda_N, \varphip ; \cdot ) \big\|_{\vec{h}, T_N (\Lambda_N, \varphip)} \leq C' e^{-\frac{1}{4} \gamma \beta} L^{-\alpha N} G_N (\Lambda_N, \varphip) .
\end{align}
Finally,  we commence the integral
\begin{align}
\E \big[ Z_0 (\phi^{(m^2)} + \rr u_N) \big] 
= \E^{\varphip} \big[  Z^{\rr}_N (\varphip + \rr u_N ; \tau) \big], \qquad  \varphip \sim \cN (0, t_N Q_N) .
\end{align}
Observe that,  if we take $Y\sim \cN (0, t_N L^{-N})$, then $Y \one$ has the same distribution as $\varphip$ (where $\one$ is the constant field taking value 1, also see \eqref{quote:Gamma_four}) so $G_N (\Lambda_N, \varphip) \equiv \text{(constant)}$ almost surely.  Also $|\nabla \varphip|^2 = 0$ almost surely, so $|U_N^{\Lambda_N} (\Lambda_N, \varphip)| \leq \norm{U_N^{\Lambda_N}}_{\Omega_N^{U}}$.  Therefore
\begin{align}
	\E^{\varphip} \big[ Z^{\rr}_N (\varphip  ; \tau) \big]  
	& = e^{-E_N^{\Lambda_N} |\Lambda_N|} \exp \Big( \sum_{j=1}^{N} \sum_{B\in \cB_j ((P_y^j)^*)} \kg^{\rr, \Lambda_N}_{j} [\f] (B ; \tau)  \Big) 
	\big( 1+  O(L^{-\alpha N} ) \big)
	\label{eq:prop_inf_vol_energy2}
	.
\end{align}
Since $g_j^{\rr, \Lambda_N}$ is independent of $\Lambda_N$ for $N > \max\{ j,j_{0y} +2 \}$,
the convergence \eqref{eq:moment_convergence_unif_in_m^2} holds by \eqref{eq:prop_inf_vol_energy_inter1} and Corollary~\ref{cor:K_j_e_j_bounds}.
The convergence is uniform in $m^2$ because the convergence rate only depends on $O(L^{-\alpha N} )$ and bounds on $\kg^{\rr, \Lambda_N}_j [\f] (B; \tau)$'s, which are independent of $m^2 >0$.
\end{proof}

In this proposition, the finite volume function was computed only to justify that the infinite volume limit exists. However, finite volume case is actually the case of our interest, so we record one by-product in the next corollary.
In what follows,  we use the notation
\begin{align}
\kg^{\rr}_{j} [\f] (\Z^2 ; \tau) := \sum_{B \in \cB_j ((P_y^j)^*)} \kg^{\rr}_j [\f]  (B ; \tau) 
\end{align}
although $\kg^{\rr}_j (B ; \tau) [\f]$ is actually defined on $\Lambda_N$.

\begin{corollary} \label{cor:finite_volume_energy}
Fix $R >0$ and let $\cz = \rr + \tau$ with $\tau \in \D_{\htau}$ and $\rr \in [-R,R]$. 
Then for sufficiently large $L$ (depending on $R$) and $N$,
and under the same assumptions as in Proposition~\ref{prop:inf_vol_energy} but $\f$ satisfying \eqref{quote:assumpfa},
\begin{align}
	\lim_{m^2 \downarrow 0} F_{N,m^2} [\f]  (\rr,\tau) 
= e^{ \sum_{j =1}^{\infty}  \kg^{\rr}_j [\f] (\Z^2 ; \tau)  } 
\big(1 + \tilde{\psi}^{\rr}_N [\f] (\tau,y)  \big)
\label{eq:finite_volume_energy}
\end{align}
where $\tilde{\psi}^{\rr}_N [\f] (\tau,y)$ is an analytic function of $\tau \in \D_{\htau}$  and $\norm{ \tilde{\psi}^{\rr}_N [\f] (\cdot,y) }_{\htau, W} = O( L^{-\alpha N})$ with the rate uniform on $\rr \in [-R, R]$. 
\end{corollary}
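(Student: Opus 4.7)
The plan is to closely follow the computation in the proof of Proposition~\ref{prop:inf_vol_energy}, but instead of taking $N\to\infty$ directly, extract finite-$N$ information from the numerator and denominator of $F_{N,m^2}[\f](\rr,\tau)$ in Proposition~\ref{prop:final_alternative_form_of_mgf} separately, and then form their ratio. First I would enlarge the $\beta_0, L_0$ of Proposition~\ref{prop:stability_of_dynamics} to allow uniform $\rr\in[-R,R]$; this is a matter of inspecting the $\rr$-dependence of the constants in Lemma~\ref{lemma:extfield_bound}, Lemma~\ref{lemma:reblocking_estimate}, and Theorem~\ref{thm:rg_map_definition_estimates}, all of which enter only through a bound on $|\rr|$. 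With these choices, Corollary~\ref{cor:K_j_e_j_bounds} yields
\begin{align*}
\norm{U_N^{\Lambda_N}}_{\Omega_N^U},\; \norm{K_N^{\rr,\Lambda_N}}_{\vec{h},T_N},\; \norm{\kg_N^{\rr,\Lambda_N}[\f](B;\cdot)}_{\htau,W} \leq C_R\, e^{-c_f\beta}\, L^{-\alpha N},
\end{align*}
uniformly in $\rr\in[-R,R]$ and $B\in\cB_N$.

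The main step is a repetition of the computation \eqref{eq:Z_N_expression}--\eqref{eq:prop_inf_vol_energy2} applied to both the numerator and the denominator of $F_{N,m^2}[\f](\rr,\tau)$. Since $\phi^{(m^2)}\sim\cN(0, t_N Q_N)$ is a constant field, $|\nabla\phi^{(m^2)}|^2\equiv 0$ and $G_N(\Lambda_N,\phi^{(m^2)})$ is an almost-sure constant, so integrating the representation of $Z_N^{\rr}$ (after the reblocking~\eqref{eq:reblocking_final_step}) against $\phi^{(m^2)}$ gives
\begin{align*}
\E^{\phi^{(m^2)}}\!\big[Z_N^{\rr}(\phi^{(m^2)};\tau)\big] = e^{-E_N^{\Lambda_N}|\Lambda_N|}\exp\Big(\sum_{j=1}^N \kg_j^{\rr,\Lambda_N}[\f](\Z^2;\tau)\Big)\big(1+R_N^{\rr}[\f](\tau,y)\big),
\end{align*}
with $\norm{R_N^{\rr}[\f](\cdot,y)}_{\htau,W}=O_R(L^{-\alpha N})$. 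Setting $\rr=\tau=0$ yields the denominator with $\kg_j^{0}[\f](B;0)\equiv 0$, a direct consequence of Definition~\ref{def:evolution_of_g_j}, since $(K_j^{0})^{\dagger}(\cdot;0)=K_j^0(\cdot;0)$ makes the summand in \eqref{eq:e_j+1_definition} vanish. Taking the ratio, the factor $e^{-E_N^{\Lambda_N}|\Lambda_N|}$ cancels; using Proposition~\ref{prop:spatial_locality_of_rg_map} to identify $\kg_j^{\rr,\Lambda_N}[\f]$ with the infinite-volume $\kg_j^{\rr}[\f]$ for $j<N$, and absorbing the $O_R(L^{-\alpha N})$ tail $\sum_{j\geq N}\kg_j^{\rr}[\f](\Z^2;\tau)$ (bounded by geometric summation of Corollary~\ref{cor:K_j_e_j_bounds}) into the error, one obtains \eqref{eq:finite_volume_energy}.

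The assumption \eqref{quote:assumpfa} enters exactly where it did in Proposition~\ref{prop:inf_vol_energy}: the zero-sum condition gives $t_N Q_N\tilde{\f}=0$, which is what allows passing the $m^2\downarrow 0$ limit inside the integrals defining both sides of the ratio, with convergence uniform in $\rr\in[-R,R]$ and in $m^2\in(0,1]$. Analyticity of $\tilde\psi_N^{\rr}[\f](\cdot,y)$ in $\tau\in\D_{\htau}$ in the $W^+$-sense is inherited from the analyticity built into the $\Omega_{j,\kt}^K$ spaces and preserved by each RG step (Lemma~\ref{lemma:analyticity_preserved_under_E}), combined with the fact that the denominator $1+R_N^0[\f](0,y)$ is a nonzero real constant whenever $N$ is large enough. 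The main obstacle I anticipate is purely bookkeeping: tracking the $\rr$-dependence through every cited constant so as to guarantee uniformity on $[-R,R]$, and converting the pointwise-in-$\tau$ $(\vec{h},T_j)$-norm estimates of the RG flow into the $W^+$-norm estimates required by the statement.
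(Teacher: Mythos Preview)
Your proposal is correct and follows essentially the same approach as the paper: define $1+\tilde\psi_N^{\rr}$ as the ratio of the finite-$N$ expression to $e^{\sum_{j\ge 1}\kg_j^{\rr}[\f](\Z^2;\tau)}$, use the $N$-scale representation \eqref{eq:Z_N_expression}--\eqref{eq:reblocking_final_step} together with the fact that $\phi^{(m^2)}$ is a constant field to reduce to $1+O(L^{-\alpha N})$, and invoke \eqref{quote:assumpfa} via Proposition~\ref{prop:final_alternative_form_of_mgf} to justify the $m^2\downarrow 0$ limit. Your observation that $\kg_j^0[\f](B;0)\equiv 0$ (so the denominator contributes only $e^{-E_N^{\Lambda_N}|\Lambda_N|}(1+O(L^{-\alpha N}))$) and your treatment of the $\sum_{j>N}$ tail are exactly what the paper does, and the $W^+$-norm control you flag as a bookkeeping issue is handled in the paper by the one-line estimate $\norm{\E[\partial_\tau^n (K_N^{\rr,\Lambda_N})^{\dagger}(\Lambda_N,Y\one;\tau)]}_{h,T_N(\Lambda_N,0)}\le C\norm{\partial_\tau^n (K_N^{\rr,\Lambda_N})^{\dagger}(\Lambda_N)}_{h,T_N(\Lambda_N)}$.
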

\begin{proof}
By Proposition~\ref{prop:final_alternative_form_of_mgf}, $F_{N,m^2}$ admits a limit as $m^2 \downarrow 0$ when $\f$ satisfies \eqref{quote:assumpfa} and the limit is an analytic function.  
It follows from \eqref{eq:Z_N_recall},  uniformity of \eqref{eq:prop_inf_vol_energy2} in $m^2$ and definition of $F_{N,m^2}$ that
\begin{align}
	 1 + \tilde{\psi}_N^{\rr} [\f ] (\tau,y)  := e^{- \sum_{j =1}^{\infty}  \kg^{\rr}_j (\Z^2 ; \tau) }  \lim_{m^2 \downarrow 0} F_{N,m^2} [\f]
\end{align}
satisfies
\begin{align}
	1 + \tilde{\psi}_N^{\rr} [\f] (\tau,y) 
	= e^{-\sum_{j > N}  \kg^{\rr}_j (\Z^2 ; \tau)} \lim_{m^2 \downarrow 0} \E \Big[  e^{U_N^{\Lambda_N} (\Lambda_N,  Y \one) }  + (K_{N}^{\rr, \Lambda_N} )^{\dagger} (\Lambda_N,  Y \one ; \tau) \Big] = 1 + O(L^{-\alpha N})
\end{align}
with $Y \sim \cN(0, t_N (m^2) L^{-N})$. 
It also satisfies $\norm{\tilde{\psi}_N^{\rr} [\f] (\cdot, y)}_{\htau, W} = O(L^{-\alpha N})$ as
\begin{align}
\norm{ \E[ \partial^n_{\tau} (K_{N}^{\rr, \Lambda_N} )^{\dagger} (\Lambda_N,  Y \one ; \tau) ] }_{h, T_j (\Lambda_N, 0)} \leq C \norm{ \partial^n_{\tau} (K_{N}^{\rr, \Lambda_N} )^{\dagger} (\Lambda_N) }_{h, T_j (\Lambda_N)} 
\end{align}
for any $n\geq 0$.
Thus we have \eqref{eq:finite_volume_energy}.

The final remark follows from the observation that the estimates of the previous sections would still stay uniformly true even if we were looking at the regime $|\rr| \leq R$. 
\end{proof}

\subsection{One and two-point energies}

Due to Corollary~\ref{cor:finite_volume_energy}, 
the proof of Proposition~\ref{prop:overview_of_the_proof} is complete once we show that the sum $\sum_{j} g_j^{\rr} [T_y \f_2 ] (\Z^2 ; \tau)$ does not depend on $y$.
In this section, we will see that this is the case, using translation invariance of $F_{N,m^2}[\f]$.

\subsubsection{One-point energy.}

For any $j\geq 1$, we have
\begin{align}
	\sum_{B\in \cB_j ( (P_y^j)^* )} \kg^{\rr}_j  [T_y \f_2] (B ; \tau)  
		= \sum_{B\in \cB_j ( (Q_y^j)^* )} \kg^{\rr}_j [T_y \f_2] (B ; \tau)  
	.
\end{align}
Hence, by \eqref{eq:moment_convergence_unif_in_m^2}, this implies
\begin{align}
	\exp \Big(  \sum_{j =1}^{\infty} 
	\sum_{B\in \cB_j ( (Q_y^j)^* )} \kg^{\rr}_j [T_y \f_2]  (B ; \tau)  \Big) 
	= \lim_{N\rightarrow \infty}   F_{N, m^2} [T_y \f_2]
	\quad \text{uniformly in} \; m^2 > 0 .
	\label{eq:prop_one_pt_energy_1}
\end{align}
Also, if $\f = \f_2$,  the same principles give
\begin{align}
	\exp \Big( \sum_{j\geq 1} \sum_{B\in \cB_j( (B_0^j)^* )} \kg^{\rr}_{j} [ \f_2] (B ; \tau) \Big) 
	= \lim_{N\rightarrow \infty} F_{N, m^2} [\f_2]
	\quad \text{uniformly in} \; m^2 > 0
	.
	\label{eq:prop_one_pt_energy_2}
\end{align}
But since $F_{N,m^2} [T_y \f_2]$ is independent of the choice of $y$, 
we see that the expression on the left side of \eqref{eq:prop_one_pt_energy_1},
\begin{align}
	\exp \Big( \sum_{j =1}^{\infty} \sum_{B\in \cB_j ((Q_y^j)^* )} \kg^{\rr}_j  [T_y \f_2]  (B ; \tau) \Big) 
		= \exp \Big( \sum_{j\geq 1}  \kg^{\rr}_j [T_y \f_2] (\Z^2 ; \tau)  \Big)
\end{align}
should also not depend on $y$,
i.e.,
\begin{align}
\sum_{j \geq 1}  \kg^{\rr}_j [T_y \f_2] (\Z^2 ; \tau)  = \sum_{j\geq 1}  \sum_{B\in \cB_j( (B_0^j)^* )} \kg^{\rr}_{j} [ \f_2]  (B ; \tau) . 
\end{align}
In summary, we have the following. 

\begin{proposition} \label{prop:one_point_energy}
Let $\tau \in \D_{\htau}$ and $\beta > 0$ be sufficiently large and
$\f_1$ be as in \eqref{quote:assumpf}. 
Define the infinite volume one-point energy of $\f_1$ by
\begin{align}
	\tilde{g}^{\rr}_{\infty} [\f_1 ] (\tau)  = \sum_{j\geq 1} \kg^{\rr}_{j} [ \f_1] (\Z^2 ; \tau)
	\label{eq:e_infty_definition}
	.
\end{align}
Then $\tilde{g}^{\rr}_{\infty} [\f_1 ] (\tau)  = \sum_{j \geq 1}  \kg^{\rr}_j [T_y \f_1] (\Z^2 ; \tau) $ for any $y$.
This series converges absolutely with rate
\begin{align}
	\Big\| \tilde{g}^{\rr}_{\infty} [\f_1] (\tau) - \sum_{j = 1}^{m} \kg^{\rr}_{j} [T_y \f_1] (\Z^2 ; \tau) \Big\|_{\htau, W} 
	\leq C(M, \rho,  \rr)  L^{-\alpha m}
\end{align}
with rate uniform on $\rr \in [-R,R]$ and $y\in \Z^2$, 
and $g^{\rr}_{\infty} [\f_1] (\tau)  \in W^+ (\D_{\htau})$.
\end{proposition}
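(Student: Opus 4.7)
The plan is to combine the absolute-convergence estimate, supplied by Corollary~\ref{cor:K_j_e_j_bounds}, with the $y$-independence of $F_{N,m^2}[\cdot]$ on the torus, which is essentially sketched in the paragraph preceding the statement. The real content is to identify the two infinite sums themselves (not merely their exponentials), which requires a short analytic-continuation argument.

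First I would establish the summability and analyticity. Corollary~\ref{cor:K_j_e_j_bounds} gives $\|\kg_j^{\rr}(B;\cdot)\|_{\htau,W} \leq C e^{-c_f \beta} L^{-\alpha j}$ uniformly in $B \in \cB_j$ and $\rr \in [-R,R]$. For the one-cluster observable $T_y \f_1$ (treated as the $\f_2 \equiv 0$ instance of \eqref{quote:assumpf}), the scale-$j$ blocks on which $\kg_j^{\rr}[T_y\f_1](B;\tau)$ is nonzero lie in a fixed-diameter neighbourhood of $y+\supp(\f_1)$, and their number is bounded by a constant $c(M,\rho)$ independent of $j$ and $y$. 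Hence
\[
  \big\| \kg_j^{\rr}[T_y \f_1](\Z^2;\cdot) \big\|_{\htau, W}
  \leq C'(M,\rho,\rr)\, L^{-\alpha j}.
\]
Summing the geometric tail yields absolute convergence of the series in the Banach space $W^+(\D_{\htau})$, the remainder bound $\|\sum_{j>m}\kg_j^{\rr}[T_y\f_1](\Z^2;\cdot)\|_{\htau,W} \leq C(M,\rho,\rr)\, L^{-\alpha m}$, and membership in $W^+(\D_{\htau})$ for the limit.

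Next I would establish the translation invariance. For every $N$ large enough that both $\f_1$ and $T_y \f_1$ embed in the fundamental domain $\Lambda_N'$ via $\iota_N$, torus translation invariance of $\langle e^{\beta^{-1/2}\cz(\cdot,\sigma)}\rangle_{\beta,m^2,\Lambda_N}$, combined with translation invariance of the quadratic prefactors $\tfrac{1}{2}\cz^2(\tilde{\f},\tilde{C}(s)\tilde{\f}) + \tfrac{\gamma}{2}\cz^2(\f,\tilde{\f})$ appearing in \eqref{eq:final_alternative_form_of_mgf}, gives $F_{N,m^2}[T_y\f_1] = F_{N,m^2}[\f_1]$. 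Applying Proposition~\ref{prop:inf_vol_energy} to both sides and letting $N \to \infty$ (uniformly in $m^2$) produces
\[
  \exp\Big( \sum_{j \geq 1} \kg_j^{\rr}[T_y \f_1](\Z^2;\tau) \Big)
  = \exp\Big( \sum_{j \geq 1} \kg_j^{\rr}[\f_1](\Z^2;\tau) \Big).
\]
Both exponents are analytic functions of $\tau$ on the simply connected disk $\D_{\htau}$; inspection of \eqref{eq:e_j+1_definition} shows that each summand vanishes at $\rr=\tau=0$, so both sides of the identity vanish there. Analyticity on a connected set removes the $2\pi i \Z$ branch ambiguity, and the equality of the exponents themselves follows.

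The main technical point I anticipate is the book-keeping around the embedding $\iota_N$: to invoke translation invariance on the torus one must arrange that $\f_1$ and $T_y \f_1$ both lie inside $\Lambda_N'$ simultaneously, which is automatic once $L^N/3 > |y|_\infty + \rho$. By Proposition~\ref{prop:spatial_locality_of_rg_map}, the partial sums $\sum_{j \leq m} \kg_j^{\rr}[T_y \f_1](\Z^2;\tau)$ are then $N$-independent for $N$ exceeding $m \vee j_{0y}$, so the two limits $N\to\infty$ and $m\to\infty$ commute and the whole scheme closes; this is a matter of checking rather than a genuine obstruction.
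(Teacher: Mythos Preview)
Your approach is the paper's: absolute convergence and the tail estimate come from Corollary~\ref{cor:K_j_e_j_bounds} (the number of $j$-blocks in $(Q_y^j)^*$ being bounded uniformly in $j$), and $y$-independence comes from torus translation invariance of $F_{N,m^2}$ fed through Proposition~\ref{prop:inf_vol_energy}. The paper actually skips the $2\pi i\Z$ branch ambiguity entirely, so your attention to it is an improvement.

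That said, your resolution of the ambiguity is not quite complete. You check that both sums vanish at $\rr=\tau=0$, but the proposition is stated for a \emph{fixed} $\rr$; for $\rr\neq 0$ you cannot simply evaluate at $\rr=0$ without first establishing continuity of the sums in $\rr$ (or analyticity in $\cz=\rr+\tau$ on $S_{R,\htau}$), which you have not argued. Two cheap repairs are available. First, the $W$-norm bound you already have gives $\sup_{\tau\in\D_{\htau}}\bigl|\sum_{j\geq 1}\kg_j^{\rr}[\,\cdot\,](\Z^2;\tau)\bigr|\leq C e^{-c_f\beta}<\pi$ for $\beta$ large, so the difference of the two sums lies in $2\pi i\Z$ with modulus below $2\pi$ and must vanish. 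Second, for real $\tau$ the tilted expectations and polymer activities are real-valued (by induction on the RG map, which preserves realness for real inputs), hence both sums are real on $(-\htau,\htau)$, forcing the integer to be zero there and then everywhere by analyticity in $\tau$.
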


\begin{proof}
The first part follows from the discussion above. 
Also the second part is a consequence of the estimate of Corollary~\ref{cor:K_j_e_j_bounds} and recalling that any uniform limit of analytic functions is also an analytic function. 
\end{proof}

\subsubsection{Two-point energy.}

The two-point energy is defined to be the free energy in scales after $j_{0y}$. But for doing so, we will have to make sure that the free energy before the scale $j_{0y}$ can be expressed as sum of the one-point energies.

\begin{lemma}
\label{lemma:before_coalsc_scale}
Let $j \leq j_{0y} - 1$ and $\f = \f_1 + T_y \f_2$ as in \eqref{quote:assumpf}.
Then
\begin{alignat}{2}
	\kg_j^{\rr}  [\f_1 + T_y \f_2] (B) 
		&= \kg_j^{\rr} [\f_1](B)	& \quad & \text { for } B \in \cB_{j-1} ( (B_0^{j-1})^{*} ) \text{ and} \\
	\kg_j^{\rr} [\f_1 + T_y \f_2](B) 
		& = \kg_j^{\rr} [T_y \f_2](B) & \quad &  \text{ for } B\in \cB_{j-1} ( (Q_y^{j-1})^{*} )
	.
\end{alignat}
\end{lemma}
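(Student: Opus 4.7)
The plan is to deduce the lemma from a stronger locality property of the RG coordinates themselves, proved by induction on the scale. Specifically, I would establish that for every $0 \leq j < j_{0y}$ and every polymer $X \in \Conn_j$ with $X \subset (B_0^j)^{**}$,
\begin{align}
	K_j^\rr[\f_1 + T_y \f_2](X, \varphi ; \tau) = K_j^\rr[\f_1](X, \varphi ; \tau)
\end{align}
as functions of $(\varphi, \tau)$, together with the symmetric statement obtained by swapping $\f_1$ with $T_y \f_2$ and $B_0^j$ with $Q_y^j$. The base case $j = 0$ is trivial, since by \eqref{quote:rginit} the initial activity $K_0^\rr(X) = 1_{X = \emptyset}$ carries no dependence on $\f$ at all.

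For the inductive step, I would work through the expression \eqref{eq:expression_for_K_j+1_external_field} defining $\mathcal{K}^\rr_{j+1}(X)$ term by term. The dependence on $\f_2$ enters through (i) the shifted potential $e^{\mathcal{U}_{j+1}(X\backslash T,\, \varphi + (\rr+\tau) u_{j+1})}$ through $u_{j+1} = u_{j+1,1} + T_y u_{j+1,2}$; (ii) the tilted expectation $\E_{(\rr+\tau)}$, which by Lemma~\ref{lemma:Et_by_complex_shift} rewrites as an ordinary expectation with the integration variable shifted by $\tau u_{j+1}$; (iii) the scale-$j$ activities $(K_j^\rr)^\dagger$, $\bar K_j^\rr$, $\mathcal{E} K_j^\rr$, $J_j^\rr$, which depend on $\f$ only through $u_j$ and recursively through $K_j^\rr$; and (iv) the free energy $\kg_{j+1}^\rr$, itself built from $(K_j^\rr)^\dagger$ via Definition~\ref{def:evolution_of_g_j}. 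When $X \subset (B_0^{j+1})^{**}$ and $j+1 < j_{0y}$, the coalescence-scale condition $(B_0^{j+1})^{***} \cap Q_y^{j+1} = \emptyset$ forces $T_y u_{j+1,2}$ to vanish on the small-set neighborhoods of every polymer $X_0, X_1, Z, B^*_{Z''}$ appearing in the sum, so (i), (ii), (iv) depend only on $\f_1$; and the induction hypothesis handles the recursive occurrences of $K_j^\rr$ in (iii). A short geometric check (using $\operatorname{dist}_\infty(0, (B_0^k)^c) \geq L^k/3$ and the fact that $(\cdot)^*$ enlarges a polymer by only one layer of current-scale blocks) shows that for $L$ sufficiently large, $(B_0^j)^{**} \subset (B_0^{j+1})^{**}$, so the required enlargement condition propagates cleanly across the change of scale.

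Given the locality property, the lemma is immediate from Definition~\ref{def:evolution_of_g_j}. For $B \in \cB_{j-1}((B_0^{j-1})^*)$, every small polymer $Y \in \cS_{j-1}$ with $Y \supset B$ satisfies $Y \subset B^{**} \subset (B_0^{j-1})^{***}$, which is disjoint from $Q_y^{j-1}$ because $j - 1 < j_{0y}$. Hence the locality property at scale $j-1$ gives $(K_{j-1}^\rr)^\dagger[\f_1 + T_y \f_2](Y, \cdot\,;\tau) = (K_{j-1}^\rr)^\dagger[\f_1](Y, \cdot\,;\tau)$, while the tilted expectation in Definition~\ref{def:evolution_of_g_j} (again via Lemma~\ref{lemma:Et_by_complex_shift}) acts as a shift by $(\rr+\tau) u_j|_{Y^*}$, which reduces to $(\rr+\tau) u_{j,1}|_{Y^*}$ since $Y^*$ still avoids $\supp(T_y u_{j,2}) \subset Q_y^j$ for $L$ large. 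Finally, the normalization factor $|Y \cap (P_y^{j-1})^*|_{j-1}$ collapses to $|Y \cap (B_0^{j-1})^*|_{j-1}$, matching the $\f_1$-only expression. Plugging everything into the formula for $\kg_j^\rr[\f_1 + T_y \f_2](B;\tau)$ yields $\kg_j^\rr[\f_1](B;\tau)$. The second identity in the lemma follows by the symmetric argument, exchanging the roles of $\f_1$ and $T_y \f_2$.

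The main obstacle I anticipate is purely geometric bookkeeping: one must carefully track how the support of $u_j$, the small-set-neighborhood operation $(\cdot)^*$, and the polymer sums in the RG map interact across a change of scale, in order to confirm that the slack $(B_0^{j-1})^{***} \cap Q_y^{j-1} = \emptyset$ inherited from $j - 1 < j_{0y}$ really does absorb one inductive step uniformly in $j$. This is a routine but slightly tedious verification relying on the explicit form of $B_0^j$ and on taking $L$ sufficiently large; everything else is mechanical use of the RG framework already developed in the paper.
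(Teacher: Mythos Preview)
Your approach is essentially the same as the paper's: prove a locality statement for $K_j^{\rr}$ by induction on the scale, then read off the lemma from Definition~\ref{def:evolution_of_g_j}. The difference lies in how you formulate the induction hypothesis, and that difference introduces a genuine gap.

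You posit the hypothesis ``$K_j^{\rr}[\f_1+T_y\f_2](X)=K_j^{\rr}[\f_1](X)$ for all $X\in\Conn_j$ with $X\subset (B_0^j)^{**}$''. In the inductive step you need the identity at scale $j$ for every connected $j$-polymer that can occur inside $\bar K_j^{\rr}(X_1')$ when $X_1'\subset X'\subset (B_0^{j+1})^{**}$. Such polymers range over all of $(B_0^{j+1})^{**}$, which at scale $j$ is an $L$-fold enlargement of $(B_0^j)^{**}$; in particular there are large connected $Y$ meeting $(B_0^j)^*$ but not contained in $(B_0^j)^{**}$, and your hypothesis says nothing about these. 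The built-in identity $K_j^{\rr}=K_j^0$ away from $(P_y^j)^*$ only handles $Y$ that also miss $(B_0^j)^*$, so a gap remains. Your remark that $(B_0^j)^{**}\subset (B_0^{j+1})^{**}$ is the wrong inclusion for closing the loop.

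The paper avoids this by phrasing the hypothesis as a \emph{disjointness} condition from a fixed neighbourhood of the second cluster, namely $X\cap (Q_y^{j_{0y}-1})^*=\emptyset$, which is scale-independent and therefore trivially inherited by every sub-polymer arising in the RG map. Reformulating your hypothesis in this complementary way (``$X$ away from cluster~2'' rather than ``$X$ near cluster~1'') makes the induction close with no further changes to your argument; the final deduction of $\kg_j^{\rr}$ is then as you wrote.
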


\begin{proof}

These follow from the `local dependence' of the RG flow as in the proof of Proposition~\ref{prop:spatial_locality_of_rg_map}. 
For the first statement,
we assume, as an induction hypothesis, that
\begin{align}
	K_{j'}^{\rr} [\f_1 + T_y \f_2] (X)  = K_{j'}^{\rr}  [\f_1]  (X)
	\quad 
	\text{for any} \; X\in \cP_{j'},  \;  X \cap (Q_y^{j_{0y} - 1} )^{*} = \emptyset ,  
	\; j' < j_{0y}-1. 
\end{align}
We have $X^* \cap \operatorname{supp} (T_y u_{j,2}) = \emptyset$ for any such $X$, 
so this would imply $K_{j'}^{\rr, \dagger} [\f_1 + T_y \f_2]  (X) = K_{j'}^{\rr, \dagger} [\f_1] (X)$.
Also, by definition of $j_{0y}$, saying $j' < j_{0y} -1$ would mean
$(B_0^{j' + 1})^{**} \cap (Q_y^{j_{0y}-1})^* = \emptyset$,
so $\kg_{j' +1} [\f_1 + T_y \f_2] (B) = \kg_{j' +1} [\f_1 ](B)$  for any $B\in \cB_{j'} ( (B_0^j)^{*})$ by Definition~\ref{def:evolution_of_g_j}. 
If we also had $j' < j_{0y} -1$,  $Y \in \cP_{j' +1}$,  and $Y \cap (Q_y^{j'+1})^* = \emptyset$, since $K_{j' +1}^{\rr} (Y)$ only depends on $\f_1 + T_y \f_2$ via $u_{j+1} |_{Y^*}$, $(\kg_{j' + 1}^{\rr} (B))_{B\in (B_0^j)^*}$ and $(K_{j'}^{\rr, \dagger} (X') : X'\in \cP_{j'} ( X^* ), X\in \cP_{j'} (Y) )$,  the induction proceeds.  The conclusion was also obtained during the induction process.

The second statement follows from an identical argument. 
\end{proof}

As we have claimed, this lemma implies that the free energy before scale $j_{0y}$ is just the sum of two one-point energies.

\begin{corollary} \label{cor:before_coalsc_scale}
If $j < j_{0y}$, then
\begin{align}
	\kg_j^{\rr}  [\f_1 +  T_y \f_2] (\Z^2 ; \tau) = \kg_j^{\rr}  [\f_1] (\Z^2 ; \tau) + \kg_j^{\rr}  [T_{y} \f_2] (\Z^2 ; \tau)
\end{align}
\end{corollary}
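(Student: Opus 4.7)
The plan is a short bookkeeping argument: split the defining sum for $\kg_j^{\rr}[\f_1 + T_y\f_2](\Z^2;\tau)$ along a disjoint decomposition available when $j < j_{0y}$, apply Lemma~\ref{lemma:before_coalsc_scale} termwise, and identify the two halves with the two single-cluster energies. First I would establish that $(B_0^{j-1})^{*}$ and $(Q_y^{j-1})^{*}$ are disjoint for $j < j_{0y}$. The definition $j_{0y} = \min\{j\geq 0 : (B_0^j)^{***} \cap Q_y^j \neq \emptyset\}$ applied at scale $j-1 < j_{0y}$ gives $(B_0^{j-1})^{***} \cap Q_y^{j-1} = \emptyset$; since one application of the small-set neighborhood extends a polymer by at most the diameter of a small polymer, the implication $X^{*} \cap Y^{*} \neq \emptyset \Rightarrow X^{***} \cap Y \neq \emptyset$ holds, and the desired disjointness follows. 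Because $P_y^{j-1} = B_0^{j-1} \cup Q_y^{j-1}$, any small polymer meeting $P_y^{j-1}$ meets exactly one of the two pieces, so $(P_y^{j-1})^{*} = (B_0^{j-1})^{*} \sqcup (Q_y^{j-1})^{*}$.

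Second, I would decompose $\kg_j^{\rr}[\f_1+T_y\f_2](\Z^2;\tau)$ as the sum of $\kg_j^{\rr}[\f_1+T_y\f_2](B;\tau)$ over $B \in \cB_{j-1}((B_0^{j-1})^{*})$ plus the analogous sum over $B \in \cB_{j-1}((Q_y^{j-1})^{*})$, and then invoke Lemma~\ref{lemma:before_coalsc_scale} to replace the integrand by $\kg_j^{\rr}[\f_1](B;\tau)$ in the first sum and by $\kg_j^{\rr}[T_y\f_2](B;\tau)$ in the second. To finish, I need the first restricted sum to equal $\kg_j^{\rr}[\f_1](\Z^2;\tau)$ and the second to equal $\kg_j^{\rr}[T_y\f_2](\Z^2;\tau)$. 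Running the RG with only $\f_1$, the external field $u_{j',1}$ is supported in $B_0^{j'}$ at every scale $j' < j$, so the locality argument underlying Proposition~\ref{prop:spatial_locality_of_rg_map} shows that the difference $(K_{j-1}^{\rr})^{\dagger}[\f_1] - K_{j-1}^{0}$ inside $\kg_j^{\rr}[\f_1]$ vanishes on polymers disjoint from $(B_0^{j-1})^{*}$; translating by $y$ gives the analogue for $T_y\f_2$ with $(Q_y^{j-1})^{*}$ in place of $(B_0^{j-1})^{*}$.

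The main obstacle, such as it is, is this last locality identification: the single-cluster energies $\kg_j^{\rr}[\f_1]$ and $\kg_j^{\rr}[T_y\f_2]$ are \emph{a priori} defined through their own RG flows, not as restrictions of the two-cluster quantity, so one must verify that they are genuinely concentrated on the respective small-set neighborhoods. This reuses the induction of Proposition~\ref{prop:spatial_locality_of_rg_map}, so no new estimate is required; the corollary is thus essentially a rewriting exercise granted Lemma~\ref{lemma:before_coalsc_scale}.
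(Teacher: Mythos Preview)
Your proposal is correct and follows the same approach the paper intends: the paper states the corollary as an immediate consequence of Lemma~\ref{lemma:before_coalsc_scale} without giving a separate proof, and your argument spells out precisely the bookkeeping that makes this immediate---the disjoint decomposition $(P_y^{j-1})^* = (B_0^{j-1})^* \sqcup (Q_y^{j-1})^*$ for $j<j_{0y}$, the termwise application of the lemma, and the observation (already used just above Proposition~\ref{prop:one_point_energy}) that the single-cluster energies $\kg_j^{\rr}[\f_1]$ and $\kg_j^{\rr}[T_y\f_2]$ are supported on $(B_0^{j-1})^*$ and $(Q_y^{j-1})^*$ respectively. Your disjointness step is in fact slightly sharper than needed (you get $X^{**}\cap Y \neq \emptyset$ from $X^*\cap Y^* \neq \emptyset$, one star fewer than required).
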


Thus we can see that the two-point energy, defined in the following lemma, has diminishing contribution in the limit $\norm{y}_2 \rightarrow \infty$.

\begin{lemma} 
\label{lemma:two_point_energy}
Let $\tau \in \D_{\htau}$ and $\beta >0$ be sufficiently large. 
Define
\begin{align}
\tilde{g}^{(2),\rr}_{\infty}  [\f_1,  T_y \f_2] (\tau) = \sum_{j=1}^{\infty} \kg^{\rr}_j [\f_1 + T_y \f_2] (\Z^2 ; \tau)  -  \tilde{g}^{\rr}_{\infty} [\f_1] (\tau)  - \tilde{g}^{\rr}_{\infty} [\f_2] (\tau) 
.
\end{align}
Then $\tilde{g}^{\rr}_{\infty}  [\f_1,  T_y \f_2] \in W^+ (\D_{\htau})$ and satisfies the bound
\begin{align}
	\big\| \tilde{g}^{(2),\rr}_{\infty} [\f_1,  T_y \f_2]  \big\|_{\htau, W} 
	= O\big( \norm{y}_2^{-\alpha} \big)
	.
\end{align}
\end{lemma}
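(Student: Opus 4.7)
\medskip

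\noindent
\textbf{Proof plan for Lemma~\ref{lemma:two_point_energy}.}

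The plan is to rewrite $\tilde{g}^{(2),\rr}_{\infty}$ as a sum of differences that vanish below the coalescence scale and are each of size $O(L^{-\alpha j})$ above it. By Proposition~\ref{prop:one_point_energy}, the infinite-volume one-point energy is translation invariant, so
\begin{align*}
\tilde{g}^{\rr}_{\infty}[\f_2](\tau) = \sum_{j\geq 1} \kg^{\rr}_j[T_y \f_2](\Z^2;\tau),
\end{align*}
and similarly for $\f_1$ with the identity $y=0$. Substituting into the definition of $\tilde{g}^{(2),\rr}_{\infty}$ yields the telescoped form
\begin{align*}
\tilde{g}^{(2),\rr}_{\infty}[\f_1, T_y\f_2](\tau)
= \sum_{j\geq 1} \Bigl( \kg^{\rr}_j[\f_1 + T_y\f_2] - \kg^{\rr}_j[\f_1] - \kg^{\rr}_j[T_y \f_2]\Bigr)(\Z^2;\tau).
\end{align*}

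Next I would apply Corollary~\ref{cor:before_coalsc_scale}, which shows that for $j < j_{0y}$ the summand vanishes identically, so the sum is effectively over $j \geq j_{0y}$. For each such $j$, I would bound the three terms individually using Corollary~\ref{cor:K_j_e_j_bounds}: each $\kg^{\rr}_j(B;\tau)$ satisfies $\|\kg^{\rr}_j(B;\cdot)\|_{\htau,W} \leq C e^{-c_f \beta} L^{-\alpha j}$, and the number of blocks $B \in \cB_j((P_y^j)^*)$ or $\cB_j((B_0^j)^*)$ appearing in $\kg^{\rr}_j(\Z^2;\tau)$ is $O(1)$ uniformly in $j$ (since $P_y^j$ is the union of five scale-$j$ blocks and its small-set neighbourhood still contains a bounded number of scale-$j$ blocks). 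Therefore
\begin{align*}
\bigl\| \kg^{\rr}_j[\f_1 + T_y \f_2](\Z^2;\cdot) - \kg^{\rr}_j[\f_1](\Z^2;\cdot) - \kg^{\rr}_j[T_y\f_2](\Z^2;\cdot) \bigr\|_{\htau, W} \leq C(M,\rho,\rr)\, L^{-\alpha j}.
\end{align*}

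Summing the geometric tail gives
\begin{align*}
\bigl\| \tilde{g}^{(2),\rr}_{\infty}[\f_1, T_y\f_2] \bigr\|_{\htau,W}
\leq \sum_{j \geq j_{0y}} C L^{-\alpha j} \leq C' L^{-\alpha j_{0y}} \leq C'' \norm{y}_2^{-\alpha},
\end{align*}
where the last step uses \eqref{eq:charac_of_coalescence}. The absolute convergence of the series in the Banach space $W^+(\D_{\htau})$ then gives $\tilde{g}^{(2),\rr}_{\infty}[\f_1, T_y\f_2] \in W^+(\D_{\htau})$, since each term lies in $W^+(\D_{\htau})$ by Proposition~\ref{prop:stability_of_dynamics} and the remark following Definition~\ref{def:evolution_of_g_j}.

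There is no substantive obstacle: this is essentially a bookkeeping argument that recombines the translation-invariance identity from Proposition~\ref{prop:one_point_energy}, the exact splitting below the coalescence scale from Corollary~\ref{cor:before_coalsc_scale}, and the per-scale decay from Corollary~\ref{cor:K_j_e_j_bounds}. The only mildly delicate point is to verify that the constants implicit in ``$O(1)$ blocks'' are uniform in $j$ and $y$ (which is clear from the geometric definition of $P_y^j$ and its small-set neighbourhood), and that combining this with the $L^{-\alpha j}$ decay produces a single tail bound independent of $\rr \in [-R,R]$, which follows because all estimates in Corollary~\ref{cor:K_j_e_j_bounds} are uniform in $\rr$ on bounded intervals.
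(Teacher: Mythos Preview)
Your proposal is correct and follows essentially the same argument as the paper: use Proposition~\ref{prop:one_point_energy} to replace $\tilde{g}^{\rr}_{\infty}[\f_2]$ by $\sum_j \kg^{\rr}_j[T_y\f_2](\Z^2;\tau)$, apply Corollary~\ref{cor:before_coalsc_scale} so that the summand vanishes for $j<j_{0y}$, bound the remaining terms by Corollary~\ref{cor:K_j_e_j_bounds}, and sum the geometric tail using \eqref{eq:charac_of_coalescence}. Your additional remarks on the uniform $O(1)$ block count and membership in $W^+(\D_{\htau})$ are accurate elaborations of points the paper leaves implicit.
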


\begin{proof}
By Corollary~\ref{cor:before_coalsc_scale} and Proposition~\ref{prop:one_point_energy} we have
\begin{align}
	\tilde{g}^{\rr}_{\infty} [\f_1,  T_y \f_2]  (\tau) 
=  \sum_{j=j_{0y} }^{\infty} \Big( \kg^{\rr}_j [\f_1 + T_y \f_2] (\Z^2 ; \tau)  - \kg^{\rr}_j [\f_1 ] (\Z^2 ; \tau)  - \kg^{\rr}_j [T_y \f_2] (\Z^2 ; \tau)  \Big) .
\end{align}
Hence by Corollary~\ref{cor:K_j_e_j_bounds},  the norm on $\tilde{g}^{\rr}_{\infty} (\tau) [\f_1,  T_y \f_2]$ is bounded by $O(L^{-\alpha j_{0y}})$. But since $L^{-j_{0y}} = O(\norm{y}_2^{-1})$, we have the desired conclusion.
\end{proof}

Thus we are equipped with all components required to prove Proposition~\ref{prop:overview_of_the_proof}.

\begin{proof}[Proof of Proposition~\ref{prop:overview_of_the_proof}]

Our aim is to rewrite the limit $\lim_{m^2\downarrow 0}F_{N,m^2} [\f]$.
By Corollary~\ref{cor:finite_volume_energy}, Proposition~\ref{prop:one_point_energy} and Lemma~\ref{lemma:two_point_energy}, 
\begin{align}
	\lim_{m^2 \downarrow 0} F_{N,m^2} [\f] = e^{\tilde{g}_{\infty}^{\rr} [\f_1] + \tilde{g}_{\infty}^{\rr} [\f_2] +  \sum_{j\geq j_{0y}} \tilde{\kg}_j^{(2),\rr} [\f_1, T_y \f_2] ) } ( 1 + \tilde{\psi}^{\rr}_N [\f] (\cdot,  y) )
\end{align}
where
\begin{align}
	& \tilde{\kg}_j^{(2),\rr} [\f_1 , T_y \f_2] = \kg^{\rr}_j [\f_1 + T_y \f_2] (\Z^2)  -  {\kg}_j^{\rr}[\f_1] (\Z^2)  - \kg_j^{\rr} [\f_2] (\Z^2)
\end{align}
and the required properties also follow from the same references.
\end{proof}

\ifx\newpageonoff\undefined
{\red command undefined!!}
\else
  \if\newpageonoff1
  \newpage
  \fi
\fi

\appendix

\section{Action of $\Et$ on polymer activities}
\label{sec:norm_inequalities}

\subsection{Subdecomposition of the the field}

We import some lemmas from \cite{dgauss1} and \cite{dgauss2},
that use the idea of subdecomposing the field and the regulator. 
When $\zeta \sim \cN (0, \Gamma_{j+1})$, it is observed in \cite[Section~4.3]{dgauss1} that,
for $N' \in \N$ and $I_{N'} = \{ s = 0, \frac{1}{N'}, \cdots, 1-\frac{1}{N'}  \}$ such that $\ell := L^{1/N'} \in \N$,
there exist covariances $(\Gamma_{j+s, j+s+(N')^{-1}})_{s \in I_{N'}}$ such that
\begin{align}
\zeta = \sum_{s\in I_{N'}} \xi_{j+s, j+s'} ,  \qquad \Gamma_{j+1} = \sum_{s \in I_{N'}} \Gamma_{j+s, j+s'}
\label{eq:covariance_subdecomposition}
\end{align} 
(where $s' = s + (N')^{-1}$) with each $\xi_{j+s, j+s'} \sim \cN(0, \Gamma_{j+s, j+s'})$ independent
and $(\Gamma_{j+s, j+s+(N')^{-1}})_{s \in I_{N'}}$'s satisfy properties similar to \eqref{quote:Gamma_one}--\eqref{quote:Gamma_four}.
All subsequent properties we need are summarised in the following lemmas, 
where we use
\begin{align}
g_{j+s} (X, \varphi) = \exp\Big( \kappa c_4 \sum_{a=0}^2 W_{j+s} (X, \nabla_j^a \varphi ) \Big),  \quad X \in \cP_{j+s}
\end{align}
where $W_{j+s}$ (resp. $\cP_{j+s}$) is a subscale analogue of \eqref{eq:W^r_j_definition} (resp. $\cP_j$)
and $c_4 >0$ is chosen in the next lemma. 
Given $Y\in \cP_j$, denote $Y_{s}$ the smallest polymer in $\cP_{j+s}$ that contains $Y$,  
and $\norm{\cdot}_{C^2_{j+s} (X)}$, $\norm{\cdot}_{L^2_{j+s} (X)}$ and $\norm{\cdot}_{L^2_{j+s} (\partial X)}$ are as in \cite[Section~5.3]{dgauss1}.

\begin{lemma} \label{lemma:extfield_bound_fine}
For $\alpha = 1,2$, let $\f_{\alpha}$ be as in \eqref{quote:assumpf} and let
\begin{align}
	u_{j+s,j+\bar{s}, \alpha} = \Gamma_{j+s, j+s'} \tilde{\f}_{\alpha}, \quad \bar{s} > s, \; s \in I_{N'}, \; \bar{s} \in (N')^{-1} + I_{N'}.
\end{align}
Then $u_{j+s, j+\bar{s} , \alpha}$ is supported on $B_{j+\bar{s}}^{0}$, the unique $j+\bar{s}$-block containing 0 and for each $n\geq 0$ and
\begin{align}
	& \norm{\nabla^n u_{j+s,j+\bar{s}, \alpha}}_{L^{\infty}} \leq 
	\begin{array}{ll}
	\begin{cases}
	C_0 M \rho^2  \log L & \text{if} \;\; n =0 \\
	C_n M \rho^2 L^{-n (j+s) }  & \text{if} \;\; n \geq 1
	\end{cases}
	\end{array}
\end{align}
Also, $u_{j+1, \alpha}$ defined by Definition~\ref{def:extfield_def} admits decomposition
\begin{align}
	u_{j+1, \alpha} = \sum_{s\in I_{N'}} u_{j+s, j+s + (N')^{-1}, \alpha} .
\end{align}
\end{lemma}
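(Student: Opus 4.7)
The proof proposal mirrors the strategy of Lemma~\ref{lemma:extfield_bound}, with the fractional-scale covariances $\Gamma_{j+s, j+s'}$ playing the role that $\Gamma_j$ played there, together with one additional identity coming from the covariance subdecomposition \eqref{eq:covariance_subdecomposition}.

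First I would dispatch the decomposition of $u_{j+1,\alpha}$, which is purely algebraic. Recalling from Definition~\ref{def:extfield_def} that $u_{j+1,\alpha} = \Gamma_{j+1} \tilde{\f}_\alpha$ and from \eqref{eq:covariance_subdecomposition} that $\Gamma_{j+1} = \sum_{s\in I_{N'}} \Gamma_{j+s, j+s'}$, linearity gives
\begin{align}
u_{j+1,\alpha}
= \Big( \sum_{s\in I_{N'}} \Gamma_{j+s, j+s'} \Big) \tilde{\f}_\alpha
= \sum_{s\in I_{N'}} u_{j+s, j+s+(N')^{-1}, \alpha}
\end{align}
immediately.

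Next I would establish the support property. The paper states that the subscale covariances $\Gamma_{j+s, j+s'}$ satisfy analogues of \eqref{quote:Gamma_one}--\eqref{quote:Gamma_four}, so in particular $\Gamma_{j+s, j+s'}(x,y) = 0$ whenever $\dist_\infty(x,y) \geq c L^{j+\bar{s}}$ for an appropriate constant (here $\bar{s} = s + (N')^{-1}$ is the larger endpoint of the subscale window). Since $\supp(\tilde{\f}_\alpha) \subset \supp(\f_\alpha) \cup \{ x : \dist_\infty(x, \supp(\f_\alpha)) \leq 2\}$ has diameter at most $\rho + 4$, the support of $u_{j+s, j+\bar{s}, \alpha}$ is contained in an $O(L^{j+\bar{s}})$-neighbourhood of $0$. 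Exactly as in the second half of the proof of Lemma~\ref{lemma:extfield_bound}, choosing $L$ large enough relative to $\rho$ ensures this neighbourhood fits inside $B^0_{j+\bar{s}}$, the unique $(j+\bar{s})$-block containing~$0$, by virtue of the quantitative distance bound $\dist_\infty(0, (B^0_{j+\bar{s}})^c) \geq L^{j+\bar{s}}/3$.

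Finally, the $L^\infty$ bounds on $\nabla^n u_{j+s, j+\bar{s}, \alpha}$ follow by the same convolution estimate used in Lemma~\ref{lemma:extfield_bound}: for any multi-index $\vec{\mu}$ with $|\vec{\mu}| = n$,
\begin{align}
\big| \nabla^{\vec{\mu}} u_{j+s, j+\bar{s}, \alpha}(x) \big|
\leq \big( \norm{\f_\alpha}_{L^\infty} + |s|\gamma \norm{\Delta \f_\alpha}_{L^\infty} \big) \cdot | \supp(\tilde{\f}_\alpha) | \cdot \sup_{y,z} | \nabla^{\vec{\mu}} \Gamma_{j+s, j+s'}(y,z) |,
\end{align}
so the conclusion reduces to the scale-$L^{j+s}$ analogue of \eqref{eq:Gamma_j_decay} for the subscale covariances, which the paper already guarantees. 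I do not expect any serious obstacle here; the only minor points are (i) making sure the constants absorb the factor $\norm{\Delta \f_\alpha}_{L^\infty} \leq 4 \norm{\f_\alpha}_{L^\infty}$ without harming the dependence on $M$ and $\rho$, and (ii) ensuring the $L$-largeness threshold is uniform in $s \in I_{N'}$, which is automatic since $I_{N'}$ is finite and the subscale constants inherit bounds from the scale-$j$ ones.
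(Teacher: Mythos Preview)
Your proposal is correct and matches the paper's approach: the paper's proof consists of the single sentence ``The proof is identical to Lemma~\ref{lemma:extfield_bound},'' and what you have written is precisely that argument with the subscale covariances $\Gamma_{j+s,j+s'}$ in place of $\Gamma_j$, together with the trivial linearity step for the decomposition identity.
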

\begin{proof}
The proof is identical to Lemma~\ref{lemma:extfield_bound}.
\end{proof}

\begin{lemma}[\!\!{\cite[Lemma~3.1]{dgauss2}}] \label{lemma:G_change_of_scale_external_field}

Assume $0\leq j < N$, $L=\ell^{N'}$.
For $X\in \cP_{j+s}$ and $\varphi$, $\xi_o$, $\xi_B \in \C^{\Lambda_N}$ for each $B\in \cB_{j+s} (X)$, define
\begin{align}
& \log G_{j+s} (X, \varphi, \xi_o, (\xi)_{B\in \cB_{j+s} (X)})  \\
& = \kappa \norm{\nabla_{j+s} (\varphi + \xi_o)}_{L_{j+s}^2 (X)}^2 
+ \kappa \,  c_2 \norm{\nabla_{j+s} (\varphi + \xi_o)}_{L_{j+s}^2 (\partial X)}^2 
+ \kappa  \sum_{B\in \cB_{j+s} (X)} \norm{\nabla_{j+s}^2 (\varphi + \xi_B)  }^2_{L^{\infty} (B^*)} . \nonumber
\end{align}
For any choice of $c_2$ small enough compared to $1$, 
there exist $c_4 \equiv c_4(c_2)$ and an integer $\ell_0=\ell_0(c_1,c_2)$, 
such that for all $\ell \geq \ell_0$, $N' \geq 1$, $s \in \{0,\frac{1}{N'},\dots,1-\frac1{N'} \}$, $s' = s+ (N')^{-1}$ and $\kappa > 0$,
for $X\in \cP_{j+s}^c$,
\begin{equation}\label{eq:G_change_of_scale_variant_form}
G_{j+s} (X, \varphi , \xi_o, (\xi_B)_{B\in \cB_{j+s} (X)}) 
\leq 
\max_{\mathfrak{a} \in \{o\} \cup \cB_{j +s} (X)} g_{j+s}( X_{s'}, \xi_{\mathfrak{a}} ) G_{j+s'} ( X_{s'}, \varphi)
\end{equation}
where $X_{s'}$ is the smallest $j+s'$-polymer containing $X$.
\end{lemma}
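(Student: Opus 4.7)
My plan is to prove the inequality by separating the $\varphi$-dependence from the $\xi$-dependence using an elementary Young-type splitting, then applying scale-change estimates to each component. Concretely, I would fix a small parameter $\epsilon > 0$ (to be chosen later) and apply the pointwise inequality $(a+b)^2 \le (1+\epsilon)a^2 + (1+\epsilon^{-1})b^2$ to each of the three terms defining $\log G_{j+s}(X,\varphi,\xi_o,(\xi_B))$. This produces a decomposition
\begin{align*}
\log G_{j+s}(X,\varphi,\xi_o,(\xi_B)) \le (1+\epsilon)\Phi_\varphi + (1+\epsilon^{-1})\Phi_\xi,
\end{align*}
where $\Phi_\varphi$ depends only on $\varphi$ at scale $j+s$ (summed over $X$) and $\Phi_\xi$ is a sum over $B\in\cB_{j+s}(X)$ of terms depending only on $\xi_o$ or on individual $\xi_B$'s.

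The next step is to upgrade $(1+\epsilon)\Phi_\varphi$ to $\log G_{j+s'}(X_{s'},\varphi)$. For the bulk term $\kappa\|\nabla_{j+s}(\varphi+\xi_o)\|_{L^2_{j+s}(X)}^2$, the definitions of $\|\cdot\|_{L^2_{j+s}}$ and $\nabla_{j+s}$ combine with the inclusion $X \subseteq X_{s'}$ to give directly the monotonicity $\|\nabla_{j+s}\varphi\|_{L^2_{j+s}(X)}^2 \le \|\nabla_{j+s'}\varphi\|_{L^2_{j+s'}(X_{s'})}^2$ up to a constant that tends to $1$ as $\ell \to \infty$. For the second-derivative term, the sum $\sum_{B\in\cB_{j+s}(X)} \|\nabla^2_{j+s}\varphi\|^2_{L^\infty(B^*)}$ is bounded by the analogous sum over $B' \in \cB_{j+s'}(X_{s'})$ after rescaling, again with a loss that vanishes for $\ell$ large. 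The delicate piece is the boundary norm $\|\nabla_{j+s}\varphi\|_{L^2_{j+s}(\partial X)}^2$: the boundary $\partial X$ at scale $j+s$ generally has no direct analogue at scale $j+s'$. Here I would use a discrete Sobolev/trace-type inequality to bound $\|\nabla_{j+s}\varphi\|_{L^2_{j+s}(\partial X)}^2$ by $C\ell^{-1}\|\nabla^2_{j+s'}\varphi\|_{L^2_{j+s'}(X_{s'})}^2 + \|\nabla_{j+s'}\varphi\|_{L^2_{j+s'}(\partial X_{s'})}^2$, so that the ``interior'' portion of $\partial X$ gets absorbed into the bulk at the coarser scale. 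Choosing $c_2$ small and then $\ell_0$ large, the product $(1+\epsilon) \cdot c_2 C\ell^{-1}$ is dominated by (say) half of the bulk coefficient of $G_{j+s'}$, so the total $\varphi$-contribution fits inside $\log G_{j+s'}(X_{s'},\varphi)$.

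For the remaining term $(1+\epsilon^{-1})\Phi_\xi$, I would use that each $\xi_B$ appears only through the block $B$ and $\xi_o$ appears only through the first two terms. Thus, writing $\mathfrak{a}\in\{o\}\cup\cB_{j+s}(X)$ and maximising over $\mathfrak{a}$ bounds $\Phi_\xi$ by $\sum_{a=0}^2 W_{j+s}(X_{s'},\nabla_j^a\xi_{\mathfrak{a}})^2$ (with an appropriate choice of $\mathfrak{a}$ on each block), after again using that $X\subseteq X_{s'}$ enlarges the set where the $L^2$ and $L^\infty$ norms are taken. Fixing $c_4 = c_4(c_2)$ large enough to absorb $(1+\epsilon^{-1})$ then gives $(1+\epsilon^{-1})\Phi_\xi \le \log g_{j+s}(X_{s'},\xi_{\mathfrak{a}})$ for the maximising $\mathfrak{a}$, completing the proof.

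The main obstacle is the boundary term: unlike the bulk and $L^\infty$ norms, which are monotone under enlarging the domain, $\partial X$ and $\partial X_{s'}$ are not in any inclusion relation, and without the trace-type absorption the change-of-scale factor for the boundary norm would be $O(1)$ rather than $O(\ell^{-1})$, which would not combine favourably with $(1+\epsilon)$ no matter how small $\epsilon$ is. Thus the entire argument hinges on the constant $c_2$ being chosen small \emph{before} $\ell_0$, and on using the gain $\ell^{-1}$ from trading boundary regularity for bulk regularity at the coarser scale.
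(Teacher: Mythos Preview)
The paper does not prove this lemma here; it is quoted from \cite[Lemma~3.1]{dgauss2}, so there is no in-paper argument to compare against. Your strategy---decouple $\varphi$ from the $\xi$'s via $(a+b)^2\le (1+\epsilon)a^2+(1+\epsilon^{-1})b^2$, then use domain-monotonicity for the bulk $L^2$ term, the $\ell^{-2}$ rescaling gain on the $W$-term, and absorb the interior portion of $\partial X$ into the coarser bulk by a discrete trace/Sobolev estimate---is the standard route and is essentially what the cited proof in \cite{dgauss2} (and its predecessor in \cite{dgauss1}) carries out. Your identification of the boundary term as the bottleneck, forcing $c_2$ to be fixed small before $\ell_0$ is chosen large, matches that mechanism exactly.

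One step is imprecise. You say that maximising over $\mathfrak{a}$ controls $\Phi_\xi$ ``with an appropriate choice of $\mathfrak{a}$ on each block'', but the right-hand side of \eqref{eq:G_change_of_scale_variant_form} carries a \emph{single} $\max_{\mathfrak{a}}$, not a block-by-block selection. If the $(\xi_B)_B$ were genuinely unrelated---for instance each $\xi_B$ a bump supported near $B^*$ and vanishing elsewhere---then $\sum_{B}\|\nabla^2_{j+s}\xi_B\|_{L^\infty(B^*)}^2$ grows like $|X|_{j+s}$, whereas for any fixed $\mathfrak{a}$ the quantity $W_{j+s}(X_{s'},\nabla^2\xi_{\mathfrak{a}})^2$ only picks up the $O(1)$ blocks meeting the support of $\xi_{\mathfrak{a}}$; no fixed $c_4$ can close that gap. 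In every application in this paper (see the display immediately following the lemma) all the $\xi_B$ coincide with $\xi_o$, namely the subscale increment $\xi_{j+s,j+s'}+\rr\,u_{j+s,j+s'}$, so the maximum is vacuous and your argument goes through verbatim. Either restrict the statement you prove to that case, or check the precise hypothesis in \cite{dgauss2} before claiming the full per-block version.
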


By the lemmas, one may decompose
\begin{align}
	\frac{ G_j (X, \varphip + \zeta + \rr u_{j+1} )  }{G_{j+1} (\bar{X}, \varphip )}
	& \leq
	\prod_{s \in I_{N'} } g_{j+s} \big( X_{s'} ,   \xi_{j+s, j + s'}  + \rr u_{j+s, j+s'} \big) \nnb
	& \leq e^{C(M, \rho, \rr, \ell) N' / \log L  }  \prod_{s \in I_{N'} } g_{j+s} \big( X_{s'} ,   \xi_{j+s, j + s'}  \big)^2
	,
\end{align}
where $u_{j+s,j+s'} = u_{j+s,j+s',1} + T_y u_{j+s,j+s', 2}$ and the second inequality follows from Lemma~\ref{lemma:extfield_bound_fine}.  But since $N' = \frac{\log L}{\log \ell}$, 
\begin{align}
	\frac{ G_j (X, \varphip + \zeta + \rr u_{j+1} )  }{G_{j+1} (\bar{X}, \varphip )}
	& \leq C(M, \rho,\rr, \ell) \prod_{s \in I_{N'} } g_{j+s} \big( X_{s'} ,   \xi_{j+s, j + s'}  \big)^2
	.
	\label{eq:G^r_j_G_j+1_ratio}
\end{align}
The expectation of each $(g_{j+s})^2$ is controlled by the next lemma.

\begin{lemma}[\!\!{\cite[Lemma~5.12]{dgauss1}}] \label{lemma:g_j+s_bound_by_quadratic}
The following hold for $g_{j+s}$.

\begin{itemize}
\item There exists $C>0$ such that
for any $X\in \cP_{j+s}$ and $\zeta \in \R^{\Lambda_N}$, 
\begin{equation}  \label{eq:g_j+s_bound}
  g_{j+s} (X, \zeta) \leq
  \exp \Big( \frac{1}{2} Q_{j+s} (X,\zeta) \Big) 
  := \exp\bigg( C c_4 \kappa \sum_{a=0}^4 
  \sum_{\vec{\mu} \in \hat{e}^a}   \norm{\nabla_{j+s}^{\vec{\mu}} \zeta}^2_{L^2_{j+s} (X^*)} \bigg)
  .
\end{equation}
\item For any $c_4>0$, any integer $\ell$, there is $c_\kappa = c_\kappa(c_4,\ell)>0$ such
  that if $\kappa (L)=c_\kappa (\log L)^{-1}$ then
\begin{equation} \label{eq:g_j+s_expectation}
  \E [ e^{2 Q_{j+s}(X,\zeta)} ] \leq 2^{(N')^{-1}|X|_{j+s}}, \qquad \zeta \sim \cN (0, \Gamma_{j+s, j+s+(N')^{-1}} + m^2)
\end{equation}
for $m^2 \in [0, \delta]$ and sufficiently small $\delta >0$. 
\end{itemize}
\end{lemma}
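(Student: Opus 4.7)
My plan is to prove the two parts separately, treating the first as a discrete Sobolev estimate and the second as a standard Gaussian moment generating function calculation.

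For the pointwise bound \eqref{eq:g_j+s_bound}, the idea is that $g_{j+s}$ is built from $L^\infty$ norms of derivatives up to order $2$, while $Q_{j+s}$ measures squared $L^2$ norms of derivatives up to order $4$; the two are linked by a lattice Sobolev inequality. Specifically, on each $(j+s)$-scale block $B$, by rescaling to a unit block one obtains an estimate of the form
\begin{equation*}
	\norm{\nabla^{\vec\mu}_{j+s} f}_{L^\infty(B^*)}^2
	\leq C \sum_{k=0}^{2} \sum_{\vec\nu \in \hat e^k}  \norm{\nabla_{j+s}^{\vec\mu + \vec\nu} f}_{L^2_{j+s}(B^{**})}^2
\end{equation*}
from the fact that on a two-dimensional domain the embedding $W^{2,2} \hookrightarrow L^\infty$ is continuous, with a discrete analogue that is straightforward on boxes of comparable side length. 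Applying this with $f = \varphi$ and $|\vec\mu| = a$ for each $a \in \{0,1,2\}$, and summing over blocks $B \in \cB_{j+s}(X)$, yields
\begin{equation*}
	\kappa c_4 \sum_{a=0}^2 W_{j+s}(X, \nabla_j^a \varphi)
	\leq C c_4 \kappa \sum_{a=0}^{4} \sum_{\vec\mu \in \hat e^a} \norm{\nabla^{\vec\mu}_{j+s} \varphi}_{L^2_{j+s}(X^*)}^2,
\end{equation*}
where the sum runs up to $a=4$ because of the two extra derivatives brought in by the Sobolev embedding. Exponentiating gives \eqref{eq:g_j+s_bound}. The choice of the constant $c_4$ is whatever is needed to absorb the Sobolev constants.

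For the integrability estimate \eqref{eq:g_j+s_expectation}, I would view $2Q_{j+s}(X, \cdot)$ as a positive quadratic form $( \cdot , M \cdot )$ on $\R^{\Lambda_N}$. The standard Gaussian moment generating formula gives
\begin{equation*}
	\E\big[ e^{2 Q_{j+s}(X, \zeta)} \big]
	= \det\!\big( I - 2 \Gamma_{j+s,j+s'}^{1/2} M \Gamma_{j+s,j+s'}^{1/2} \big)^{-1/2},
\end{equation*}
valid provided the operator norm of $2\Gamma_{j+s,j+s'} M$ is smaller than $1$. The first step is to control this operator norm. Because we are working at a single subscale of width $(N')^{-1}$ with $L = \ell^{N'}$, the subscale decomposition of Lemma~\ref{lemma:extfield_bound_fine} gives derivative bounds $|\nabla^{\vec\mu} \Gamma_{j+s,j+s'}(x,y)| \leq C(\ell) L^{-(j+s)|\vec\mu|}$ for $|\vec\mu| \geq 1$ and $\Gamma_{j+s,j+s'}(0,0) \leq C \log \ell$, along with finite range $\lesssim \ell L^{j+s}$. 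Combining these bounds with the explicit form of $M$, one estimates that each nonzero eigenvalue of $\Gamma_{j+s,j+s'} M$ restricted to $X^{**}$ is $O(c_4 \kappa)$ (scale factors from $L^{a(j+s)}$ in the rescaled derivatives exactly cancel those from the covariance bounds). Then
\begin{equation*}
	-\tfrac12 \log \det\!\big( I - 2 \Gamma_{j+s,j+s'}^{1/2} M \Gamma_{j+s,j+s'}^{1/2} \big)
	\leq C \tr( \Gamma_{j+s,j+s'} M)
	\leq C' c_4 \kappa \, |X|_{j+s} \log \ell
	.
\end{equation*}
The factor $|X|_{j+s}$ comes from the $O(|X|_{j+s})$ degrees of freedom that see the support of $M$, and the $\log \ell$ absorbs the logarithm in $\Gamma_{j+s,j+s'}(0,0)$. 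Finally, setting $\kappa = c_\kappa (\log L)^{-1}$ and recalling $(N')^{-1} = \log \ell / \log L$, we choose $c_\kappa \equiv c_\kappa(c_4, \ell)$ small enough that $C' c_4 c_\kappa \log \ell \leq (\log 2) \log \ell$, i.e.\ $C' c_4 \kappa \log \ell \leq (N')^{-1} \log 2$. This yields the claimed bound $\E[e^{2 Q_{j+s}(X, \zeta)}] \leq 2^{(N')^{-1} |X|_{j+s}}$ uniformly in $m^2 \in [0,\delta]$, where $\delta$ is chosen small enough that the addition of $m^2$ does not spoil the operator-norm bound.

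The main obstacle I expect is the careful bookkeeping in the second step: tracking the interplay between the rescaling factors $L^{a(j+s)}$ in the derivatives, the decay rates from \eqref{quote:Gamma_one} applied to $\Gamma_{j+s,j+s'}$, and the volume factor $L^{-2(j+s)}$ in $\norm{\cdot}_{L^2_{j+s}}$, so that one genuinely obtains a dimensionless bound proportional to $|X|_{j+s}$ with an $L$-independent leading constant times $\log \ell$. Once this is in place, the choice $\kappa = c_\kappa(\log L)^{-1}$ absorbs the $\log L$ factor exactly so that the exponent scales like $(N')^{-1}|X|_{j+s}$, matching the prefactor needed when summing over the $N'$ subscales.
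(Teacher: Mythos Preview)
The paper does not give its own proof of this lemma: it is quoted from \cite[Lemma~5.12]{dgauss1}, with the single remark that ``the statement of this lemma is actually slightly stronger than its original version, but the proof is exactly the same.'' Your outline reconstructs what is essentially the standard argument behind that cited result: a discrete $W^{2,2} \hookrightarrow L^\infty$ Sobolev embedding on each block for the pointwise bound, and the Gaussian determinant formula $\E[e^{(\zeta,M\zeta)}] = \det(I-2\Gamma^{1/2}M\Gamma^{1/2})^{-1/2}$ together with a trace estimate for the integrability bound. So there is nothing to compare against in this paper itself, and your approach is the right one.

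One small imprecision worth flagging: you assert that each nonzero eigenvalue of $\Gamma_{j+s,j+s'} M$ is $O(c_4 \kappa)$, but the $a=0$ contribution to $M$ sees the full operator norm of $\Gamma_{j+s,j+s'}$, which scales like $\ell^2 L^{2(j+s)}$ (it is the piece of $(-\Delta)^{-1}$ living at that scale). After the $L^{-2(j+s)}$ volume factor in the $L^2_{j+s}$ norm this leaves $O(c_4 \kappa\, \ell^2)$, not $O(c_4 \kappa)$. This does not break your argument --- it is exactly why $c_\kappa$ must depend on $\ell$ --- but note that your trace inequality $-\tfrac12 \log\det(I-A) \leq C\operatorname{tr}(A)$ requires $\|A\| < 1$, and that smallness condition is where the $\ell$-dependence of $c_\kappa$ actually enters. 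Your final displayed condition $C' c_4 c_\kappa \log \ell \leq (\log 2)\log \ell$ simplifies to $C' c_4 c_\kappa \leq \log 2$, which is $\ell$-independent, so on its own it does not explain the stated dependence $c_\kappa = c_\kappa(c_4,\ell)$.
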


The statement of this lemma is actually slightly stronger than its original version, but the proof is exactly the same. 
There is an immediate corollary of these computations. 

\begin{proof}[Proof of Lemma~\ref{lemma:E_G^r_j}]
The first inequality is \cite[Proposition~5.9]{dgauss1}. The second inequality follows from combining \eqref{eq:G^r_j_G_j+1_ratio} with Lemma~\ref{lemma:g_j+s_bound_by_quadratic}. 
\end{proof}

\subsection{Complex-valued shift of variables}
\label{sec:complex-valued-shift-of-variables}

We prove Lemma~\ref{lemma:Et_by_complex_shift} here.

\begin{lemma} \label{lemma:complex_shift_of_variables}
Let $X\in \cP_j$, 
let $F$ be a function such that $\norm{F}_{h, T_j (X)} < \infty$.
Let $v_2 \in \R^{\Lambda_N}$ be such that $\norm{\Gamma_{j+s, j+\bar{s}} v_{2}}_{C^2_{j}} < h/2$ for each $s,\bar{s} \in \{0, \cdots, 1-(N')^{-1} \}$ and $s < \bar{s}$.
If $\varphip \in \R^{\Lambda_N}$, $v = v_1 + iv_2$ for some $v_1 \in \R^{\Lambda_N}$,   
and $\zeta \sim \cN(0, \Gamma_{j+1})$ (or $\cN(0, \Gamma_N^{\Lambda_N})$ when $j+1 = N$) then
\begin{align}
\E[ F (X, \varphip  + \zeta  + \Gamma_{j+1} v) ] = e^{-\frac{1}{2} (v,  \Gamma_{j+1} v) } \E \big[ e^{(\zeta,  v)} F(X,  \varphip + \zeta  ) \big]
.
\end{align}
\end{lemma}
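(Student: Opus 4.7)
The identity is the complex-analytic extension of the standard Cameron--Martin shift formula. For real $w \in \R^{\Lambda_N}$, completing the square in the Gaussian density gives at once
\[
\E[F(X, \varphip + \zeta + \Gamma_{j+1} w)] = e^{-\tfrac12 (w, \Gamma_{j+1} w)} \E[e^{(\zeta, w)} F(X, \varphip + \zeta)],
\]
so only the imaginary shift $i \Gamma_{j+1} v_2$ needs justification. My plan is to fix $v_1$ (absorbing its shift by the real Cameron--Martin identity) and extend in a complex parameter $t$ along $iv_2$, using analytic continuation. However, $\|\Gamma_{j+1} v_2\|_{C^2_j(X^*)}$ may by itself exceed $h$, so a single global contour shift is not available; the hypothesis on the sub-partial-sums $\Gamma_{j+s,j+\bar s}$ is exactly what permits shifting one subscale at a time.

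First I would use the subdecomposition \eqref{eq:covariance_subdecomposition} to write $\zeta = \sum_{s\in I_{N'}} \xi_{j+s,j+s'}$ with independent $\xi_{j+s,j+s'}\sim \cN(0,\Gamma_{j+s,j+s'})$, and $\Gamma_{j+1} v = \sum_s \Gamma_{j+s,j+s'} v$. By Fubini the left-hand side becomes an iterated expectation; I would process the subscales one by one (say in increasing $s$), at each stage conditioning on the already-integrated variables, which only changes the ambient real field $\Psi \in \R^{\Lambda_N}$ at which $F$ is evaluated. The reduction is to prove, for each fixed subscale $s_0$ and each real ambient $\Psi$,
\[
\E^{\xi_{s_0}}\!\bigl[F(X, \Psi+\xi_{s_0}+\Gamma_{j+s_0,j+s_0'}(v_1+iv_2))\bigr] = e^{-\tfrac12(v_1+iv_2,\,\Gamma_{j+s_0,j+s_0'}(v_1+iv_2))} \E^{\xi_{s_0}}\!\bigl[e^{(\xi_{s_0},v_1+iv_2)} F(X,\Psi+\xi_{s_0})\bigr].
\]

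To prove this per-subscale identity, I would introduce
\[
A(t):=\E^{\xi_{s_0}}\!\bigl[F(X,\Psi+\xi_{s_0}+\Gamma_{j+s_0,j+s_0'}(v_1+tv_2))\bigr],\qquad B(t):=e^{-\tfrac12(v_1+tv_2,\Gamma_{j+s_0,j+s_0'}(v_1+tv_2))}\E^{\xi_{s_0}}\!\bigl[e^{(\xi_{s_0},v_1+tv_2)}F(X,\Psi+\xi_{s_0})\bigr]
\]
and show $A=B$ on an open complex neighbourhood of $\R$ containing $t=i$, whereupon evaluating at $t=i$ yields the claim. For real $t$, $A(t)=B(t)$ by the real Cameron--Martin identity (the shift is real). $B(t)$ is entire in $t$: the integrand is entire and the Gaussian moment generating function is entire, with dominated convergence supplied by $|F(X,\Psi+\xi_{s_0})|\le \|F(X)\|_{h,T_j(X)} G_j(X,\Psi+\xi_{s_0})$ from Proposition~\ref{prop:analytic_on_strip} and the integrability of the regulator (Lemma~\ref{lemma:E_G^r_j}). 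For $A(t)$, the hypothesis $\|\Gamma_{j+s_0,j+s_0'} v_2\|_{C^2_j(X^*)}<h/2$ means that for $|\Im t|<2$ the shift $(\Im t)\Gamma_{j+s_0,j+s_0'} v_2$ lies in the strip where Proposition~\ref{prop:analytic_on_strip} provides a complex-analytic extension of $F(X,\cdot)$ with the pointwise bound \eqref{eq:bound_in_the_analytic_strip}; this bound together with Lemma~\ref{lemma:E_G^r_j} supplies the dominating integrable majorant needed to differentiate under the expectation, making $A$ holomorphic for $|\Im t|<2$. Since $A$ and $B$ are holomorphic on a connected complex domain containing $\R\cup\{i\}$ and coincide on $\R$, the identity theorem gives $A(i)=B(i)$. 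Iterating over $s_0$ and multiplying the scale-$s_0$ prefactors together ($\sum_s(v,\Gamma_{j+s,j+s'}v)=(v,\Gamma_{j+1}v)$ and $\sum_s(\xi_{s},v)=(\zeta,v)$) gives the global identity.

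The main subtlety is in the iteration: at the $s_0$-th stage one must check that the ambient field $\Psi$ at which $F$ is to be evaluated remains real, so that the single-subscale analytic continuation above applies verbatim. This is secured by performing all real shifts first (which preserves reality) and processing the imaginary shifts one subscale at a time after each $\xi_s$ has been converted, via the per-subscale identity, into a multiplicative exponential factor that is absorbed into the accumulating prefactor. The hypothesis imposed on every partial sum $\Gamma_{j+s,j+\bar s} v_2$ (rather than only on $\Gamma_{j+1} v_2$) is tailored precisely to guarantee that each individual subscale shift stays strictly inside the analytic strip, independently of the order and number of stages already completed.
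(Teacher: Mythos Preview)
Your overall strategy—subscale decomposition followed by per-subscale complex extension—matches the paper's. The paper implements each subscale step by an explicit one-variable contour shift (Cauchy's theorem after choosing a basis aligned with $C_s v_2$) and introduces a mass $m^2>0$ so that $C_s^{-1}$ exists in the Gaussian density; your parameter-analytic-continuation variant $A(t)=B(t)$ is a legitimate alternative that sidesteps the mass regularisation.

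There is, however, a real gap in your iteration. You assert that the ambient field $\Psi$ stays real at every stage, but it does not. After you have converted the imaginary shift at subscale $s_0$ into the factor $e^{i(\xi_{s_0},v_2)}$, the imaginary shifts at the \emph{not-yet-processed} subscales, namely $i\sum_{s>s_0}\Gamma_{j+s,j+s'}v_2=i\,\Gamma_{j+s_0',\,j+1}v_2$, are still sitting inside the argument of $F$ and therefore inside the $\Psi$ you hand to the next stage. The paper handles this correctly by writing the per-subscale step as
\[
\E^{\xi_s}\!\bigl[e^{(\xi_s,v_1)}F\bigl(\varphip+\xi_s+i(C_s+C_{>s})v_2\bigr)\bigr]
= e^{-i(v_1,C_s v_2)+\tfrac12(v_2,C_s v_2)}\,\E^{\xi_s}\!\bigl[e^{(\xi_s,v_1+iv_2)}F\bigl(\varphip+\xi_s+iC_{>s}v_2\bigr)\bigr],
\]
carrying the residual $iC_{>s}v_2$ explicitly through the induction. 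Your argument repairs the same way: allow $\Psi$ to have imaginary part $\Gamma_{j+s_0',\,j+1}v_2$ (whose $C^2_j$-norm is $<h/2$ by hypothesis). Then along the segment $t\in[0,i]$ the total imaginary displacement in $F$'s argument is $(\Im t)\,\Gamma_{j+s_0,j+s_0'}v_2+\Gamma_{j+s_0',\,j+1}v_2$, which by the triangle inequality has $C^2_j$-norm strictly below $h/2+h/2=h$, keeping you inside the analyticity strip of Proposition~\ref{prop:analytic_on_strip}. This, and not merely the bound on a single increment $\Gamma_{j+s,j+s'}v_2$, is why the hypothesis is imposed on \emph{all} partial sums $\Gamma_{j+s,j+\bar s}v_2$; your sketch invokes only the single-increment bound and so does not yet use the full strength of the assumption.
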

\begin{proof}
Since $X$ does not play any role, we will drop it at most places. 
By change of variable $\zeta \rightarrow \zeta - \Gamma_{j+1} v_1$,  we have
\begin{align}
& \E[ F ( \varphi + \zeta + \Gamma_{j+1} v_1 + i \Gamma_{j+1} v_2) ] 
 = e^{-\frac{1}{2} (v_1, \Gamma_{j+1} v_1)  } \E[ e^{ (\zeta,  v_1)}  F (\varphi + \zeta + i \Gamma_{j+1} v_2) ] .
\end{align}
We aim to prove, for any $m^2 >0$ sufficiently small, 
\begin{align}
& \E[ e^{ (\zeta^{(m^2)},  v_1)}  F (\varphi + \zeta^{(m^2)}  + i C v_2) ] 
  = e^{- i (v_1,  C v_2 ) +  \frac{1}{2} (v_2,  C v_2)} \E \big[ e^{( \zeta^{(m^2)} ,   v_1 + iv_2  )} F( \varphi + \zeta^{(m^2)} ) \big] 
  \label{eq:complex_int_by_parts}
\end{align}
where now $C = \Gamma_{j+1} + N' m^2$ and $\zeta^{(m^2)} \sim \cN (0,   C|_{X^*})$.  Once this is obtained, we can take the limit $m^2 \downarrow 0$ to conclude. 

Also having done the subscale decomposition $\zeta^{(m^2)} = \sum_{s=0}^{N'} \xi_{s}$ with $\xi_{s} \sim \cN(0,  \Gamma_{j+s, j+s'} + m^2)$ (where $s' = s+ (N')^{-1}$), 
the proof of \eqref{eq:complex_int_by_parts}, upto an induction,  reduces to proving
\begin{align}
& \E \big[  e^{(\xi_s ,  v_1)} F(\varphip + \xi_s + i  ( C_s + C_{> s}  ) v_2  ) \big] \nnb
&  \qquad \qquad \qquad  = e^{-i (v_1, C_s v_2) + \frac{1}{2} (v_2, C_s v_2)} \E \big[ e^{(\xi_s, v_1 + i v_2  )} F(\varphip + \xi_s + i C_{> s} v_2 ) \big]
\end{align}
where $C_s = \Gamma_{j+s, j+s'} + m^2$, $C_{>s} = \sum_{\bar{s} > s} C_{\bar{s}}$, $\xi_s \sim \cN (0, C_s)$ and $\varphip = \varphi + \sum_{\bar{s} \neq s} \xi_{\bar{s}}$. 
After writing the expectation in integral form, this is equivalent to
\begin{align}
& \int_{\R^{X^*}} d\zeta_s \,  e^{-\frac{1}{2} (\zeta_s,  C_{s}^{-1} \zeta_s) } e^{(\zeta_s ,  v_1 + i v_2 )} F(\varphip + \zeta_s  + i  ( C_s + C_{> s} ) v_2  )  \nnb
& \qquad \qquad  = \int_{\R^{X^*} - i C_s v_2 } d\zeta_s \,   e^{-\frac{1}{2} (\zeta_s,  C_{s}^{-1} \zeta_s) } e^{(\zeta_s ,  v_1 + i v_2 )} F(\varphip + \zeta_s + i  ( C_s + C_{> s} ) v_2  ) 
\end{align}
After having done orthonormal change of basis of $(\delta_{x} : x \in X^*)$ to $( e_y : y \in A)$ (with $|A| = |X^*|$) such that $C_s v_2 = \alpha e_{y_0}$ for some $\alpha \in \R$ and $y_0 \in A$,  
and writing $(\xi_{s,y})_{y \in A}$ for the coordinates for $\xi_s$ in this basis (so $\xi_s = \sum_{y\in A} \xi_{s,y} e_y$), 
this is implied by
\begin{align}
	& \int_{\R } d \xi_{s,y_0} e^{-\frac{1}{2} (\xi_s ,  C_s^{-1} \xi_s) } e^{(\xi_s ,  v_1 + i v_2)} F(\varphip + \xi_s +  i  ( C_s + C_{> s} ) v_2  )  \nnb
	& \qquad  \qquad = \int_{\R - i\alpha} d\xi_{s,y_0} \,   e^{-\frac{1}{2} (\xi_s,  C_{s}^{-1} \xi_s) } e^{(\xi_s ,  v_1 + i v_2 )} F(\varphip + \xi_s +  i  ( C_s + C_{> s} ) v_2  ) .
\end{align}
By Proposition~\ref{prop:analytic_on_strip} and our assumptions on the size of norms of $C_s v_2$ and $C_{>s} v_2$,  
the function $\xi_{s,y_0} \mapsto F( \sum_{y \in A} \xi_{s,y} e_y + \varphip +  i (C_s + C_{>s} ) v_2 )$ is analytic in an open neighbourhood of $\{ \xi_{y_0} = a+ ib : a\in \R, b \in [-\alpha,0] \}$ when $\xi_{s,y} \in \R$ for each $y\neq y_0$,
so the equality is again implied by Cauchy's theorem on complex integrals if we could prove
\begin{align}
	\Big| e^{-\frac{1}{2} (\xi_{s} + it e_{y_0} , C_s^{-1} (\xi_{s} + it e_{y_0})) } e^{(\xi_{s} + it e_{y_0} ,  v_1 + i v_2 )} F(\varphip + \xi_s + it e_{y_0} +  i  ( C_s + C_{> s} ) v_2  )  \Big| = o(1)
\label{eq:F_decay_at_infty}
\end{align}
as $\norm{\xi_{s} + it e_{y_0} }_{L^2 (X^*)} \rightarrow \infty$, when $\xi_{s} \in \R^{X^*}$ and $t \in [-\alpha, 0]$.
To bound this, 
we use \eqref{eq:bound_in_the_analytic_strip} and Lemma~\ref{lemma:G_change_of_scale_external_field},
obtaining
\begin{align}
	\big| F(\varphip + \xi_s + C_s v_1 + it e_{y_0}  + i  ( C_s + C_{> s} ) v_2  ) \big| 
	& \leq \norm{F}_{h,  T_j (X) } G_j (X, \varphi + \xi_s + C_s v_1 ) \nnb
	&  \leq C \norm{F}_{h,  T_j (X) } G_{j+s'}(X, \varphi) g_{j+s} (X_{j+s'},\xi_s )^2
	,
\end{align}
where $s' = s+ (N')^{-1}$.
While by Lemma~\ref{lemma:g_j+s_bound_by_quadratic}, 
$g_{j+s} (X_{j+\bar{s}},\xi)^2 \leq e^{ (\xi,  Q_{j+{s}} \xi) }$ for some quadratic form $Q_{j+s}$ with
$
\E\big[ e^{ 2 (\xi,  Q_{j+{s}} \xi) } \big] < \infty
$.
This implies
\begin{align}
	e^{-\frac{1}{2} (\xi_s , C_s^{-1} \xi_s) } 
	\big| F(\varphip + \xi_s + C_s v_1 + it e_{y_0}  + i  ( C_s + C_{> s} ) v_2  ) \big|
 	= o(1) \;\; \text{as} \;\; \norm{\xi_s}_{L^2 (X^*)} \rightarrow \infty
\end{align}
and proves \eqref{eq:F_decay_at_infty}.
\end{proof}

\begin{proof}[Proof of Lemma~\ref{lemma:Et_by_complex_shift}]
Recall, $u_{j+1} = \Gamma_{j+1} \tilde{\f}$ if $j\geq 0$.
By definition, 
\begin{align}
\Et \big[ F (X, \varphip + \zeta)   \big] = e^{-\frac{1}{2} \tau^2 (\tilde{\f}, \Gamma_{j+1} \tilde{\f}) } \E \Big[ e^{(\zeta,  \tau \tilde{\f})} F(X, \varphip + \zeta) \Big]
\end{align}
Also,  Lemma~\ref{lemma:extfield_bound_fine} implies
\begin{align}
	\norm{\tau \Gamma_{j+s, j+ \bar{s}} \tilde{f}}_{C^2_{j}} < h/2, \quad \bar{s} > s, \; s \in I_{N'}, \; \bar{s} \in (N')^{-1} + I_{N'}
\end{align}	
whenever $\tau < (2 M \rho^2 \log L )^{-1} h /2$. 
This verifies the assumptions of Lemma~\ref{lemma:complex_shift_of_variables}, completing the proof.
\end{proof}

\subsection{Proof of Lemma~\ref{lemma:Et_F_bound}} 

\begin{lemma}[Gaussian integration by parts]
\label{lemma:gauss_int_by_parts}

Given $\tilde{\f}$, let $u_{j+1} = \Gamma_{j+1} \tilde{\f}$. 
Let $F$ be a polymer activity such that $\norm{F}_{\vec{h}, T_j (X)} < \infty$. Then
\begin{align}
	\E\Big[  D^k F(X,  \varphi ' + \zeta) (u_{j+1}^{\otimes k}) \Big] = \sum_{l=0}^k
	\begin{pmatrix}
	k \\
	l
	\end{pmatrix}
	(u_{j+1}, \tilde{\f})^{l/2} \Her_l(0) \E^{\zeta}_{\Gamma_{j+1}} \big[ (\zeta|_{X^*} , \tilde{\f})^{k-l} F(X, \varphip + \zeta)  \big]
\end{align}
when $\Her_{2p +1} (0) = 0$ and $\Her_{2p} (0) = (-1)^p \frac{(2p)!}{2^p p!}$
for $p\in \Z_{\geq 0}$.
\end{lemma}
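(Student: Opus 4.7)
The plan is to prove the identity by combining Cameron--Martin (Girsanov) for the Gaussian measure with the Leibniz rule, reading off the Hermite coefficients from the Taylor expansion of a Gaussian density factor. Concretely, I would start from the elementary shift formula for $\zeta \sim \cN(0,\Gamma_{j+1})$: for any scalar $s$,
\begin{align}
    \E\big[ F(X,\varphip+\zeta+s u_{j+1}) \big]
    = e^{-\tfrac{s^2}{2}(u_{j+1},\tilde{\f})}\,\E\big[ e^{s(\zeta,\tilde{\f})} F(X,\varphip+\zeta) \big],
\end{align}
which is the Cameron--Martin identity applied to the shift by $su_{j+1} = s\Gamma_{j+1}\tilde{\f}$, noting that $(u_{j+1},\tilde{\f})=(\tilde{\f},\Gamma_{j+1}\tilde{\f})$ is the variance of the Gaussian linear functional $(\zeta,\tilde{\f})$. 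For real $s$ this is standard; for complex $s$ in a small disk, the same equality follows from Lemma~\ref{lemma:Et_by_complex_shift} or Lemma~\ref{lemma:complex_shift_of_variables}, which is exactly the regime covered by the hypothesis $\|F\|_{\vec{h},T_j(X)}<\infty$.

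Next, I would differentiate both sides $k$ times in $s$ at $s=0$. The left-hand side yields $\E[D^k F(X,\varphip+\zeta)(u_{j+1}^{\otimes k})]$ by the chain rule, while the right-hand side is handled by the Leibniz product rule, giving
\begin{align}
    \sum_{l=0}^{k} \binom{k}{l}
    \Big[\tfrac{d^l}{ds^l}\, e^{-\tfrac{s^2}{2}(u_{j+1},\tilde{\f})}\Big]_{s=0}\,
    \E\big[(\zeta,\tilde{\f})^{k-l} F(X,\varphip+\zeta)\big].
\end{align}
The factor in brackets is computed by matching Taylor coefficients of $e^{-x^2/2}$: one has $\tfrac{d^l}{dx^l}e^{-x^2/2}|_{x=0}=\Her_l(0)$ (probabilist's Hermite), and scaling $x=s(u_{j+1},\tilde{\f})^{1/2}$ produces the prefactor $(u_{j+1},\tilde{\f})^{l/2}\,\Her_l(0)$. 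This gives exactly the claimed formula.

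The only technical obstacle is justifying the interchange of $\tfrac{d^k}{ds^k}$ with $\E$ on both sides. I would do this by appealing to analyticity: by Proposition~\ref{prop:analytic_on_strip} the function $s\mapsto F(X,\varphip+\zeta+s u_{j+1})$ extends holomorphically to a complex disk of radius $\htau$ (since $\htau\norm{u_{j+1}}_{C^2_j(X^*)}<h$ by the choice in Section~\ref{sec:choice_of_parameters}), with pointwise bound $|F(X,\varphip+\zeta+s u_{j+1})|\le \|F(X,\cdot)\|_{\vec{h},T_j(X)}\,G_j(X,\varphip+\zeta)$ uniformly on that disk. Combined with $\E[G_j(X,\varphip+\zeta)]<\infty$ from Lemma~\ref{lemma:E_G^r_j}, the dominated convergence theorem justifies differentiation under the expectation and, via Cauchy's formula on a contour in $s$, the extraction of all $k$ coefficients. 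The moment $\E[(\zeta,\tilde{\f})^{k-l}F(X,\varphip+\zeta)]$ is then finite for the same reason, since $(\zeta,\tilde{\f})$ has Gaussian tails and $G_j(X,\varphip+\zeta)$ is integrable. This is the step where care is needed, but it is entirely analogous to arguments already used (e.g.\ Lemma~\ref{lemma:analyticity_preserved_under_E}), so the calculation really does reduce to the Leibniz expansion above.
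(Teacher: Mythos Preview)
Your proof is correct and arrives at the identity by the same mechanism as the paper: differentiate $k$ times in a real shift parameter $s$ and read off Hermite coefficients via the Leibniz rule. The difference is in the starting point. The paper writes the expectation as a Lebesgue integral against the Gaussian density (after adding a mass $m^2>0$ so that $C=\Gamma_{j+1}+m^2$ is invertible on $\R^{X^*}$), moves the $k$ derivatives from $F$ onto the density by integration by parts, computes $(-1)^k\frac{d^k}{ds^k}\big|_{s=0}e^{-\frac12 As^2-Bs}$ with $A=(u_{j+1},\tilde\f)$, $B=(\zeta,\tilde\f)$, and then removes the regularisation by letting $m^2\downarrow 0$. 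You instead invoke the Cameron--Martin shift identity (which is precisely Lemma~\ref{lemma:complex_shift_of_variables} with $v=s\tilde\f$) at the level of expectations and differentiate that. Your route sidesteps the $m^2$ regularisation entirely, since the shift identity holds for the possibly degenerate covariance $\Gamma_{j+1}$ without reference to a density; the justification of differentiation under $\E$ via the regulator bound and Lemma~\ref{lemma:E_G^r_j} is exactly as you describe. One small point: the statement records $(\zeta|_{X^*},\tilde\f)$ rather than $(\zeta,\tilde\f)$, which the paper obtains by restricting to $\zeta\sim\cN(0,C|_{X^*})$ from the outset; in your argument this amounts to observing that $F$ depends on $\varphi$ only through $\varphi|_{X^*}$, so the shift identity can be read on $\R^{X^*}$ as well.
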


\begin{proof}
We drop  $X$ so that $F(X, \varphi)$ is written as $F(\varphi)$.
It is sufficient to prove this for $\zeta \sim \cN (0,C|_{X^*})$
and $u_{j+1} = C \tilde{\f}$, 
where $C = \Gamma_{j+1} + m^2$ and $m^2 >0$. 
Also, by integration by parts and upto a normalisation factor, 
$\E_C [  D^k F(X,  \varphi ' + \zeta) (u_{j+1}^{\otimes k}) ]$ can be written in terms of the Lebesgue integral
\begin{align}
\int_{\R^{X^*}} d\zeta \,  e^{-\frac{1}{2} (\zeta, C^{-1} \zeta)} D^k F (\varphip + \zeta ) (u_{j+1}^{\otimes k}) = \int_{\R^{X^*}} d\zeta \,  (-D)^k e^{-\frac{1}{2} (\zeta, C^{-1} \zeta)}  (u_{j+1}^{\otimes k}) F (\varphip + \zeta )  . 
\end{align}
The derivative on the right-hand side can be written as
\begin{align}
(-1)^k \frac{d^k}{d s^k} \Big|_{s=0} e^{-\frac{1}{2}  (\zeta + su_{j+1}, C^{-1} (\zeta + su_{j+1})) }  = (-1)^k \frac{d^k}{d s^k} \Big|_{s=0} e^{-\frac{1}{2} A s^2 - Bs -  E} 
\end{align}
where $A = (u_{j+1}, C^{-1} u_{j+1}) = (u_{j+1}, \tilde{\f})$, $B= (\zeta, C^{-1} u_{j+1}) = (\zeta, \tilde{\f})$ and $E = \frac{1}{2} (\zeta, C^{-1} \zeta )$. 
By the Leibniz formula,
\begin{align}
(-1)^k \frac{d^k}{d s^k} e^{-\frac{1}{2} A s^2 - Bs - E} = \sum_{l=0}^k \begin{pmatrix}
k \\
l
\end{pmatrix}
\Her_l ( \sqrt{A} s) A^{l/2} B^{k-l} e^{-\frac{1}{2} A s^2 - Bs - E} , 
\end{align}
where $\Her_l (x)$ is the Hermite polynomial of degree $l$. 
Therefore we obtain
\begin{align}
& \E_C \Big[  D^k F(X,  \varphi ' + \zeta) (u_{j+1}^{\otimes k}) \Big] \nnb
& \qquad \propto \sum_{l=0}^k
\begin{pmatrix}
k \\
l
\end{pmatrix}
(u_{j+1},  \tilde{\f})^{l /2} \Her_l (0) 
\int_{\R^{X^*}} d\zeta \, e^{-\frac{1}{2} (\zeta, C^{-1} \zeta) } (\zeta,  \tilde{\f})^{k-l} F(X, \varphip + \zeta)  \big] .
\end{align}
To conclude, take limit $m^2 \rightarrow 0$.

\end{proof}

\begin{lemma} \label{lemma:G_j_times_polynomial_bound}
Let $\f$ be as in \eqref{quote:assumpf}, $\tilde{\f} = (1+ s\gamma \Delta) \f$ and $u_{j+1}$ be defined by Definition~\ref{def:extfield_def}. 
If $\norm{D^n F (X)}_{n, T_j (X)} < \infty$, 
there are some constants $C_1 (M, \rho), C_2(M,  \rho, \rr)>0$ such that
\begin{align}
\begin{split}
& \big\|  \E^{\zeta}_{\Gamma_{j+1}} \big[ (\zeta|_{X^*} ,  \tilde{\f})^k D^n F (X, \varphip + \zeta + \rr u_{j+1})  \big] \big\|_{n, T_j (X, \varphi) } \\
& \qquad \qquad \leq C_2  2^{|X|_j} (C_1 \log L)^{\frac{3}{2} k} \Big( \Big\lceil \frac{k}{2} \Big\rceil ! \Big) \, G_{j+1} (\bar{X}, \varphip) \norm{D^n F (X)}_{n, T_j (X)} .
\end{split}
\end{align}
\end{lemma}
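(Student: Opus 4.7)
The plan is to pull the operator seminorm inside the Gaussian expectation, apply Cauchy--Schwarz to separate the polynomial factor in $\zeta$ from the regulator, and then estimate each piece independently using the subscale decomposition and standard Gaussian moment bounds.

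First, since the seminorm $\|\cdot\|_{n, T_j(X, \psi)}$ on $n$-linear forms is defined via a supremum over test functions normed in $C^2_j(X^*)$ (so the label $\psi$ only serves to record where $D^n F$ is evaluated), we have
\[
\big\|\E[(\zeta|_{X^*}, \tilde{\f})^k D^n F(X, \varphip + \zeta + \rr u_{j+1})]\big\|_{n, T_j(X, \varphi)}
\leq \E\big[|(\zeta|_{X^*}, \tilde{\f})|^k \,\|D^n F(X, \varphip + \zeta + \rr u_{j+1})\|_{n, T_j(X, \cdot)}\big],
\]
and, by the definition of $\|D^n F(X)\|_{n, T_j(X)}$ as $\sup_\psi G_j(X, \psi)^{-1} \|D^n F(X, \psi)\|_{n, T_j(X, \psi)}$ (by analogy with \eqref{eq:Xnorm_defi}), this in turn is bounded by $\|D^n F(X)\|_{n, T_j(X)}\,\E[|(\zeta, \tilde{\f})|^k G_j(X, \varphip + \zeta + \rr u_{j+1})]$. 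Cauchy--Schwarz then gives
\[
\E[|(\zeta, \tilde{\f})|^k G_j(\cdots)] \leq \E[(\zeta, \tilde{\f})^{2k}]^{1/2} \cdot \E[G_j(\cdots)^2]^{1/2}.
\]

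For the regulator moment, I would repeat the argument that proves Lemma~\ref{lemma:E_G^r_j}. Using the subscale decomposition \eqref{eq:covariance_subdecomposition} and Lemma~\ref{lemma:G_change_of_scale_external_field} iteratively, with the shift by $\rr u_{j+1}$ decomposed as $\rr u_{j+1} = \sum_s \rr u_{j+s, j+s'}$ and Lemma~\ref{lemma:extfield_bound_fine} used to absorb it as a multiplicative constant,
\[
G_j(X, \varphip + \zeta + \rr u_{j+1})^2 \leq C(M, \rho, \rr, \ell)\, G_{j+1}(\bar{X}, \varphip)^2 \prod_{s \in I_{N'}} g_{j+s}(X_{s'}, \xi_{j+s, j+s'})^4.
\]
By independence of the $\xi_{j+s, j+s'}$ and the bound $\E[e^{2Q_{j+s}}] \leq 2^{(N')^{-1}|X_{s'}|_{j+s}}$ from Lemma~\ref{lemma:g_j+s_bound_by_quadratic}, after choosing $c_\kappa(c_4, \ell)$ sufficiently small so that the exponents telescope correctly, one obtains $\E[G_j^2]^{1/2} \leq C\, 2^{|X|_j} G_{j+1}(\bar{X}, \varphip)$.

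For the Gaussian moment, $(\zeta, \tilde{\f})$ is a centred Gaussian with variance $\sigma^2 = (\tilde{\f}, \Gamma_{j+1} \tilde{\f}) \leq \|\tilde{\f}\|_{L^1} \|\Gamma_{j+1}\tilde{\f}\|_{L^\infty} = \|\tilde{\f}\|_{L^1}\|u_{j+1}\|_{L^\infty} \leq C M^2 \rho^4 \log L$ by Lemma~\ref{lemma:extfield_bound}. Then $\E[(\zeta, \tilde{\f})^{2k}] = (2k-1)!!\, \sigma^{2k}$, and Stirling's formula yields $\sqrt{(2k-1)!!} \leq C^k \lceil k/2 \rceil!$, so
\[
\E[(\zeta, \tilde{\f})^{2k}]^{1/2} \leq (C' M \rho^2)^k (\log L)^{k/2} \lceil k/2 \rceil! \leq (C_1 \log L)^{3k/2} \lceil k/2 \rceil!
\]
once $C_1 = C_1(M, \rho)$ is chosen large enough and $L \geq e$ (the exponent $3k/2$ is deliberately loose relative to the optimal $k/2$, but is convenient for later applications). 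Combining this with the regulator bound and Step 1 gives the claim.

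The calculation is routine; the only delicate point is that one needs $\E[G_j^2]^{1/2}$, not just $\E[G_j]$, which is absorbed into the flexibility in the choice of $c_\kappa(c_4, \ell)$ in Lemma~\ref{lemma:g_j+s_bound_by_quadratic}.
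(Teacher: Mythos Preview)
Your argument is correct and takes a genuinely different route from the paper's. The paper does \emph{not} use Cauchy--Schwarz: instead, after the subscale decomposition $\zeta=\sum_s \xi_s$, it converts the polynomial factor directly into an extra regulator via the pointwise inequality
\[
\|\xi_s\|_{L^\infty(X^*)}^k \;\leq\; (c_4\kappa)^{-k/2}\,\Gamma\!\Big(\tfrac{k+2}{2}\Big)\,e^{c_4\kappa\|\xi_s\|_{L^\infty(X^*)}^2}
\;\leq\; (c_4\kappa)^{-k/2}\,\Big\lceil\tfrac{k}{2}\Big\rceil!\; g_{j+s}(X_{s'},\xi_s),
\]
and then bounds a single expectation $\E\big[\prod_s g_{j+s}^3\big]\le 2^{|X|_j}$ using Lemma~\ref{lemma:g_j+s_bound_by_quadratic}. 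The factor $(\log L)^{3k/2}$ arises naturally from $(N')^{k}\kappa^{-k/2}$ with $N'=\log L/\log\ell$ and $\kappa=c_\kappa(\log L)^{-1}$.

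Your Cauchy--Schwarz approach is cleaner in that it separates the two pieces and uses only the explicit Gaussian moment $\E[(\zeta,\tilde\f)^{2k}]=(2k-1)!!\,\sigma^{2k}$ with $\sigma^2\le C\log L$; this actually yields the sharper power $(\log L)^{k/2}$, which you correctly note is stronger than what is stated. The cost is that you need $\E[G_j^2]^{1/2}$ rather than $\E[G_j]$, but since $g_{j+s}\le e^{Q_{j+s}/2}$ and Lemma~\ref{lemma:g_j+s_bound_by_quadratic} already controls $\E[e^{2Q_{j+s}}]$, this covers $g_{j+s}^4$ without any further adjustment of $c_\kappa$ (your parenthetical about retuning $c_\kappa$ is unnecessary). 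The paper's approach, by contrast, keeps everything inside a single expectation and avoids the second-moment regulator bound, at the price of the looser exponent.
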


\begin{proof}
Again, we drop $X$. 
Since $|D^n F (\varphip + \zeta)| \leq \norm{D^n F}_{n, T_j (X)} G_j (X, \varphip + \zeta)$ 
we just need to bound $\E[ |(\zeta|_{X^*}, \tilde{\f})|^k G_j (X, \varphip + \zeta)  ]$. 
Firstly, after the subdecomposition $\zeta|_{X^*} = \sum_{s=0}^{N' - 1} \xi_s |_{X^*}$ (as in \eqref{eq:covariance_subdecomposition}) and using the Jensen's inequality, we have
\begin{align}
	|(\zeta|_{X^*} , \tilde{\f})|^k 
	= 
	\Big|  {\textstyle \sum_{s=0}^{N'-1} } \big( \xi_s |_{X^*} , \tilde{\f} \big) \Big|^k 
	& \leq 
	(N')^{k-1}  {\textstyle \sum_{s=0}^{N'-1} } | (\xi_s |_{X^*} , \tilde{\f}) |^k  \nnb
	& \leq 
	(N')^{k-1}  {\textstyle \sum_{s=0}^{N'-1} }  \norm{\tilde{\f}}^k_{L^1} \norm{\xi_s }^k_{L^{\infty} (X^*)}
\end{align}
But since
\begin{align}
 \norm{\xi_s}^k_{L^{\infty} (X^*)} \leq (c_4 \kappa )^{-\frac{k}{2}} \Gamma \Big( \frac{k + 2}{2} \Big)  \exp\big( c_4  \kappa \norm{\xi_s}^2_{L^{\infty} (X^*)} \big)
\end{align}
($\Gamma$ is the gamma function)
and $\norm{\tilde{\f}}_{L^1} \leq M \rho^2$, 
we see for some $C \equiv C(M, \rho)$ that
\begin{align}
|(\zeta |_{X^*} , \tilde{\f})|^k 
\leq 
(N')^{k-1} \sum_{s=0}^{N' - 1} C^k \kappa^{-\frac{k}{2}} \Big( \Big\lceil \frac{k}{2} \Big\rceil ! \Big)  e^{ c_4 \kappa  \norm{\xi_s}^2_{L^{\infty}(X^*) } } . 
\label{eq:(zeta,f)^k_bound}
\end{align}
Secondly,   \eqref{eq:G^r_j_G_j+1_ratio} gives decomposition
\begin{align}
G_j (X, \varphip + \zeta +\rr u_{j+1} ) \leq C(M, \rho,\rr,\ell) \prod_{s \in I_{N'} } g_{j+ (N')^{-1} s} (X_{j+(N')^{-1} s} ,  \xi_s)^2 G_{j+1} (\bar{X}, \varphip) . 
\end{align}
But since 
$
e^{ c_4 \kappa \norm{\xi_s}^2_{L^{\infty} (X^*)} } \leq g_{j+(N')^{-1} s} (X, \xi_s)
$,  
combining this with \eqref{eq:(zeta,f)^k_bound}, we have
\begin{align}
\big| (\zeta |_{X^*} , \tilde{\f}) \big|^k G_j (X, \varphip + \zeta) 
\leq (N')^{k} C^k \kappa^{-\frac{k}{2} }  \Big\lceil \frac{k}{2} \Big\rceil !  \, G_{j+1} (\bar{X}, \varphip)   \prod_{s=0}^{N' - 1} g_{j+ (N')^{-1} s} (X_{j+(N')^{-1} s} ,  \xi_s)^3  .
\end{align}
Also Lemma~\ref{lemma:g_j+s_bound_by_quadratic} says
\begin{align}
\E \Big[ \prod_{s=0}^{N' - 1} g_{j+ (N')^{-1} s} (X_{j+(N')^{-1} s} ,  \xi_s)^3 \Big] \leq 2^{|X|_j},
\end{align}
so the conclusion follows from recalling that $N' = \frac{\log L}{\log \ell}$ and $\kappa = c_{\kappa} (\log L)^{-1}$.

\end{proof}

\begin{lemma} \label{lemma:E[F(cdot+tau_u)]_bound}
Let $\f$, $\tilde{\f}$ and $u_{j+1}$ be as in the previous Lemma.
Let $\htau \leq (C \log L)^{-3/2}$ for sufficiently large $C \equiv C(M, \rho)$
and $\norm{ F(X)}_{\vec{h}, T_j (X)} < \infty$. 
Then
\begin{align}
	\norm{ \E [  F (X, \varphip + \zeta + (\cdot + \rr) u_{j+1} ; \cdot) ] }_{\vec{h}, T_j (X, \varphip)} \leq C_2 2^{|X|_j} \norm{ F(X)}_{\vec{h}, T_j (X)} G_{j+1} (\bar{X}, \varphip) 
\end{align}
for some $C_2 \equiv C_2(M, \rho,\rr)$,
when $\zeta \sim \cN (0, \Gamma_{j+1})$.
\end{lemma}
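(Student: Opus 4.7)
The plan is to expand the $\vec{h}$-seminorm of $H(\varphip;\tau) := \E[F(X, \varphip + \zeta + (\tau+\rr)u_{j+1};\tau)]$ via its defining series $\sum_{n,m\ge 0}\frac{h^n\htau^m}{n!\,m!}\norm{\partial_\tau^m D^n H(\varphip;0)}_{n,T_j(X,\varphip)}$ and to reduce each derivative to a scalar Gaussian estimate by integration by parts. Applying the chain rule to the two sources of $\tau$-dependence in $F(X,\varphip+(\tau+\rr)u_{j+1};\tau)$ yields
\[
	\partial_\tau^m D^n H(\varphip;0)(f_1,\dots,f_n) = \sum_{k=0}^m \binom{m}{k}\E\bigl[D^{n+k}\partial_\tau^{m-k}F(X,\varphip+\zeta+\rr u_{j+1};0)(f_1,\dots,f_n,u_{j+1}^{\otimes k})\bigr].
\]
Since $u_{j+1}=\Gamma_{j+1}\tilde{\f}$, Lemma~\ref{lemma:gauss_int_by_parts} applied in the $k$ directions $u_{j+1}$ (holding $f_1,\dots,f_n$ fixed) converts each such term into $\sum_{l=0}^k\binom{k}{l}(u_{j+1},\tilde{\f})^{l/2}\Her_l(0)\,\E\bigl[(\zeta|_{X^*},\tilde{\f})^{k-l}\,D^n\partial_\tau^{m-k}F(\varphip+\zeta+\rr u_{j+1};0)(f_1,\dots,f_n)\bigr]$.

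The crucial next step is to swap the order of summation so that the series $\sum_{n,m'\ge 0}\frac{h^n\htau^{m'}}{n!\,m'!}$ (with $m'=m-k$, after using $\binom{m'+k}{k}/(m'+k)!=1/(m'!\,k!)$) is performed \emph{inside} the expectation. By the definition of the $\vec{h}$-norm this reassembles as
\[
	\sum_{n,m'\ge 0}\frac{h^n\htau^{m'}}{n!\,m'!}\norm{D^n\partial_\tau^{m'}F(X,\psi;0)}_{n,T_j(X,\psi)} = \norm{F(X,\psi;\cdot)}_{\vec{h},T_j(X,\psi)} \le G_j(X,\psi)\norm{F(X;\cdot)}_{\vec{h},T_j(X)},
\]
with $\psi := \varphip+\zeta+\rr u_{j+1}$. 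What remains inside $\E$ is the purely scalar expectation $\E\bigl[|(\zeta|_{X^*},\tilde{\f})|^{k-l}\,G_j(X,\varphip+\zeta+\rr u_{j+1})\bigr]$, which is controlled by the subscale decomposition used in the proof of Lemma~\ref{lemma:G_j_times_polynomial_bound}---combine \eqref{eq:G^r_j_G_j+1_ratio} with Lemma~\ref{lemma:g_j+s_bound_by_quadratic}---to give $C'\,2^{|X|_j}\,(C_1\log L)^{3(k-l)/2}\,\lceil(k-l)/2\rceil!\,G_{j+1}(\bar X,\varphip)$. Combining with $|(u_{j+1},\tilde{\f})|\le C(M,\rho)\log L$ (Lemma~\ref{lemma:extfield_bound}) and $|\Her_{2p}(0)|\le 2^p\,p!$ factorises the whole bound as $C_2\,2^{|X|_j}G_{j+1}(\bar X,\varphip)\norm{F(X;\cdot)}_{\vec{h},T_j(X)}\cdot S(\htau,L)$, where
\[
	S(\htau,L) := \sum_{k\ge 0}\frac{\htau^k}{k!}\sum_{l=0}^k\binom{k}{l}(C\log L)^{l/2}|\Her_l(0)|(C_1\log L)^{3(k-l)/2}\lceil(k-l)/2\rceil!.
\]

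The main obstacle is twofold. First, the sum-swap just described is genuinely necessary: a per-summand bound $\norm{D^n\partial_\tau^{m-k}F(X)}_{n,T_j(X)}\le (n!(m-k)!/h^n\htau^{m-k})\norm{F(X;\cdot)}_{\vec{h},T_j(X)}$ would, after reindexing, introduce a divergent $\sum_{n,m}1$ factor, because the explicit $(n,m)$-dependence in the prefactor cancels against that bound. One must therefore re-sum the $(n,m')$ series inside $\E$ \emph{before} invoking the sup-norm domination of $\norm{F(X,\psi;\cdot)}_{\vec{h},T_j(X,\psi)}$ by $G_j(X,\psi)\norm{F(X;\cdot)}_{\vec{h},T_j(X)}$. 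Second, one has to verify that $S(\htau,L)$ is bounded uniformly in $L$ under $\htau\le (C\log L)^{-3/2}$: the dominant $l=0$ summand behaves like $(\htau(C_1\log L)^{3/2})^k/\lceil k/2\rceil!$, which is summable precisely under this constraint, and the $l>0$ contributions are absorbed by routine binomial-Hermite estimates (using $(2p)!/(2^p p!)\le 2^p p!$ and $\binom{k}{l}\le 2^k$). The $(\log L)^{-3/2}$ scale for $\htau$ in the hypothesis thus reflects exactly the $(\log L)^{3/2}$ growth-per-order of the scalar estimate from Lemma~\ref{lemma:G_j_times_polynomial_bound}.
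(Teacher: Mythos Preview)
Your proposal is correct and follows essentially the same route as the paper: chain-rule expansion of the two $\tau$-dependencies, Gaussian integration by parts (Lemma~\ref{lemma:gauss_int_by_parts}) to trade the $u_{j+1}$-directional derivatives for powers of $(\zeta|_{X^*},\tilde\f)$, the subscale moment bound of Lemma~\ref{lemma:G_j_times_polynomial_bound} for $\E[|(\zeta,\tilde\f)|^{p}\,G_j]$, and finally summability of the resulting series under $\htau\le (C\log L)^{-3/2}$.

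The only organisational difference is where the re-summation happens. The paper fixes $n$, applies Lemma~\ref{lemma:G_j_times_polynomial_bound} to $\partial_\tau^k F$ (which already factors out $\|\partial_\tau^k F\|_{0,T_j(X)}$ via the $G_j$-domination), and then re-sums $\sum_k \htau^k/k!\,\|\partial_\tau^k F\|_{0,T_j(X)}$ at the end; you instead pull the entire $(n,m')$-series inside the expectation first, reassemble it as $\|F(X,\psi;\cdot)\|_{\vec h,T_j(X,\psi)}\le G_j(X,\psi)\|F(X)\|_{\vec h,T_j(X)}$, and then invoke only the scalar case of Lemma~\ref{lemma:G_j_times_polynomial_bound}. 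The resulting combinatorial series $S(\htau,L)$ coincides with the paper's $\sum_{l'}(\cdots)$ after the change of variables $k\to l'$, $l=2p$, and both are bounded by the same elementary estimate.
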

\begin{proof}
Again, we drop $X$.
We prove this by making the bound
\begin{align}
& \sum_{l=0}^{\infty} \frac{\htau^l}{l !} \Big\| \frac{d^l}{d \tau^l} \Big|_{\tau =0} D^n \E [ F(\varphip + \zeta +  (\tau + \rr) u_{j+1} ; \tau ) \Big\|_{n, T_j (X, \varphi)}  \nnb
& \qquad \qquad \leq C 2^{|X|_j} \sum_{l=0}^{\infty} \frac{\htau^l}{l!} \| \partial_{\tau}^l D^n F( \cdot ; \tau ) \|_{n, T_j (X)} G_{j+1} (\bar{X}, \varphip)
\end{align}
for each $n \geq 0$ and $C$ independent of $n$.
Also, since $n$ does not play any role in the proof, we will just prove this for the case $n=0$.
To show this, first make expansion
\begin{align}
& \frac{d^l}{d \tau^l} \Big|_{\tau =0}  F(\varphip + \zeta + (\tau + \rr) u_{j+1} ; \tau ) 
= \sum_{k=0}^l
\begin{pmatrix}
l \\
k
\end{pmatrix}
D^{l-k} \partial_{\tau}^{k } F(\varphip + \zeta + \rr u_{j+1} ; 0) (u_{j+1}^{\otimes l-k}) 
\end{align}
and by Lemma~\ref{lemma:gauss_int_by_parts}, 
\begin{align}
& \E \big[ D^{l-k} \partial_{\tau}^{k} F(\varphip + \zeta +  \rr u_{j+1} ; 0) (u_{j+1}^{\otimes l-k}) \big] \\
& \nonumber = \sum_{m=0}^{l-k}
\begin{pmatrix}
l-k \\
m
\end{pmatrix}
(u_{j+1}, \tilde{\f})^{\frac{m}{2}} \Her_m(0) \E\big[ (\zeta |_{X^*} , \tilde{\f})^{l-k-m} \partial_{\tau}^{k} F( \varphip + \zeta + \rr u_{j+1} ; 0)  \big] ,
\end{align}
while by Lemma~\ref{lemma:G_j_times_polynomial_bound}, 
\begin{align}
&\big| \E\big[ (\zeta |_{X^*} , \tilde{\f})^{l-k-m} \partial_{\tau}^{k} F( \varphip + \zeta + \rr u_{j+1} ; 0)  \big] \big| \\
&\nonumber  \leq C_2 2^{|X|_j} (C_1 \log L)^{\frac{3}{2} (l-k-m)} \Big( \Big\lceil \frac{l-k-m}{2} \Big\rceil ! \Big) \, G_{j+1} (\bar{X}, \varphip) \norm{\partial_{\tau}^{k} F}_{0, T_j (X)}
\end{align}
Combining these bounds, using $(u_{j+1}, \tilde{\f}) \leq C(M, \rho) \log L$ and
$\Her_{2p-1} (0) = 0$, $\Her_{2p} (0) = (-1)^p \frac{(2p) !}{2^p p!}$ for $p \in \Z_{\geq 0}$, we have
\begin{align}
& \sum_{l=0}^{\infty} \frac{\htau^l}{l !} \Big\| \frac{d^l}{d \tau^l} \Big|_{\tau =0} \E [ F(\varphip + \zeta +  (\tau + \rr) u_{j+1} ; \tau ) \Big\|_{0,  T_j (X, \varphi)} 
\\
& \leq 
C_2 2^{|X|_j} G_{j+1} (\bar{X}, \varphip) 
\sum_{k=0}^{\infty} \sum_{l' = 0}^{\infty} \frac{\htau^{l' + k}}{k !}   \sum_{p=0}^{\lfloor \frac{l'}{2} \rfloor} C_3^{2p} \frac{(C_1 \log L)^{\frac{3}{2} l' - 2p }}{2^{p} p! (l'-2p)! } \Big( \Big\lceil \frac{l'-2p}{2} \Big \rceil ! \Big)  \norm{ \partial_{\tau}^{k} F }_{0, T_j(X)}
\nonumber
\end{align} 
after reparametrising $l' = l-k$, $m = 2p$ and $C_3 = (C_1 / C)^{1/2}$.
This is bounded by
\begin{align}
C_2 2^{|X|_j} \sum_{k=0}^{\infty} \frac{\htau^k}{k!} \| \partial_{\tau}^k  F( \cdot ; \tau ) \|_{0, T_j (X)} G_{j+1} (\bar{X}, \varphip)  \times  \sum_{l'=0}^{\infty} \Big(\cdots \Big) 
\label{eq:E[F(cdot+tau_u)]_bound_final_bound}
\end{align}
where
\begin{align}
\sum_{l'=0}^{\infty} \Big(\cdots \Big) =   \sum_{l'=0}^{\infty} \big( (C_1 \log L)^{\frac{3}{2}} \htau \big)^{l'} \sum_{p=0}^{\lfloor \frac{l'}{2} \rfloor}   C_3^p \frac{(C_1 \log L)^{-2p }}{2^p p! (l'-2p) !}  \Big\lceil \frac{l'-2p}{2} \Big\rceil ! . 
\end{align}
But after using the trivial bound $\sum_{p=0}^{\lfloor \frac{l'}{2} \rfloor} \frac{1}{p! (l'-2p) !}  \big\lceil \frac{l' -2p}{2} \big\rceil !  \leq e$ and setting $\htau$ and $L$ so that $ \frac{1}{2} C_3 (C_1 \log L)^{-2} \leq 1$
and $\htau \leq \frac{1}{2} (C_1 \log L)^{-3/2}$,
we see that \eqref{eq:E[F(cdot+tau_u)]_bound_final_bound} is bounded by a constant that is independent of $L$.

\end{proof}

\begin{proof}[Proof of Lemma~\ref{lemma:Et_F_bound}]
By Lemma~\ref{lemma:Et_by_complex_shift},
\begin{align}
\E_{(\rr + \tau)} [F (X , \varphip  ; \tau)] &= \frac{\E \big[ e^{(\zeta, \tilde{\f})} F(X, \varphip + \zeta + \rr u_{j+1} ; \tau)  \big]}{\exp \big( \frac{1}{2}\tau^2 (\tilde{\f}, \Gamma_{j+1} \tilde{\f}) \big)} \nnb
&= \E [ F (X, \varphip + \zeta + (\tau + \rr ) u_{j+1} ; \tau)].
\end{align}
Hence in fact
\begin{align}
	\norm{ \E_{(\rr + \cdot)} [F (X , \varphip + \zeta  ; \cdot)] }_{\vec{h}, T_j (X, \varphip)} = \norm{ \E [ F (X, \varphip + \zeta + (\cdot + \rr) u_{j+1} ; \cdot)] }_{\vec{h}, T_j (X, \varphip)}
\end{align}
and we obtain the bound \eqref{eq:Et_F_bound} by applying Lemma~\ref{lemma:E[F(cdot+tau_u)]_bound}. 
The analyticity of $\tau \mapsto \E_{(\tau)} [D^n F(X, \varphip + \zeta  ; \tau)]$ is a consequence of Lemma~\ref{lemma:F'_is_analytic}.
\end{proof}

\subsection{Proof of Lemma~\ref{lemma:Loc_Et_K_j_bound}}
\label{sec:Loc_Et_K_j_bound_proof}

For the first inequality, since $\Loco_X \Et F_1(X, \varphip + \zeta) = \Et \hat{F}_{1,0} (X, \zeta)$ and by \eqref{eq:neutralisation_bound} and Lemma~\ref{lemma:Et_F_bound}, 
\begin{align}
\norm{ \E_{(\rr + \cdot)} \hat{F}_{1,0} (X, \zeta ; \cdot )}_{\htau, W} \leq \norm{ \E_{(\rr + \cdot)} F_1 (X, \zeta ;\cdot  )}_{\vec{h}, T_j (X)} \leq C' 2^{|X|_j} \norm{F_1 (X ; \cdot  )}_{\vec{h}, T_j (X)} .
\end{align}
The analyticity follows from the analyticity statement in Lemma~\ref{lemma:Et_F_bound}. 

For the second, since $\Loc_{Z,B}$ gives a polynomial of degree 2, we have
\begin{align}
\norm{\Loc_{Z, B} \E_{(\rr + \cdot)} [  F_2 ( Z, \varphip + \zeta ) ]}_{\vec{h}, T_j (B, \varphip)} \leq 4 \norm{\Loc_{Z, B} \E_{(\rr + \cdot)} [  F_2 ( Z, \varphip + \zeta ) ]}_{(h/2, \htau), T_j (B, \varphip)}
\end{align}
while by \eqref{eq:weaker_Et_bound}, 
\begin{align}
\norm{\Loc_{Z, B} \E_{(\rr + \cdot)} [  F_2 ( Z, \varphip + \zeta ) ]}_{(h/2, \htau),  T_j (B, \varphip)} \leq \norm{\Loc_{Z, B} \E [  F_2 ( Z, \varphip + \zeta + \rr u_{j+1} ) ]}_{h,  T_j (B, \varphip)}
\end{align}
whenever $\htau < C (\log L)^{-1} h$.
But \cite[(6.16)]{dgauss1} says
\begin{align}
\norm{\Loc_{Z, B} \E [  F_2 ( Z, \varphip + \zeta + \rr u_{j+1} ) ]}_{h,  T_j (B, \varphip)} & \leq C \log L \norm{ F_2 (Z )}_{h, T_j (Z)} e^{c_w \kappa (L) w_j (B, \varphip + \rr u_{j+1})^2} \nnb
& \leq C' (L) \norm{ F_2 (Z )}_{h, T_j (Z)} e^{c_w \kappa (L) w_j (B, \varphip )^2}
\end{align}
so we obtain \eqref{eq:Loc_Et_K_j_bound}.

\section{Contraction estimates}
\label{sec:contraction-estimates}

\subsubsection{Taylor expansion.}

Given a polymer activity $F(X) \in \cN_j (X)$ on $X\in \cP_j$,
we define the Taylor polymer of degree $n$ by
\begin{align}
& \operatorname{Tay}^{\varphi}_n F (X, \varphi + \psi) = \sum_{k=0}^{n} \frac{1}{k !} \sum_{x_1, \cdots, x_k \in X^*} \frac{\partial^k F( X, \psi)}{\partial \psi (x_1) \cdots \partial \psi (x_k)} \varphi(x_1) \cdots \varphi(x_k) \\
& \operatorname{Rem}^{\varphi}_n F (X, \varphi + \psi) = F( X, \varphi + \psi) - \operatorname{Tay}^{\varphi}_n F ( X, \varphi + \psi) .
\end{align}
Also for $F(X) \in {\cN}_j(X)$ neutral, define
\begin{align}
& \bar{\Tay}_2^{ \varphi} F(X, \varphi + \psi) = \frac{1}{|X|} \sum_{x_0 \in X} \Tay_2^{\delta \varphi} F(X, \delta \varphi + \psi) \\
& \bar{\Rem}_2^{ \varphi} F (X, \varphi + \psi) = F(X,  \varphi + \psi) - \bar{\Tay}_2^{ \varphi} F(X,  \varphi + \psi)
\end{align}
where $\delta \varphi (x) := \varphi(x) - \varphi(x_0)$ is dependent of the choice of $x_0 \in X$.

\subsubsection{Fourier decomposition.}

If $F : \cP_j \times \R^{\Lambda} \rightarrow \C$ is a ($2\pi\beta^{-1/2}$-)periodic function, i.e., $F(X, \varphi + y \one) = F(X, \varphi)$ for any $X\in \cP_j$, $y\in 2\pi \beta^{-1/2} \Z$ and $\one$ is the constant field, we have defined the neutral part of $F$ in Section~\ref{sec:neutralisation}. 
In fact, this definition can be extended to Fourier components of any period, 
\begin{align}
\hat{F}_q( X, \varphi) = \frac{\sqrt{\beta}}{2\pi} \int_0^{\frac{2\pi}{\sqrt{\beta}}} ds \; e^{-i\sqrt{\beta} q s} F(X, \varphi + s), \quad q \in \Z.
\end{align}
This is also called the \emph{charge$-q$ component} of $F$, and 
satisfy
\begin{equation}
\label{eq:charge_qdefin}
F(X,\varphi+ t)= \sum_{q\in \mathbb{Z}} e^{i\sqrt{\beta} q t} \hat{F}_q(X, \varphi ), \quad t \in \R .
\end{equation}
If $F = \hat{F}_q$ for some $q\in \Z$, then we call $F$ is a polymer activity of charge $q$.

As explained in Section~\ref{sec:loc_operator}, 
$\operatorname{Loc}_X^{(k)} F$ is intended to be the approximation of $\bar{\Tay}_k F$ with better algebraic properties, so we can bound $(1- \operatorname{Loc}_X^{(k)} ) \E_{(\rr + \tau)} F(X)$ by bounding each term of the following: 
\begin{align}
	(1- \operatorname{Loc}_X^{(k)} ) \E_{(\rr+\tau)} F(X, \varphip + \zeta) \label{eq:1-Loc_rewriting}
	& = \bar{\Rem}_{k}^{\varphip}  \E_{(\rr + \tau)} \hat{F}_0 (X, \varphip + \zeta) \\
	& \quad + (\bar{\Tay}_{k}^{ \varphip} - \operatorname{Loc}^{(k)}_X ) \E_{(\rr + \tau)} \hat{F}_0 (X, \varphip + \zeta) ) 
	\\
	& \quad + \sum_{q \in \Z \backslash \{ 0 \}} \E_{(\rr + \tau)} \hat{F}_q (X, \varphip + \zeta) 
\end{align} 
Hence the proof of Proposition~\ref{prop:Loc-contract_v2} can now be reduced to the following statements. 

\begin{proposition} \label{prop:contraction_estimates_external_field}
Let $X \in \mathcal{S}_j$,  $\vec{h} = (h, \htau)$
 and let $F$ be a ($2\pi/\sqrt{\beta}$-)periodic polymer activity such that $\norm{F}_{\vec{h}, T_j (X)} < \infty$.
Let $L\geq L_0$, $h\geq c_h \sqrt{\beta}$ and $\htau < (C_1 \log L)^{-1} h$ for $L_0,  c_h$ and $C_1$ sufficiently large.
 Then the following hold for some $C=C (M, \rho, \rr)>0$.
\begin{itemize}
\item[(1)] If $F$ has charge $q$ with $|q|\geq 1$, then for all $\varphip \in \R^{\Lambda_N}$,
\begin{align}
\begin{split}
& \norm{\E_{(\rr +\cdot)} F (X, \varphip + \zeta) }_{ \vec{h}, T_{j+1} ({X}, \varphip)}
\\
& \qquad \qquad \qquad \qquad \leq C e^{2 \sqrt{\beta} |q| h} e^{-(|q|-1/2) \Gamma_{j+1} (0)} \norm{F(X)}_{\vec{h} , T_j (X)} G_{j+1} (\bar{X}, \varphip) . \label{eq:contraction_of_charge_q_term_external_field}
\end{split}
\end{align}

\item[(2)] If $F$ is neutral, then for all $\varphip \in \R^{\Lambda_N}$ and $m \in \{0,2\}$,
\begin{align}
\begin{split}
& \norm{ \bar{\Rem}_{m}^{\varphip} \E_{(\rr +\cdot)}  F(X,\varphip + \zeta)  }_{ \vec{h}, T_{j+1} (X, \varphip)}  \\
& \qquad \qquad \qquad \qquad \qquad \leq C
L^{-(m+1)} (\log L)^{(m+1)/2} \norm{F(X)}_{\vec{h} , T_j (X)} G_{j+1} (\bar{X}, \varphip) . \label{eq:gaussian_contraction_external_field}
\end{split}
\end{align}
\end{itemize}
\end{proposition}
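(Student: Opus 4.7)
The plan is to reduce Proposition~\ref{prop:contraction_estimates_external_field} to the analogous contraction estimates for the untilted expectation $\E$ in the $\norm{\cdot}_{h, T_j}$-norm that are (essentially) established in \cite{dgauss1} and \cite{dgauss2}, by stripping away both the complex tilting $\E_{(\rr + \tau)}$ and the $\tau$-analytic structure of $\norm{\cdot}_{\vec{h}, T_j}$. The reduction has two ingredients. First, Lemma~\ref{lemma:Et_by_complex_shift} (applicable by $\htau < (C_1 \log L)^{-1} h$) rewrites $\E_{(\rr + \tau)}[F(X, \varphip + \zeta; \tau)] = \E[F(X, \varphip + \zeta + (\rr + \tau) u_{j+1}; \tau)]$, converting the tilted expectation into an ordinary Gaussian expectation with a shifted field. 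Second, Lemma~\ref{lemma:F'_is_analytic} together with Remark~\ref{remark:new_norm} shows that bounds in $\norm{\cdot}_{h, T_j}$ on $\varphi \mapsto F(X, \varphi + \rr u_{j+1}; 0)$ upgrade to bounds in $\norm{\cdot}_{\vec{h}, T_j}$ on $(\varphi, \tau) \mapsto F(X, \varphi + (\rr + \tau) u_{j+1}; \tau)$ at the cost of enlarging $h$ to $h'' = h + \htau \norm{u_{j+1}}_{C^2_j} \leq 2h$, using $\norm{u_{j+1}}_{C^2_j} \leq C \log L$ from Lemma~\ref{lemma:extfield_bound}. The fixed real shift by $\rr u_{j+1}$ perturbs the regulator by at most a multiplicative constant $C(M, \rho, \rr)$ via the second bound of Lemma~\ref{lemma:E_G^r_j}, which will be absorbed into the final constant.

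For part~(1), the contraction for charge-$q$ activities exploits the structural identity $F(X, \varphi + t \one) = e^{i q \sqrt{\beta} t} F(X, \varphi)$ valid for $t \in \R$ and any charge-$q$ polymer activity. Decomposing $\zeta = \bar\zeta \one + \zeta^{\perp}$, where $\bar\zeta$ is an appropriately chosen spatial average of $\zeta$, extracts an explicit oscillatory factor $e^{i q \sqrt{\beta} \bar\zeta}$ whose Gaussian expectation equals $\exp(-\tfrac{1}{2} q^2 \beta \operatorname{Var}(\bar\zeta))$; since $\operatorname{Var}(\bar\zeta) = \Gamma_{j+1}(0) (1 + O(L^{-1}))$ by \eqref{quote:Gamma_three}, this produces the decay $e^{-(|q| - 1/2) \beta \Gamma_{j+1}(0)}$, where the $\tfrac{1}{2} \beta \Gamma_{j+1}(0)$ shortfall is spent controlling a complex shift of $\bar\zeta$ of size $O(h/\sqrt{\beta})$ required by the $h$-norm, and this same shift generates the prefactor $e^{2 \sqrt{\beta} |q| h}$. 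The remaining $\zeta^{\perp}$-integration is bounded by $\norm{F(X)}_{h, T_j(X)} G_{j+1}(\bar X, \varphip)$ via Lemma~\ref{lemma:E_G^r_j}, after which the first paragraph transfers the bound to the $\vec{h}$-norm.

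For part~(2), a neutral $F$ depends only on $\nabla \varphi$, so $\bar\Tay^\varphi_m F$ is genuinely a Taylor expansion in $\nabla\varphi$ up to order $m$. The integral form of Taylor's theorem bounds the remainder by $\norm{F(X)}_{h, T_j(X)}$ multiplied by $\norm{\nabla \varphi}_{L^\infty(X^*)}^{m+1}$-type factors (paying appropriate derivatives of $F$ to the $h$-norm). Converting $L^2_j$- and $L^\infty$-norms at scale $j$ into those at scale $j+1$ produces a factor $L^{-(m+1)}$ from $\nabla_{j+1} = L \cdot \nabla_j$, and Gaussian moments of $\zeta$ integrated against $G_{j+1}$ yield the $(\log L)^{(m+1)/2}$ factor via Lemma~\ref{lemma:g_j+s_bound_by_quadratic}. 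The resulting $h$-norm bound is then transferred to the $\vec{h}$-norm by the first paragraph, yielding \eqref{eq:gaussian_contraction_external_field}.

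The main obstacle is the careful bookkeeping required to preserve $\tau$-analyticity through the change-of-variable $\varphi \mapsto \varphi + (\rr + \tau) u_{j+1}$, especially in part~(1) where the charge-$q$ structure must coexist with a non-real shift. The key quantitative check is that the enlargement $h \mapsto h''$ multiplies the exponential prefactor $e^{2 \sqrt{\beta} |q| h}$ by at most $e^{2 \sqrt{\beta} |q| \htau \cdot C \log L} = O(1)$, using $\htau \leq (C_1 \log L)^{-1} h$, so the explicit $|q|$-dependence is preserved. A secondary subtlety is that the Fourier decomposition of part~(1) must be applied \emph{after} the change of variables, so that the charge-$q$ phase factor $e^{iq\sqrt{\beta}\bar\zeta}$ couples only to the real Gaussian field $\zeta$ and not to the fixed complex translate $(\rr + \tau) u_{j+1}$; this preserves the Gaussian exponential moment computation used to extract the decay factor.
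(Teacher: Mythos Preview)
Your overall reduction strategy is exactly the paper's: strip the complex tilt via \eqref{eq:weaker_Et_bound} (at the cost $h \mapsto h'' \leq 2h$), absorb the real shift $\rr u_{j+1}$ into the regulator via Lemma~\ref{lemma:E_G^r_j}, and then invoke the contraction estimates of \cite{dgauss1}. For part~(2) this is precisely what the paper does: it applies \eqref{eq:weaker_Et_bound} at scale $j+1$ to pass from $\vec h$ to $(2h,\htau)$, expands the $\htau$-series, and feeds each $\partial_\tau^k F$ into the $2h$-norm remainder estimate (Lemma~\ref{lemma:gaussian_contraction2}, which is \cite[Lemma~6.16]{dgauss1} with the shifted regulator).

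For part~(1), your heuristic is not quite the mechanism that works. Writing $\zeta = \bar\zeta\one + \zeta^{\perp}$ does extract the phase $e^{iq\sqrt\beta\bar\zeta}$, but $\bar\zeta$ and $\zeta^{\perp}$ are \emph{not} independent under $\Gamma_{j+1}$, so you cannot factor the expectation and read off $\exp(-\tfrac12 q^2\beta\operatorname{Var}(\bar\zeta))$. The paper (following \cite[Lemma~6.13]{dgauss1}) instead performs a single Cameron--Martin complex shift of the \emph{whole} field by $i\sqrt\beta\,\sigma_q\,\Gamma_{j+1}(\cdot - x_0)$ for a fixed $x_0\in X$: the charge-$q$ structure then yields a factor $e^{-|q|\beta\Gamma_{j+1}(0)}$ from the constant part of the shift, the Gaussian density contributes $e^{+\frac12\beta\Gamma_{j+1}(0)}$, and the non-constant part $\xi(x)=\sqrt\beta(\Gamma_{j+1}(x-x_0)-\Gamma_{j+1}(0))$ is small in $C^2_j$ and is absorbed into the $h$-budget. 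This is why the decay is $e^{-(|q|-1/2)\beta\Gamma_{j+1}(0)}$ rather than $e^{-\frac12 q^2\beta\Gamma_{j+1}(0)}$; your exponent accounting (a ``$\tfrac12\beta\Gamma_{j+1}(0)$ shortfall'') does not match the actual gap $\tfrac12(|q|-1)^2\beta\Gamma_{j+1}(0)$.

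A second bookkeeping point: your claim that $h\mapsto h''$ changes $e^{2\sqrt\beta|q|h}$ by $O(1)$ is not correct, since $e^{2\sqrt\beta|q|(h''-h)} = e^{2\sqrt\beta|q|\htau\norm{u_{j+1}}}$ is not bounded uniformly. The paper sidesteps this by proving the untilted charge-$q$ bound (Lemma~\ref{lemma:contraction_of_charge_q_term}) \emph{directly in the $2h$-norm at scale $j+1$}, so that after \eqref{eq:weaker_Et_bound} one lands exactly on the needed input with no further enlargement; the factor $e^{2\sqrt\beta|q|h}$ arises there from the charge-$q$ scale change $T_{j+1}\to T_j$ (equation \eqref{eq:6.60_dgauss1}), not from the $\tau$-upgrade.
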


\begin{lemma} \label{lemma:Loc_minus_Tay}
    Let $F$ be a neutral polymer activity such that
    $F(X, \varphi) = F(X,-\varphi)$
    and $\norm{F}_{\vec{h}, T_j} < \infty$. 
    Then for $X\in \cS_j$,
\begin{align} \label{eq:Loc_minus_Tay_2}
\begin{split}
    & \| \Loc_X \E_{(\rr + \cdot)} F(X, \varphip +\zeta ; \cdot) - \bar\Tay_2 \E_{(\rr + \cdot)} F(X, \varphip +\zeta; \cdot)\|_{\vec{h},T_{j+1}(\bar{X}, \varphip)} \\
    & \qquad \qquad \qquad \qquad \qquad \qquad  \qquad \qquad  \leq
	C L^{-3}(\log L) (A')^{-|X|_j} 
	\|F\|_{\vec{h},T_j, A'} G_{j+1} (\bar{X}, \varphip).
\end{split}
\end{align}
  for any $A' \geq 1$.
\end{lemma}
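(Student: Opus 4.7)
The strategy is to reduce the tilted, $\tau$-dependent statement to its untilted $h$-norm analogue, which is a direct transcription of the $k=2$ case of the $\Loc^{(2)} - \bar{\Tay}_2$ contraction estimates from \cite{dgauss1, dgauss2}. The tilt is inserted via the complex shift of variables in Lemma~\ref{lemma:Et_by_complex_shift}, and the $\tau$-direction is handled by Lemma~\ref{lemma:F'_is_analytic} exactly as sketched in Remark~\ref{remark:new_norm}.

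First, I would apply Lemma~\ref{lemma:Et_by_complex_shift} (valid for $\htau < (C \log L)^{-1}h$ with $C$ large) to rewrite $\E_{(\rr+\tau)}[F(X,\varphip+\zeta;\tau)] = \E[F'(X,\varphip+\zeta;\tau)]$, where $F'(X,\varphi;\tau) := F(X, \varphi + (\rr+\tau)u_{j+1}; \tau)$. Since $\Loc_X$ and $\bar{\Tay}_2$ act on the $\varphi$-argument and commute with the Gaussian expectation, the left side of \eqref{eq:Loc_minus_Tay_2} equals $\|(\Loc_X - \bar{\Tay}_2)\E F'(X,\varphip+\zeta;\cdot)\|_{\vec{h}, T_{j+1}(\bar X,\varphip)}$. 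Using Lemma~\ref{lemma:F'_is_analytic} together with the bound $\norm{u_{j+1}}_{C^2_j} \le C(M,\rho)\log L$ from Lemma~\ref{lemma:extfield_bound}, one has $\norm{F'(X;\cdot)}_{\vec{h}, T_j(X)} \lesssim \norm{F(X;\cdot)}_{\vec{h}, T_j(X)}$ once $\htau$ is small enough to absorb the $\log L$ (which is exactly the regime $\htau \le c_{\kt}(\log L)^{-3/2}$ in Section~\ref{sec:choice_of_parameters}). This reduces the problem to the untilted bound for $G := F'(X,\cdot;\cdot)$, which, by Remark~\ref{remark:new_norm}, is in turn equivalent to the analogous $h$-norm estimate applied to each Taylor coefficient in $\tau$.

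For the remaining $h$-norm estimate $\|(\Loc_X - \bar{\Tay}_2)\E G_0(X, \cdot + \zeta)\|_{h, T_{j+1}(\bar X,\varphip)} \le CL^{-3}(\log L)(A')^{-|X|_j}\norm{G_0}_{h, T_j, A'} G_{j+1}(\bar X,\varphip)$: both $\Loc_X \E G_0$ and $\bar{\Tay}_2 \E G_0$ are polynomials of degree $\le 2$ in $\nabla\varphi$ with coefficients drawn from $(\E \partial^\alpha G_0(X,\zeta))_{|\alpha|\le 2}$, and the construction of $\Loc_X$ in \cite[Definition~6.4]{dgauss1} selects the particular component of the form $\sum_{B\subset X} \tfrac12 s(B) |\nabla\varphi|^2_B$. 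Their difference consequently lies in the space of ``irrelevant'' quadratic spatial-moment and boundary combinations on $X^*$, and these carry an additional factor $L^{-1}$ beyond the baseline $L^{-2}$ scaling of a quadratic form in $\nabla\varphi$ when moving from $T_j$ to $T_{j+1}$; this gives the $L^{-3}$, while the $\log L$ enters through the $\Gamma_{j+1}(0,0) = O(\log L)$ bound in \eqref{quote:Gamma_three}, which controls the zeroth-order coefficient $\E G_0(X,\zeta)$ itself.

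\textbf{The main obstacle} is the last step---the explicit algebraic verification that $(\Loc_X - \bar{\Tay}_2)\E G_0$ consists only of these irrelevant terms, which is a careful transcription of the $k=2$ computation behind Proposition~\ref{prop:Loc-contract_v2} and the treatment in \cite[Section~6]{dgauss1}. Everything else---the reduction to the untilted estimate via Lemma~\ref{lemma:Et_by_complex_shift}, the passage to $\vec{h}$-norms via Remark~\ref{remark:new_norm}, and the absorption of the $O(\log L)$ cost from $u_{j+1}$ into our choice $\htau \le c_{\kt}(\log L)^{-3/2}$---is bookkeeping already encoded in the norm definitions of Section~\ref{sec:polymer_activities_and_norms}.
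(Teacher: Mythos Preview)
Your proposal is correct and follows essentially the same route as the paper: both reduce the tilted $\vec{h}$-estimate to the untilted $h$-estimate via Lemma~\ref{lemma:F'_is_analytic}/\eqref{eq:weaker_Et_bound} (which is exactly your complex-shift-then-$F'$ move), and both delegate the core $(\Loc_X - \bar{\Tay}_2)$ contraction to \cite[Lemma~6.16]{dgauss1}. The paper additionally invokes the observation that $\Loc_X\E_{(\rr+\tau)}F - \bar{\Tay}_2\E_{(\rr+\tau)}F$ is a degree-$2$ polynomial in $\varphip$ to absorb the passage from $h$ to $h'' = h + \htau\|u_{j+1}\|_{C^2_j}\le 2h$ at the cost of a harmless factor~$4$, rather than relying on the smallness of $\htau$ alone; and it does not carry out what you call the ``main obstacle'' at all, simply citing the existing untilted result.
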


\subsection{Proof of Proposition~\ref{prop:contraction_estimates_external_field}~(1)}

\begin{corollary}
Let $q\in \Z \backslash \{ 0 \}$, $X\in \cS_j$, $\xi(x) = \sqrt{\beta}(\Gamma_{j+1} (x-x_0) - \Gamma_{j+1} (0))$
and $h \geq c_h \sqrt{\beta}$ for $c_h$ sufficiently large.
Let $\f$ satisfy \eqref{quote:assumpf},  $u_{j+1}$ defined by Definition~\ref{def:extfield_def} 
and $|\tau| < \htau \leq (C \log L)^{-1} h$ for sufficiently large $C$.
If $F$ is a $q$-charge polymer activity on $X$ with $\norm{F(X)}_{h, T_j (X)} < \infty$, then
\begin{align}
\E[F (X, \varphi + \zeta + \tau u_{j+1}) ] 
= 
e^{-\frac{1}{2} \beta \Gamma_{j+1} (0) (2|q| - 1)} \E \big[ e^{-i\sqrt{\beta} \sigma_q \zeta_{x_0}} F(X, \varphi + \zeta +  \tau u_{j+1} + i \sigma_q \xi ) \big]
\end{align}
where $\sigma_q = \operatorname{sign} (q)$ and any $\varphi \in \R^{\Lambda_N}$. 
\end{corollary}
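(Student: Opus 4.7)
The strategy is to combine the Gaussian complex shift of variables (Lemma~\ref{lemma:complex_shift_of_variables}) with the charge-$q$ translation property of $F$. The engineered shift $\xi = \sqrt{\beta}(\Gamma_{j+1}(\cdot - x_0) - \Gamma_{j+1}(0)\one)$, which vanishes at $x_0$, decomposes naturally as the ``Gaussian-adapted'' piece $\sqrt{\beta}\Gamma_{j+1}(\cdot - x_0) = \Gamma_{j+1}(\sqrt{\beta}\delta_{x_0})$ plus the constant correction $-\sqrt{\beta}\Gamma_{j+1}(0)\one$. The former is tailored to the complex shift lemma, while the latter will be absorbed by the periodicity of $F$ to produce the characteristic $e^{|q|\beta\Gamma_{j+1}(0)}$ factor.

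First, I will apply Lemma~\ref{lemma:complex_shift_of_variables} with $v = -i\sigma_q\sqrt{\beta}\delta_{x_0}$, so that $\Gamma_{j+1}v = -i\sigma_q\sqrt{\beta}\Gamma_{j+1}(\cdot - x_0)$, $(v, \Gamma_{j+1}v) = -\beta\Gamma_{j+1}(0)$, and $(\zeta, v) = -i\sigma_q\sqrt{\beta}\zeta(x_0)$. Writing $\varphi' = \varphi + \tau u_{j+1}$, this produces
\begin{align*}
\E\bigl[F(X, \varphi' + \zeta - i\sigma_q\sqrt{\beta}\Gamma_{j+1}(\cdot - x_0))\bigr] = e^{\frac{1}{2}\beta\Gamma_{j+1}(0)}\,\E\bigl[e^{-i\sigma_q\sqrt{\beta}\zeta(x_0)} F(X, \varphi' + \zeta)\bigr].
\end{align*}
Next, the charge-$q$ relation $F(X, \psi + s\one) = e^{i\sqrt{\beta}qs}F(X, \psi)$ with $s = -i\sigma_q\sqrt{\beta}\Gamma_{j+1}(0)$, using $q\sigma_q = |q|$, rewrites the Gaussian-adapted shift as the engineered $\xi$:
\begin{align*}
F\bigl(X, \psi - i\sigma_q\sqrt{\beta}\Gamma_{j+1}(\cdot - x_0)\bigr) = e^{|q|\beta\Gamma_{j+1}(0)}\,F(X, \psi - i\sigma_q\xi).
\end{align*}
Substituting and rearranging yields the intermediate identity $\E[e^{-i\sigma_q\sqrt{\beta}\zeta(x_0)}F(X, \varphi' + \zeta)] = e^{\frac{1}{2}(2|q|-1)\beta\Gamma_{j+1}(0)}\E[F(X, \varphi' + \zeta - i\sigma_q\xi)]$.

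Finally, I will apply this intermediate identity with $F$ replaced by $G(X, \psi) := F(X, \psi + i\sigma_q\xi)$. The polymer activity $G$ inherits charge $q$ (since $i\sigma_q\xi$ is constant in the translation parameter $s$), and is well-defined via the analytic extension of Proposition~\ref{prop:analytic_on_strip}. The $-i\sigma_q\xi$ shift appearing on the right of the intermediate identity then telescopes with the $+i\sigma_q\xi$ built into $G$, and a final algebraic rearrangement delivers the stated formula.

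The main technical obstacle is verifying the hypothesis $\|\Gamma_{j+s, j+\bar{s}}v_2\|_{C_j^2} < h/2$ of Lemma~\ref{lemma:complex_shift_of_variables} for the pointwise-supported $v_2 = \sigma_q\sqrt{\beta}\delta_{x_0}$, together with controlling the strip width needed to define $G$ in the final step. Both rely on the subscale decomposition \eqref{eq:covariance_subdecomposition} combined with the analog of \eqref{quote:Gamma_one} applied to the subscale covariances $\Gamma_{j+s, j+\bar{s}}$: each subscale piece has $C_j^2$-norm of order $\sqrt{\beta}$ (the potentially large $\log L$ factor in the covariance is distributed across $N'\sim \log L/\log\ell$ subscales), which fits comfortably within $h/2$ once $c_h$ in the assumption $h \geq c_h\sqrt{\beta}$ is chosen sufficiently large depending on the constants in the finite range decomposition.
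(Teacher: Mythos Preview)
Your proof is correct and uses exactly the same ingredients as the paper: the complex shift Lemma~\ref{lemma:complex_shift_of_variables} with $v$ proportional to $i\delta_{x_0}$ (hypothesis checked via \cite[Lemma~6.11]{dgauss1}), followed by the charge-$q$ translation identity to trade the constant piece $\sqrt{\beta}\Gamma_{j+1}(0)\one$ for the factor $e^{|q|\beta\Gamma_{j+1}(0)}$. Your organization is marginally more roundabout---you first derive an intermediate identity and then re-apply it to $G=F(\cdot+i\sigma_q\xi)$, whereas the paper effects the shift in a single pass---but this is purely cosmetic, and your extra care about the strip width for $G$ is handled by the same $h\ge c_h\sqrt{\beta}$ assumption.
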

\begin{proof}
Again, omit $X$. Without loss of generality, we take $q >0$.
We apply Lemma~\ref{lemma:complex_shift_of_variables}
with $v = i \sqrt{\beta} \delta_{x_0}$ 
(and the assumptions are verified by \cite[Lemma~6.11]{dgauss1}, if $c_h$ is chosen sufficiently large)
to obtain
\begin{align}
\E \big[ e^{-i\sqrt{\beta} \sigma_q \zeta_{x_0}} F(\varphi + \zeta +  \tau u_{j+1} + i \sqrt{\beta}  \Gamma_{j+1} (x-x_0) ) \big] = e^{\frac{1}{2} \beta \Gamma_{j+1} (0) } \E[ F (\varphi + \zeta + \tau u_{j+1}) ]
.
\end{align}
But since $F$ has charge $q$, 
\begin{align}
F \big((\varphi + \zeta +  \tau u_{j+1} + i \sqrt{\beta}  \Gamma_{j+1} (x-x_0) \big) = e^{ \beta \Gamma_{j+1} (0) |q|} F \big( \varphi + \zeta +  \tau u_{j+1} + i  \xi \big) 
.
\end{align}
\end{proof}

\begin{lemma} \label{lemma:contraction_of_charge_q_term}
Let $h \geq c_h  \sqrt{\beta}$ and $L\geq L_0$ for $L_0$ and $c_h$ sufficiently large.
There exists $C \equiv C(M, \rho,\rr) >0$ such that for $X\in \mathcal{S}_j$,
and any
charge-$q$ polymer activity $F$
with $|q| \geq 1$ and $\|F (X )\|_{h,T_j (X)} < \infty$, and all $\varphip \in \R^{\Lambda_N}$,
\begin{equation} \label{eq:contraction_of_charge_q_term-1}
  \norm{\E_{(\rr)} [ F (X,  \varphip + \zeta )]}_{2h, T_{j+1}(X, \varphip)}
  \leq C e^{2 \sqrt{\beta} |q| h} e^{-(|q|-1/2) r \beta \Gamma_{j+1} (0)} \norm{ F(X)}_{h, T_j (X)} G_{j+1} (\bar{X},  \varphip ).
\end{equation}
\end{lemma}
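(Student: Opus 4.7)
The plan is to reduce the bound to a controlled complex shift using the corollary stated immediately before the lemma. Since $\rr \in \R$, the tilted expectation coincides with the untilted one after a real translation, so $\E_{(\rr)}[F(X, \varphip + \zeta)] = \E[F(X, \varphip + \zeta + \rr u_{j+1})]$. Applying the preceding corollary with $\tau \leftarrow \rr$ then yields
\begin{align*}
\E_{(\rr)}[F(X, \varphip + \zeta)] = e^{-\frac{1}{2}\beta\Gamma_{j+1}(0)(2|q|-1)}\, \E\bigl[e^{-i\sqrt\beta\sigma_q\zeta_{x_0}}\, F(X, \varphip + \zeta + \rr u_{j+1} + i\sigma_q\xi)\bigr],
\end{align*}
which already exhibits the Debye-type decay $e^{-(|q|-1/2)\beta\Gamma_{j+1}(0)}$. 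It remains to bound the second factor in the $\norm{\cdot}_{2h, T_{j+1}(X, \varphip)}$-norm by $C\, e^{2\sqrt\beta|q|h}\norm{F(X)}_{h, T_j(X)}\, G_{j+1}(\bar X, \varphip)$.

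Next I would estimate $\norm{\xi}_{C_j^2(X^*)}$. For $X\in\cS_j$, every $x\in X^*$ satisfies $|x-x_0| = O(L^j)$, so the bounds on $\nabla^n\Gamma_{j+1}$ from \eqref{quote:Gamma_one} give $\norm{\xi}_{C_j^2(X^*)} \leq C\sqrt\beta \leq h$ provided $c_h$ is sufficiently large. In particular the imaginary shift $i\sigma_q\xi$ lies inside the analytic strip on which $F$ has a holomorphic extension by Proposition~\ref{prop:analytic_on_strip}. Differentiating $n$ times in $\varphip$ inside the expectation, testing against $f_1,\dots,f_n$ with $\norm{f_k}_{C_{j+1}^2(X^*)}\leq 1$ (so in particular $\norm{f_k}_{C_j^2(X^*)}\leq 1$), Taylor-expanding $D^n F$ at $\varphi := \varphip + \zeta + \rr u_{j+1}$ in the direction $\xi$, and summing with weights $(2h)^n/n!$ yields the bound
\begin{align*}
\sum_n \frac{(2h)^n}{n!}\bigl|D^n F(X, \varphi + i\sigma_q\xi)(f_1,\dots,f_n)\bigr| \leq \norm{F(X, \varphi)}_{2h + \norm{\xi}_{C_j^2(X^*)},\, T_j(X, \varphi)}.
\end{align*}

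The critical step -- and the main obstacle I expect -- is passing from this enlarged seminorm (with argument $h' := 2h + \norm{\xi}_{C_j^2(X^*)} \leq 3h$) back to $\norm{F(X)}_{h, T_j(X)}$ at the price of the factor $e^{2\sqrt\beta|q|h}$. For a charge-$q$ activity the identity $F(X, \varphi + t\one) = e^{i\sqrt\beta q t}F(X, \varphi)$ extends analytically to complex $t$, so imaginary constant shifts multiply $|F|$ by $e^{-\sqrt\beta q t}$; combined with the Cauchy representation of the $T_j$-seminorm this furnishes an amplification inequality of the form $\norm{F(X, \varphi)}_{h', T_j(X, \varphi)} \leq C\, e^{\sqrt\beta|q|(h'-h)}\norm{F(X, \varphi)}_{h, T_j(X, \varphi)}$. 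Applied with $h' \leq 3h$ this produces the advertised factor $e^{2\sqrt\beta|q|h}$. Combining with the pointwise bound $\norm{F(X, \varphi)}_{h, T_j(X, \varphi)} \leq \norm{F(X)}_{h, T_j(X)}\, G_j(X, \varphi)$, with the inclusion $\norm{G}_{2h, T_{j+1}} \leq \norm{G}_{2h, T_j}$ (since $\norm{f}_{C_{j+1}^2}\ge \norm{f}_{C_j^2}$), and with the regulator expectation bound from Lemma~\ref{lemma:E_G^r_j}, namely $\E[G_j(X, \varphip + \zeta + \rr u_{j+1})] \leq C(\rr)\, 2^{|X|_j}\, G_{j+1}(\bar X, \varphip)$, yields the desired estimate \eqref{eq:contraction_of_charge_q_term-1}. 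The amplification inequality is a variant of Lemma~6.11 of \cite{dgauss1}, and the real tilt $\rr u_{j+1}$ contributes only a mild $\rr$-dependence that can be absorbed into the prefactor $C(M, \rho, \rr)$.
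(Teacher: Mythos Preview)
Your proof breaks down at the ``amplification inequality''
\[
\norm{F(X,\varphi)}_{h',T_j(X,\varphi)} \leq C\, e^{\sqrt\beta|q|(h'-h)}\norm{F(X,\varphi)}_{h,T_j(X,\varphi)}.
\]
This is false for general charge-$q$ activities.  For instance, take $F(X,\varphi)=e^{i\sqrt\beta(q+m)\varphi(x_1)-i\sqrt\beta m\varphi(x_2)}$ with $x_1,x_2\in X$ separated by order $L^j$; this has charge $q$, but one computes $\norm{F(X,\varphi)}_{h',T_j}/\norm{F(X,\varphi)}_{h,T_j}=e^{(h'-h)\sqrt\beta(q+2m)}$, which for large $m$ exceeds any bound of the form $C\,e^{\sqrt\beta|q|(h'-h)}$.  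The charge only constrains how $F$ responds to \emph{constant} shifts, not to general test functions, so it cannot by itself compensate for enlarging $h$ to $3h$ in the $T_j$-seminorm.

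The paper's argument avoids this trap by reversing the order of operations.  The factor $e^{2\sqrt\beta|q|h}$ enters not through an amplification at fixed scale but through the \emph{scale change} $T_{j+1}\to T_j$ (this is \cite[(6.60)]{dgauss1}): a test function $f$ with $\norm{f}_{C^2_{j+1}(X^*)}\le 1$ on a $j$-small set $X$ splits as a constant (which, tested against a charge-$q$ activity, produces the factor $i\sqrt\beta q$ per slot) plus a remainder of $C^2_j$-norm $O(L^{-1})$.  This converts $\norm{\,\cdot\,}_{2h,T_{j+1}}$ into $e^{2\sqrt\beta|q|h}\norm{\,\cdot\,}_{2C_2L^{-1}h,T_j}$.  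Only then is the complex shift (6.59) applied, and now $2C_2L^{-1}h+\norm{\xi}_{C^2_j}\le h$ for $L$ and $c_h$ large, so the hypothesis norm $\norm{F(X)}_{h,T_j(X)}$ is recovered directly with no amplification needed.  Your proposal misses this $L^{-1}$ gain from the scale change, and there is no way to close the argument without it.
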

\begin{proof}
This almost follows from \cite[Lemma~6.13]{dgauss1}. To be specific, by \cite[(6.59)]{dgauss1} (applied with $r=1$ there),  for any $h' > 0$,
\begin{align}
	\norm{\E_{(\rr)} [F(X, \varphip + \zeta)]}_{h', T_j (X, \varphip)} \leq e^{-\frac{2 |q| - 1 }{2} \beta \Gamma_{j+1} (0) } \E_{(\rr)}  \big[ \norm{F(X, \varphip + \zeta)}_{h' + \norm{\xi} , T_j (X, \varphip)} \big] 
	\label{eq:6.59_dgauss1}
\end{align}
where $\xi(x) = \sqrt{\beta} ( \Gamma_{j+1} (x-x_0) - \Gamma_{j+1} (0))$,  $x_0 \in X$ and $\norm{\xi} \equiv \norm{\xi}_{C_j^2} \leq C_1 \sqrt{\beta}$ for some $C_1 > 0$ (by \cite[Lemma~6.11]{dgauss1}). Also by \cite[(6.60)]{dgauss1}, 
\begin{align}
	\norm{\E_{(\rr)}  [ F(X, \varphip + \zeta) ]}_{2h, T_{j+1} (X, \varphip)} \leq e^{2\sqrt{\beta} q h} \norm{ \E_{(\rr)}  [ F(X, \varphip + \zeta)] }_{2C_2 L^{-1} h, T_j (X,\varphip )} 
	\label{eq:6.60_dgauss1}
\end{align}
for some $C_2 >0$.
Hence setting $h' = 2 C_2 L^{-1} h$ in \eqref{eq:6.59_dgauss1} and combining with \eqref{eq:6.60_dgauss1} gives
\begin{align}
& \norm{\E_{(\rr)}  [F(X, \varphip + \zeta)]}_{2h, T_{j+1} (X,\varphip)} \nnb
& \qquad  \leq e^{-\frac{2|q| -1}{2} \beta \Gamma_{j+1} (0)} e^{2\sqrt{\beta} |q| h} \E[G_j (X, \varphip + \zeta + \rr u_{j+1})] \norm{F (X)}_{2 C_2 L^{-1} h + \norm{\xi}, T_j (X)} .
\end{align}
By Lemma~\ref{lemma:E_G^r_j} 
$\E[G_j (X, \varphip + \zeta + \rr u_{j+1})] \leq C 2^{|X|_j} G_{j+1} (X, \varphip)$,
and for $L$ and $c_h$ sufficiently large, $2 C L^{-1} h + \norm{\xi} \leq h$, so we have the desired bound. 
\end{proof}

\begin{proof}[Proof of Proposition~\ref{prop:contraction_estimates_external_field}~(1)]

The conclusion follows from combining \eqref{eq:contraction_of_charge_q_term-1} with \eqref{eq:weaker_Et_bound}.
\end{proof}

\subsection{Proof of Proposition~\ref{prop:contraction_estimates_external_field}~(2)}

\begin{lemma} \label{lemma:gaussian_contraction2}
For $X \in \cS_j$, $\varphi \in \R^{\Lambda_N}$ and neutral $F(X)$ such that $\norm{F}_{h, T_j } < \infty$ and $m \in \{0,2\}$,
choose some $x_0 \in X$ and let $\delta \varphip (x) = \varphip (x) - \varphip (x_0)$.
Then
\begin{equation}
\begin{split}
  \norm{ 
    & \textnormal{Rem}^{\delta \varphip}_m \E_{(\rr)}  F 
    ( {X} , \zeta + \delta \varphip)}_{2h, T_{j+1} (\bar{X}, \varphip)}  \\
    & \qquad\qquad\qquad\qquad \leq C 
   L^{-(m+1)} (\log L)^{(m+1)/2}  (A')^{-|X|_j} 
    \|F\|_{h, T_j,A'}
    G_{j+1} (\bar{X}, \varphip)
\end{split}
\end{equation}
for some $C \equiv C(M, \rho,\rr)$.
\end{lemma}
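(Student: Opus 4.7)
The approach is a Gaussian contraction estimate via the integral form of Taylor's theorem, exploiting the scale-rescaling of the discrete difference $\delta\varphip$. The key mechanism producing the decay $L^{-(m+1)}(\log L)^{(m+1)/2}$ is that each factor $\delta\varphip$ gains an $L^{-1}$ upon passing from scale $j$ to scale $j+1$, while trading polynomial powers of the strong-regulator weight $w_{j+1}$ against the exponential $e^{c_w\kappa w_{j+1}^2}$ produces $\kappa^{-(m+1)/2} \asymp (\log L)^{(m+1)/2}$, since $\kappa \asymp (\log L)^{-1}$.

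First I would write the Taylor remainder in integral form,
\begin{align*}
\operatorname{Rem}^{\delta\varphip}_m F(X, \zeta + \delta\varphip) = \frac{1}{m!}\int_0^1 (1-t)^m D^{m+1} F(X, \zeta + t\delta\varphip)(\delta\varphip^{\otimes(m+1)}) \, dt,
\end{align*}
apply $\E_{(\rr)}$, and use the real-valued Gaussian shift of variables (the real-$\rr$ case underlying Lemma~\ref{lemma:Et_by_complex_shift}) to rewrite $\E_{(\rr)}[\,\cdot\,(\zeta)]$ as $\E[\,\cdot\,(\zeta + \rr u_{j+1})]$. Pulling the $\norm{\cdot}_{2h, T_{j+1}(\bar X, \varphip)}$-seminorm inside expectation and integral, the $(m+1)$-fold derivative of $F$ contributes $(m+1)!/h^{m+1} \cdot \norm{F(X)}_{h, T_j(X)}$ by the definition of the $\norm{\cdot}_{h, T_j}$-seminorm, while each contraction with $\delta\varphip$ produces a factor $\norm{\delta\varphip}_{C_j^2(X^*)}$. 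No loss arises from the change of test-function scale, since any $C_{j+1}^2$-test function $f$ has $C_j^2$-norm $\leq 1$ (because $\nabla_j = L^{-1}\nabla_{j+1}$ and the max is over $n \le 2$).

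The crucial estimate is $\norm{\delta\varphip}_{C_j^2(X^*)} \leq C L^{-1} w_{j+1}(\bar X, \varphip)$, which I would verify as follows: since $X \in \cS_j$ forces $\operatorname{diam}(X^*) \lesssim L^j$, the $n=0$ entry satisfies $|\delta\varphip(x)| \leq CL^j\norm{\nabla\varphip}_{L^\infty(X^*)} = CL^{-1}\norm{\nabla_{j+1}\varphip}_{L^\infty(X^*)}$, while for $n \in \{1,2\}$ we have $\nabla_j^n\delta\varphip = L^{-n}\nabla_{j+1}^n\varphip$, gaining $L^{-n} \leq L^{-1}$. Raising this to the $(m+1)$-th power yields $L^{-(m+1)} w_{j+1}(\bar X, \varphip)^{m+1}$. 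I would then apply the elementary Gaussian moment bound $x^{m+1} \leq C_m \kappa^{-(m+1)/2} e^{c_w \kappa x^2}$ at $x = w_{j+1}(\bar X, \varphip)$, absorb the residual $e^{c_w \kappa w_{j+1}^2}$ into $G_{j+1}(\bar X, \varphip)$ via \eqref{eq:strong_regulator_key_bound}, and recall $\kappa \asymp (\log L)^{-1}$ to produce the $(\log L)^{(m+1)/2}$ factor.

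The remaining expectation $\E[G_j(X, \zeta + \rr u_{j+1} + t\delta\varphip)]$ is bounded by $C \cdot 2^{|X|_j} G_{j+1}(\bar X, \varphip)$ through Lemma~\ref{lemma:E_G^r_j}, after noting that the deterministic shift $t\delta\varphip$ changes the regulator only by a multiplicative constant absorbable into the $(\log L)^{(m+1)/2}$ prefactor. Combining with $\norm{F(X)}_{h, T_j(X)} \leq (A')^{-|X|_j} \norm{F}_{h, T_j, A'}$ yields the claim. The main obstacle I anticipate is careful bookkeeping of the $\varphip$-derivative required by the $T_{j+1}(\bar X, \varphip)$-seminorm: since $\delta\varphip$ depends linearly on $\varphip$, differentiating in a $C_{j+1}^2$-test direction $f$ either feeds $\delta f$ into $D^{m+1}F$ or replaces one of the $\delta\varphip$ slots with $\delta f$ via Leibniz, and one must systematically verify that the bound $\norm{\delta f}_{C_j^2(X^*)} \leq CL^{-1}$ holds uniformly for all such terms so that the $L^{-1}$-per-$\delta$ gain is preserved across every Leibniz expansion.
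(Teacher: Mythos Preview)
Your approach is correct and matches the paper's proof, which is a one-line deferral to \cite[Lemma~6.16]{dgauss1} (noting only that $2h$ replaces $h$ on the left and that Lemma~\ref{lemma:E_G^r_j} handles the shifted-regulator expectation). You have faithfully reconstructed the argument behind that reference: the integral Taylor remainder, the scale gain $\|\delta\varphip\|_{C_j^2(X^*)}\le CL^{-1}w_{j+1}(\bar X,\varphip)$ (and likewise $\|\delta f\|_{C_j^2}\le CL^{-1}$ for $C_{j+1}^2$-unit $f$, which is also what absorbs the $2h$ into $h$), and the $(\log L)^{(m+1)/2}$ from $\kappa\asymp(\log L)^{-1}$.

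One bookkeeping point deserves more care than you indicate. As written, the trade $w_{j+1}^{m+1}\le C_m\kappa^{-(m+1)/2}e^{c_w\kappa w_{j+1}^2}$ together with \eqref{eq:strong_regulator_key_bound} produces one factor $G_{j+1}(\bar X,\varphip)$, while Lemma~\ref{lemma:E_G^r_j} on $\E[G_j(X,\zeta+\rr u_{j+1}+t\delta\varphip)]$ produces a second, so you land on $G_{j+1}^2$ rather than $G_{j+1}$. The cure is to retain the $t$-dependence from the integral remainder: since the regulator depends only on gradients, Lemma~\ref{lemma:E_G^r_j} actually yields $C\,2^{|X|_j}G_{j+1}(\bar X,t\varphip)=C\,2^{|X|_j}G_{j+1}(\bar X,\varphip)^{t^2}$, and then
\[
w_{j+1}(\bar X,\varphip)^{m+1}\int_0^1\frac{(1-t)^m}{m!}\,G_{j+1}(\bar X,\varphip)^{t^2}\,dt\;\le\;C\,\kappa^{-(m+1)/2}\,G_{j+1}(\bar X,\varphip)
\]
follows from $c_w\kappa\,w_{j+1}^2\le\log G_{j+1}(\bar X,\varphip)$ and the elementary estimate $\int_0^1 s^m e^{-sA}\,ds\le m!\,A^{-(m+1)}$. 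This (or an equivalent device) is what underlies \cite[Lemma~6.16]{dgauss1}.
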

\begin{proof}
The proof is very similar to \cite[Lemma~6.16]{dgauss1}, 
just observing that the norm on the left-hand side can admit $2h$ instead of $h$, 
and using Lemma~\ref{lemma:E_G^r_j}.

\end{proof}

\begin{proof}[Proof of Proposition~\ref{prop:contraction_estimates_external_field}~(2)]
Since $\bar{\textnormal{Rem}}^{\varphip}_m = \frac{1}{|X|} \sum_{x_0 \in X} \textnormal{Rem}^{\delta \varphip}_m$ where $\delta \varphip (x) = \varphip (x) - \varphip (x_0)$,  it suffices to prove the same bound for $\textnormal{Rem}^{\varphip}_m \E_{(\rr + \tau )} F( {X} , \zeta + \varphip ; \tau)$. Also by \eqref{eq:weaker_Et_bound}, 
\begin{align}
	\norm{\textnormal{Rem}^{\varphip}_m \E_{(\rr + \cdot )} F( {X} , \zeta + \varphip ; \cdot) }_{\vec{h}, T_{j+1} (\bar{X}, \varphi)} 
	\leq \norm{\textnormal{Rem}^{\varphip}_m \E_{(\rr)} F( {X} , \zeta + \varphip ; \cdot) }_{\vec{h}', T_{j+1} (\bar{X}, \varphi)}
\end{align}
where $\vec{h}' = (2h, \htau)$, and by definition of $\norm{\cdot}_{\vec{h}', T_{j+1} (\bar{X}, \varphi)}$,
\begin{align}
	\norm{\textnormal{Rem}^{\varphip}_m \E_{(\rr)} F( {X} , \zeta + \varphip ; \cdot) }_{\vec{h}', T_{j+1} (\bar{X}, \varphi)} 
	\leq \sum_{k=0}^{\infty } \frac{\htau^k }{k!} \norm{\textnormal{Rem}^{\varphip}_m \E_{(\rr)} \partial_{\tau}^k F( {X} , \zeta + \varphip ; \tau) }_{2h, T_{j+1} (\bar{X}, \varphi)} .
\end{align}
But Lemma~\ref{lemma:gaussian_contraction2} bounds the right-hand side, giving the desired conclusion. 
\end{proof}

\subsection{Proof of Proposition~\ref{prop:Loc-contract_v2}}
\label{sec:proof_of_Loc-contract}

\begin{proof}[Proof of Lemma~\ref{lemma:Loc_minus_Tay}]
Since $\Loc_X \E_{(\rr+ \tau)} F(X, \varphip + \zeta)$ and $\bar{\Tay_2} \E_{(\rr+ \tau)} F (X, \varphip + \zeta)$ are polynomials in $\varphip$ of degree 2, we have
  \begin{align}
    & \| \Loc_X \E_{(\rr+ \tau)} F (X, \varphip +\zeta) - \bar\Tay_2 \E_{(\rr+ \tau)} F (X, \varphip +\zeta)\|_{h,T_{j+1}(\bar{X}, \varphip)} \nnb
    & \qquad \qquad \leq 4 \| \Loc_X \E_{(\rr+ \tau)} F (X, \varphip +\zeta) - \bar\Tay_2 \E_{(\rr+ \tau)} F (X, \varphip +\zeta)\|_{2h,T_{j+1}(\bar{X}, \varphip)}   .
  \end{align}
Also,  by \eqref{eq:weaker_Et_bound}, it is enough to prove
\begin{align}
    & \| \Loc_X \E_{(\rr)} F (X, \varphip +\zeta) - \bar{\Tay}^{\varphip}_2 \E_{(\rr)} F (X, \varphip +\zeta)\|_{2h,T_{j+1}(\bar{X}, \varphip)}
	\nnb    
    & \qquad \qquad \qquad \qquad \qquad \qquad \qquad  \leq
    C L^{-3}(\log L) (A')^{-|X|_j} 
    \norm{F}_{h,T_j, A'} G_{j+1} (\bar{X}, \varphip).
    \label{eq:Loc_minus_Tay_intermediate}
\end{align}
for any neutral polymer activity $F$. 
And this follows from (the proof of) \cite[Lemma~6.16]{dgauss1}, which says
\begin{align}
    & \| \Loc_X \E_{(\rr)} F(X, \varphip +\zeta) - \bar{\Tay}^{\varphip}_2 \E_{(\rr)} F (X, \varphip +\zeta)\|_{h,T_{j+1}(\bar{X}, \varphip)} \nnb
    & \qquad \qquad \qquad \qquad \leq    C L^{-3}(\log L) (A')^{-|X|_j} 
    \norm{F}_{h,T_j, A'} \E_{(\rr)} [ G_{j} ( X, \varphip) ],  \nnb
       & \qquad \qquad \qquad \qquad \leq   C'  L^{-3}(\log L) (A' / 2)^{-|X|_j} 
    \norm{F}_{h,T_j, A'}  G_{j+1} (\bar{X}, \varphip), 
\end{align}
where the final inequality is due to Lemma~\ref{lemma:E_G^r_j}.

\end{proof}

\begin{proof}[Proof of Proposition~\ref{prop:Loc-contract_v2}]

Take $k =0$ or $2$.
After expanding in the Fourier series $F = \sum_{q \in \Z} \hat{F}_{q}$, we have
\begin{align}
    & \| \operatorname{Loc}^{(k)}_X \E_{(\rr+ \cdot)} F(X, \varphip+\zeta ; \cdot) - \E_{(\rr+ \cdot)} F(X, \varphip+\zeta ; \cdot)\|_{\vec{h}, T_{j+1}(\bar X, \varphip)}  \nnb
   & \qquad\qquad 
   \leq \| \operatorname{Loc}^{(k)}_X \E_{(\rr+ \cdot)} \hat{F}_0(X, \varphip+\zeta ; \cdot) - \E_{(\rr+ \cdot)} \hat{F}_0 (X, \varphip+\zeta ; \cdot)\|_{\vec{h}, T_{j+1}(\bar X, \varphip)} \nnb
   & \qquad \qquad \qquad 
   + \sum_{q \in \Z \backslash \{ 0 \}}  \norm{ \E_{(\rr+ \cdot)} \hat{F}_q (X, \varphip+\zeta ; \cdot) }_{\vec{h},  T_{j+1}(\bar X, \varphip)}  .
\end{align}
First by Proposition~\ref{prop:contraction_estimates_external_field}~(2) and Lemma~\ref{lemma:Loc_minus_Tay}, 
\begin{align}
& \| \operatorname{Loc}^{(k)}_X \E_{(\rr+ \cdot)} \hat{F}_0(X, \varphip+\zeta ; \cdot) - \E_{(\rr+ \cdot)} \hat{F}_0 (X, \varphip+\zeta ; \cdot)\|_{\vec{h}, T_{j+1}(\bar X, \varphip)}  \nnb
& \leq 
\| \operatorname{Loc}^{(k)}_X \E_{(\rr+ \cdot)} \hat{F}_0(X, \varphip+\zeta ; \cdot) - \bar{\Tay}_k^{\varphip} \E_{(\rr+ \cdot)} \hat{F}_0 (X, \varphip+\zeta ; \cdot)\|_{\vec{h},T_{j+1}(\bar X, \varphip)} 
\nnb
& \qquad + \| \bar{\Rem}_k^{\varphip} \E_{(\rr+ \cdot)} \hat{F}_0 (X, \varphip+\zeta ; \cdot)\|_{\vec{h},T_{j+1}(\bar X, \varphip)} \nnb
& \leq 
C L^{-(k+1)} (\log L)^{\frac{k+1}{2}} (A')^{-|X|_j} \norm{F}_{\vec{h}, T_j, A'} G_{j+1} (\bar{X}, \varphi) 
\end{align}
Secondly by Proposition~\ref{prop:contraction_estimates_external_field}~(1),
applied on the terms with $\hat{F}_q$ ($q\neq 0$), we have
\begin{align}
& \norm{ \E_{(\rr+ \cdot)} \hat{F}_q (X, \varphip+\zeta ; \cdot) }_{h,T_{j+1}(\bar X, \varphip)} 
\nnb
& \qquad \qquad\qquad \qquad \leq C e^{2 \sqrt{\beta}qh} e^{-(q-1/2)\beta\Gamma_{j+1}(0)} (A')^{-|X|_j} \norm{F}_{\vec{h}, T_j, A'} G_{j+1} (\bar{X}, \varphip) .
\end{align}
\end{proof}

\subsection{Proof of Proposition~\ref{prop:largeset_contraction_external_field}}

\label{sec:prop_largeset_contraction_external_field_proof}

\begin{lemma} \label{lemma:largeset_contraction_building_block}
Let $X\in \cP^c_{j+1}$ and $A' \geq A_0 (L)$ where $A_0 (L)$ is some function only polynomially large in $L$.  Then
\begin{align}
\sum_{Y\in \Conn_j \backslash \cS_j}^{\bar{Y} = X} (A')^{-|Y|_j} \leq (C L^2 (A')^{-(1+\eta)} )^{|X|_{j+1}} 
\end{align}
for some $C >0$ and $\eta >0$.
\end{lemma}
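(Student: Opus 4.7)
The plan is to combine two ingredients: a lattice-animal enumeration of connected $j$-polymers, together with lower bounds on $|Y|_j$ extracted from the constraints $\bar Y = X$ and $Y \not\in \cS_j$.

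First I would extract two lower bounds on $|Y|_j$. Since $\bar Y = X$ is the smallest $(j+1)$-polymer containing $Y$, every block $B \in \cB_{j+1}(X)$ must contain at least one $j$-block of $Y$ (otherwise $X\setminus B$ would be a strictly smaller $(j+1)$-polymer containing $Y$), which gives $|Y|_j \geq |X|_{j+1}$. Since $Y \in \Conn_j \setminus \cS_j$ and $X \neq \emptyset$, the definition of small polymers also forces $|Y|_j \geq 5$. Writing $m = |X|_{j+1}$ and $n_0 = \max(m,5)$, we therefore have $|Y|_j \geq n_0$.

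Second, I would invoke the standard lattice-animal bound: in any graph of bounded degree, the number of connected subsets of size $n$ containing a fixed vertex is at most $c^n$ for a constant $c$ depending only on the maximum degree. Applied to the graph on $\cB_j$ equipped with the $\ell^\infty$-adjacency $\sim$, this yields that the number of $Y \in \Conn_j$ with $\bar Y = X$ and $|Y|_j = n$ is at most $L^2 m \cdot c^n$: pick one of the $L^2 m$ starting $j$-blocks of $X$, then enumerate connected extensions of size $n$ (each $Y$ is over-counted by a factor of $n$, which is harmless). Summing geometrically, for $A' \geq 2c$,
\begin{align*}
	\sum_{Y \in \Conn_j \setminus \cS_j,\, \bar Y = X} (A')^{-|Y|_j} \leq L^2 m \sum_{n \geq n_0} (c/A')^n \leq 2 L^2 m\, (c/A')^{n_0}.
\end{align*}

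Third, I would verify that this is bounded by $(C L^2 (A')^{-(1+\eta)})^m$ for an appropriately chosen $\eta, C > 0$, provided $A' \geq A_0(L)$ with $A_0(L) \leq L^q$ polynomial. The ratio of the two sides is
\begin{align*}
	2 m (c/C)^{n_0} L^{2(1-m)} (A')^{(1+\eta)m - n_0}.
\end{align*}
For $m \leq 4$, one has $n_0 = 5 > (1+\eta)m$ (for $\eta$ small), so the exponent of $A'$ is strictly negative and the ratio is $\leq 1$ once $A'$ is large. For $m = 5$, the ratio becomes $\tfrac{10 c^5}{C^5} L^{-8} (A')^{5\eta}$, which is $\leq 1$ provided $\eta \leq 8/(5q)$ and $C$ is chosen sufficiently large. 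For $m \geq 6$, with $n_0 = m$, the ratio reads $2 m (c/C)^m L^{2 + m(q\eta - 2)}$, which is uniformly bounded and in fact decays exponentially in $m$ once $\eta < 2/q$ and $C > c$. The main obstacle is merely keeping the constraints on $\eta$ simultaneously compatible; the choice $\eta = 8/(5q)$ together with $C$ and $A_0(L)$ sufficiently large accomplishes this, and no individual step is delicate once the bookkeeping is in place.
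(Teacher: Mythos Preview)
Your overall strategy---combine a lattice-animal count with a lower bound on $|Y|_j$---is exactly the standard one (and is what the cited lines of \cite{dgauss1} do). However, your lower bound $|Y|_j \geq n_0 = \max(m,5)$ is too weak, and this causes a genuine failure in step~3.

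Look at the exponent of $A'$ in your ratio: it is $(1+\eta)m - n_0$. For $m \geq 5$ you have $n_0 = m$, so the exponent equals $\eta m > 0$, and the ratio diverges as $A' \to \infty$. You hide this by writing $(A')^{\eta m} \leq L^{q\eta m}$, i.e.\ by substituting $A' = L^q$; but the hypothesis is $A' \geq A_0(L)$, a \emph{lower} bound only, with no upper bound on $A'$. So your inequality must hold for arbitrarily large $A'$, and with your $n_0$ it does not. (Concretely: already for $m=5$ your ratio $\tfrac{10c^5}{C^5}L^{-8}(A')^{5\eta}$ exceeds $1$ once $A'$ is large.)

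The missing ingredient is the standard geometric lemma (this is precisely what the referenced lines of \cite{dgauss1} supply): for $L$ large enough there exists $\eta' = \eta'(d) > 0$ such that every \emph{large} connected $j$-polymer $Y$ satisfies
\[
|Y|_j \;\geq\; (1+2\eta')\,|\bar Y|_{j+1}.
\]
Your own analysis already hints at why your bound $|Y|_j \ge m$ is never sharp for $m \ge 5$: a connected set of $5$ $j$-blocks has $\ell^\infty$-diameter at most $4 < L$, hence its closure contains at most $2^d = 4$ blocks, so there is no $Y$ with $|Y|_j = m = 5$. The geometric lemma makes this quantitative. With it in hand, write $(A')^{-|Y|_j} \le (A')^{-(1+\eta')m}\,(A')^{-\eta'' |Y|_j}$ for some $\eta''>0$, and then your lattice-animal sum converges (for $A'$ merely larger than a constant), yielding
\[
\sum_{Y} (A')^{-|Y|_j} \;\leq\; C'\,L^2 m\,(A')^{-(1+\eta')m} \;\leq\; \bigl(C L^2 (A')^{-(1+\eta')}\bigr)^{m},
\]
uniformly in $A' \geq A_0(L)$. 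So the fix is a one-line replacement of your lower bound; everything else you wrote is fine.
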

\begin{proof}
This is a bound already proved in the lines between \cite[(6.115)--(6.118)]{dgauss1}.

\end{proof}

\begin{proof}[Proof of Proposition~\ref{prop:largeset_contraction_external_field}]

By definition of $\mathbb{S}$-operator,  
\begin{align}
\big\| \mathbb{S} \big[ \E_{(\rr + \cdot)} [ F 1_{Y \not\in \cS_j}] \big] (X, \varphip) \big\|_{\vec{h}, T_{j+1} (X, \varphip)} 
\leq \sum_{Y\in \Conn_j \backslash \cS_j}^{\bar{Y} = X}  \norm{ \E_{(\rr, \cdot)} F(Y, \varphip + \zeta ; \cdot) }_{\vec{h}, T_j (X, \varphip)}
,
\end{align}
but by Lemma~\ref{lemma:Et_F_bound}, this is bounded by some multiple of
\begin{align}
G_{j+1} (X, \varphip)  \sum_{Y\in \Conn_j \backslash \cS_j}^{\bar{Y} = X} 2^{|Y|_j} \norm{F(Y)}_{\vec{h}, T_j (Y)} 
\leq G_{j+1} (X, \varphip) \norm{F}_{\vec{h}, T_j , A'} \sum_{Y\in \Conn_j \backslash \cS_j}^{\bar{Y} = X} (A'/2)^{-|Y|_j} 
.
\end{align}
We then just need to apply Lemma~\ref{lemma:largeset_contraction_building_block} and take $A'$ sufficiently large (depending on $L$) to conclude.
\end{proof}

\section*{Acknowledgements}
We are grateful to Roland Bauerschmidt and Pierre-Fran\c{c}ois Rodriguez for helpful discussions and Roland Bauerschmidt for his comments on the manuscript.
JP was supported by the Cambridge doctoral training centre Mathematics of Information and partially supported by the European Research Council under the
European Union's Horizon 2020 research and innovation programme
(grant agreement No.~851682 SPINRG).
He gratefully acknowledges the support and hospitality of the
University of British Columbia and of the Pacific Institute for the Mathematical Sciences in Vancouver during part of this work.

\bibliography{all}

\begin{thebibliography}{10}

\bibitem{1910.13564}
S.~Adams, S.~Buchholz, R.~Koteck\'y, and S.~M\"uller.
\newblock {C}auchy-{B}orn {R}ule from {M}icroscopic {M}odels with {N}on-convex
  {P}otentials.
\newblock 2019.
\newblock Preprint, arXiv:1910.13564.

\bibitem{2110.09498}
M.~Aizenman, M.~Harel, R.~Peled, and J.~Shapiro.
\newblock Depinning in the integer-valued {G}aussian {F}ield and the {BKT}
  phase of the {2D} {V}illain model.
\newblock 2021.
\newblock Preprint, arXiv:2110.09498.

\bibitem{RB16-MATH253x}
R.~Bauerschmidt.
\newblock Ferromagnetic spin systems.
\newblock Unpublished lecture notes. Available at
  \url{https://www.statslab.cam.ac.uk/~rb812/}.

\bibitem{MR3345374}
R.~Bauerschmidt, D.C. Brydges, and G.~Slade.
\newblock Critical two-point function of the 4-di\-men\-sion\-al weakly
  self-avoiding walk.
\newblock {\em Commun. Math. Phys.}, 338(1):169--193, 2015.

\bibitem{dgauss1}
R.~Bauerschmidt, J.~Park, and P.-F. Rodriguez.
\newblock The {D}iscrete {G}aussian model, {I}. {R}enormalisation group flow.
\newblock 2021.

\bibitem{dgauss2}
R.~Bauerschmidt, J.~Park, and P.-F. Rodriguez.
\newblock The {D}iscrete {G}aussian model, {II}. {I}nfinite volume scaling
  limit at high temperature.
\newblock 2021.

\bibitem{MR4043225}
M.~Biskup.
\newblock Extrema of the two-dimensional discrete {G}aussian free field.
\newblock In {\em Random graphs, phase transitions, and the {G}aussian free
  field}, volume 304 of {\em Springer Proc. Math. Stat.}, pages 163--407.
  Springer, Cham, 2020.

\bibitem{Brandenberger:1982aa}
R.~Brandenberger and C.~E. Wayne.
\newblock Decay of correlations in surface models.
\newblock {\em Journal of Statistical Physics}, 27(3):425--440, 1982.

\bibitem{Brascamp2004}
H.~J. Brascamp, E.~H. Lieb, and J.~L. Lebowitz.
\newblock {\em The Statistical Mechanics of Anharmonic Lattices}, pages
  379--390.
\newblock Springer Berlin Heidelberg, Berlin, Heidelberg, 2004.

\bibitem{MR2523458}
D.C. Brydges.
\newblock Lectures on the renormalisation group.
\newblock In {\em Statistical mechanics}, volume~16 of {\em IAS/Park City Math.
  Ser.}, pages 7--93. Amer. Math. Soc., 2009.

\bibitem{MR574172}
D.C. Brydges and P.~Federbush.
\newblock Debye screening.
\newblock {\em Commun. Math. Phys.}, 73(3):197--246, 1980.

\bibitem{MR4315657}
N.~Chandgotia, R.~Peled, S.~Sheffield, and M.~Tassy.
\newblock Delocalization of uniform graph homomorphisms from {$\Bbb Z^2$} to
  {$\Bbb Z$}.
\newblock {\em Commun. Math. Phys.}, 387(2):621--647, 2021.

\bibitem{MR3182484}
J.G. Conlon and T.~Spencer.
\newblock A strong central limit theorem for a class of random surfaces.
\newblock {\em Commun. Math. Phys.}, 325(1):1--15, 2014.

\bibitem{MR2976565}
C.~Cotar and J.-D. Deuschel.
\newblock Decay of covariances, uniqueness of ergodic component and scaling
  limit for a class of {$\nabla\phi$} systems with non-convex potential.
\newblock {\em Ann. Inst. Henri Poincar\'e Probab. Stat.}, 48(3):819--853,
  2012.

\bibitem{Delmotte2005OnET}
T.~Delmotte and J.-D. Deuschel.
\newblock On estimating the derivatives of symmetric diffusions in stationary
  random environment, with applications to $\nabla \phi$ interface model.
\newblock {\em Probability Theory and Related Fields}, 133:358--390, 2005.

\bibitem{MR1101688}
J.~Dimock and T.R. Hurd.
\newblock A renormalization group analysis of the {K}osterlitz-{T}houless
  phase.
\newblock {\em Commun. Math. Phys.}, 137(2):263--287, 1991.

\bibitem{MR1777310}
J.~Dimock and T.R. Hurd.
\newblock Sine-{G}ordon revisited.
\newblock {\em Ann. Henri Poincar\'e}, 1(3):499--541, 2000.

\bibitem{MR0424106}
R.L. Dobrushin and S.B. Shlosman.
\newblock Absence of breakdown of continuous symmetry in two-dimensional models
  of statistical physics.
\newblock {\em Commun. Math. Phys.}, 42:31--40, 1975.

\bibitem{MR3369909}
J.~Dub\'{e}dat.
\newblock Dimers and families of {C}auchy-{R}iemann operators {I}.
\newblock {\em J. Amer. Math. Soc.}, 28(4):1063--1167, 2015.

\bibitem{2012.13750}
H.~Duminil-Copin, A.~Karrila, I.~Manolescu, and M.~Oulamara.
\newblock Delocalization of the height function of the six-vertex model.
\newblock 2020.

\bibitem{MR2917175}
P.~Falco.
\newblock Kosterlitz-{T}houless transition line for the two dimensional
  {C}oulomb gas.
\newblock {\em Commun. Math. Phys.}, 312(2):559--609, 2012.

\bibitem{1311.2237}
P.~Falco.
\newblock {C}ritical exponents of the two dimensional {C}oulomb gas at the
  {B}erezinskii-{K}osterlitz-{T}houless transition, 2013.
\newblock Preprint, arXiv:1311.2237.

\bibitem{MR0456220}
J.~Fr\"{o}hlich and Y.M. Park.
\newblock Remarks on exponential interactions and the quantum sine-{G}ordon
  equation in two space-time dimensions.
\newblock {\em Helv. Phys. Acta}, 50(3):315--329, 1977.

\bibitem{MR632763}
J.~Fr{\"o}hlich and C.~Pfister.
\newblock On the absence of spontaneous symmetry breaking and of crystalline
  ordering in two-dimensional systems.
\newblock {\em Commun. Math. Phys.}, 81(2):277--298, 1981.

\bibitem{MR634447}
J.~Fr{\"o}hlich and T.~Spencer.
\newblock The {K}osterlitz-{T}houless transition in two-dimensional abelian
  spin systems and the {C}oulomb gas.
\newblock {\em Commun. Math. Phys.}, 81(4):527--602, 1981.

\bibitem{MR610687}
J.~Fr{\"o}hlich and T.~Spencer.
\newblock On the statistical mechanics of classical {C}oulomb and dipole gases.
\newblock {\em J. Statist. Phys.}, 24(4):617--701, 1981.

\bibitem{MR2228384}
T.~Funaki.
\newblock Stochastic interface models.
\newblock In {\em Lectures on probability theory and statistics}, volume 1869
  of {\em Lecture Notes in Math.}, pages 103--274. Springer, 2005.

\bibitem{2012.01400}
C.~Garban and A.~Sep{\'u}lveda.
\newblock Quantitative bounds on vortex fluctuations in $2d$ coulomb gas and
  maximum of the integer-valued {G}aussian free field.
\newblock 2020.
\newblock Preprint, arXiv:2012.01400.

\bibitem{2002.12284}
C.~Garban and A.~Sep\'ulveda.
\newblock Statistical reconstruction of the {G}aussian free field and {KT}
  transition.
\newblock 2020.
\newblock Preprint, arXiv:2002.12284.

\bibitem{MR3606736}
A.~Giuliani, V.~Mastropietro, and F.L. Toninelli.
\newblock Height fluctuations in interacting dimers.
\newblock {\em Ann. Inst. Henri Poincar\'{e} Probab. Stat.}, 53(1):98--168,
  2017.

\bibitem{MR4121614}
A.~Giuliani, V.~Mastropietro, and F.L. Toninelli.
\newblock Non-integrable dimers: universal fluctuations of tilted height
  profiles.
\newblock {\em Commun. Math. Phys.}, 377(3):1883--1959, 2020.

\bibitem{arxiv.2007.10869}
S.~Hilger.
\newblock Decay of covariance for gradient models with non-convex potential.
\newblock 2020.
\newblock Preprint, arXiv:2007.10869.

\bibitem{Ioffe2002}
D.~Ioffe, S.~Shlosman, and Y.~Velenik.
\newblock 2d models of statistical physics with continuous symmetry: The case
  of singular interactions.
\newblock {\em Communications in Mathematical Physics}, 226(2):433--454, 2002.

\bibitem{kang2013gaussian}
N.-G. Kang and N.~Makarov.
\newblock Gaussian free field and conformal field theory.
\newblock {\em Ast{\'e}risque}, (1), 2013.

\bibitem{KenyonSheffield2003}
R.~Kenyon, S~Sheffield, and A.~Okounkov.
\newblock Dimers and amoebae.
\newblock {\em Annals of mathematics, ISSN 0003-486X, Vol. 163, Nº 3, 2006,
  pags. 1019-1056}, 163:1019--1056, 11 2006.

\bibitem{1711.04720}
V.~Kharash and R.~Peled.
\newblock The {F}r\"ohlich-{S}pencer {P}roof of the
  {B}erezinskii-{K}osterlitz-{T}houless {T}ransition.
\newblock 2017.
\newblock Preprint, arXiv:1711.04720.

\bibitem{2012.09687}
P.~Lammers.
\newblock Height function delocalisation on cubic planar graphs.
\newblock 2020.
\newblock Preprint, arXiv:2012.09687.

\bibitem{arxiv.2211.14365}
P.~Lammers.
\newblock A dichotomy theory for height functions, 2022.
\newblock Preprint, arXiv:2211.14365.

\bibitem{2101.05139}
P.~Lammers and S.~Ott.
\newblock Delocalisation and absolute-value-{FKG} in the solid-on-solid model.
\newblock 2021.
\newblock Preprint, arXiv:2101.05139.

\bibitem{Laslier2015}
B.~Laslier and F.L. Toninelli.
\newblock How quickly can we sample a uniform domino tiling of the
  {\$}{\$}2l{$\backslash$}times 2l{\$}{\$}square via glauber dynamics?
\newblock {\em Probability Theory and Related Fields}, 161(3):509--559, 2015.

\bibitem{PhysRevLett.17.1133}
N.~D. Mermin and H.~Wagner.
\newblock Absence of ferromagnetism or antiferromagnetism in one- or
  two-dimensional isotropic heisenberg models.
\newblock {\em Phys. Rev. Lett.}, 17:1133--1136, Nov 1966.

\bibitem{Mermin1967}
N.D. Mermin.
\newblock Absence of ordering in certain classical systems.
\newblock {\em Journal of Mathematical Physics}, 8(5):1061--1064, 1967.

\bibitem{MR3395146}
P.~Mi{\l}o\'s and R.~Peled.
\newblock Delocalization of two-dimensional random surfaces with hard-core
  constraints.
\newblock {\em Communications in Mathematical Physics}, 340(1):1--46, 2015.

\bibitem{Pfister1981}
C~Pfister.
\newblock {On the symmetry of the Gibbs states in two-dimensional lattice
  systems}.
\newblock {\em Communications in Mathematical Physics}, 79(2):181 -- 188, 1981.

\bibitem{Pinson2004}
H.~Pinson.
\newblock Rotational invariance and discrete analyticity in the 2d dimer model.
\newblock {\em Communications in Mathematical Physics}, 245(2):355--382, 2004.

\bibitem{MR2251117}
S.~Sheffield.
\newblock Random surfaces.
\newblock {\em Ast\'erisque}, (304):vi+175, 2005.

\bibitem{MR3459163}
G.~Slade and A.~Tomberg.
\newblock Critical correlation functions for the 4-dimensional weakly
  self-avoiding walk and {$n$}-component {$|\varphi|^4$} model.
\newblock {\em Commun. Math. Phys.}, 342(2):675--737, 2016.

\bibitem{Velenik-LectureNotes}
Y.~Velenik.
\newblock Localization and delocalization of random interfaces.
\newblock {\em Probability Surveys}, 3:112--169, 2006.

\bibitem{1907.08868}
M.~Wirth.
\newblock Maximum of the integer-valued {G}aussian free field.
\newblock Preprint, arXiv:1907.08868.

\bibitem{arxiv.2206.12058}
W.~Wu.
\newblock A central limit theorem for square ice, 2022.
\newblock Preprint, arXiv:2206.12058.

\bibitem{arxiv.2202.13578}
W.~Wu.
\newblock Local central limit theorem for gradient field models.
\newblock 2022.
\newblock Preprint, arXiv:2202.13578.

\bibitem{MR923850}
W.-S. Yang.
\newblock Debye screening for two-dimensional {C}oulomb systems at high
  temperatures.
\newblock {\em J. Statist. Phys.}, 49(1-2):1--32, 1987.

\end{thebibliography}
\bibliographystyle{plain}

\end{document}